\begin{document}
%%%%%%%%%%%%%%%%%%%%%%%%%%%%%%%%%%%%%%%%%%%%%%%%%%%%%%%%%%%%%%%%%%%%%%
%	spaces for your own definitions follows
%%%%%%%%%%%%%%%%%%%%%%%%%%%%%%%%%%%%%%%%%%%%%%%%%%%%%%%%%%%%%%%%%%%%%%
\newtheorem{theo}{Theorem}[section]
\newtheorem{prop}[theo]{Proposition}
\newtheorem{lemma}[theo]{Lemma}
\newtheorem{coro}[theo]{Corollary}
\theoremstyle{definition}
\newtheorem{exam}[theo]{Example}
\newtheorem{defi}[theo]{Definition}
\newtheorem{rem}[theo]{Remark}

%\renewcommand{\theequation}{\mbox{\arabic{section}.\arabic{equation}}}

%letters - added these
\newcommand{\Bb}{{\bf B}}
\newcommand{\Cb}{{\bf C}}
\newcommand{\Nb}{{\bf N}}
\newcommand{\Qb}{{\bf Q}}
\newcommand{\Rb}{{\bf R}}
\newcommand{\Zb}{{\bf Z}}
\newcommand{\Ac}{{\mathcal A}}
\newcommand{\Bc}{{\mathcal B}}
\newcommand{\Cc}{{\mathcal C}}
\newcommand{\Dc}{{\mathcal D}}
\newcommand{\Ec}{{\mathcal E}}
\newcommand{\Fc}{{\mathcal F}}
\newcommand{\Ic}{{\mathcal I}}
\newcommand{\Jc}{{\mathcal J}}
\newcommand{\Kc}{{\mathcal K}}
\newcommand{\Lc}{{\mathcal L}}
\newcommand{\Mx}{{\mathcal M}}
\newcommand{\Nc}{{\mathcal N}}
\newcommand{\Oc}{{\mathcal O}}
\newcommand{\Pc}{{\mathcal P}}
\newcommand{\Qc}{{\mathcal Q}}
\newcommand{\Rc}{{\mathcal R}}
\newcommand{\Sc}{{\mathcal S}}
\newcommand{\Tb}{{\bf T}}
\newcommand{\Tc}{{\mathcal T}}
\newcommand{\TC}{{\mathcal TC}}
\newcommand{\Ub}{{\bf U}}
\newcommand{\Uc}{{\mathcal U}}
\newcommand{\Vb}{{\bf V}}
\newcommand{\Vc}{{\mathcal V}}
\newcommand{\Wb}{{\bf W}}
\newcommand{\Wc}{{\mathcal W}}
\newcommand{\btu}{\bigtriangleup}

\author{Nik Weaver}

\title [Quantum relations]
       {Quantum relations}

\address {Department of Mathematics\\
          Washington University in Saint Louis\\
          Saint Louis, MO 63130}

\email {nweaver@math.wustl.edu}

\subjclass{}

\date{May 3, 2010}

\begin{abstract}
We define a ``quantum relation'' on a von Neumann algebra $\Mx
\subseteq \Bc(H)$ to be a weak* closed operator bimodule over its
commutant $\Mx'$.  Although this definition is framed in terms of a
particular representation of $\Mx$, it is effectively representation
independent. Quantum relations on $l^\infty(X)$ exactly correspond to
subsets of $X^2$, i.e., relations on $X$. There is also a good definition
of a ``measurable relation'' on a measure space, to which quantum
relations partially reduce in the general abelian case.

By analogy with the classical setting, we can identify structures such as
quantum equivalence relations, quantum partial orders, and quantum graphs,
and we can generalize Arveson's fundamental work on weak* closed
operator algebras containing a masa to these cases. We are also able to
intrinsically characterize the quantum relations on $\Mx$ in terms of
families of projections in $\Mx {\overline{\otimes}} \Bc(l^2)$.
\end{abstract}

\maketitle

%%%%%%%%%%%%%%%%%%%%%%%%%%%%%%%%%%%%%%%%%%%%%%%%%%%%%%%%%%%%%%%%%%%%%%%
%	Please insert the article body now
%%%%%%%%%%%%%%%%%%%%%%%%%%%%%%%%%%%%%%%%%%%%%%%%%%%%%%%%%%%%%%%%%%%%%%%

This paper arose out of a joint project between Greg Kuperberg and the
author \cite{KW}. That project involved a new definition of quantum
metrics that is suited to the von Neumann algebra setting and is especially
motivated by the metric aspect of quantum error correction. In the
course of that investigation, weak* closed operator bimodules over the
commutant of a von Neumann algebra emerged as centrally important, and
it became apparent that they were playing the role of a quantum version
of relations on a set.

There is an obvious von Neumann algebra version of the notion of
a relation on a set $X$, i.e., a subset of $X\times X$. Passing from $X$
to a von Neumann algebra $\Mx$, the standard translation dictionary tells
us to replace $X \times X$ with the von Neumann algebra tensor product
$\Mx {\overline{\otimes}} \Mx$. A subset of $X\times X$ would then
presumably correspond to a projection in $\Mx {\overline{\otimes}} \Mx$.
Thus, it would be natural to take a ``quantum
relation'' on $\Mx$ to be a projection in $\Mx {\overline{\otimes}} \Mx$.

But although this definition is simple and natural, it is not particularly
fruitful. If we try to identify conditions which could serve as quantum
analogs of, say, reflexivity or transitivity of a relation, this line of
thought becomes complicated and does not seem to lead anywhere interesting.
In light of the fundamental role played in classical mathematics by the
concept of a relation on a set, together with the fact that we do have
robust quantum analogs of large portions of classical mathematics
(\cite{Con}; see also \cite{W5}), this is rather disappointing.

A pessimistic conclusion which could be drawn is that the classical concept of a
relation on a set is simply not ``rigid'' or ``algebraic'' enough to have
a good quantum analog. To the contrary, we claim that there is a very good
quantum analog, it is just not the obvious one.

We define a {\it quantum relation} on a von Neumann algebra
$\Mx \subseteq \Bc(H)$ to be a weak* closed operator bimodule over $\Mx'$
(Definition \ref{quantrel}). Although this definition is framed in terms
of a particular representation of $\Mx$, it is effectively representation
independent (Theorem \ref{repindep}). In the atomic abelian case, quantum
relations on $l^\infty(X)$ correspond to subsets of $X \times X$, i.e.,
classical relations on $X$ (Proposition \ref{atomiccase}). We are also
able to give a reasonable definition of a ``measurable relation''
(Definition \ref{measrel}) to which quantum relations partially
reduce in the general abelian case (Theorem \ref{abelianrel} and
Corollary \ref{qreflex}).

Quantum analogs of such properties as reflexivity and transitivity are
easily identified using the algebraic structure available in $\Bc(H)$, and
this allows us to define such things as quantum partial orders and quantum
graphs (Definition \ref{quantthings}). These turn out to be well-known
structures familiar from the standard operator algebra toolkit; for
instance, a quantum preorder on $\Mx$ is just a weak* closed operator
algebra containing $\Mx'$.

This new point of view pays dividends. For instance, we can elegantly
generalize Arveson's fundamental results on weak* closed operator
algebras containing a masa \cite{Arv}. Some of this work roughly duplicates
results of Erdos \cite{Erd}, providing a new context for that material. The
advantages of our point of view are exhibited in an attractive
characterization of reflexive operator space and operator system bimodules
over a maximal abelian von Neumann algebra (Theorem \ref{char}).

While much of the content of Sections \ref{abcs} and \ref{apps} could
actually be derived from Arveson's work, in some ways the new point of
view provides a broader perspective that could be useful. For example,
we can define a pullback of quantum relations (Proposition \ref{qpb});
this is not possible in the setting of weak* closed operator algebras
because pullbacks are not compatible with products in general. (Thus,
the pullback of a quantum preorder need not be a quantum preorder.)

Our most substantial result is an intrinsic characterization of
quantum relations. Given a von Neumann algebra
$\Mx$, let $\Pc$ be the set of projections in $\Mx {\overline{\otimes}}
\Bc(l^2)$, equipped with the restriction of the weak operator topology.
We define an {\it intrinsic quantum relation} on $\Mx$ to be an
open subset $\Rc \subset \Pc \times \Pc$ satisfying
\begin{quote}
\noindent (i) $(0,0)\not\in \Rc$

\noindent (ii) $(\bigvee P_\lambda, \bigvee Q_\kappa) \in \Rc$
$\Leftrightarrow$ some $(P_\lambda, Q_\kappa) \in \Rc$

\noindent (iii) $(P,[BQ]) \in \Rc$ $\Leftrightarrow$ $([B^*P],Q) \in \Rc$
\end{quote}
for all projections $P,Q,P_\lambda,Q_\kappa \in \Pc$ and all
$B \in I \otimes \Bc(l^2)$. Here square brackets denote range projection.
We prove that intrinsic quantum relations on $\Mx$ naturally correspond to
quantum relations on $\Mx$ (Theorem \ref{abstractchar}).

To illustrate the tractability of quantum relations, we introduce a notion
of translation invariance for quantum relations on quantum torus von Neumann
algebras (Definition \ref{titori}) and characterize the structure of quantum
relations on quantum tori with this property (Theorem \ref{transinvqr} and
Corollary \ref{transinvcor}).

We work with complex scalars throughout.

\tableofcontents

\section{Measurable relations}\label{mrs}

A relation on a set $X$ is a subset of $X^2$. Before discussing
a noncommutative version of this notion, we first consider
the measurable setting since the idea of a ``measurable relation'' is
already interesting. Some readers may prefer to skip to Section 2 and
refer back to this section as needed.

\subsection{Basic definitions}
The obvious definition of a measurable relation on a measure space $(X,\mu)$
would be a measurable subset of $X^2$ up to modification on a null set. But
if $(X,\mu)$ is nonatomic then under this definition the condition that a
relation be reflexive becomes vacuous, because the diagonal of $X\times X$
has measure zero. The notion of transitivity also becomes problematic.
Therefore we seek a better-behaved candidate to play the role
of a measurable relation. Our proposed definition is more
complicated than the one suggested above, but it has elegant
properties and generates a robust theory.

To avoid pathology we assume
that all measure spaces are {\it finitely decomposable}. This means
that the space $X$ can be partitioned into a (possibly uncountable)
family of finite measure subspaces $X_\lambda$ such that a set
$S \subseteq X$
is measurable if and only if its intersection with each $X_\lambda$
is measurable, in which case $\mu(S) = \sum \mu(S \cap X_\lambda)$
(\cite{W4}, Definition 6.1.1).
Finite decomposability is a generalization of $\sigma$-finiteness.
Counting measure on any set is also finitely decomposable.
The spaces $L^\infty(X,\mu)$ with $(X,\mu)$ finitely decomposable
are precisely the abelian von Neumann algebras.

If $(X,\mu)$ is finitely decomposable then the projections in the
abelian von Neumann algebra
$L^\infty(X,\mu)$ constitute a complete lattice. These projections
are precisely the characteristic functions $\chi_S$ of measurable subsets
$S\subseteq X$ up to null sets. Thus, working with projections in
$L^\infty(X,\mu)$ is a
convenient way to factor out equivalence modulo null sets.

Our approach will be to model a measurable relation by specifying
which pairs of projections belong to it. This differs from the classical
pointwise notion but the two can be made equivalent in the atomic case
if we adopt an appropriate axiom specifying how the pairs of projections
belonging to the relation must cohere. The suitable coherence axiom
(($*$) in Definition \ref{measrel} below) is motivated by the following
probably well-known result.

\begin{prop}\label{motive}
Let $(X,\mu)$ be a finitely decomposable measure space and let $\Fc$ be a
family of nonzero projections in $L^\infty(X,\mu)$ with the property that
for any family of nonzero projections $\{p_\lambda\}$ in $L^\infty(X,\mu)$
we have
$$\bigvee p_\lambda \in \Fc \quad \Leftrightarrow \quad
\hbox{some }p_\lambda \in \Fc.$$
Then there is a projection $p \in L^\infty(X,\mu)$ such that $q \in \Fc
\Leftrightarrow pq \neq 0$.
\end{prop}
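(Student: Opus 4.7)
The plan is to identify $p$ via its complement: let $p^\perp = 1-p$ be the join of all nonzero projections that are \emph{not} in $\mathcal{F}$. The intuition is that projections outside $\mathcal{F}$ should live in a common ``forbidden'' region, and the coherence hypothesis should force this region to close up into a single projection.

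Concretely, I would set $\mathcal{G} = \{q \in L^\infty(X,\mu) : q \text{ is a nonzero projection}, q \notin \mathcal{F}\}$ and define $p^\perp = \bigvee \mathcal{G}$, taking $p^\perp = 0$ if $\mathcal{G}$ is empty; this is legitimate because the projections in $L^\infty(X,\mu)$ form a complete lattice. The key preliminary observation is that $p^\perp \notin \mathcal{F}$: when $\mathcal{G}$ is empty this holds because $\mathcal{F}$ contains no zero projection, and when $\mathcal{G}$ is nonempty it follows immediately from the hypothesis applied to the family $\mathcal{G}$ itself, since no element of $\mathcal{G}$ lies in $\mathcal{F}$.

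I then verify the equivalence $q \in \mathcal{F} \Leftrightarrow pq \neq 0$ in two directions. For ($\Leftarrow$), contrapositively, if $q \notin \mathcal{F}$ then either $q = 0$ and $pq = 0$ trivially, or $q \in \mathcal{G}$, whence $q \leq p^\perp$ and again $pq = 0$. For ($\Rightarrow$), suppose $q \in \mathcal{F}$ but $pq = 0$, so $q \leq p^\perp$. Adjoining $q$ to $\mathcal{G}$ then yields a family of nonzero projections whose join is still $p^\perp$ (since $q \leq p^\perp$) and which now contains the element $q \in \mathcal{F}$; by the hypothesis this would force $p^\perp \in \mathcal{F}$, contradicting the preliminary observation.

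The main obstacle, such as it is, is simply hitting on the right candidate for $p$; once one sees that $p^\perp$ should be the join of the nonzero projections avoided by $\mathcal{F}$, the verification is almost mechanical and uses only completeness of the projection lattice, not any measure-theoretic structure of $(X,\mu)$.
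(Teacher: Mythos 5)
Your proof is correct and follows essentially the same route as the paper: both define the complement of $p$ as the join $r$ of all nonzero projections not in $\Fc$, use the hypothesis to conclude $r \notin \Fc$, and then observe that any nonzero $q \leq r$ satisfies $q \vee r = r \notin \Fc$ and hence $q \notin \Fc$. The only cosmetic difference is that you adjoin $q$ to the whole family $\mathcal{G}$ rather than to the two-element family $\{r,q\}$; the substance is identical.
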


\begin{proof}
Let $r$ be the join of all the projections that are not in $\Fc$.
Then the displayed condition implies that $r$ is not in $\Fc$.
Moreover, if $q$ is any projection less than $r$ then
$r \vee q = r \not\in \Fc$ implies $q \not\in \Fc$. So a projection
is not in $\Fc$ if and only if it lies below $r$. Letting $p = 1 - r$,
we then have $q \in \Fc \Leftrightarrow pq \neq 0$.
\end{proof}

Since the converse of Proposition \ref{motive} is trivial ---
for any projection $p$ the set of projections $q$ such that
$pq \neq 0$ does have the stated property --- this result gives us a
(somewhat roundabout) characterization of the measurable subsets of $X$ up to
null sets. Namely, having a measurable subset is the same as having a family
$\Fc$ of nonzero projections with the property that $\bigvee p_\lambda
\in \Fc \Leftrightarrow$ some $p_\lambda \in \Fc$. (Such a family is
just a proper complete filter of the lattice of projections in
$L^\infty(X,\mu)$.) This makes the following definition plausible.

\begin{defi}\label{measrel}
Let $(X,\mu)$ be a finitely decomposable measure space.
A {\it measurable relation} on $X$ is a family $\Rc$ of ordered pairs
of nonzero projections in $L^\infty(X,\mu)$ such that
$$\left(\bigvee p_\lambda,\bigvee q_\kappa\right) \in\Rc
\quad\Leftrightarrow\quad
\hbox{some }(p_\lambda, q_\kappa) \in \Rc\eqno{(*)}$$
for any pair of families of nonzero projections $\{p_\lambda\}$ and
$\{q_\kappa\}$. Equivalently, we can impose the two conditions
$$p' \leq p,\quad q' \leq q,\quad (p',q') \in \Rc\qquad
\Rightarrow\qquad (p,q) \in \Rc\eqno{(*')}$$
and
$$\left(\bigvee p_\lambda, \bigvee q_\kappa\right) \in \Rc
\quad\Rightarrow\quad\hbox{some } (p_\lambda,q_\kappa) \in \Rc.\eqno{(*'')}$$
(It is easy to check that ($*'$) is equivalent to the reverse
implication in ($*$), using the fact that $p' \leq p$
$\Leftrightarrow$ $p' \vee p = p$.)
\end{defi}

The generalization to a measurable relation on a pair of finitely
decomposable measure spaces $(X,\mu)$ and $(Y,\nu)$ would be a
family of ordered pairs of nonzero projections $p \in L^\infty(X,\mu)$
and $q \in L^\infty(Y,\nu)$ satisfying condition ($*$) (or,
alternatively, conditions ($*'$) and ($*''$)) in
Definition \ref{measrel}. We need not develop this more general
notion separately since measurable relations on $X$ and $Y$ can be identified
with measurable relations on the disjoint union $X \coprod Y$ that do not
contain the pairs $(\chi_X, \chi_X)$, $(\chi_Y, \chi_X)$, or
$(\chi_Y, \chi_Y)$ (i.e., that live in the $X\times Y$ corner of
$(X \coprod Y)^2$).

The intuition behind Definition \ref{measrel} is that a pair of projections
$(\chi_S, \chi_T)$ belongs to $\Rc$ if and only if some point in $S$
is related to some point in $T$. This pointwise condition is not
meaningful in the general measurable case, but we will now show that
in the atomic case
it effectively reduces measurable relations on $X$ to subsets of $X^2$.
Let $e_x = \chi_{\{x\}}$ be the characteristic function of
singleton $x$.

\begin{prop}\label{atommeas}
Let $\mu$ be counting measure on a set $X$.
If $R$ is a relation on $X$ then
\begin{eqnarray*}
\Rc_R &=&\{(\chi_S, \chi_T): (x,y) \in R\hbox{ for some }x \in S, y \in T\}\cr
&=& \{(\chi_S,\chi_T): (S \times T) \cap R \neq \emptyset\}
\end{eqnarray*}
is a measurable relation on $X$; conversely, if $\Rc$ is a
measurable relation on $X$ then
$$R_\Rc = \{(x,y) \in X^2: (e_x, e_y) \in \Rc\}$$
is a relation on $X$. The two constructions are inverse to each other.
\end{prop}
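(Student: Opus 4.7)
The plan is to verify the two maps are well-defined and then check they compose to the identity in both directions; because $\mu$ is counting measure, every projection in $L^\infty(X,\mu)=l^\infty(X)$ has the form $\chi_S$ for a unique $S\subseteq X$, and joins of projections correspond to unions of subsets, so the whole argument reduces to set-theoretic bookkeeping combined with one application of axiom $(*)$.

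First I would check that $\Rc_R$ really is a measurable relation. Given families of nonzero projections $p_\lambda = \chi_{S_\lambda}$ and $q_\kappa = \chi_{T_\kappa}$, we have $\bigvee p_\lambda = \chi_{\bigcup S_\lambda}$ and $\bigvee q_\kappa = \chi_{\bigcup T_\kappa}$. Then $(\bigvee p_\lambda,\bigvee q_\kappa)\in\Rc_R$ means $((\bigcup S_\lambda)\times(\bigcup T_\kappa))\cap R\ne\emptyset$, which is equivalent to the existence of indices $\lambda,\kappa$ and a point $(x,y)\in R$ with $x\in S_\lambda$ and $y\in T_\kappa$, i.e., some $(p_\lambda,q_\kappa)\in\Rc_R$. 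This establishes $(*)$. That $R_\Rc$ is a relation on $X$ is immediate from the definition.

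Next I would verify $R_{\Rc_R}=R$: unwinding, $(x,y)\in R_{\Rc_R}$ iff $(e_x,e_y)\in\Rc_R$ iff $(\{x\}\times\{y\})\cap R\ne\emptyset$ iff $(x,y)\in R$. For the other composition $\Rc_{R_\Rc}=\Rc$, I would use the fact that for any nonempty $S\subseteq X$ one has $\chi_S=\bigvee_{x\in S}e_x$, a join of nonzero projections. Given a pair of nonzero projections $(\chi_S,\chi_T)$, applying axiom $(*)$ to $\Rc$ yields
\[
(\chi_S,\chi_T)\in\Rc\ \Longleftrightarrow\ (e_x,e_y)\in\Rc\hbox{ for some }x\in S,y\in T,
\]
and the right-hand side is exactly the statement that $(S\times T)\cap R_\Rc\ne\emptyset$, i.e., $(\chi_S,\chi_T)\in\Rc_{R_\Rc}$. (Zero projections need not be considered since by definition a measurable relation contains only pairs of nonzero projections, and $S=\emptyset$ forces $\chi_S=0$.)

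There is no real obstacle here; the only subtlety is that the equivalence $\Rc_{R_\Rc}=\Rc$ is precisely where axiom $(*)$ does its work, and one must notice that it is axiom $(*)$ rather than the weaker monotonicity condition $(*')$ that is needed to decompose an arbitrary pair $(\chi_S,\chi_T)$ into the collection of pairs $(e_x,e_y)$. Everything else is a direct translation between subsets of $X$ and projections in $l^\infty(X)$.
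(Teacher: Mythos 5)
Your proof is correct and follows essentially the same route as the paper: the forward direction reduces to the identity $(\bigcup S_\lambda)\times(\bigcup T_\kappa)=\bigcup_{\lambda,\kappa}(S_\lambda\times T_\kappa)$, and the composition $\Rc_{R_\Rc}=\Rc$ is obtained by decomposing $\chi_S=\bigvee_{x\in S}e_x$ and invoking axiom $(*)$, exactly as in the paper's argument.
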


\begin{proof}
$\Rc_R$ is a measurable relation because $(\bigcup S_\lambda) \times
(\bigcup T_\kappa) = \bigcup_{\lambda,\kappa} (S_\lambda \times T_\kappa)$
intersects $R \subseteq X^2$ if and only if
$S_\lambda \times T_\kappa$ intersects $R$ for some $\lambda$, $\kappa$.
The fact that $R_\Rc$ is a relation is trivial. Now let $R$ be a relation,
let $\Rc = \Rc_R$, and let $\tilde{R} = R_\Rc$. Then
$$(x,y) \in R\quad\Leftrightarrow\quad (e_x, e_y) \in \Rc
\quad\Leftrightarrow\quad (x,y) \in \tilde{R}$$
for all $x,y \in X$,
so $R = \tilde{R}$. Finally, let $\Rc$ be a measurable relation,
let $R = R_\Rc$, and let $\tilde{\Rc} = \Rc_R$. Since $\Rc$ is a measurable
relation and $\chi_S = \bigvee_{x \in S} e_x$, $\chi_T =
\bigvee_{y \in T} e_y$ we have
\begin{eqnarray*}
(\chi_S,\chi_T) \in \Rc&\Leftrightarrow&
(e_x, e_y) \in \Rc\hbox{ for some }x \in S,y \in T\cr
&\Leftrightarrow& (x,y) \in R\hbox{ for some }x \in S,y \in T\cr
&\Leftrightarrow& (\chi_S,\chi_T) \in \tilde{\Rc}
\end{eqnarray*}
for all nonzero projections $\chi_S$ and $\chi_T$.
So $\Rc = \tilde{\Rc}$.
\end{proof}

We also want to mention the following alternative characterization
of measurable relations. This will be used in Section \ref{apps}
to establish a connection with operator space reflexivity.

\begin{prop}\label{erd}
Let $(X,\mu)$ be a finitely decomposable measure space.
If $\Rc$ is a measurable relation on $X$ then the map
$$\phi_\Rc: q \mapsto 1 - \bigvee \{p: (p,q) \not\in \Rc\},$$
from the set of projections in $L^\infty(X,\mu)$ to itself,
takes $0$ to $0$ and preserves arbitrary joins. If $\phi$ is
a map from the set of projections in $L^\infty(X,\mu)$ to itself
that takes $0$ to $0$ and preserves arbitrary joins then
$$\Rc_\phi = \{(p,q): p\phi(q) \neq 0\}$$
is a measurable relation on $X$. The two constructions are inverse
to each other.
\end{prop}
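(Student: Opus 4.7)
The plan is to reduce the proposition to Proposition \ref{motive}. For each nonzero projection $q$, the family $\Fc_q = \{p \neq 0 : (p,q) \in \Rc\}$ satisfies the hypothesis of Proposition \ref{motive}, since applying condition ($*$) with the singleton family $\{q\}$ on the right yields $\bigvee p_\lambda \in \Fc_q$ iff some $p_\lambda \in \Fc_q$. Proposition \ref{motive} then produces a projection $p_q$ with $(p,q) \in \Rc \Leftrightarrow p p_q \neq 0$. The set $\{p : (p,q) \notin \Rc\}$ is therefore precisely the collection of projections orthogonal to $p_q$, which has join $1 - p_q$, and so $\phi_\Rc(q) = p_q$. (When $\Fc_q$ is empty, inspection shows $\phi_\Rc(q) = 0$, matching $p_q = 0$.) This yields the key characterization
$$(p, q) \in \Rc \quad\Leftrightarrow\quad p\phi_\Rc(q) \neq 0 \qquad \text{(for nonzero } p, q\text{).}$$

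From here the proof is short. The identity $\phi_\Rc(0) = 0$ is immediate, since no pair $(p, 0)$ lies in $\Rc$ and so the join in the defining formula is $1$. For join preservation, combining ($*$) with the characterization gives
$$p\phi_\Rc\bigl(\bigvee q_\kappa\bigr) \neq 0 \;\Leftrightarrow\; p \phi_\Rc(q_\kappa) \neq 0 \text{ for some }\kappa,$$
and the right side is in turn equivalent to $p\bigl(\bigvee_\kappa \phi_\Rc(q_\kappa)\bigr) \neq 0$, since a projection $p$ is orthogonal to $\bigvee r_\lambda$ iff it is orthogonal to each $r_\lambda$. Hence $\phi_\Rc(\bigvee q_\kappa)$ and $\bigvee \phi_\Rc(q_\kappa)$ have the same set of orthogonal projections and so coincide.

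For the reverse direction, to verify ($*$) for $\Rc_\phi$ I would use join preservation of $\phi$ to rewrite $(\bigvee p_\lambda)\phi(\bigvee q_\kappa)$ as $(\bigvee p_\lambda)\bigl(\bigvee_\kappa \phi(q_\kappa)\bigr)$, and then apply the same orthogonality argument in each variable to conclude that this product is nonzero iff some $p_\lambda \phi(q_\kappa) \neq 0$, i.e., some $(p_\lambda, q_\kappa) \in \Rc_\phi$. The inverse identities are then immediate: $\{p : (p,q) \notin \Rc_\phi\} = \{p : p \leq 1 - \phi(q)\}$ has join $1 - \phi(q)$, giving $\phi_{\Rc_\phi} = \phi$, and the characterization above shows $\Rc_{\phi_\Rc} = \Rc$ by definition of $\Rc_{\phi_\Rc}$.

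The main subtlety throughout is avoiding any appeal to infinite distributivity in the projection lattice, which need not hold in a general complete Boolean algebra. My plan routes everything through the elementary observation that a projection is orthogonal to a join iff it is orthogonal to each summand, which holds in any complete orthomodular lattice, together with the fact that two projections with the same collection of orthogonal projections must be equal.
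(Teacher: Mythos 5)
Your proof is correct and follows essentially the same route as the paper: both hinge on first establishing the characterization $(p,q)\in\Rc \Leftrightarrow p\phi_\Rc(q)\neq 0$ and then deriving join preservation, the axioms for $\Rc_\phi$, and the two inverse identities from it, using the fact that a projection annihilates a join iff it annihilates each term. The only cosmetic difference is that you obtain the key characterization by applying Proposition \ref{motive} to each slice $\Fc_q$, whereas the paper reruns that same two-line argument inline.
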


\begin{proof}
We start with a simple observation.
Let $\Rc$ be a measurable relation and let $p$ and $q$ be projections
in $L^\infty(X,\mu)$. It is immediate from the definition of measurable
relations that $(1 - \phi_\Rc(q),q) \not\in \Rc$, so if
$p\phi_\Rc(q) = 0$ then $p \leq 1 - \phi_\Rc(q)$ and hence
$(p,q) \not\in \Rc$. Conversely, if $p\phi_\Rc(q) \neq 0$ then
$(p,q) \in \Rc$ by the definition of $\phi_\Rc$.
So $(p,q) \in \Rc$ if and only if $p\phi_\Rc(q) \neq 0$.

It is clear that $\phi_\Rc(0) = 0$.
Let $\{q_\kappa\}$ be any family of projections in $L^\infty(X,\mu)$.
If $p\cdot \bigvee \phi_\Rc(q_\kappa) = 0$ then $p\phi_\Rc(q_\kappa) = 0$
for all $\kappa$, so that $(p,q_\kappa) \not\in \Rc$ for all $\kappa$,
which implies that $(p,\bigvee q_\kappa) \not\in \Rc$. Conversely, if
$p\cdot \bigvee \phi_\Rc(q_\kappa) \neq 0$ then $p\phi_\Rc(q_\kappa)
\neq 0$ for some $\kappa$, so that $(p,q_\kappa) \in \Rc$ for that
$\kappa$, which implies that $(p,\bigvee q_\kappa) \in \Rc$. So we
have shown that $(p,\bigvee q_\kappa) \in \Rc$ if and only if
$p\cdot \bigvee \phi_\Rc(q_\kappa) \neq 0$. By the preceding paragraph
we also have that $(p,\bigvee q_\kappa) \in \Rc$ if and only if
$p\phi_\Rc(\bigvee q_\kappa) \neq 0$, and this
implies that $\phi_\Rc(\bigvee q_\kappa) = \bigvee \phi_\Rc(q_\kappa)$.
Thus $\phi_\Rc$ takes $0$ to $0$ and preserves arbitrary joins.

Now let $\phi$ be any map which takes $0$ to $0$ and preserves arbitrary
joins and let $\{p_\lambda\}$ and $\{q_\kappa\}$ be families of nonzero
projections. Then 
\begin{eqnarray*}
(\bigvee p_\lambda, \bigvee q_\kappa) \in \Rc_\phi
&\Leftrightarrow&
\left(\bigvee p_\lambda\right) \phi\left(\bigvee q_\kappa\right) \neq 0\cr
&\Leftrightarrow&
\left(\bigvee p_\lambda\right) \left(\bigvee \phi(q_\kappa)\right) \neq 0\cr
&\Leftrightarrow&
p_\lambda\phi(q_\kappa) \neq 0\hbox{ for some }\lambda,\kappa\cr
&\Leftrightarrow&
(p_\lambda, q_\kappa) \in \Rc_\phi\hbox{ for some }\lambda,\kappa.
\end{eqnarray*}
Also, since $\phi(0) = 0$ it is clear that $(p,q) \not\in \Rc_\phi$ if
either $p$ or $q$ is $0$. So $\Rc_\phi$ is a measurable relation.

The fact that $\Rc = \Rc_{\phi_\Rc}$ follows immediately from the observation
made in the first paragraph of the proof. The identity $\phi =
\phi_{\Rc_\phi}$ follows from the fact that
$\bigvee\{p: p\phi(q) = 0\} = 1 - \phi(q)$.
\end{proof}

\subsection{Constructions with measurable relations}
The following are basic constructions with measurable relations.

\begin{prop}\label{measverify}
Let $(X,\mu)$ be a finitely decomposable measure space.

\noindent (a) The set of pairs of projections $p$ and $q$ in
$L^\infty(X,\mu)$ such that $pq \neq 0$ is a measurable relation on $X$.

\noindent (b) If $\Rc$ is a measurable relation on $X$ then so is
$\{(q,p): (p,q) \in \Rc\}$.

\noindent (c) If $\Rc$ and $\Rc'$ are measurable relations on $X$ then
a pair of nonzero projections $(p,r)$ satisfies
\begin{quote}
for every projection $q$, either $(p,q) \in \Rc$
or $(1-q,r) \in \Rc'$
\end{quote}
if and only if it satisfies
\begin{quote}
there exists a nonzero projection $q$ such that
$(p,q') \in \Rc$ and $(q',r) \in \Rc'$ for every nonzero $q' \leq q$
\end{quote}
and the set of all pairs satisfying these conditions constitutes a
measurable relation.

\noindent (d) Any union of measurable relations on $X$ is a measurable
relation on $X$.

\noindent (e) If $\Rc$ is a measurable relation on a finitely decomposable
measure space $(Y,\nu)$ and $\phi: L^\infty(X,\mu) \to L^\infty(Y,\nu)$ is
a unital weak* continuous $*$-homomorphism then
$$\phi^*(\Rc) = \{(p,q): (\phi(p), \phi(q)) \in \Rc\}$$
is a measurable relation on $X$.
\end{prop}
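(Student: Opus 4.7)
The plan is to dispatch (a), (b), (d), (e) by direct verification of axiom $(*)$ in Definition \ref{measrel}, and to handle the nontrivial part (c) by translating through the join-preserving maps produced in Proposition \ref{erd}. For (a), in the abelian von Neumann algebra $L^\infty(X,\mu)$ the product of projections is their meet, and meets distribute over arbitrary joins, so $(\bigvee p_\lambda)(\bigvee q_\kappa) = \bigvee_{\lambda,\kappa} p_\lambda q_\kappa$ is nonzero iff some $p_\lambda q_\kappa$ is. Part (b) is immediate from the symmetry of $(*)$ in its two arguments. Part (d) follows because the existential quantifier over the indexing set of the family $\{\Rc_\alpha\}$ commutes with the one over $\lambda,\kappa$ in $(*)$. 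For (e), the key point is that a unital weak* continuous $*$-homomorphism between abelian von Neumann algebras is normal, hence preserves arbitrary joins of projections; condition $(*)$ for $\phi^*(\Rc)$ then reduces to condition $(*)$ for $\Rc$ applied to the image families (with zero members discarded, which does not affect the joins).

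The main content is (c). I would first prove the equivalence of the two displayed conditions by passing to the join-preserving maps $\phi := \phi_\Rc$ and $\psi := \phi_{\Rc'}$ of Proposition \ref{erd}, so that $(p, q) \in \Rc \Leftrightarrow p\phi(q) \neq 0$ and $(q, r) \in \Rc' \Leftrightarrow q\psi(r) \neq 0$. The first condition rewrites as: for every projection $q$, either $p\phi(q) \neq 0$ or $\psi(r) \leq q$. Equivalently, every $q \geq \psi(r)$ satisfies $p\phi(q) \neq 0$; since $\phi$ is monotone and $\psi(r)$ is the minimum such $q$, this is equivalent to $p\phi(\psi(r)) \neq 0$.

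For the second condition, let $q_{\max}$ denote the largest projection with $p\phi(q_{\max}) = 0$; this is well defined because $\phi$ preserves joins, so $\{q : p\phi(q) = 0\}$ is closed under joins. A short argument then shows that $(p, q') \in \Rc$ for every nonzero $q' \leq q$ iff $q \cdot q_{\max} = 0$, and $(q', r) \in \Rc'$ for every nonzero $q' \leq q$ iff $q \leq \psi(r)$ (in each case, a witness to failure is $q \cdot q_{\max}$ or $q(1-\psi(r))$, respectively). Hence the second condition requires a nonzero $q \leq \psi(r)(1 - q_{\max})$, which amounts to $\psi(r) \not\leq q_{\max}$, i.e., $p\phi(\psi(r)) \neq 0$. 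The set of pairs satisfying the (equivalent) conditions is therefore $\Rc_{\phi \circ \psi}$ in the notation of Proposition \ref{erd}; since $\phi \circ \psi$ takes $0$ to $0$ and preserves arbitrary joins, that proposition immediately yields that this set is a measurable relation on $X$. The only genuine obstacle is this translation in (c); the remaining parts are essentially formal once one has distributivity of meet over join in $L^\infty(X,\mu)$ and normality of weak* continuous $*$-homomorphisms.
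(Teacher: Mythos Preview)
Your proof is correct. For parts (a), (b), (d), (e) you do essentially what the paper does (it simply declares them ``easy''), but for part (c) you take a genuinely different route.

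The paper proves the equivalence in (c) by a direct hands-on argument: given the first condition, it builds the witness $q$ for the second condition as $1 - (p' \vee r')$, where $r' = \bigvee\{r'': (p,r'') \notin \Rc\}$ and $p' = \bigvee\{p'': (p'',r) \notin \Rc'\}$; for the converse it argues that any nonzero $q'$ meets either $q$ or $1-q$ and thereby fails one of the two sub-conditions. The paper then verifies $(*'')$ directly by exhibiting the interpolating projection $q = \bigwedge_\lambda \bigvee_\kappa q_{\lambda,\kappa}$.

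Your approach instead channels everything through Proposition~\ref{erd}: you show that both displayed conditions are equivalent to the single inequality $p\,\phi_\Rc(\phi_{\Rc'}(r)) \neq 0$, and then observe that this identifies the product as $\Rc_{\phi_\Rc \circ \phi_{\Rc'}}$, which is automatically a measurable relation because the composite of two join-preserving, $0$-preserving maps is again join-preserving and $0$-preserving. This is cleaner and more conceptual --- it exhibits the product of measurable relations as corresponding to composition of their associated maps, a structural fact the paper does not make explicit. The cost is a dependency on Proposition~\ref{erd}; the paper's argument is self-contained from the axioms. Both are valid.
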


\begin{proof}
Parts (a), (b), (d), and (e) are easy. For part (c), fix measurable relations
$\Rc$ and $\Rc'$ and nonzero projections $p$ and $r$. Suppose first that
every projection $q$ satisfies either $(p,q) \in \Rc$ or
$(1-q,r) \in \Rc'$. Let $r'$ be the join of all the projections $r''$
such that $(p,r'') \not\in \Rc$ and let $p'$ be the join of all the
projections $p''$ such that $(p'', r) \not\in \Rc'$. If $p' \vee r' = 1$
then $q = r'$ falsifies our assumption on the pair $(p,r)$. Therefore
$q = 1 - p'\vee r'$ is a nonzero projection such that $(p,q') \in \Rc$
and $(q', r) \in \Rc'$ for every nonzero $q' \leq q$.

Next, suppose
there exists a projection $q$ satisfying $(p,q) \not\in \Rc$
and $(1-q,r) \not\in \Rc'$. Then every nonzero projection $q'$ either
satisfies $q'q \neq 0$, in which case $q'$ lies over a nonzero projection
$q'q$ with $(p,q'q) \not\in \Rc$, or else $q'(1-q) \neq 0$, in which case
$q'$ lies over a nonzero projection $q'(1-q)$ with $(q'(1-q),r) \not\in \Rc'$.
This completes the proof that the two conditions are equivalent.

For the second assertion in part (c), we verify conditions ($*'$) and ($*''$)
in Definition \ref{measrel}. The first condition is trivial. For the
second, suppose that for every $\lambda$ and $\kappa$ there
exists a projection $q_{\lambda,\kappa}$ such that
$(p_\lambda, q_{\lambda,\kappa}) \not\in \Rc$ and $(1-q_{\lambda,\kappa},
r_\kappa) \not\in \Rc'$. The projection
$$q = \bigwedge_\lambda \bigvee_\kappa q_{\lambda,\kappa}$$
then satisfies $(\bigvee p_\lambda,q) \not\in \Rc$ and
$(1 - q,\bigvee r_\kappa) \not\in \Rc'$ (because $1 - q =
\bigvee_\lambda \bigwedge_\kappa (1 - q_{\lambda,\kappa})$).
This shows that condition ($*''$) from Definition
\ref{measrel} holds. This completes the proof.
\end{proof}

This justifies the following definition.

\begin{defi}\label{meastypes}
Let $(X,\mu)$ be a finitely decomposable measure space.

\noindent (a) The {\it diagonal measurable relation $\Delta$} on $X$ is
defined by setting $(p,q) \in \Delta$ if $pq \neq 0$.

\noindent (b) The {\it transpose} of a measurable relation $\Rc$ is the
measurable relation $\Rc^T = \{(q,p): (p,q) \in \Rc\}$.

\noindent (c) The {\it product} of two measurable relations
$\Rc$ and $\Rc'$ is the measurable relation
$$\Rc\cdot\Rc' = \{(p,r):\hbox{ either condition in Proposition
\ref{measverify} (c) holds}\}.$$

\noindent (d) A measurable relation $\Rc$ on $X$ is
\begin{quote}
(i) {\it reflexive} if $\Delta \subseteq \Rc$

\noindent (ii) {\it symmetric} if $\Rc^T = \Rc$

\noindent (iii) {\it antisymmetric} if $\Rc \wedge \Rc^T \subseteq \Delta$

\noindent (iv) {\it transitive} if $\Rc^2 \subseteq \Rc$.
\end{quote}
\end{defi}

In (d) (iii), $\Rc \wedge \Rc^T$ denotes the greatest lower bound
of $\Rc$ and $\Rc^T$ under the partial ordering $\subseteq$. The set
of measurable relations is a complete lattice by Proposition
\ref{measverify} (d).

The diagonal relation intuitively consists of those pairs $(\chi_S,\chi_T)$
such that $S \times T$ intersects the diagonal in a set of positive measure.
Of course, this intuition is not accurate since the diagonal could
have measure zero. Notice that under our definition the diagonal relation
on any nonzero measure space $X$ is different from the zero relation, even
if the set-theoretic diagonal of $X^2$ has measure zero.

We note that pullbacks (Proposition \ref{measverify} (e)) are not compatible
with products; indeed, the atomic version of this statement (with
reversed arrows) already fails.
For example, let $X = \{x,y,z\}$ and $Y = \{x,y_1,y_2,z\}$ and define
$f: Y \to X$ by $f(x) = x$, $f(y_i) = y$ ($i = 1,2$), and $f(z) = z$.
Then the relations $R = \{(x,y_1)\}$ and $R' = \{(y_2,z)\}$ on $Y$ satisfy
$R\cdot R' = \emptyset$, so that $f_*(R\cdot R') = \emptyset$, but the
product $f_*(R)\cdot f_*(R')$ of their pushforwards is $\{(x,z)\}$.

We also note that the above definitions generalize the
corresponding classical constructions with relations on sets.

\begin{prop}\label{measgen}
Let $\mu$ be counting measure on a set $X$, let $R_1$, $R_2$, and $R_3$
be relations on $X$, and let $\Rc_i = \Rc_{R_i}$ ($i = 1,2,3$) be the
corresponding measurable relations as in Proposition \ref{atommeas}. Then

\noindent (a) $R_1 \subseteq R_2$ $\Leftrightarrow$ $\Rc_1 \subseteq \Rc_2$;

\noindent (b) $R_1$ is the diagonal relation $\Leftrightarrow$ $\Rc_1$ is the
diagonal measurable relation;

\noindent (c) $R_1$ is the transpose of $R_2$ $\Leftrightarrow$ $\Rc_1$ is the
transpose of $\Rc_2$; and

\noindent (d) $R_3$ is the product of $R_1$ and $R_2$ $\Leftrightarrow$
$\Rc_3$ is the product of $\Rc_1$ and $\Rc_2$.
\end{prop}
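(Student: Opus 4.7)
The plan is to verify each of the four assertions directly from the bijection $R \leftrightarrow \Rc_R$ set up in Proposition \ref{atommeas}. The general strategy is to test each property on pairs of singleton characteristics $(e_x, e_y)$ — where membership in $\Rc_R$ reduces to membership of $(x,y)$ in $R$ — and then to lift to arbitrary pairs $(\chi_S, \chi_T)$ using the coherence axiom $(*)$ together with the decompositions $\chi_S = \bigvee_{x \in S} e_x$ and $\chi_T = \bigvee_{y \in T} e_y$.

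First I would dispatch (a): the forward direction is clear from the formula $\Rc_R = \{(\chi_S,\chi_T) : (S\times T)\cap R \neq \emptyset\}$, and the reverse follows from $(x,y) \in R \Leftrightarrow (e_x,e_y) \in \Rc_R$. For (b), a direct computation shows that the diagonal measurable relation, defined by $pq \neq 0$, satisfies $(\chi_S,\chi_T) \in \Delta \Leftrightarrow S\cap T \neq \emptyset \Leftrightarrow (S\times T)\cap \{(x,x) : x \in X\} \neq \emptyset$, which identifies $\Delta$ with $\Rc_{R_{\mathrm{diag}}}$; combined with (a) and the bijection of Proposition \ref{atommeas}, this gives the claimed equivalence. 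Part (c) is a nearly identical computation using $(S\times T) \cap R^T \neq \emptyset \Leftrightarrow (T\times S)\cap R \neq \emptyset$.

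The only non-routine piece is (d), which requires unpacking the product of measurable relations via the equivalent formulation in Proposition \ref{measverify}(c). I would prove $(\chi_S,\chi_U) \in \Rc_1 \cdot \Rc_2 \Leftrightarrow (S\times U) \cap (R_1 \cdot R_2) \neq \emptyset$ in the atomic case as follows. Given a witness $q$ for the product condition, pick any $y$ in the support of $q$ and apply the condition to $q' = e_y$; this produces $x \in S$ with $(x,y) \in R_1$ and $z \in U$ with $(y,z) \in R_2$, so $(x,z) \in (R_1\cdot R_2) \cap (S\times U)$. Conversely, given $x \in S$, $z \in U$, and $y$ with $(x,y) \in R_1$ and $(y,z) \in R_2$, take $q = e_y$: since $e_y$ is an atom, the only nonzero $q' \leq q$ is $e_y$ itself, and the pair of conditions $(\chi_S,e_y) \in \Rc_1$ and $(e_y,\chi_U) \in \Rc_2$ is witnessed by $x$ and $z$.

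The main (very minor) obstacle is keeping straight the measure-theoretic product condition in the atomic setting; the atomicity hypothesis is essential in (d) because it allows one to take $q' = e_y$ as an indivisible witness, exactly mirroring the classical existence of an intermediate element linking $x$ to $z$. Once this observation is made, everything else is a mechanical matter of applying Proposition \ref{atommeas}.
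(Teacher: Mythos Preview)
Your proof is correct and is precisely the straightforward verification the paper has in mind; the paper does not spell out a proof beyond declaring it routine, and your argument---reducing everything to singletons $(e_x,e_y)$ via the bijection of Proposition~\ref{atommeas} and, for (d), exploiting atomicity to take $q = e_y$ as the witnessing projection in the second condition of Proposition~\ref{measverify}(c)---is the natural way to carry it out.
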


The proof of this proposition is straightforward.

Using Definition \ref{meastypes} we can define measurable versions
of equivalence relations, preorders, partial orders, and graphs:

\begin{defi}\label{measthings}
Let $(X,\mu)$ be a finitely decomposable measure space.

\noindent (a) A {\it measurable equivalence relation} on $X$ is
a reflexive, symmetric, transitive measurable relation on $X$.

\noindent (b) A {\it measurable preorder} on $X$ is a reflexive,
transitive measurable relation on $X$.

\noindent (c) A {\it measurable partial order} on $X$ is a
reflexive, antisymmetric, transitive measurable relation on $X$.

\noindent (d) A {\it measurable graph} on $X$ is a reflexive,
symmetric measurable relation on $X$.
\end{defi}

It follows from Proposition \ref{measgen} that the
preceding definitions generalize the corresponding classical
definitions.

Part (d) requires some explanation.
A (simple) graph is usually defined to be a vertex set $V$ together
with a family of 2-element subsets of $V$. But the same information
determines and is determined by a reflexive, symmetric relation
on $V$ and so a graph may equivalently be defined as a set
equipped with such a relation. This justifies our definition of a
measurable graph.

The expected definition of a measurable equivalence relation is
probably a von Neumann subalgebra of $L^\infty(X,\mu)$, or equivalently
(\cite{Tak}, Theorem II.4.24) a complete Boolean subalgebra of the Boolean
algebra of projections in $L^\infty(X,\mu)$. We will now show that this is
equivalent to our definition, and more generally, that measurable preorders
correspond to complete 0,1-sublattices of the lattice of projections
in $L^\infty(X,\mu)$.

Given a measurable relation $\Rc$ on $X$, say that $S \subseteq X$ is
a {\it lower set} if $(1 - \chi_S, \chi_S) \not\in \Rc$. In the atomic
case this means that all points below any point in $S$ belong to $S$.

\begin{theo}\label{versuslattice}
Let $(X,\mu)$ be a finitely decomposable measure space. If $\Rc$
is a measurable preorder on $X$ then
\begin{eqnarray*}
\Lc_\Rc &=& \{p \in L^\infty(X,\mu): p\hbox{ is a projection and }
(1-p,p) \not\in \Rc\}\cr
&=& \{\chi_S: S \subseteq X\hbox{ is a lower set}\}
\end{eqnarray*}
is a complete 0,1-sublattice of the lattice of projections
in $L^\infty(X,\mu)$. If $\Lc$ is a complete 0,1-sublattice of the
lattice of projections in $L^\infty(X,\mu)$ then
$$\Rc_\Lc = \{(p,q): pq' \neq 0\hbox{ for all }q' \in \Lc
\hbox{ with }q' \geq q\}$$
is a measurable preorder on $X$. The two constructions
are inverse to each other. This correspondence between measurable
preorders and complete 0,1-sublattices restricts to a correspondence
between measurable equivalence relations and complete Boolean
subalgebras.
\end{theo}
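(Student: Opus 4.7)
My plan is to verify four claims in turn: $\Lc_\Rc$ is a complete $0,1$-sublattice, $\Rc_\Lc$ is a measurable preorder, the two constructions invert each other, and the symmetric case corresponds to the complement-closed case. The main obstacle will be the identity $\Rc_{\Lc_\Rc} = \Rc$, and within it the step that $\phi_\Rc$ (from Proposition \ref{erd}) actually takes values in $\Lc_\Rc$; this is the only place transitivity gets used.

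For the first claim, the two descriptions of $\Lc_\Rc$ coincide by the definition of ``lower set'', while $0, 1 \in \Lc_\Rc$ hold vacuously because pairs in a measurable relation have nonzero components. If $\{p_\lambda\} \subseteq \Lc_\Rc$ and $(1 - \bigvee p_\lambda, \bigvee p_\lambda) \in \Rc$, then ($*''$) yields some $(1 - \bigvee p_\lambda, p_\lambda) \in \Rc$, and since $1 - \bigvee p_\lambda \leq 1 - p_\lambda$, ($*'$) promotes this to $(1 - p_\lambda, p_\lambda) \in \Rc$, a contradiction; the same argument after rewriting $1 - \bigwedge p_\lambda = \bigvee (1 - p_\lambda)$ handles meets. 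For the second claim, put $\hat{q} := \bigwedge\{q' \in \Lc : q' \geq q\}$, so $\hat{q} \in \Lc$ by completeness and $\Rc_\Lc = \{(p,q) : p\hat{q} \neq 0\}$; join-closure of $\Lc$ gives $\widehat{\bigvee q_\kappa} = \bigvee \widehat{q_\kappa}$, which combined with distributivity of multiplication over joins in $L^\infty(X,\mu)$ yields ($*$). Reflexivity follows from $q \leq \hat{q}$. For transitivity, given nonzero $(p, r)$ with $p\hat{r} = 0$, set $q := \hat{r}$; then $\hat{q} = q$, so $(p, q) \notin \Rc_\Lc$ and $(1 - q, r) \notin \Rc_\Lc$, and the first form of Proposition \ref{measverify}(c) places $(p, r)$ outside $\Rc_\Lc \cdot \Rc_\Lc$.

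The identity $\Lc_{\Rc_\Lc} = \Lc$ is immediate: $p \in \Lc_{\Rc_\Lc} \Leftrightarrow (1 - p)\hat{p} = 0 \Leftrightarrow \hat{p} = p \Leftrightarrow p \in \Lc$. For the reverse identity $\Rc_{\Lc_\Rc} = \Rc$, my strategy is to show $\phi_\Rc(q) = \hat{q}^{\Lc_\Rc}$, the smallest $\Lc_\Rc$-element above $q$; Proposition \ref{erd}'s equivalence $(p, q) \in \Rc \Leftrightarrow p\phi_\Rc(q) \neq 0$ then closes the loop. The inequality $\phi_\Rc(q) \geq q$ uses reflexivity: each $p''$ in the defining join satisfies $p''q = 0$ because $\Delta \subseteq \Rc$ makes $(p'', q) \notin \Rc$ incompatible with $p''q \neq 0$. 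Minimality is a one-line ($*'$) argument: for $s \in \Lc_\Rc$ with $s \geq q$, the facts $(1-s, s) \notin \Rc$ and $q \leq s$ force $(1-s, q) \notin \Rc$, placing $1 - s$ in the defining join of $1 - \phi_\Rc(q)$. The genuinely hard step is showing $\phi_\Rc(q) \in \Lc_\Rc$. Writing $r := 1 - \phi_\Rc(q) = \bigvee\{p'' : (p'', q) \notin \Rc\}$, I suppose for contradiction that $(r, 1-r) \in \Rc$. For any projection $q''$, if $(1-q'', q) \notin \Rc$ then $1-q'' \leq r$, so $q'' \geq 1-r$ and ($*'$) applied to $(r, 1-r) \in \Rc$ yields $(r, q'') \in \Rc$; the first form of Proposition \ref{measverify}(c) therefore places $(r, q)$ in $\Rc^2$, hence in $\Rc$ by transitivity. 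But then ($*''$) applied to the defining join of $r$ produces some $p''$ with both $(p'', q) \in \Rc$ and $(p'', q) \notin \Rc$, the desired contradiction.

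Finally, for the Boolean refinement: if $\Rc$ is symmetric then $p \in \Lc_\Rc$ gives $(p, 1-p) \notin \Rc$ by symmetry, so $1 - p \in \Lc_\Rc$; hence $\Lc_\Rc$ is complement-closed. Conversely, if $\Lc$ is complement-closed then $1 - \hat{q} \in \Lc$, so $p\hat{q} = 0 \Leftrightarrow \hat{p} \leq 1 - \hat{q} \Leftrightarrow \hat{p}\hat{q} = 0$, which is manifestly symmetric in $p$ and $q$; this makes $\Rc_\Lc$ symmetric, completing the refined correspondence.
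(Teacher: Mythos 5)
Your proof is correct and follows essentially the same route as the paper: the lattice closure via $(*')$ and $(*'')$, the description of $\Rc_\Lc$ through the smallest lattice element above $q$, and the key inverse identity via transitivity applied to the join $r=\bigvee\{p'':(p'',q)\not\in\Rc\}$ (your $1-\phi_\Rc(q)$ is exactly the paper's $q'$). Your only cosmetic variations — packaging that join as $\phi_\Rc$ from Proposition \ref{erd} and using the universal rather than the existential form of the product in the contradiction step — do not change the substance of the argument.
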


\begin{proof}
Let $\Rc$ be a measurable preorder on $X$. It is clear that $0$ and $1$
belong to $\Lc_\Rc$, and if $\Rc$ is symmetric then $\Lc_\Rc$ is
closed under complementation. Now let $\{p_\lambda\}$ be any family of
projections in $\Lc_\Rc$ and let $p = \bigvee p_\lambda$. Then
$(1-p_\lambda, p_\lambda) \not\in \Rc$ for every $\lambda$, hence
$(1-p, p_\lambda) \not\in \Rc$ for every $\lambda$ (since
$1 - p \leq 1 - p_\lambda$), hence $(1-p, p) \not\in \Rc$ (since
$p = \bigvee p_\lambda$). This shows that $p \in \Lc_\Rc$. Letting
$q = \bigwedge p_\lambda$, we also have that $(1-p_\lambda, p_\lambda)
\not\in \Rc$ for every $\lambda$ implies $(1 - p_\lambda,q) \not\in \Rc$
for every $\lambda$. Since $1 - q = \bigvee (1 - p_\lambda)$, it follows
that $(1-q, q) \not\in \Rc$, i.e., $q \in \Lc_\Rc$.
So $\Lc_\Rc$ is a complete 0,1-sublattice of the
lattice of projections in $L^\infty(X,\mu)$, and it is a Boolean algebra
if $\Rc$ is a measurable equivalence relation.

Next let $\Lc$ be any complete 0,1-sublattice of the lattice of projections.
We first check that $\Rc_\Lc$ satisfies the pair of conditions stated in
Definition \ref{measrel}. Condition ($*'$) is trivial. For
condition ($*''$), let $\{p_\lambda\}$ and $\{q_\kappa\}$
be any families of nonzero projections and suppose $(p_\lambda,
q_\kappa) \not\in \Rc_\Lc$ for all $\lambda$ and $\kappa$. There must
exist $q_{\lambda,\kappa} \in \Lc$ with
$q_\kappa \leq q_{\lambda,\kappa}$
and $p_\lambda q_{\lambda,\kappa} = 0$. Then
$$q' = \bigvee_\kappa\bigwedge_\lambda q_{\lambda,\kappa} \in \Lc$$
satisfies $\bigvee q_\kappa \leq q'$ and $(\bigvee p_\lambda)q'= 0$,
which shows that $(\bigvee p_\lambda, \bigvee q_\kappa)
\not\in \Rc_\Lc$. This verifies condition ($*''$),
so $\Rc_\Lc$ is a measurable relation.

Reflexivity of $\Rc_\Lc$ is easy, as is symmetry if $\Lc$ is Boolean.
To verify transitivity, let $p$ and $r$ be nonzero projections such
that for every projection $q$, either $(p,q) \in \Rc_\Lc$ or
$(1-q,r) \in \Rc_\Lc$; we must show that $(p,r) \in \Rc_\Lc$. Let $r'$
be a projection in $\Lc$ such that $r \leq r'$. Then $(1-r', r) \not\in
\Rc_\Lc$ directly from the definition of $\Rc_\Lc$ since $r'$ is a projection
in $\Lc$ that contains $r$ and is disjoint from $1-r'$. So the condition
we assumed on $p$ and $r$ yields $(p,r') \in \Rc_\Lc$. From this it follows
that $pr' \neq 0$ (since $r'$ is a projection in $\Lc$ with $r' \leq r'$),
and we conclude that $(p,r) \in \Rc_\Lc$, as desired. Thus, we have shown
that $\Rc_\Lc$ is a measurable preorder, and that it is a measurable
equivalence relation if $\Lc$ is Boolean.

Now let $\Rc$ be a measurable preorder, let $\Lc = \Lc_\Rc$, and let
$\tilde{\Rc} = \Rc_\Lc$. If
$(p,q) \in \Rc$ then for any $q' \in \Lc$ with $q \leq q'$
we must have $(p,q') \in \Rc$, which by the definition
of $\Lc$ implies $pq' \neq 0$ (since $(1-q', q') \not\in \Rc$).
Thus $(p,q) \in \Rc$ implies
$(p,q) \in \tilde{\Rc}$. For the reverse inclusion, suppose
$(p,q) \not\in \Rc$, let $p'$ be the join of all the projections
$r$ such that $(r,q) \not\in \Rc$, and let $q' = 1-p'$; we claim
that $q \leq q'$ and $q' \in \Lc$. Since $p \leq p'$ and hence
$pq' = 0$, this will verify that $(p,q) \not\in \tilde{\Rc}$.
First, if $(r,q) \not\in \Rc$ then $rq = 0$ by reflexivity
of $\Rc$, and this shows that $p'q = 0$, i.e., $q \leq q'$. To see that
$q' \in \Lc$ we must show that $(p',q') \not\in \Rc$. Let $r'$
be the join of all the projections $r$ such that $(p',r) \not\in \Rc$.
We must have $r' \leq q'$ by reflexivity. But if $r' \neq q'$ then
every nonzero projection $r \leq q' - r'$ satisfies $(p',r) \in \Rc$
(since $rr' = 0$) and $(r,q) \in \Rc$ (since $r\leq q' = 1-p'$); by
transitivity, this implies that $(p',q) \in \Rc$, a contradiction.
Therefore $r' = q'$ and hence $(p',q') \not\in \Rc$. This completes
the proof of the reverse inclusion.

Finally, let $\Lc$ be a complete 0,1-lattice of projections,
let $\Rc = \Rc_\Lc$, and let $\tilde{\Lc} = \Lc_\Rc$.
Then it is trivial that a projection $p$ belongs to $\Lc$
if and only if there exists $p' \in \Lc$ such that $p \leq p'$
and $(1-p)p' = 0$. Thus $p \in \tilde{\Lc}$ $\Leftrightarrow$
$(1-p,p) \not\in \Rc$ $\Leftrightarrow$ $p \in \Lc$. So
$\Lc = \tilde{\Lc}$.
\end{proof}

\subsection{Conversion to classical relations}\label{reduct}
In the general measurable setting the correspondence identified in
Proposition \ref{atommeas} remains partially valid: if $R$ is a measurable
subset of $X^2$ then
$$\Rc = \{(\chi_S,\chi_T): (S \times T) \cap R\hbox{ is nonnull}\}$$
is a measurable relation on $X$. However, this is of limited
value because, first, there typically exist measurable relations that do not
derive from subsets of $X^2$ in the above manner; the diagonal
measurable relation (Definition \ref{meastypes} (a)) on any nonatomic
measure space is an example of this phenomenon. Second, distinct subsets
of $X^2$ (that is, distinct even modulo null sets)
can give rise to the same measurable relation. For example,
there exists a measurable subset $R$ of $[0,1]^2$, of measure
strictly less than 1, that has positive measure intersection with every
positive measure subset of the form $S \times T$. The measurable relation
derived from such a set in the manner suggested above is the same as the
measurable relation derived from the full relation $[0,1]^2$. Such a set
$R$ can be constructed as the set of pairs $(x,y) \in [0,1]^2$ such that
$x - y$ belongs to a dense open subset $U$ of $[-1,1]$ with measure
strictly less than 2. For if $S$ and $T$ are any positive
measure subsets of $[0,1]$, let $s$ be a Lebesgue point of $S$ and
$t$ a Lebesgue point of $T$, and find $\epsilon > 0$ such that
$$\mu((s - \epsilon , s + \epsilon) \cap S),
\mu((t - \epsilon , t + \epsilon) \cap T) > \frac{3\epsilon}{2}.$$
Then for all $a \in [-\epsilon,\epsilon]$ we have
$$\mu((S + t + a) \cap I) > \frac{\epsilon}{2}\qquad{\rm and}\qquad
\mu((T + s)\cap I) > \frac{3\epsilon}{2}$$
where $I$ is the interval $(s + t -\epsilon, s + t + \epsilon)$ of
length $2\epsilon$. It follows that
$\mu((S + t + a) \cap (T + s)) > 0$ for all $a \in [-\epsilon,\epsilon]$.
Since $U$ is dense in $[-1, 1]$, find $a \in [-\epsilon,\epsilon]$ such that
$s - t - a \in U$, and let $u$ be a Lebesgue point of $(S + t + a)\cap
(T + s)$; then $s' = u - t - a$ and $t' = u - s$ are respectively
Lebesgue points of $S$ and $T$, and $s' - t' = s - t - a \in U$.
It follows that for sufficiently small $\delta$ the sets
$(s' - \delta, s' + \delta) \cap S$ and
$(t' - \delta, t' + \delta) \cap T$ have positive measure and their
product is contained in $R$. Thus $(S\times T) \cap R$ has
positive measure, as claimed.

We can, however, always convert measurable relations to pointwise relations
in the following standard way. Let $(X,\mu)$ be a finitely decomposable
measure space and
recall that the {\it carrier space} $\Omega$ of $L^\infty(X,\mu)$
is the set of nonzero homomorphisms from $L^\infty(X,\mu)$ into $\Cb$,
that $\Omega$ is a compact Hausdorff space with weak* topology inherited
from the dual of $L^\infty(X,\mu)$, and that there is a natural isomorphism
$\Phi: L^\infty(X,\mu) \cong C(\Omega)$ (\cite{Tak}, Theorem I.4.4).

(If $\mu$ is $\sigma$-finite then we can go further:
letting $g \in L^1(X,\mu)$ be positive and nowhere-zero, integration
against $g$ on $L^\infty(X,\mu)$
corresponds to a bounded positive linear functional on $C(\Omega)$,
and hence is given by integration against a regular Borel measure
$\nu$ on $\Omega$. That is,
$$\int fg\, d\mu = \int \Phi(f)\, d\nu$$
for all $f \in L^\infty(X,\mu)$. Then $C(\Omega) \cong L^\infty(\Omega,\nu)$
by essentially the identity map, so that we can regard $\Phi$ as an
isomorphism between $L^\infty(X,\mu)$ and $L^\infty(\Omega,\nu)$
(\cite{Tak}, Theorem III.1.18).)

If $p$ is a projection in $L^\infty(X,\mu)$ and $\phi \in \Omega$ then
$\phi(p) = 0$ or $1$. Now if $\Rc$ is a measurable relation on $X$ then
we can define a corresponding relation $R_\Rc$ on $\Omega$ by setting
$(\phi,\psi) \in R_\Rc$ if $(p,q) \in \Rc$
for every pair of projections $p$ and $q$ in $L^\infty(X,\mu)$ such that
$\phi(p) = \psi(q) = 1$.

\begin{theo}\label{pointwise}
Let $(X,\mu)$ be a $\sigma$-finite measure space,
let $\Rc_1$, $\Rc_2$, and $\Rc_3$ be measurable relations on $X$,
and let $R_i = R_{\Rc_i}$ ($i = 1,2,3$) be the corresponding relations on
$\Omega$ as defined above. Then

\noindent (a) $R_1$ is a closed relation on $\Omega$

\noindent (b) $\Rc_1 \subseteq \Rc_2$ $\Leftrightarrow$ $R_1 \subseteq R_2$

\noindent (c) $\Rc_1$ is the diagonal measurable relation $\Leftrightarrow$
$R_1$ is the diagonal relation

\noindent (d) $\Rc_1$ is the transpose of $\Rc_2$ $\Leftrightarrow$
$R_1$ is the transpose of $R_2$

\noindent (e) $\Rc_3$ is the product of $\Rc_1$ and $\Rc_2$ $\Leftrightarrow$
$R_3$ is the product of $R_1$ and $R_2$.
\end{theo}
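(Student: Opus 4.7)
The plan is to work in the Gelfand picture throughout. Writing $\hat{p}\subseteq\Omega$ for the clopen set $\{\phi:\phi(p)=1\}$ attached to a projection $p\in L^\infty(X,\mu)$, the definition of $R_\Rc$ can be restated as: $(\phi,\psi)\notin R_\Rc$ iff there exist nonzero projections $p,q$ with $(\phi,\psi)\in\hat{p}\times\hat{q}$ and $(p,q)\notin\Rc$. Thus
$$R_\Rc^c=\bigcup\{\hat{p}\times\hat{q}:(p,q)\notin\Rc,\ p,q\neq 0\}$$
is a union of clopen rectangles in $\Omega\times\Omega$, which proves (a).

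The engine driving the remaining parts is a \emph{correspondence lemma}: for nonzero projections $p,q$ one has $(p,q)\in\Rc$ iff $(\hat{p}\times\hat{q})\cap R_\Rc\neq\emptyset$. The $\Leftarrow$ direction is immediate. For $\Rightarrow$, suppose the intersection is empty; the clopen rectangles from the displayed formula for $R_\Rc^c$ then cover the compact set $\hat{p}\times\hat{q}$, so finitely many suffice, and after intersecting with $p$ and $q$ and applying $(*')$ we may take $\hat{p}\times\hat{q}\subseteq\bigcup_{i=1}^n\hat{p_i'}\times\hat{q_i'}$ with $p_i'\leq p$, $q_i'\leq q$, and $(p_i',q_i')\notin\Rc$. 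Decompose $p$ along the $2^n$ atoms $\{p_S\}_{S\subseteq\{1,\ldots,n\}}$ of the finite Boolean algebra generated by $\{p_i'\}$ within the lattice of subprojections of $p$, and similarly decompose $q=\bigvee_T q_T$. The covering condition forces $S\cap T\neq\emptyset$ whenever $p_S,q_T$ are both nonzero. If $(p,q)\in\Rc$, axiom $(*'')$ applied to $p=\bigvee_S p_S$ and $q=\bigvee_T q_T$ produces a nonzero pair $(p_S,q_T)\in\Rc$; picking $i\in S\cap T$ yields $p_S\leq p_i'$, $q_T\leq q_i'$, and $(*')$ forces the contradiction $(p_i',q_i')\in\Rc$. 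Part (b) now follows at once: the forward direction is trivial, and the reverse holds because $(p,q)\in\Rc_1$ produces $(\phi,\psi)\in R_1\subseteq R_2$ with $\phi\in\hat{p}$, $\psi\in\hat{q}$, so $(p,q)\in\Rc_2$. In particular the map $\Rc\mapsto R_\Rc$ is injective.

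For (c), I check that $R_\Delta$ is the set-theoretic diagonal: if $\phi\neq\psi$ then some projection $p$ satisfies $\phi(p)=1$, $\psi(p)=0$, and then $\hat{p},\widehat{1-p}$ are disjoint clopen neighborhoods of $\phi,\psi$ with $p(1-p)=0$, so $(\phi,\psi)\notin R_\Delta$; conversely $\phi=\psi$ automatically lies in $\hat{p}\cap\hat{q}$ whenever $\phi(p)=\phi(q)=1$. Part (d) is a direct unwinding of definitions giving $R_{\Rc^T}=(R_\Rc)^T$. In both cases the ``iff'' follows by combining these identifications with injectivity of $\Rc\mapsto R_\Rc$.

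The main obstacle is (e). I will show $R_{\Rc_1\cdot\Rc_2}=R_1\cdot R_2$ (where the right side denotes classical relational composition on $\Omega$); then (e) follows by injectivity. The $\supseteq$ direction is a case split: given a witness $\psi$ for $(\phi,\chi)\in R_1\cdot R_2$, for any $p,r$ with $\phi(p)=\chi(r)=1$ and any projection $q$, either $\psi(q)=1$ (forcing $(p,q)\in\Rc_1$ via $(\phi,\psi)\in R_1$) or $\psi(1-q)=1$ (forcing $(1-q,r)\in\Rc_2$ via $(\psi,\chi)\in R_2$), so $(p,r)\in\Rc_1\cdot\Rc_2$ and hence $(\phi,\chi)\in R_{\Rc_1\cdot\Rc_2}$. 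The reverse inclusion is the harder step. Given $(\phi,\chi)\in R_{\Rc_1\cdot\Rc_2}$, consider the closed subsets $S_1^\phi=\{\psi:(\phi,\psi)\in R_1\}$ and $S_2^\chi=\{\psi:(\psi,\chi)\in R_2\}$ of $\Omega$; the goal is $S_1^\phi\cap S_2^\chi\neq\emptyset$. If they were disjoint, the clopen basis of $\Omega$ yields a clopen $\hat{q}$ with $S_1^\phi\subseteq\hat{q}$ and $S_2^\chi\cap\hat{q}=\emptyset$. I then run a compactness argument paralleling the correspondence lemma: for each $\psi\in\widehat{1-q}$ we have $(\phi,\psi)\notin R_1$, yielding projections $p_\psi,q_\psi$ with $\phi(p_\psi)=1$, $\psi(q_\psi)=1$, $q_\psi\leq 1-q$, and $(p_\psi,q_\psi)\notin\Rc_1$; a finite subcover of $\widehat{1-q}$ by the $\hat{q_\psi}$, together with axioms $(*')$ and $(*'')$ applied to the meet of the finitely many $p_\psi$'s, produces a single $p$ with $\phi(p)=1$ and $(p,1-q)\notin\Rc_1$. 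The symmetric argument produces $r$ with $\chi(r)=1$ and $(q,r)\notin\Rc_2$. But then testing the definition of $\Rc_1\cdot\Rc_2$ at the pair $(p,r)$ and the projection $1-q$ fails, contradicting $(\phi,\chi)\in R_{\Rc_1\cdot\Rc_2}$.
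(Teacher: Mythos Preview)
Your proof is correct and takes a genuinely different route from the paper's.

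The paper establishes the key correspondence $(p,q)\in\Rc$ $\Leftrightarrow$ $(\hat{p}\times\hat{q})\cap R_\Rc\neq\emptyset$ by a measure-theoretic construction: given $(p,q)\in\Rc$, for each pair of finite partitions of $S={\rm supp}(p)$ and $T={\rm supp}(q)$ it forms averaging states $\phi_{\Sc,\Tc}(f)=\mu(S_i)^{-1}\int_{S_i}f\,d\mu$ and passes to a weak* cluster point. The same device is used again for the hard direction of (e). This is where $\sigma$-finiteness enters: one needs the blocks of the partition to have finite measure. Your argument replaces this entirely with Stone duality and compactness. For the correspondence lemma you extract a finite clopen cover of $\hat{p}\times\hat{q}$ and then run a finite Boolean decomposition; for (e) you separate the two closed fibers $S_1^\phi$ and $S_2^\chi$ by a clopen set $\hat{q}$ (available because $\Omega$ is zero-dimensional) and use compactness again to manufacture the witnessing $p$ and $r$. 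This is cleaner, avoids any integration, and in fact does not use $\sigma$-finiteness at all --- your argument goes through for any finitely decomposable $(X,\mu)$. The paper's approach, on the other hand, is closer in spirit to the analytic machinery developed later (e.g., Theorem \ref{derivation}), and the averaging trick foreshadows those constructions. Both are short; yours is more order-theoretic, the paper's more analytic.
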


\begin{proof}
$R_1$ is closed because if $\phi_\lambda \to \phi$ and $\psi_\lambda
\to \psi$ weak* with $(\phi_\lambda, \psi_\lambda) \in R_1$ for
all $\lambda$, and $p$ and $q$ are projections with $\phi(p) = \psi(q) = 1$,
then eventually $\phi_\lambda(p) = \psi_\lambda(q) = 1$, which implies that
$(p,q) \in \Rc_1$; this shows that $(\phi,\psi) \in R_1$.

It is clear that $\Rc_1 \subseteq \Rc_2$ implies $R_1 \subseteq R_2$.
For the converse, we claim that $(p,q) \in \Rc_1$ if and only if
there exist $\phi, \psi \in \Omega$ such that $\phi(p) = \psi(q) = 1$
and $(\phi,\psi) \in R_1$; this obviously yields that $R_1 \subseteq R_2$
implies $\Rc_1 \subseteq \Rc_2$.
The reverse implication in the claim is immediate from the definition of
$R_1$. For the forward implication, suppose $(p,q) \in \Rc_1$, say
$p = \chi_S$ and $q = \chi_T$. For any
finite partitions $\Sc = \{S_1, \ldots, S_m\}$ of $S$ and
$\Tc = \{T_1, \ldots, T_n\}$ of $T$, choose $1 \leq i \leq m$
and $1 \leq j \leq n$ such that $(\chi_{S_i},\chi_{T_j}) \in \Rc_1$
and define $\phi_{\Sc,\Tc}, \psi_{\Sc,\Tc}: L^\infty(X,\mu) \to \Cb$ by
$$\phi_{\Sc,\Tc}(f) = \frac{1}{\mu(S_i)} \int_{S_i}f\, d\mu\quad
{\rm and}\quad\psi_{\Sc,\Tc}(f) = \frac{1}{\mu(T_j)} \int_{T_j}f\, d\mu.$$
The limit $(\phi,\psi)$ of any weak* convergent subnet of the net
$\{(\phi_{\Sc,\Tc},\psi_{\Sc,\Tc})\}$
will then be a pair of complex homomorphisms with the property that
$\phi(p) = \psi(q) = 1$. To see that $(\phi,\psi) \in R_1$ let
$p'$ and $q'$ be any projections in $L^\infty(X,\mu)$
such that $\phi(p') = \psi(q') = 1$.
Say $p' = \chi_{S'}$ and $q' = \chi_{T'}$. Then the net
$\{(\phi_{\Sc,\Tc},\psi_{\Sc,\Tc})\}$ has the property that eventually
$S' \cap S$ is a finite union of blocks in the partition $\Sc$, and
similarly for $T' \cap T$, so that $\phi(p') = \psi(q') = 1$ implies
that in the subnet that converges to $(\phi,\psi)$,
eventually $S_i \subseteq S'$ and $T_j \subseteq T'$ for the
choice of $i$ and $j$ used to define $\phi_{\Sc,\Tc}$ and $\psi_{\Sc,\Tc}$.
This shows
that $(p',q') \in \Rc_1$, and we conclude that $(\phi,\psi) \in R_1$.
This completes the proof that $R_1 \subseteq R_2$ implies
$\Rc_1 \subseteq \Rc_2$.

Observe that part (b) implies $\Rc_1 = \Rc_2$ $\Leftrightarrow$
$R_1 = R_2$.

Next, suppose $\Rc_1$ is the diagonal measurable relation on $X$ and let
$\phi \in \Omega$.
For any projections $p$ and $q$ with $\phi(p) = \phi(q) = 1$ we must have
$\phi(pq) = 1$ since $\phi$ is a homomorphism,
and therefore $pq \neq 0$. Thus $(p,q) \in \Rc_1$ for any
$p$ and $q$ with $\phi(p) = \phi(q) = 1$, so that $(\phi,\phi) \in R_1$.
Conversely, if $\phi, \psi \in \Omega$ are distinct then there is a
projection $p$ such that $\phi(p) \neq \psi(p)$. Say $\phi(p) = 1$ and
$\psi(p) = 0$. Then $\psi(1-p) = 1$, and the pair $(p,1-p)$ does not
belong to $\Rc_1$, so $(\phi,\psi)$ cannot belong to $R_1$. Thus $R_1$ is the
diagonal relation on $\Omega$.

It is easy to see that the measurable transpose of $\Rc_1$ is taken to the
classical transpose of $R_1$. To verify compatibility with products,
fix $\phi,\theta \in \Omega$; we must show that $(\phi,\theta) \in R_1R_2$
if and only if $(p,r) \in \Rc_1\Rc_2$
for all projections $p$ and $r$ such that $\phi(p) = \theta(r) = 1$.
For the forward implication, let $p$ and $r$ be projections such that
$\phi(p) = \theta(r) = 1$ and suppose
that $(p,r) \not\in \Rc_1\Rc_2$. Let $q'$ be the join
of all projections $p'$ such that $(p,p') \not\in \Rc_1$ and let $q''$ be the
join of all projections $r'$ such that $(r',r) \not\in \Rc_2$. Then
$(p,q') \not\in \Rc_1$ and $(q'',r) \not\in \Rc_2$, and we must have
$q' \vee q'' = 1$ as otherwise $1 - (q' \vee q'')$ would witness
$(p,r) \in \Rc_1\Rc_2$. Then for any $\psi \in \Omega$ we have either
$\psi(q') = 1$ or $\psi(q'') = 1$, which implies that either $(\phi,\psi)
\not\in R_1$ or $(\psi,\theta) \not\in R_2$. This shows that
$(\phi,\theta) \not\in R_1R_2$, which completes the proof of the forward
implication. For the reverse implication, suppose that $(p,r) \in \Rc_1\Rc_2$
for all projections $p$ and $r$ such that $\phi(p) = \theta(r) = 1$.
For any finite partition $\Sc = \{S_1, \ldots, S_m\}$ of $X$ we have
$\phi(\chi_{S_i}) = \theta(\chi_{S_k}) = 1$ for precisely one choice of
$i$ and $k$. Since $(\chi_{S_i},\chi_{S_k}) \in \Rc_1\Rc_2$ there exists
a nonzero
projection $q$ such that $(\chi_{S_i},q') \in \Rc_1$ and $(q',\chi_{S_k})
\in \Rc_2$ for every $q' \leq q$. Choose a value of $j$ such that
$q\chi_{S_j} \neq 0$; then $(\chi_{S_i},\chi_{S_j}) \in \Rc_1$ and
$(\chi_{S_j},\chi_{S_k}) \in \Rc_2$. Define $\psi_\Sc: L^\infty(X,\mu)
\to \Cb$ by
$$\psi_\Sc(f) = \frac{1}{\mu(S_j)} \int_{S_j} f\, d\mu$$
and let $\psi$ be a weak* cluster point of the net $\{\psi_\Sc\}$.
Then as in an earlier part of the proof we have
$(\phi,\psi) \in R_1$ and $(\psi,\theta) \in R_2$, so
$(\phi,\theta) \in R_1R_2$. This completes the proof of the
reverse implication.
\end{proof}

It immediately follows that all of the notions introduced in Definition
\ref{measthings} reduce to their classical analogs under the
conversion described above.

For our purposes there is no need to convert to pointwise relations,
so we will not use Theorem \ref{pointwise}. Nonetheless, it may be cited
as evidence that measurable relations are a reasonable generalization of
pointwise relations to the measurable setting.

\subsection{Basic results}\label{br}
In this section we present three basic tools for working with measurable
relations. The first is easy but the other two are more substantive.

\begin{prop}\label{reduce}
Let $\Rc$ be a measurable relation on a finitely decomposable measure
space and suppose $(p,q) \in \Rc$. Then there
exist nonzero projections $p' \leq p$ and $q' \leq q$ such that
$$(p',q'')\in\Rc\quad{\rm and}\quad (p'', q') \in \Rc$$
for all nonzero projections $p'' \leq p'$ and $q'' \leq q'$.
\end{prop}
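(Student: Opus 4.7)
The plan is to peel off a ``bad'' piece on the $p$-side and then on the $q$-side, using condition $(*'')$ of Definition \ref{measrel} as the main tool.

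First I would define
\[
p_0 = \bigvee\{r : 0 \neq r \leq p,\ (r,q) \notin \Rc\}
\]
(interpreted as $0$ if the set is empty). By the contrapositive of $(*'')$ applied to the family defining $p_0$ and the singleton $\{q\}$, we have $(p_0,q) \notin \Rc$. Set $p' = p - p_0$. Since $(p,q) = (p_0 \vee p', q) \in \Rc$, another application of $(*'')$ and the fact that $(p_0,q) \notin \Rc$ forces $p' \neq 0$ and $(p',q) \in \Rc$. Moreover any nonzero subprojection $p'' \leq p'$ satisfies $(p'',q) \in \Rc$: otherwise $p''$ would appear in the family defining $p_0$, so $p'' \leq p_0$, contradicting $p'' \leq p' = p - p_0$.

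Now I repeat the construction on the right. Let
\[
q_0 = \bigvee\{r : 0 \neq r \leq q,\ (p',r) \notin \Rc\},
\]
so $(p',q_0) \notin \Rc$ by $(*'')$, and set $q' = q - q_0$. Since $(p',q) \in \Rc$ and $q = q' \vee q_0$, the same reasoning as before yields $q' \neq 0$, $(p',q') \in \Rc$, and $(p',q'') \in \Rc$ for every nonzero $q'' \leq q'$.

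It remains to verify that $(p'',q') \in \Rc$ for every nonzero $p'' \leq p'$. Fix such a $p''$. By the previous paragraph's first step, $(p'',q) \in \Rc$, and $q = q' \vee q_0$, so $(*'')$ gives $(p'',q') \in \Rc$ or $(p'',q_0) \in \Rc$. The second alternative is impossible: by $(*')$ it would imply $(p',q_0) \in \Rc$, contradicting the definition of $q_0$. Hence $(p'',q') \in \Rc$, as required. The main thing to get right is the direction of the implications from $(*')$ and $(*'')$; once those are used correctly the argument is essentially a two-step ``maximal bad join'' construction.
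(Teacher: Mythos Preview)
Your proof is correct and follows essentially the same ``maximal bad join'' strategy as the paper. The only cosmetic difference is that the paper defines the right-side bad set $s$ relative to the original $p$ (so $s = \bigvee\{s' \leq q : (p,s') \notin \Rc\}$) and then argues both conclusions symmetrically, whereas you define $q_0$ relative to the already-reduced $p'$ and proceed sequentially; both versions work by the same mechanism.
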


\begin{proof}
Let $r$ be the join of $\{r' \leq p: (r',q) \not\in \Rc\}$ and let
$s$ be the join of $\{s' \leq q: (p,s') \not\in \Rc\}$; then take
$p'= p - r$ and $q' = q - s$. By the definition of measurable
relations we must have $(r,q), (p,s) \not\in \Rc$, so that $r \neq p$
and $s \neq q$. Therefore $p'$ and $q'$ are nonzero.

Now let $p'' \leq p'$ be nonzero. Then $p'' \not\leq r$, so
$(p'',q) \in \Rc$. But $(p'',s) \not\in \Rc$ because $(p,s) \not\in \Rc$
and $p'' \leq p$; since $q = s \vee q'$ and $(p'',q) \in \Rc$ this
implies $(p'', q') \in \Rc$. The analogous statement for nonzero
$q'' \leq q'$ holds by symmetry.
\end{proof}

Say that a projection $p$ in $L^\infty(X,\mu)$ is an {\it atom}
if there is no projection $q$ with $0 < q < p$, and is {\it nonatomic}
if it does not lie above any atoms. Also, say that $p$ has {\it finite
measure} if $\int p\, d\mu < \infty$.

\begin{lemma}\label{partit}
Let $\Rc$ be a measurable relation on a finitely decomposable measure
space and suppose $(p,q) \in \Rc$ with $p$ and
$q$ both nonatomic with finite measure. Say $p = \chi_S$ and $q = \chi_T$.
Then there exists $a > 0$ such that any finite partitions
$\{S_1, \ldots, S_m\}$ and $\{T_1, \ldots, T_n\}$ of $S$ and $T$
can be refined to partitions $\{S_1', \ldots, S_{m'}'\}$ and
$\{T_1',\ldots, T_{n'}'\}$ with the properties that (1) we have
$$2\cdot \min_{i,j}\{\mu(S_i'), \mu(T_j')\} \geq
\max_{i,j}\{\mu(S_i'), \mu(T_j')\}$$
and (2) after reordering, there exists $k$ such that
$\mu(S_1' \cup \cdots \cup S_k') \geq a$ and
$(\chi_{S_l'}, \chi_{T_l'}) \in \Rc$ for $1 \leq l \leq k$.
\end{lemma}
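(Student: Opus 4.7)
The plan is to first apply Proposition~\ref{reduce} to $(p,q)$, obtaining nonzero projections $\chi_{S_0}\le p$ and $\chi_{T_0}\le q$ with the two-sided property $(\chi_{S_0},\chi_{T'})\in\Rc$ for every nonzero $\chi_{T'}\le\chi_{T_0}$, and $(\chi_{S'},\chi_{T_0})\in\Rc$ for every nonzero $\chi_{S'}\le\chi_{S_0}$. I will define
\[
a := \tfrac{1}{2}\Bigl(\mu(S_0)+\mu(T_0)-\sup\bigl\{\mu(A)+\mu(B):A\subseteq S_0,\,B\subseteq T_0,\,(\chi_A,\chi_B)\notin\Rc\bigr\}\Bigr),
\]
a constant depending only on $(p,q)$ and $\Rc$, and argue below that $a>0$.

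Given arbitrary $\{S_i\}$ and $\{T_j\}$, I refine them by first intersecting each block with $S_0$ versus $S_0^c$ (and similarly $T_0$), and then using nonatomicity of $p$ and $q$ to further subdivide each resulting block into sub-blocks of measure in a common interval $[\epsilon,2\epsilon]$ for a small $\epsilon>0$; condition~(1) is then immediate. Inside $S_0\times T_0$ I consider the finite bipartite graph whose vertices are the refined sub-blocks of $S_0,T_0$ and whose edges are pairs in $\Rc$, and I build a maximum matching via augmenting paths. The two-sided property combined with axiom~($*$) furnishes the alternating edges: given any residual $T$-region, axiom~($*$) applied to $\chi_{S_0}=\bigvee\tilde S_\alpha$ and then to the $T$-region produces an $\Rc$-edge, which extends the current matching or continues an alternating path through a matched pair. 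By K\"onig duality, a maximum matching has $S$-measure $\mu(S_0)+\mu(T_0)-\alpha\epsilon$ where $\alpha\epsilon$ is the largest $\mu(A)+\mu(B)$ realized by a nonedge pair of unions of sub-blocks; since axiom~($*$) identifies independent sets in the graph precisely with nonedge pairs, this quantity converges to $2a$ as $\epsilon\to 0$, hence the matched $S$-measure is at least $a$ once $\epsilon$ is small enough. Listing the matched pairs first then realizes condition~(2).

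The main obstacle is verifying that $a>0$. Suppose for contradiction that the sup equals $\mu(S_0)+\mu(T_0)$; then there exists a sequence of nonedges $(A_n,B_n)\subseteq S_0\times T_0$ with $\mu(A_n)\to\mu(S_0)$ and $\mu(B_n)\to\mu(T_0)$, so $\bigvee \chi_{A_n}=\chi_{S_0}$ and $\bigvee\chi_{B_n}=\chi_{T_0}$. Since $(\chi_{S_0},\chi_{T_0})\in\Rc$ by the two-sided property, axiom~($*$) forces some $(\chi_{A_{n_0}},\chi_{B_{m_0}})\in\Rc$ (necessarily with $n_0\ne m_0$). Applying Proposition~\ref{reduce} to this edge yields reduced sub-pieces $A'\subseteq A_{n_0}$, $B'\subseteq B_{m_0}$ enjoying their own two-sided condition; combining this with the downward closure $(*')$ of the nonedge $(A_{n_0},B_{n_0})$ (respectively $(A_{m_0},B_{m_0})$) to $(A'$, $B_{n_0})$ (respectively $(A_{n_0}, B')$) and applying axiom~($*$) to the joins then produces the desired contradiction.
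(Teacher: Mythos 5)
The central difficulty of this lemma is the uniform positivity of $a$, and that is exactly where your argument has a genuine gap. Your definition of $a$ is fine, and indeed the claim that $\sup\{\mu(A)+\mu(B):(\chi_A,\chi_B)\notin\Rc\}<\mu(S_0)+\mu(T_0)$ is true, but your proof of it does not close. From nonedges $(A_n,B_n)$ with $\mu(A_n)\to\mu(S_0)$, $\mu(B_n)\to\mu(T_0)$ you correctly get $\bigvee\chi_{A_n}=\chi_{S_0}$, $\bigvee\chi_{B_n}=\chi_{T_0}$ and, by ($*$), an edge $(\chi_{A_{n_0}},\chi_{B_{m_0}})\in\Rc$ with $n_0\neq m_0$; Proposition \ref{reduce} then gives a two-sided pair $A'\subseteq A_{n_0}$, $B'\subseteq B_{m_0}$. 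But the only nonedges you can bring to bear are $(A_{n_0},B_{n_0})$ and $(A_{m_0},B_{m_0})$, which by downward closure give $(A',B_{n_0})\notin\Rc$ and $(A_{m_0},B')\notin\Rc$ --- and these are not in tension with the two-sided property of $(A',B')$, because $B'$ may be disjoint (mod null) from $B_{n_0}$ and $A'$ disjoint from $A_{m_0}$. No application of ($*$) to joins of just these two indices produces a contradiction. The statement can be rescued, but only by using the whole sequence: pass to a subsequence with $\sum\mu(S_0\setminus A_n),\sum\mu(T_0\setminus B_n)<\infty$, note that shrinking second coordinates and then joining first coordinates preserves nonedges, so $\bigl(\bigvee_{n\geq N}\chi_{A_n},\bigwedge_{n\geq N}\chi_{B_n}\bigr)\notin\Rc$ for every $N$, and then $\bigvee_{n\geq N}\chi_{A_n}=\chi_{S_0}$ and $\bigvee_N\bigwedge_{n\geq N}\chi_{B_n}=\chi_{T_0}$ (Borel--Cantelli) contradict $(\chi_{S_0},\chi_{T_0})\in\Rc$ by ($*$). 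This join/meet trick is precisely the paper's mechanism; the paper applies it directly to the unmatched remainders of a greedy matching on $(p,q)$ itself, so Proposition \ref{reduce} and the two-sided pair are not needed at all.

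There is also a quantitative slip in the matching step. K\"onig duality gives the matching \emph{cardinality} $\nu=|X|+|Y|-\max(|A|+|B|)$ over block-unions with no edges between them; your identity ``matched $S$-measure $=\mu(S_0)+\mu(T_0)-\max(\mu(A)+\mu(B))$'' only holds if all blocks have equal measure. With blocks merely in $[\epsilon,2\epsilon]$ one only gets matched $S$-measure $\geq \nu\epsilon\geq \tfrac12\mu(S_0)+\tfrac12\mu(T_0)-\bigl(\mu(S_0)+\mu(T_0)-2a\bigr)=2a-\tfrac12(\mu(S_0)+\mu(T_0))$, which is vacuous unless $a$ is large; nothing ``converges to $2a$ as $\epsilon\to0$'' because $\epsilon|X|$ need not approach $\mu(S_0)$. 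This is fixable by nonatomicity: within each cell of the given partitions take blocks of measure exactly $\epsilon$ plus one leftover of measure in $[\epsilon,2\epsilon)$, so the error is $O((m+n)\epsilon)$ and can be absorbed by choosing $\epsilon$ small after the partitions are given (only $a$ must be uniform). Also, the construction of a maximum matching needs no augmenting-path argument fed by ($*$); in a finite bipartite graph a maximum matching exists outright, and ($*''$) is only needed to identify independent block-unions with nonedges, as you say. With these two repairs your route (extremal constant plus K\"onig duality) is a legitimate alternative to the paper's greedy-matching-plus-contradiction proof, but as written the proof of $a>0$ is missing its key step.
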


\begin{proof}
Fix partitions $\{S_1,\ldots, S_m\}$ and $\{T_1, \ldots, T_n\}$
of $S$ and $T$. Let $b = \min\{\mu(S_i),\mu(T_j)\}$.
Since $p$ and $q$ are nonatomic and have finite measure we can
refine to partitions $\{S_1',\ldots, S_{m'}'\}$ and
$\{T_1', \ldots, T_{n'}'\}$ such that $b \leq \mu(S_i'), \mu(T_j')
\leq 2b$ for all $i$ and $j$ (see \cite{Tak}, Theorem III.1.22).
We then follow a greedy algorithm, pairing $S$'s and $T$'s that
belong to the relation until no further pair can be found. Then after
reordering there exists $k$ such that $(\chi_{S_l'}, \chi_{T_l'}) \in \Rc$
for $1 \leq l \leq k$ and $(\chi_{S_i'}, \chi_{T_j'}) \not\in \Rc$ for any
$i,j > k$.

Let $a = \mu(S_1' \cup \cdots \cup S_k')$. We have to show that there
is a positive lower bound on $a$ independent of the construction we just
performed and of the original choice of partitions of $S$ and $T$.
Suppose not. Then for each $n$ we can find some such construction with
$a \leq 2^{-n}$. Let $U_n = S_{k+1}' \cup \cdots \cup S_{m'}'$ and
$V_n = T_{k+1}' \cup \cdots \cup T_{n'}'$. Then $(\chi_{U_n},\chi_{V_n}) \not\in \Rc$
since $(\chi_{S_i'}, \chi_{T_j'}) \not\in \Rc$ for any $i,j > k$.
So for every $N$ we have
$$\left(\bigvee_{n \geq N}\chi_{U_n}, \bigwedge_{n \geq N}\chi_{V_n}\right)
\not\in \Rc,$$
and $p = \bigvee_{n \geq N} \chi_{U_n}$, so $(p, \bigwedge_{n \geq N}
\chi_{V_n}) \not\in \Rc$. Since $q = \bigvee_N \bigwedge_{n \geq N}
\chi_{V_n}$ this implies $(p,q) \not\in \Rc$, a contradiction. Therefore
there is a positive lower bound on $a$, as claimed.
\end{proof}

For $f \in L^\infty(X,\mu)$ let $M_f \in B(L^2(X,\mu))$ be the multiplication
operator $M_f: g \mapsto fg$.

\begin{theo}\label{connect}
Let $\Rc$ be a measurable relation on a finitely decomposable measure
space and suppose $(p,q) \in \Rc$ with $p = \chi_S$ and $q = \chi_T$.
Then there is a nonzero bounded
operator $A: L^2(T, \mu|_T) \to L^2(S, \mu|_S)$ such that $f \geq 0$ implies
$Af \geq 0$ and
$$(p', q') \not\in \Rc \quad\Rightarrow\quad
M_{p'}AM_{q'}= 0$$
for $p' \leq p$ and $q' \leq q$.
\end{theo}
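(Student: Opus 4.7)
The plan is to reduce $(p,q)$ first to finite measure and then to purely atomic or purely nonatomic components, constructing $A$ separately in each resulting subcase. Using finite decomposability, partition $S$ and $T$ into finite-measure pieces; applying $(*)$ of Definition \ref{measrel} to the resulting joins, some finite-measure sub-pair $(p_1,q_1)\leq(p,q)$ still lies in $\Rc$. Replacing $(p,q)$ with $(p_1,q_1)$ is harmless: by $(*')$, an $A$ built for the sub-pair and extended by zero automatically satisfies the vanishing condition for $(p,q)$, since $(p',q')\not\in\Rc$ forces $(p'p_1,q'q_1)\not\in\Rc$. Decomposing each of $p_1,q_1$ into atomic and nonatomic parts and applying $(*)$ once more reduces the problem to one of three essentially distinct cases: atomic-atomic, mixed, and nonatomic-nonatomic.

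In the atomic-atomic case $(*)$ yields atoms $e_x\leq p_1, e_y\leq q_1$ with $(e_x,e_y)\in\Rc$; the rank-one positive operator $Af:=\bigl(\int_{\{y\}}f\,d\mu\bigr)e_x$ works, since $M_{p'}AM_{q'}$ is nonzero only when $e_x\leq p'$ and $e_y\leq q'$, in which case $(*')$ would force $(p',q')\in\Rc$. In the mixed case, say $(p_a,q_n)\in\Rc$: $(*)$ on the atomic side reduces to $p_1=e_x$, and Proposition \ref{reduce} applied to $(e_x,q_n)$ produces $q_0=\chi_{T_0}\leq q_n$ such that $(e_x,q'')\in\Rc$ for every nonzero $q''\leq q_0$; then $Af:=\bigl(\int_{T_0}f\,d\mu\bigr)e_x$ does the job, because $M_{p'}AM_{q'}\neq 0$ implies $e_x\leq p'$ and $q'q_0\neq 0$, so $(e_x,q'q_0)\in\Rc$ and $(*')$ gives the contradiction $(p',q')\in\Rc$.

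The substantive case is nonatomic-nonatomic. Here I use Lemma \ref{partit}: for each finite partition $\Pi$ of $(S,T)$ take the refinement supplied by the lemma and define the positive operator
$$A_\Pi f = \sum_{l=1}^k \frac{1}{\mu(T_l')}\chi_{S_l'}\int_{T_l'}f\,d\mu,$$
where the $k$ matched pairs satisfy $(\chi_{S_l'},\chi_{T_l'})\in\Rc$ and $\mu(\bigcup_l S_l')\geq a$. Cauchy-Schwarz combined with the bounded-ratio condition $\mu(S_l')/\mu(T_l')\leq 2$ yields $\|A_\Pi\|\leq\sqrt{2}$, while $\langle A_\Pi\chi_T,\chi_S\rangle=\mu(\bigcup_l S_l')\geq a$; by WOT-compactness of the closed ball of $\Bc(L^2(T,\mu|_T),L^2(S,\mu|_S))$, pass to a WOT-convergent subnet $A_\Pi\to A$. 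The limit is nonzero since $\langle A\chi_T,\chi_S\rangle\geq a$, and positivity-preserving since the inequality $\langle A_\Pi f,g\rangle\geq 0$ for $f,g\geq 0$ passes to WOT limits.

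The only step needing real checking is the vanishing condition. Fix $(p',q')\not\in\Rc$ with $p'=\chi_{S'}$ and $q'=\chi_{T'}$. The partitions of $(S,T)$ refining $\{S',S\setminus S'\}\times\{T',T\setminus T'\}$ are cofinal, and since the refinement in Lemma \ref{partit} preserves refinement of the input, eventually each block $S_l'$ is either inside or disjoint from $S'$, and likewise for $T_l'$; for a matched block $l\leq k$, the combination $S_l'\subseteq S'$ together with $T_l'\subseteq T'$ would force $(p',q')\in\Rc$ by $(*')$, so at least one of $S_l'\cap S'$ and $T_l'\cap T'$ is null and kills the $l$-th summand of $M_{p'}A_\Pi M_{q'}$. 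Hence $M_{p'}A_\Pi M_{q'}=0$ eventually along the net, and $M_{p'}AM_{q'}=0$ in the WOT limit. The main obstacle throughout is exactly this cofinality bookkeeping, which dissolves once one observes that Lemma \ref{partit}'s refinement preserves the refinement of the starting partition.
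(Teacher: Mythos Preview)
Your proof is correct and follows essentially the same approach as the paper's: reduce to finite measure, strip off atoms (handling them via a rank-one operator built from Proposition~\ref{reduce}), and in the nonatomic case build the averaging operators $A_\Pi$ from Lemma~\ref{partit}, bound them by $\sqrt{2}$, and pass to a weak operator limit, with nonzeroness certified by $\langle A\chi_T,\chi_S\rangle\geq a$ and the vanishing condition following from cofinality of partitions refining $\{S',S\setminus S'\}$ and $\{T',T\setminus T'\}$. The only organizational difference is that the paper disposes of atoms first and reduces to finite measure afterward, whereas you reverse the order; your ordering is arguably cleaner since it makes the finite-measure requirement in the rank-one construction automatic.
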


\begin{proof}
Let $r$ be the join of the atoms lying under $p$. If $(r,q) \in \Rc$
then there is an atom $\chi_{S'} \leq r$ such that $(\chi_{S'},q) \in \Rc$.
Applying Proposition \ref{reduce}, we can find a nonzero finite measure
projection $q' \leq q$ such that
$(\chi_{S'}, q'') \in \Rc$ for every $q'' \leq q'$. Writing $q' =
\chi_{T'}$, we can then take $A$ to be the operator
$$f \mapsto \left(\int_{T'} f\, d\mu\right)\cdot\chi_{S'}.$$
This has the desired properties. So we can assume $(r,q) \not\in \Rc$,
and replacing $p$ with $p - r$ we may assume $p$ is nonatomic. A similar
argument reduces to the case where $q$ is also nonatomic.

Since $p$ is the join of the finite measure projections lying under $p$ and
the same is true of $q$, we can find finite measure projections $p' \leq p$
and $q' \leq q$ such that $(p',q') \in \Rc$. Replacing $p$ with $p'$
and $q$ with $q'$, we may now suppose that $p$ and $q$ are both
nonatomic with finite measure. We can therefore apply Lemma \ref{partit}.
Fix $a > 0$ as in this lemma and for any finite partitions
$\Sc = \{S_1, \ldots, S_m\}$ of $S$ and
$\Tc = \{T_1, \ldots, T_n\}$ of $T$, let $\{S_1', \ldots, S_{m'}'\}$
and $\{T_1', \ldots, T_{n'}'\}$ be the refined partitions and $k$ the
integer provided by the lemma. Then define
$A_{\Sc,\Tc}: L^2(T, \mu|_T) \to L^2(S,\mu|_S)$ by
$$A_{\Sc, \Tc}: f \mapsto \sum_{l = 1}^k \frac{1}{\mu(T_l')}
\left(\int_{T_l'} f\, d\mu\right)\cdot \chi_{S_l'}.$$

The operator norm of $A_{\Sc,\Tc}$ is
$$\|A_{\Sc,\Tc}\| = \max_{1 \leq l \leq k}
\sqrt{\frac{\mu(S_l')}{\mu(T_l')}} \leq \sqrt{2}$$
so the net $\{A_{\Sc, \Tc}\}$ is bounded. Let $A$ be a weak operator cluster
point of this net. We immediately have $f \geq 0 \Rightarrow Af \geq 0$
since this is true of every $A_{\Sc, \Tc}$. Also, $A \neq 0$ because
$$\langle A_{\Sc,\Tc}(\chi_T), \chi_S\rangle =
\int_S A_{\Sc,\Tc}(\chi_T)\, d\mu = \mu(S_1' \cup \cdots \cup S_k') \geq a$$
for all $\Sc, \Tc$, which implies that $\langle A(\chi_T), \chi_S\rangle \geq a$.
Finally, let $S' \subseteq S$ and $T' \subseteq T$ satisfy
$(\chi_{S'},\chi_{T'}) \not\in \Rc$. Then for any partitions
$\Sc$ and $\Tc$ which respectively refine the partitions
$\{S', S -S'\}$ and $\{T', T-T'\}$
we have $M_{\chi_{S'}}A_{\Sc,\Tc}M_{\chi_{T'}} = 0$ by construction
(since no projection under $\chi_{S'}$ will be paired with a projection
under $\chi_{T'}$). Taking the weak operator limit then shows that
$M_{\chi_{S'}}AM_{\chi_{T'}} = 0$.
\end{proof}

Our last result in this section is the most powerful.
Its proof uses basic von Neumann algebra techniques.

\begin{theo}\label{derivation}
Let $\Rc$ be a measurable relation on a finitely decomposable measure
space $(X,\mu)$ and suppose $(p,q) \in \Rc$. Then there is a finitely
decomposable measure space $(Y,\nu)$ and a pair of unital weak* continuous
$*$-homomorphisms $\pi_l,\pi_r: L^\infty(X,\mu) \to L^\infty(Y,\nu)$
such that $\pi_l(p) = \pi_r(q) = 1$ and
$$(p',q') \not\in \Rc\quad\Rightarrow\quad
\pi_l(p')\pi_r(q') = 0.$$
\end{theo}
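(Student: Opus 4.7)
The plan is to apply Theorem \ref{connect} to produce an order-preserving operator $A: L^2(T,\mu|_T) \to L^2(S,\mu|_S)$, to interpret it as a finite positive measure $\lambda$ on $S \times T$, and to take $(Y,\nu) = (S \times T, \lambda)$ with $\pi_l(f)(x,y) = f(x)$ and $\pi_r(g)(x,y) = g(y)$. Under this construction the localization $M_{p'}AM_{q'} = 0$ supplied by Theorem \ref{connect} will translate directly into $\pi_l(p')\pi_r(q') = 0$ in $L^\infty(Y,\nu)$.

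First I reduce to the case where $S$ and $T$ have finite measure. Decomposing $X = \coprod_\alpha X_\alpha$ into finite measure pieces, $p = \bigvee_\alpha p\chi_{X_\alpha}$ and $q = \bigvee_\beta q\chi_{X_\beta}$, so by ($*$) there exist $\alpha_0, \beta_0$ with $(p\chi_{X_{\alpha_0}}, q\chi_{X_{\beta_0}}) \in \Rc$. The theorem for this smaller pair implies the theorem for the original: $\pi_l(p\chi_{X_{\alpha_0}}) = 1$ forces $\pi_l(\chi_{X \setminus X_{\alpha_0}}) = 0$ (symmetrically for $\pi_r$), so $\pi_l(p) = 1 = \pi_r(q)$ and $\pi_l(f) = \pi_l(f\chi_{X_{\alpha_0}})$ for every $f \in L^\infty(X,\mu)$; the contrapositive of ($*'$) then turns $(p',q') \not\in \Rc$ into $(p'\chi_{X_{\alpha_0}}, q'\chi_{X_{\beta_0}}) \not\in \Rc$, so the localization is inherited.

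So assume $\mu(S), \mu(T) < \infty$ and fix $A$ as in Theorem \ref{connect}. Define $\lambda$ on measurable rectangles in $S \times T$ by $\lambda(S' \times T') = \int_{S'} A\chi_{T'}\, d\mu = \langle A\chi_{T'}, \chi_{S'}\rangle_{L^2(S)}$. This is non-negative (order-preservation of $A$) and bounded above by $\|A\|\sqrt{\mu(S)\mu(T)}$. The key step is $\sigma$-additivity on the algebra of finite disjoint unions of rectangles: if $T' = \bigsqcup_n T_n'$ with all $T_n' \subseteq T$, then $\chi_{T'} = \sum_n \chi_{T_n'}$ converges in $L^2(T)$, hence $A\chi_{T'} = \sum_n A\chi_{T_n'}$ in $L^2(S)$, and integrating against $\chi_{S'}$ yields countable additivity in the second variable (the first variable is routine); a standard refinement argument extends this to arbitrary finite disjoint unions of rectangles. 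Carath\'eodory extension then yields a finite positive measure $\lambda$ on $\mathcal{B}(S) \otimes \mathcal{B}(T)$, and we set $(Y,\nu) = (S \times T, \lambda)$.

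Finally, since $\mu$-null $N \subseteq S$ satisfies $\lambda(N \times T) = \int_N A\chi_T\, d\mu = 0$ (and symmetrically), the formulas $\pi_l(f)(x,y) = f(x)$ and $\pi_r(g)(x,y) = g(y)$ define unital $*$-homomorphisms $L^\infty(X,\mu) \to L^\infty(Y,\nu)$ with $\pi_l(p) = \chi_S = 1$ and $\pi_r(q) = \chi_T = 1$. Weak* continuity follows from Fubini together with the absolute continuity of the $x$-marginal of $\lambda$ with respect to $\mu|_S$ (with density $A\chi_T$), which provides an explicit predual $\pi_{l,*}: L^1(Y,\nu) \to L^1(X,\mu)$. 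For the localization, given $(p',q') \not\in \Rc$ with $p' = \chi_{S'}$, $q' = \chi_{T'}$, set $\tilde p' = pp'$ and $\tilde q' = qq'$; then $(\tilde p', \tilde q') \not\in \Rc$ by ($*'$), so Theorem \ref{connect} yields $M_{\tilde p'}AM_{\tilde q'} = 0$, giving $\chi_{S \cap S'}\cdot A\chi_{T \cap T'} = 0$ in $L^2(S)$ and hence $\lambda((S \cap S') \times (T \cap T')) = 0$, i.e., $\pi_l(p')\pi_r(q') = 0$. The main technical obstacle is establishing the $\sigma$-additivity of the rectangle premeasure $\lambda$; once the measure exists, all homomorphism properties follow formally.
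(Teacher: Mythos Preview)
Your approach is different from the paper's and runs into a real difficulty at exactly the point you flag. You construct the candidate measure $\lambda(S'\times T')=\langle A\chi_{T'},\chi_{S'}\rangle$ and then appeal to Carath\'eodory. Your verification of $\sigma$-additivity, however, only establishes \emph{separate} countable additivity (in each variable with the other fixed); the phrase ``a standard refinement argument extends this to arbitrary finite disjoint unions of rectangles'' does not do the job. A general countable partition $S'\times T'=\bigsqcup_n(S_n\times T_n)$ cannot be reduced by any finite grid refinement, and passing from separate $\sigma$-additivity to the premeasure condition is precisely the nontrivial step of extending a positive bimeasure to a joint measure. That step is true but not routine: one route is to reduce (via Maharam) to standard Borel $S,T$ and invoke a disintegration/transition-kernel argument; another is to cite the bimeasure literature. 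The naive attempt---writing $\lambda(E_N)=\int_S g_N\,d\mu$ with $g_N(s)=(A\chi_{(E_N)_s})(s)$ and using $(E_N)_s\downarrow\emptyset$---founders because evaluating $A\chi_{(E_N)_s}$ at the same point $s$ is exactly a diagonal evaluation that $L^2$-convergence does not control.

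The paper bypasses this entirely with a GNS construction. It defines the linear functional $\tau(f\otimes g)=\int_S f\cdot Ag\,d\mu$ on $L^\infty(S)\otimes_{\rm alg}L^\infty(T)$ and checks, by a direct computation with simple functions using only that $f\geq 0\Rightarrow Af\geq 0$, that $\tau$ is positive in the $*$-algebra sense: $\tau\bigl((\sum f_i\otimes g_i)(\sum\bar f_i\otimes\bar g_i)\bigr)\geq 0$. GNS then produces a Hilbert space $H$ carrying commuting representations $\pi_l,\pi_r$ of $L^\infty(X)$, whose generated abelian von Neumann algebra is some $L^\infty(Y,\nu)$; weak$^*$ continuity of $\pi_l,\pi_r$ follows from the explicit inner-product formula (pairing against $L^1$ functions). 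No $\sigma$-additivity on a product space is ever needed. Your construction, if the bimeasure extension is supplied, would yield a concrete realization $Y=S\times T$, which the paper's argument does not give; but the paper's route is the cleaner one here precisely because it replaces the measure-theoretic extension problem with a short positivity calculation.
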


\begin{proof}
Say $p = \chi_S$ and $q = \chi_T$ and let $A$ be the operator provided
by Theorem \ref{connect}. As in the proof of Theorem \ref{connect}
we may assume $\mu(S), \mu(T) < \infty$. Define
a linear functional $\tau$ on the algebraic tensor product
$\Ac = L^\infty(S,\mu|_S) \otimes L^\infty(T,\mu|_T)$ by
$$\tau(f\otimes g) = \int fAg\, d\mu|_S = \langle Ag, \bar{f}\rangle$$
(and extending linearly). We claim that $\tau$ is positive in the
sense that
$$\tau\left( \left(\sum f_i\otimes g_i\right)
\left(\sum \bar{f}_i\otimes\bar{g}_i\right)\right) \geq 0$$
for any
$\sum f_i \otimes g_i \in \Ac$.
By continuity it is enough to check this when the $f_i$ and $g_i$ are simple.
So fix partitions $\{S_1,\ldots, S_m\}$ and $\{T_1, \ldots, T_n\}$ of
$S$ and $T$ and suppose the $f_i$ and $g_i$ are constant on each
$S_k$ and each $T_l$, respectively. For $1 \leq k \leq m$ and $1 \leq l
\leq n$ let $a_{kl} = \langle A(\chi_{T_l}), \chi_{S_k}\rangle$ and
observe that each $a_{kl} \geq 0$ since $f \geq 0$ $\Rightarrow$
$Af \geq 0$.

Say $f_i = \sum b_{ik}\chi_{S_k}$ and $g_i = \sum c_{il}\chi_{T_l}$.
We now compute
\begin{eqnarray*}
\tau\left( \left(\sum_i f_i\otimes g_i\right)
\left(\sum_i \bar{f}_i\otimes\bar{g}_i\right)\right)
&=& \sum_{i,j} \langle A(g_i\bar{g}_j), \bar{f}_i f_j\rangle\cr
&=& \sum_{i,j,k,l} b_{ik}\bar{b}_{jk}c_{il}\bar{c}_{jl}a_{kl}\cr
&=& \sum_{k,l} a_{kl} \left| \sum_i b_{ik}c_{il}\right|^2\cr
&\geq& 0.
\end{eqnarray*}
This establishes the claim.

Now define a pre-inner product on $\Ac$ by setting
$$\langle f\otimes g,f'\otimes g'\rangle
= \tau((f\otimes g)(\bar{f}'\otimes\bar{g}'))$$
(and extending linearly)
and let $H$ be the Hilbert space formed by factoring out null vectors
and completing. Define representations $\pi_l,\pi_r: L^\infty(X,\mu)
\to \Bc(H)$ by $\pi_l(h)(f\otimes g) = h|_S f\otimes g$ and
$\pi_r(h)(f\otimes g) = f\otimes h|_T g$. The representation $\pi_l$ is
both well-defined and contractive by the calculuation
\begin{eqnarray*}
\left\| \pi_l(h)\left(\sum_i f_i\otimes g_i\right)\right\|_H^2
&=& \tau\left(\sum_{i,j} |h|_S|^2f_i\bar{f}_j\otimes g_i\bar{g}_j\right)\cr
&\leq& \tau\left(\sum_{i,j} \|h\|_\infty^2 f_i\bar{f}_j\otimes g_i\bar{g}_j\right)\cr
&=& \|h\|_\infty^2\cdot \tau\left(\sum_{i,j} f_i\bar{f}_j\otimes g_i\bar{g}_j\right)\cr
&=& \|h\|_\infty^2 \left\|\sum_i f_i\otimes g_i\right\|_H^2,
\end{eqnarray*}
and a similar calculuation shows the same for $\pi_r$. (The
inequality that replaces $|h|_S|^2$ with $\|h\|^2_\infty$ follows
from positivity of $\tau$, or it can be proven first
for simple $f,g,h$ by a computation similar to the one used above to
prove that $\tau$ is positive.) The sets
$\pi_l(L^\infty(X,\mu))$ and $\pi_r(L^\infty(X,\mu))$ generate an
abelian von Neumann algebra $\Mx \cong L^\infty(Y,\nu)$. If
$(p',q') \not\in \Rc$ then $M_{p'}AM_{q'} = 0$ implies that
$\pi_l(p')\pi_r(q') = 0$ because
\begin{eqnarray*}
\langle \pi_l(p')\pi_r(q')(f\otimes g), f'\otimes g'\rangle
&=& \tau((p'|_S f\otimes q'|_T g)(\bar{f}'\otimes\bar{g}'))\cr
&=& \langle A(q'|_T g\bar{g}'), p'|_S\bar{f}f')\cr
&=& \langle (M_{p'}AM_{q'})g\bar{g}', \bar{f}f'\rangle\cr
&=& 0
\end{eqnarray*}
for all $f,f' \in L^\infty(S,\mu|_S)$ and $g,g' \in L^\infty(T,\mu|_T)$.
All other properties of $\pi_l$ and $\pi_r$ are routinely verified.
\end{proof}

\subsection{Measurable metrics}
Measurable metric spaces were introduced in \cite{W0} and have
subsequently been studied in connection with derivations
\cite{W4, W5, W3} and local Dirichlet forms \cite{H, H1, H2}.
Unfortunately, as was pointed out by Francis Hirsch, there is an error
in one of the basic results, Lemma 6.1.6 of \cite{W4}, which was used
heavily in developing the general theory of these structures.
Most of the resulting problems can be fixed without too much trouble ---
some of this has been done in \cite{H} --- but in some places the faulty
lemma was really used in an essential way.

The machinery we developed in Section \ref{br}, particularly Theorem
\ref{derivation}, can be used to quickly correct all of the problems
stemming from the use of the erroneous lemma. We do this now.

We recall the basic definition:

\begin{defi}\label{measpm}
(\cite{W4}, Definition 6.1.3)
Let $(X,\mu)$ be a finitely decomposable measure space and let
$\Pc$ be the set of nonzero projections in $L^\infty(X,\mu)$.
A {\it measurable pseudometric} on $(X,\mu)$ is
a function $\rho: \Pc^2 \to [0,\infty]$ such that
\begin{quote}
\noindent (i) $\rho(p, p) = 0$

\noindent (ii) $\rho(p,q) = \rho(q,p)$

\noindent (iii) $\rho(\bigvee p_\lambda, \bigvee q_\kappa) =
\inf_{\lambda,\kappa} \rho(p_\lambda, q_\kappa)$

\noindent (iv) $\rho(p,r) \leq \sup_{q' \leq q} (\rho(p,q') + \rho(q',r))$
\end{quote}
for all $p,q,r,p_\lambda,q_\kappa \in \Pc$. It is a
{\it measurable metric} if for all
disjoint $p$ and $q$ there exist nets $\{p_\lambda\}$ and $\{q_\lambda\}$
such that $p_\lambda \to p$ and $q_\lambda \to q$ weak*
and $\rho(p_\lambda, q_\lambda) > 0$ for all $\lambda$.
\end{defi}

If either $p$ or $q$ is (or both are) the zero projection then the
appropriate convention is $\rho(p,q) = \infty$. (Note that in the
measurable triangle inequality, property (iv), $q'$ ranges over
nonzero projections.)

Say that the {\it closure} of $p$ is the complement of
$\bigvee\{q: \rho(p,q) > 0\}$, and say that $p$ is {\it closed} if
it equals its closure. Equivalently, $p$ is closed if for every nonzero
projection $q$ that is disjoint from $p$ there exists a nonzero
projection $q' \leq q$
such that $\rho(p,q') > 0$. Then $\rho$ is a measurable metric
if and only if the closed projections generate $L^\infty(X,\mu)$ as
a von Neumann algebra. This was actually the definition of measurable
metric used in \cite{W4}, and the equivalence with the condition given
above follows from (\cite{W4}, Theorem 6.2.10).

We begin with a simple observation connecting measurable metrics
to measurable relations. Its proof is an easy verification.

\begin{lemma}\label{metricrel}
Let $(X,\mu)$ be a finitely decomposable measure space, let $\rho$ be
a measurable pseudometric on $X$, and let $t > 0$. Then
$$\Rc_t = \{(p,q): \rho(p,q) < t\}$$
is a reflexive, symmetric measurable relation on $X$. We have
$\Rc_s\Rc_t \subseteq \Rc_{s + t}$ for all $s,t > 0$.
\end{lemma}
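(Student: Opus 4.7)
The plan is to peel off the easy axioms of Definition \ref{measpm} first, and then handle the semigroup containment $\Rc_s\Rc_t \subseteq \Rc_{s+t}$ by a uniformization trick based on property (iii).

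That $\Rc_t$ is a measurable relation is immediate from axiom (iii): $(\bigvee p_\lambda, \bigvee q_\kappa) \in \Rc_t$ means $\rho(\bigvee p_\lambda, \bigvee q_\kappa) < t$, and by (iii) this common value $\inf_{\lambda,\kappa}\rho(p_\lambda, q_\kappa)$ is $<t$ if and only if some individual $\rho(p_\lambda, q_\kappa)$ is $< t$, verifying condition $(*)$. Reflexivity combines axiom (i) with a monotonicity consequence of (iii): if $pq \neq 0$, then $r := pq$ is a nonzero projection with $r \leq p$ and $r \leq q$, so
\[
\rho(p,q) = \rho(p \vee r,\, q \vee r) = \inf\{\rho(p,q),\rho(p,r),\rho(r,q),\rho(r,r)\} \leq \rho(r,r) = 0 < t.
\]
Symmetry follows immediately from axiom (ii).

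The substantive claim is $\Rc_s\Rc_t \subseteq \Rc_{s+t}$. Given $(p,r) \in \Rc_s\Rc_t$, the second description of the product in Proposition \ref{measverify}(c) yields a nonzero projection $q$ with $\rho(p,q') < s$ and $\rho(q',r) < t$ for every nonzero $q' \leq q$. Applying the triangle inequality (iv) directly with intermediate projection $q$ gives only $\rho(p,r) \leq \sup_{q' \leq q}(\rho(p,q') + \rho(q',r)) \leq s + t$, a weak inequality, because the pointwise strict bounds need not be uniform over $q' \leq q$. The main obstacle is precisely to upgrade these pointwise strict bounds to uniform strict bounds after shrinking $q$.

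To do so, for each positive integer $n$ set $b_n = \bigvee\{q' \leq q : q' \neq 0,\ \rho(p,q') \geq s - 1/n\}$. By (iii), whenever $b_n \neq 0$ we have $\rho(p, b_n) \geq s - 1/n$; so if $b_n$ were equal to $q$ for every $n$ we would get $\rho(p,q) \geq s$, contradicting our hypothesis. Hence for some $n$ the projection $q_1 := q - b_n$ is nonzero, and every nonzero $q' \leq q_1$ satisfies $\rho(p,q') < s - 1/n$ (otherwise $q'$ would lie in the family defining $b_n$, giving $q' \leq b_n \wedge (q - b_n) = 0$). Repeating the argument with $\rho(\cdot, r)$ and $q_1$ in place of $\rho(p,\cdot)$ and $q$ produces $n'$ and a nonzero $q_2 \leq q_1$ on which every nonzero $q' \leq q_2$ additionally satisfies $\rho(q',r) < t - 1/n'$. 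Applying (iv) with intermediate projection $q_2$ then yields
\[
\rho(p,r) \leq \sup_{q' \leq q_2,\, q' \neq 0}\bigl(\rho(p,q') + \rho(q',r)\bigr) \leq (s - 1/n) + (t - 1/n') < s + t,
\]
so $(p,r) \in \Rc_{s+t}$, as desired.
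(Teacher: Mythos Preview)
Your proof is correct and follows the same strategy as the paper: use property (iii) of Definition \ref{measpm} to shrink $q$ to a subprojection on which the strict bound becomes uniform, then apply the measurable triangle inequality (iv). The paper shrinks only once --- taking $\epsilon = (s - \rho(p,q))/2$ and observing that a uniform bound $\rho(p,q') < s - \epsilon$ together with the original $\rho(q',r) < t$ already yields $\sup_{q' \leq q_1}(\rho(p,q') + \rho(q',r)) \leq s + t - \epsilon$ --- whereas you shrink twice; this is a harmless redundancy and the core idea is identical.
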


\begin{proof}
The first assertion is straightforward. To verify the second, suppose
$(p,r) \in \Rc_s\Rc_t$ and find a nonzero projection $q$ such that
$(p,q') \in \Rc_s$ and $(q',r) \in \Rc_t$ for every nonzero $q' \leq q$
(Proposition \ref{measverify} (c)). Then $\rho(p,q) < s$, so let
$\epsilon = (s - \rho(p,q))/2$ and define $q_1 = q(1 - \tilde{q})$ where
$$\tilde{q} = \bigvee \{q': \rho(p,q') \geq \rho(p,q) + \epsilon\}.$$
Note that
$$\rho(p,q\tilde{q}) \geq \rho(p,\tilde{q}) \geq \rho(p,q) + \epsilon$$
so $q\tilde{q} \neq q$, i.e., $q_1$ is nonzero. We then have that
$\rho(p,q') < \rho(p,q) + \epsilon = s - \epsilon$ for all $q' \leq q_1$,
and we still have $\rho(q', r) < t$ for all $q' \leq q_1$ since $q_1 \leq q$.
Then the measurable triangle inequality (Definition \ref{measpm} (iv))
implies $\rho(p,r) \leq s + t - \epsilon$, so that $(p,r) \in \Rc_{s + t}$.
\end{proof}

It follows that using the method of Theorem \ref{pointwise} we can
convert measurable pseudometrics on $(X,\mu)$ into pointwise pseudometrics
on the carrier space of $L^\infty(X,\mu)$. This result is an improved
version of Theorem 6.3.9 of \cite{W4}. We retain the notation of Section
\ref{reduct}.

\begin{theo}
Let $\rho$ be a measurable pseudometric on a finitely decomposable
measure space $(X,\mu)$. Then $d_\rho: \Omega^2 \to [0,\infty]$ defined by
$$d_\rho(\phi,\psi) = \sup\{\rho(p,q): \phi(p) = \psi(q) = 1\}$$
is a pseudometric on $\Omega$, and we have
$$\rho(p,q) = \inf\{d_\rho(\phi,\psi): \phi(p) = \psi(q) = 1\}$$
for any nonzero projections $p$ and $q$ in $L^\infty(X,\mu)$.
\end{theo}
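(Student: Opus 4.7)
The plan is to derive both parts from Lemma \ref{metricrel} and Theorem \ref{derivation} through the family of measurable relations $\Rc_t = \{(p,q) : \rho(p,q) < t\}$, $t > 0$.

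First I would verify the pseudometric axioms for $d_\rho$. Symmetry is inherited directly from symmetry of $\rho$. For $d_\rho(\phi,\phi) = 0$, if $\phi(p) = \phi(q) = 1$ then $\phi(pq) = 1$, so $pq$ is a nonzero projection lying below both $p$ and $q$; axiom (iii) of Definition \ref{measpm} implies antitonicity (when $p' \leq p$, writing $p = p \vee p'$ gives $\rho(p,q) = \inf\{\rho(p,q),\rho(p',q)\} \leq \rho(p',q)$), so $\rho(p,q) \leq \rho(pq,pq) = 0$. The triangle inequality is the substantive step. Fix $\phi,\psi,\theta \in \Omega$, choose $s' > d_\rho(\phi,\psi)$ and $t' > d_\rho(\psi,\theta)$, and let $p,r$ satisfy $\phi(p) = \theta(r) = 1$. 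For every projection $q$, the number $\psi(q) \in \{0,1\}$; if $\psi(q) = 1$ then $\rho(p,q) \leq d_\rho(\phi,\psi) < s'$ forces $(p,q) \in \Rc_{s'}$, while if $\psi(q) = 0$ then $\psi(1-q) = 1$ and $\rho(1-q,r) \leq d_\rho(\psi,\theta) < t'$ forces $(1-q,r) \in \Rc_{t'}$. This is exactly the first criterion in Proposition \ref{measverify}(c), so $(p,r) \in \Rc_{s'}\cdot\Rc_{t'} \subseteq \Rc_{s'+t'}$ by Lemma \ref{metricrel}, giving $\rho(p,r) < s' + t'$. Letting $s'$ and $t'$ descend to $d_\rho(\phi,\psi)$ and $d_\rho(\psi,\theta)$ and taking the supremum over $p,r$ yields $d_\rho(\phi,\theta) \leq d_\rho(\phi,\psi) + d_\rho(\psi,\theta)$.

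For the infimum formula, the inequality $\rho(p,q) \leq \inf\{d_\rho(\phi,\psi) : \phi(p)=\psi(q)=1\}$ is immediate from the definition of $d_\rho$. For the reverse, fix $t > \rho(p,q)$ so that $(p,q) \in \Rc_t$. Theorem \ref{derivation} applied to $\Rc_t$ produces a finitely decomposable measure space $(Y,\nu)$ and unital weak* continuous $*$-homomorphisms $\pi_l,\pi_r : L^\infty(X,\mu) \to L^\infty(Y,\nu)$ with $\pi_l(p) = \pi_r(q) = 1$ and $\pi_l(p')\pi_r(q') = 0$ whenever $\rho(p',q') \geq t$. For any character $\phi'$ of $L^\infty(Y,\nu)$, setting $\phi = \phi' \circ \pi_l$ and $\psi = \phi' \circ \pi_r$ gives $\phi(p) = \psi(q) = 1$; and whenever $\phi(p') = \psi(q') = 1$ the product $\pi_l(p')\pi_r(q')$ is a projection on which $\phi'$ takes value $1$, hence nonzero, so $\rho(p',q') < t$. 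Therefore $d_\rho(\phi,\psi) \leq t$, and letting $t \searrow \rho(p,q)$ completes the argument.

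The main obstacle is the triangle inequality. The decisive observation is that a character $\psi$ is two-valued on projections, so the dichotomy $\psi(q) = 1$ versus $\psi(1-q) = 1$ automatically supplies the first criterion of Proposition \ref{measverify}(c); once this is in place, Lemma \ref{metricrel} delivers the multiplicative bound $\Rc_{s'}\cdot\Rc_{t'} \subseteq \Rc_{s'+t'}$ that closes the loop. The infimum formula, by comparison, is essentially a repackaging of Theorem \ref{derivation}.
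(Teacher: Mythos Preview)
Your proof is correct. The infimum formula is handled exactly as in the paper, via Theorem \ref{derivation} applied to $\Rc_t$. For the triangle inequality you take a genuinely different route: the paper argues directly from the measurable triangle inequality (axiom (iv) of Definition \ref{measpm}), building a projection $q' = (1-p')(1-r')$ on which $\psi$ takes the value $1$ and then bounding $\rho(p,r)$ by $\sup_{q'' \leq q'}(\rho(p,q'') + \rho(q'',r))$. You instead observe that the two-valuedness of $\psi$ on projections automatically verifies the first criterion in Proposition \ref{measverify}(c), and then invoke the inclusion $\Rc_{s'}\cdot\Rc_{t'} \subseteq \Rc_{s'+t'}$ from Lemma \ref{metricrel}. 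The underlying content is the same --- the proof of that inclusion in Lemma \ref{metricrel} is itself a direct application of axiom (iv) --- but your version is more modular, reusing machinery already built rather than redoing the estimate by hand. One small point worth noting explicitly: your choice of $s' > d_\rho(\phi,\psi)$ and $t' > d_\rho(\psi,\theta)$ tacitly assumes both quantities are finite, which is harmless since the triangle inequality is vacuous otherwise.
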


\begin{proof}
For any $\phi \in \Omega$ and any projections $p,q \in L^\infty(X,\mu)$,
if $\phi(p) = \phi(q) = 1$ then $\phi(pq) = 1$ and hence
$pq \neq 0$; by Definition \ref{measpm} (i) and (iii)
this implies $\rho(p,q) = 0$. This shows that
$d_\rho(\phi,\phi) = 0$ for all $\phi \in \Omega$. Symmetry of $d_\rho$ follows
immediately from symmetry of $\rho$. For the triangle inequality, let
$\phi,\psi,\theta \in \Omega$ and fix projections $p,r \in L^\infty(X,\mu)$
such that $\phi(p) = \theta(r) = 1$. We may assume that
$d_\rho(\phi, \psi), d_\rho(\psi,\theta) < \infty$.
Let $\epsilon > 0$, let $p'$
be the join of the projections $p''$ such that $\rho(p,p'') \geq
d_\rho(\phi,\psi) + \epsilon$, and let $r'$ be the join of the projections
$r''$ such that $\rho(r'',r) \geq d_\rho(\psi,\theta) + \epsilon$. Then
$\rho(p,p') \geq d_\rho(\phi,\psi) + \epsilon$, and since $\phi(p) = 1$
we must therefore have $\psi(p') = 0$ by the definition of
$d_\rho(\phi,\psi)$. Similarly $\psi(r') = 0$, so letting $q' = (1-p')(1-r')$
we must have $\psi(q') = \psi(1-p')\psi(1-r') = 1$. Therefore $q' \neq 0$, so
$$\rho(p,r) \leq  \sup_{q'' \leq q'} (\rho(p,q'') + \rho(q'',r))
\leq d_\rho(\phi,\psi) + d_\rho(\psi,\theta) + 2\epsilon$$
since any $q'' \leq q'$ is disjoint from both $p'$ and $r'$. Taking
$\epsilon \to 0$ and then taking the supremum over $p$ and $r$ yields
$d_\rho(\phi,\theta) \leq d_\rho(\phi,\psi) + d_\rho(\psi,\theta)$.
So $d_\rho$ is a pseudometric on $\Omega$.

Let $p$ and $q$ be nonzero projections in $L^\infty(X,\mu)$.
It is immediate from the definition of $d_\rho(\phi,\psi)$ that
$\rho(p,q) \leq d_\rho(\phi,\psi)$ for any $\phi,\psi \in \Omega$
satisfying $\phi(p) = \psi(q) = 1$. Conversely, given $\epsilon > 0$
we need to find $\phi,\psi \in \Omega$ such that $\phi(p) = \psi(q) = 1$
and $d_\rho(\phi,\psi) \leq \rho(p,q) + \epsilon$. We may assume
$\rho(p,q) < \infty$.
Use Lemma \ref{metricrel} and Theorem \ref{derivation} to find a
finitely decomposable measure space $(Y,\nu)$ and a
pair of unital weak* continuous $*$-homomorphisms $\pi_l,\pi_r:
L^\infty(X,\mu) \to L^\infty(Y,\nu)$ such that $\pi_l(p) =\pi_r(q) =
1$ and $\pi_l(p')\pi_r(q') = 0$ for any nonzero projections
$p'$ and $q'$ with $\rho(p',q') \geq \rho(p,q) + \epsilon$. Let
$\phi$ be any nonzero homomorphism from $L^\infty(Y,\nu)$ to $\Cb$; then
$\phi \circ\pi_l$ and $\phi\circ\pi_r$ belong to $\Omega$ and
satisfy $\phi\circ\pi_l(p) = \phi\circ\pi_r(q) = 1$. Also, if
$p'$ and $q'$ are any nonzero projections in $L^\infty(X,\mu)$
such that $\phi\circ\pi_l(p') = \phi\circ\pi_r(q') = 1$ then
$\phi(\pi_l(p')\pi_r(q')) = 1$, so $\pi_l(p')\pi_r(q') \neq 0$,
and hence $\rho(p',q') < \rho(p,q) + \epsilon$.
Thus $d_\rho(\phi\circ\pi_l,\phi\circ\pi_r)
\leq \rho(p,q) + \epsilon$, as desired.
\end{proof}

Next, we define measurable Lipschitz numbers.
Recall that the {\it essential range} of a function
$f \in L^\infty(X,\mu)$ is the set of all $a \in \Cb$ such that
$f^{-1}(U)$ has positive measure for every open neighborhood $U$
of $a$. Equivalently, it is the spectrum of the operator
$M_f \in \Bc(L^2(X,\mu))$. If $p \in L^\infty(X,\mu)$ is a projection
then we denote the essential range of $f|_{{\rm supp}(p)}$ by
${\rm ran}_p(f)$.

\begin{defi}\label{measlipnum}
(\cite{W4}, Definition 6.2.1)
Let $(X,\mu)$ be a finitely decomposable measure space and let
$\rho$ be a measurable pseudometric on $X$. The {\it Lipschitz
number} of $f \in L^\infty(X,\mu)$ is the quantity
$$L(f) = \sup\left\{\frac{d({\rm ran}_p(f), {\rm ran}_q(f))}{\rho(p,q)}\right\},$$
where the supremum is taken over all nonzero projections $p,q \in
L^\infty(X,\mu)$ and we use the convention $\frac{0}{0} = 0$. Here $d$
is the usual (minimum) distance between compact subsets of $\Cb$.
We call $L$ the {\it Lipschitz gauge} associated to $\rho$ and we
define ${\rm Lip}(X,\mu) = \{f \in L^\infty(X,\mu): L(f) < \infty\}$.
\end{defi}

Now we introduce the key tool for studying Lipschitz numbers.

\begin{defi}\label{deLeeuw}
(\cite{W4}, Definition 6.3.1)
Let $(X,\mu)$ be a finitely decomposable measure space and let $\rho$
be a measurable pseudometric on $X$. For any nonzero projections
$p,q \in L^\infty(X,\mu)$ and any $\epsilon > 0$, let $\Rc_{p,q,\epsilon}$
be the measurable relation defined in Lemma \ref{metricrel} with
$t = \rho(p,q) + \epsilon$, so that $(p,q) \in \Rc_{p,q,\epsilon}$, and find
$\pi^{p,q,\epsilon}_l$, $\pi^{p,q,\epsilon}_r$, and $(Y_{p,q,\epsilon},
\nu_{p,q,\epsilon})$
as in Theorem \ref{derivation}. Then let $Y = \coprod Y_{p,q,\epsilon}$
and $\nu = \coprod \nu_{p,q,\epsilon}$, and for $f \in L^\infty(X,\mu)$ define
$\Phi(f)$ on $Y$ by
$$\Phi(f)|_{Y_{p,q,\epsilon}} = \frac{\pi^{p,q,\epsilon}_l(f) -
\pi^{p,q,\epsilon}_r(f)}{\rho(p,q) + \epsilon}.$$
Also define $\pi_l, \pi_r: L^\infty(X,\mu) \to L^\infty(Y,\nu)$
by $\pi_l = \bigoplus \pi^{p,q,\epsilon}_l$ and $\pi_r = \bigoplus
\pi^{p,q,\epsilon}_r$. We call $\Phi$ a {\it measurable de Leeuw map}.
\end{defi}

\begin{theo}\label{fixmm}
(\cite{W4}, Theorem 6.3.2)
Let $(X,\mu)$ be a finitely decomposable measure space, let $\rho$
be a measurable pseudometric on $X$, and let $\Phi$ be a measurable
de Leeuw map.

\noindent (a) For all $f \in L^\infty(X,\mu)$ we have $L(f) =
\|\Phi f\|_\infty$.

\noindent (b) $\Phi$ is linear and we have $\Phi(fg) = \pi_l(f)\Phi(g)
+ \Phi(f)\pi_r(g)$ for all $f,g \in L^\infty(X,\mu)$.

\noindent (c) The graph of $\Phi: {\rm Lip}(X,\mu) \to L^\infty(Y,\nu)$
is weak* closed in $L^\infty(X \coprod Y)$.
\end{theo}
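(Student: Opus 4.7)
The plan is to reduce part (a) to the separation property of Theorem~\ref{derivation} by passing to characters of the componentwise algebras, and then deduce parts (b) and (c) as short consequences. The key observation is that for each component $Y_{p,q,\epsilon}$ and each character $\phi$ of $L^\infty(Y_{p,q,\epsilon},\nu_{p,q,\epsilon})$, the compositions $\phi\circ\pi_l^{p,q,\epsilon}$ and $\phi\circ\pi_r^{p,q,\epsilon}$ are characters of $L^\infty(X,\mu)$ supported in $p$ and $q$ respectively (because $\pi_l^{p,q,\epsilon}(p)=\pi_r^{p,q,\epsilon}(q)=1$), so the scalars $\alpha=\phi(\pi_l^{p,q,\epsilon}(f))$ and $\beta=\phi(\pi_r^{p,q,\epsilon}(f))$ belong to ${\rm ran}_p(f)$ and ${\rm ran}_q(f)$.

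For $\|\Phi f\|_\infty\le L(f)$: given $\phi$ and the associated $\alpha,\beta$, and given $\delta>0$, I choose disjoint open disks $U\ni\alpha$ and $V\ni\beta$ of radius $\delta$ and put $p'=p\wedge\chi_{f^{-1}(U)}$, $q'=q\wedge\chi_{f^{-1}(V)}$. Then $\phi(\pi_l^{p,q,\epsilon}(p'))=\phi(\pi_r^{p,q,\epsilon}(q'))=1$, so $\pi_l^{p,q,\epsilon}(p')\pi_r^{p,q,\epsilon}(q')\ne 0$, which by Theorem~\ref{derivation} forces $(p',q')\in\Rc_{p,q,\epsilon}$, i.e.\ $\rho(p',q')<\rho(p,q)+\epsilon$. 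Combining ${\rm ran}_{p'}(f)\subseteq\overline U$, ${\rm ran}_{q'}(f)\subseteq\overline V$ with the definition of $L(f)$ yields $|\alpha-\beta|-2\delta\le d({\rm ran}_{p'}(f),{\rm ran}_{q'}(f))\le L(f)\rho(p',q')<L(f)(\rho(p,q)+\epsilon)$. Letting $\delta\downarrow 0$ and taking the supremum over $\phi$ gives $\|\pi_l^{p,q,\epsilon}(f)-\pi_r^{p,q,\epsilon}(f)\|_\infty\le L(f)(\rho(p,q)+\epsilon)$, and dividing by $\rho(p,q)+\epsilon$ and supping over components yields $\|\Phi f\|_\infty\le L(f)$. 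The reverse inequality is easier: for the same $\alpha,\beta$ one has $d({\rm ran}_p(f),{\rm ran}_q(f))\le|\alpha-\beta|\le\|\pi_l^{p,q,\epsilon}(f)-\pi_r^{p,q,\epsilon}(f)\|_\infty\le\|\Phi f\|_\infty(\rho(p,q)+\epsilon)$, and letting $\epsilon\downarrow 0$ and supping over $(p,q)$ yields $L(f)\le\|\Phi f\|_\infty$.

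Parts (b) and (c) then follow quickly. Linearity of $\Phi$ is immediate from the componentwise definition, and the Leibniz rule reduces to the algebraic identity $\pi_l(fg)-\pi_r(fg)=\pi_l(f)(\pi_l(g)-\pi_r(g))+(\pi_l(f)-\pi_r(f))\pi_r(g)$ on each component, divided by $\rho(p,q)+\epsilon$. For (c), weak-* continuity of every $\pi_l^{p,q,\epsilon}$ and $\pi_r^{p,q,\epsilon}$ ensures that $\Phi$ is weak-* continuous in each coordinate: if $(f_\lambda,\Phi(f_\lambda))\to(f,g)$ weak-* in $L^\infty(X\coprod Y)$, then on each $Y_{p,q,\epsilon}$ one has $\Phi(f_\lambda)|_{Y_{p,q,\epsilon}}\to\Phi(f)|_{Y_{p,q,\epsilon}}$ weak-*, so $g|_{Y_{p,q,\epsilon}}=\Phi(f)|_{Y_{p,q,\epsilon}}$ on every component and hence $g=\Phi(f)$ as functions on $Y$. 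Since $g\in L^\infty(Y,\nu)$ has finite essential supremum by hypothesis, part (a) gives $L(f)=\|g\|_\infty<\infty$, whence $f\in{\rm Lip}(X,\mu)$.

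The main technical hurdle is the spectral-projection step in (a): one must verify that for a character $\chi$ of the abelian von Neumann algebra $L^\infty(X,\mu)$ the value $\chi(f)$ lies in ${\rm ran}_p(f)$ whenever $\chi(p)=1$, and that $\chi(\chi_{f^{-1}(U)})=1$ whenever $\chi(f)\in U$ for an open set $U\subseteq\Cb$. Both facts follow from the Gelfand representation of $L^\infty(X,\mu)$, but should be recorded explicitly to justify the reduction of the $Y$-side sup-norm to the existence of compatible pairs of points in ${\rm ran}_p(f)\times{\rm ran}_q(f)$.
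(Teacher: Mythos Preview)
Your argument is correct and follows essentially the same strategy as the paper. For part (a) the paper localizes by partitioning $p=\sum p_i$, $q=\sum q_j$ into pieces on which ${\rm ran}_{p_i}(f)$ and ${\rm ran}_{q_j}(f)$ have diameter at most $\delta$, and then selects a pair $(i,j)$ with $\pi_l^{p,q,\epsilon}(p_i)\pi_r^{p,q,\epsilon}(q_j)\neq 0$ on which $|\pi_l(f)-\pi_r(f)|$ is within $2\delta$ of its supremum; you accomplish the same localization by evaluating at a character $\phi$ of $L^\infty(Y_{p,q,\epsilon})$ and then passing to the spectral projections $p'=p\wedge\chi_{f^{-1}(U)}$, $q'=q\wedge\chi_{f^{-1}(V)}$. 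In both cases the crux is the implication $\pi_l^{p,q,\epsilon}(p')\pi_r^{p,q,\epsilon}(q')\neq 0 \Rightarrow \rho(p',q')<\rho(p,q)+\epsilon$ supplied by Theorem~\ref{derivation}. Your Gelfand-theoretic packaging is slightly more conceptual, while the paper's partition argument stays entirely within measure theory and avoids invoking characters; neither buys anything substantive over the other. The two spectral facts you flag (that $\chi(p)=1$ forces $\chi(f)\in{\rm ran}_p(f)$, and that $\chi(f)\in U$ forces $\chi(\chi_{f^{-1}(U)})=1$) are routine and your acknowledgement that they should be recorded is appropriate. Parts (b) and (c) are handled identically in both proofs.
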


\begin{proof}
(a) Let $p, q \in L^\infty(X,\mu)$ be nonzero projections. Since
$\pi^{p,q,\epsilon}_l(p) = \pi^{p,q,\epsilon}_r(q) = 1_{Y_{p,q,\epsilon}}$,
the essential ranges of $\pi_l^{p,q,\epsilon}(f)$ and $\pi_r^{p,q,\epsilon}(f)$
are respectively contained in ${\rm ran}_p(f)$ and ${\rm ran}_q(f)$, so
$$\|\pi^{p,q,\epsilon}_l(f) - \pi^{p,q,\epsilon}_r(f)\|_\infty \geq
d({\rm ran}_p(f), {\rm ran}_q(f)).$$
Thus
$$\|\Phi(f)\|_\infty \geq
\frac{d({\rm ran}_p(f), {\rm ran}_q(f))}
{\rho(p,q) + \epsilon};$$
taking $\epsilon \to 0$ and the supremum over $p$ and $q$ then
yields $\|\Phi(f)\|_\infty \geq L(f)$.

To verify the reverse inequality, fix $p$, $q$, and $\epsilon$; we will
show that $\|\Phi(f)|_{Y_{p,q,\epsilon}}\|_\infty \leq L(f)$. We may assume
that $\|\pi_l^{p,q,\epsilon}(f) - \pi_r^{p,q,\epsilon}(f)\|_\infty > 0$. Let
$0 < \delta < \|\pi_l^{p,q,\epsilon}(f) - \pi_r^{p,q,\epsilon}(f)\|_\infty/2$
and partition $p$ and $q$ as $p = \sum_1^m p_i$,
$q = \sum_1^n q_j$ so that ${\rm ran}_{p_i}(f)$ and ${\rm ran}_{q_j}(f)$
have diameter at most $\delta$ for all $i$ and $j$. Then for some
choice of $i$ and $j$ we must have
$\pi_l^{p,q,\epsilon}(p_i)\pi_r^{p,q,\epsilon}(q_j) \neq 0$ and
$$d({\rm ran}_{p_i}(f), {\rm ran}_{q_j}(f)) \geq
\|\pi_l^{p,q,\epsilon}(f) - \pi_r^{p,q,\epsilon}(f)\|_\infty - 2\delta.$$
Then $\rho(p_i,q_j) \leq \rho(p,q) + \epsilon$ and hence
$$\frac{\|\pi^{p,q,\epsilon}_l(f) - \pi^{p,q,\epsilon}_r(f)\|_\infty}
{\rho(p,q) + \epsilon} \leq
\frac{d({\rm ran}_{p_i}(f), {\rm ran}_{q_j}(f)) + 2\delta}
{\rho(p_i,q_j)}.$$
If $\rho(p_i,q_j) > 0$ then taking $\delta \to 0$ shows
that $\|\Phi(f)|_{Y_{p,q,\epsilon}}\|_\infty \leq L(f)$; if
$\rho(p_i,q_j) = 0$ then $L(f)= \infty$ since $d({\rm ran}_{p_i}(f),
{\rm ran}_{q_j}(f)) > 0$, so again
$\|\Phi(f)|_{Y_{p,q,\epsilon}}\|_\infty \leq L(f)$. We conclude
that $\|\Phi(f)\|_\infty \leq L(f)$.

(b) This is trivially verified on each $Y_{p,q,\epsilon}$ separately.

(c) Let $\{f_\lambda\}$ be a net in ${\rm Lip}(X,\mu)$ and suppose
$f_\lambda \oplus \Phi(f_\lambda) \to f \oplus g$ weak* in
$L^\infty(X \coprod Y)$. Restricting to each $Y_{p,q,\epsilon}$
shows that $g = \Phi(f)$. Then $L(f) = \|g\|_\infty < \infty$ by part (a)
so $f \in {\rm Lip}(X,\mu)$, and we conclude that the graph of $\Phi$
is weak* closed.
\end{proof}

\begin{coro}\label{basiclip}
(\cite{W4}, Lemma 6.2.6 and Theorem 6.2.7)
Let $(X,\mu)$ be a finitely decomposable measure space and let $\rho$
be a measurable pseudometric on $X$.

\noindent (a) $L(af) = |a|\cdot L(f)$, $L(\bar{f}) = L(f)$,
$L(f + g) \leq L(f) + L(g)$, and
$L(fg) \leq \|f\|_\infty L(g) + \|g\|_\infty L(f)$ for all $f,g \in
{\rm Lip}(X,\mu)$ and $a \in \Cb$.

\noindent (b) If $\{f_\lambda\} \subseteq L^\infty(X,\mu)$ is a net that
converges weak* to $f \in L^\infty(X,\mu)$ then
$L(f) \leq \sup L(f_\lambda)$.

\noindent (c) ${\rm Lip}(X,\mu)$ is a self-adjoint unital subalgebra of
$L^\infty(X,\mu)$.
It is a dual Banach space for the norm $\|f\|_L = \max\{\|f\|_\infty, L(f)\}$.

\noindent (d) The real part of the unit ball of ${\rm Lip}(X,\mu)$ is
a complete sublattice of the real part of the unit ball of $L^\infty(X,\mu)$.
\end{coro}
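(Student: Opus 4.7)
The plan is to derive all four parts from Theorem \ref{fixmm}, via the identification $L(f) = \|\Phi(f)\|_\infty$ together with the linearity-plus-derivation and weak*-closed graph properties of the de Leeuw map.

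Parts (a) and (b) should drop out mechanically. For (a), scalar homogeneity and additivity of $L$ come from linearity of $\Phi$; conjugate invariance from the fact that each $\pi_l^{p,q,\epsilon}$ and $\pi_r^{p,q,\epsilon}$ is a $*$-homomorphism (so $\Phi(\bar f) = \overline{\Phi(f)}$ on every $Y_{p,q,\epsilon}$); and the Leibniz estimate for $L(fg)$ from the derivation rule in Theorem \ref{fixmm}(b) together with the contractivity of $\pi_l$ and $\pi_r$. For (b), if $M := \sup_\lambda L(f_\lambda) < \infty$, then $\{\Phi(f_\lambda)\}$ lies in the weak* compact $M$-ball of $L^\infty(Y,\nu)$, so a subnet converges weak* to some $g$ with $\|g\|_\infty \leq M$; the weak*-closed graph property from Theorem \ref{fixmm}(c) then forces $g = \Phi(f)$, yielding $L(f) \leq M$.

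For (c), the algebra and unit assertions are immediate from (a) (noting that $\Phi(1) = \pi_l(1) - \pi_r(1) = 0$, so $L(1) = 0$). To get the dual Banach space structure, I would embed ${\rm Lip}(X,\mu)$ isometrically into the $\ell^\infty$-sum $L^\infty(X,\mu) \oplus L^\infty(Y,\nu)$ by $f \mapsto (f, \Phi(f))$. The image is precisely the graph of $\Phi$, which is weak* closed by Theorem \ref{fixmm}(c); intersecting with the weak*-compact ambient unit ball shows that the unit ball of ${\rm Lip}(X,\mu)$ is weak* compact in the inherited topology, and the Dixmier--Ng theorem then delivers a predual.

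The main obstacle, and the only place where the fact that $\Phi$ is merely linear (not multiplicative) needs to be supplemented, is part (d). I would first handle finite lattice operations on real elements and then reduce an arbitrary join to its finite sub-joins via part (b). For the finite case, the identity $f \vee g = \tfrac{1}{2}(f + g + |f - g|)$, combined with preservation of absolute values under $*$-homomorphisms, gives $\pi_l^{p,q,\epsilon}(f \vee g) = \pi_l^{p,q,\epsilon}(f) \vee \pi_l^{p,q,\epsilon}(g)$, and similarly for $\pi_r$. The pointwise inequality $|\max(a_1,a_2) - \max(b_1,b_2)| \leq \max(|a_1 - b_1|, |a_2 - b_2|)$ on $\Rb$ (easily checked by cases) then yields $\|\Phi(f \vee g)\|_\infty \leq \max(\|\Phi(f)\|_\infty, \|\Phi(g)\|_\infty)$, i.e., $L(f \vee g) \leq \max(L(f), L(g))$; the analogous argument (or complementation) handles $f \wedge g$. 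An arbitrary join of real unit-ball elements is the weak* limit of the net of its finite sub-joins, so part (b) upgrades the finite-case bound to the general one.
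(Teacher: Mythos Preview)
Your proposal is correct, and for parts (a) and (c) it matches the paper's argument essentially verbatim (though the paper does not invoke Dixmier--Ng in (c): it simply observes that the graph of $\Phi$ is a weak* closed subspace of $L^\infty(X\coprod Y)$, hence automatically a dual space, which is all that is needed).

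There are two places where your route genuinely differs from the paper's. For (b), the paper does not pass to a subnet or appeal to the closed-graph property; it argues directly that $\Phi(f_\lambda)|_{Y_{p,q,\epsilon}} \to \Phi(f)|_{Y_{p,q,\epsilon}}$ weak* on each piece because $\pi_l^{p,q,\epsilon}$ and $\pi_r^{p,q,\epsilon}$ are weak* continuous, and then bounds $\|\Phi(f)\|_\infty$ by $\sup_\lambda\|\Phi(f_\lambda)\|_\infty$ using lower semicontinuity of the norm. Your compactness-plus-closed-graph argument works equally well but is slightly less direct. For (d), the paper does \emph{not} use the de Leeuw map at all for the finite lattice step; instead it returns to the essential-range definition of $L$ and asserts the inequality
\[
d({\rm ran}_p(f\vee g),{\rm ran}_q(f\vee g)) \leq \max\bigl\{d({\rm ran}_p(f),{\rm ran}_q(f)),\, d({\rm ran}_p(g),{\rm ran}_q(g))\bigr\}.
\]
Your approach---pushing $f\vee g$ through the $*$-homomorphisms $\pi_l,\pi_r$ and applying the scalar inequality $|\max(a_1,a_2)-\max(b_1,b_2)|\le\max(|a_1-b_1|,|a_2-b_2|)$ pointwise in $L^\infty(Y)$---is a clean alternative that stays entirely within the de Leeuw framework and makes the estimate transparent. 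Both proofs then finish the passage to arbitrary joins identically, via part (b).
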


\begin{proof}
(a) This follows easily from Theorem \ref{fixmm} (a) and (b).

(b) If $f_\lambda \to f$ weak* then $\Phi(f_\lambda)|_{Y_{p,q,\epsilon}}
\to \Phi(f)|_{Y_{p,q,\epsilon}}$ weak* for each $p$, $q$, $\epsilon$.
It follows that
$$L(f) = \|\Phi(f)\|_\infty \leq \sup \|\Phi(f_\lambda)\|_\infty =
\sup L(f_\lambda).$$

(c) The first assertion follows immediately from part (a) and the
second assertion follows from Theorem \ref{fixmm} (c) because
$\|f\|_L = \|f \oplus \Phi(f)\|_\infty$, so that ${\rm Lip}(X,\mu)$
equipped with this norm is isometric to the graph of $\Phi$.

(d) If $f,g \in {\rm Lip}(X,\mu)$ are real-valued then $L(f \vee g) \leq
\max\{L(f), L(g)\}$ because
$$d({\rm ran}_p(f \vee g), {\rm ran}_q(f \vee g)) \leq
\max\{d({\rm ran}_p(f), {\rm ran}_q(f)), d({\rm ran}_p(g), {\rm ran}_q(g)\}$$
for all $p$ and $q$, and $L(f \wedge g) \leq \max\{L(f), L(g)\}$
similarly.
So the real part of the unit ball of ${\rm Lip}(X,\mu)$ is a sublattice of
the real part of the unit ball of $L^\infty(X,\mu)$, and it is then a complete
sublattice by part (b) since $\bigvee f_\lambda$ and $\bigwedge f_\lambda$
are respectively the weak* limits of the net of finite joins of the
$f_\lambda$ and the net of finite meets of the $f_\lambda$.
\end{proof}

We include one more fundamental result.

\begin{lemma}\label{abdistfn}
Let $(X,\mu)$ be a finitely decomposable measure space and let
$\rho$ be a measurable pseudometric on $X$. Let $r \in L^\infty(X,\mu)$
be a nonzero projection and let $c > 0$. Then the function
$$\bigvee \min\{\rho(p,r), c\}\cdot p,$$
taking the join in $L^\infty(X,\mu)$ over all nonzero projections $p$,
has Lipschitz number at most 1.
\end{lemma}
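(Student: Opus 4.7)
Define the decreasing family of projections $h_s = \bigvee\{p' : \rho(p',r) > s\}$ for $s > 0$; property (iii) of Definition \ref{measpm} gives $\rho(h_s,r) \geq s$, and a direct check (using that joins of positive functions are pointwise suprema) shows $\{f > s\} = h_s$ in $L^\infty(X,\mu)$ for $0 < s < c$, where $f$ denotes the function in the statement. I will prove $L(f) \leq 1$ through Theorem \ref{fixmm}(a), so it suffices to verify, for each triple $(p,q,\epsilon)$ appearing in Definition \ref{deLeeuw}, that $\|\pi_l^{p,q,\epsilon}(f) - \pi_r^{p,q,\epsilon}(f)\|_\infty \leq t$ where $t := \rho(p,q) + \epsilon$.

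The crux of the proof is the inequality $\rho(h_b, 1-h_a) \geq b - a$ for $0 < a < b \leq c$. I expect this to be the main obstacle. Suppose for contradiction that $\rho(h_b, 1-h_a) < b - a - 2\eta$ for some $\eta > 0$. Proposition \ref{reduce} then produces nonzero $h_b^* \leq h_b$ and $q^* \leq 1-h_a$ with $\rho(h_b^*, q'') < b-a-2\eta$ for every nonzero $q'' \leq q^*$. Any such $q''$ satisfies $\rho(q'',r) \leq a$, since otherwise $q''$ would lie in the join defining $h_a$, contradicting $q'' \leq 1 - h_a$. Hence $q^*$ witnesses $(h_b^*, r) \in \Rc_{b-a-2\eta}\cdot \Rc_{a+\eta}$ by Proposition \ref{measverify}(c), and Lemma \ref{metricrel} gives $\Rc_{b-a-2\eta} \cdot \Rc_{a+\eta} \subseteq \Rc_{b-\eta}$; therefore $\rho(h_b^*, r) < b - \eta$, contradicting $\rho(h_b^*, r) \geq \rho(h_b, r) \geq b$.

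Given this estimate, for each $s > t$ the pair $(h_s, 1-h_{s-t})$ lies outside $\Rc_t$, so the derivation property of Theorem \ref{derivation} forces $\pi_l^{p,q,\epsilon}(h_s)\,\pi_r^{p,q,\epsilon}(1-h_{s-t}) = 0$ and, symmetrically, $\pi_l^{p,q,\epsilon}(1-h_{s-t})\,\pi_r^{p,q,\epsilon}(h_s) = 0$. Since $\pi_l^{p,q,\epsilon}$ and $\pi_r^{p,q,\epsilon}$ are unital weak* continuous $*$-homomorphisms applied to the positive join $f$, at almost every $y$ one has $\pi_l^{p,q,\epsilon}(f)(y) = \min(\sigma_l(y), c)$, where $\sigma_l(y) := \sup\{\rho(p',r) : \pi_l^{p,q,\epsilon}(p')(y) = 1\}$, and $\pi_l^{p,q,\epsilon}(h_s)(y) = 1 \Leftrightarrow \sigma_l(y) > s$; analogous statements hold for $\pi_r^{p,q,\epsilon}$ and $\sigma_r$. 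Running the two vanishing identities along a countable dense set of values $s \in (t, c)$ yields, a.e., the implications $\sigma_l(y) > s \Rightarrow \sigma_r(y) > s - t$ and $\sigma_r(y) > s \Rightarrow \sigma_l(y) > s - t$. Letting $s$ approach $\min(\sigma_l(y), c)$ (or $\min(\sigma_r(y), c)$) from below, and using that $x \mapsto \min(x,c)$ is $1$-Lipschitz on $[0,\infty)$, one then obtains $|\pi_l^{p,q,\epsilon}(f)(y) - \pi_r^{p,q,\epsilon}(f)(y)| \leq t$ almost everywhere, as required.
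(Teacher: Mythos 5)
Your proof is correct, but it takes a genuinely different route from the paper's. The paper argues directly from Definition \ref{measlipnum}: given $p$, $q$, $\epsilon$, it uses Lemma \ref{metricrel} and repeated applications of Proposition \ref{reduce} to localize to subprojections $p_1\leq p$, $q_1\leq q$ on which $f$ is nearly constant, and then bounds $d({\rm ran}_p(f),{\rm ran}_q(f))$ by comparing $\rho(p_1,r)$ and $\rho(q_1,r)$ via the measurable triangle inequality. You instead identify the level sets of $f$ with the projections $h_s=\bigvee\{p':\rho(p',r)>s\}$, prove the clean global estimate $\rho(h_b,1-h_a)\geq b-a$ (your contradiction argument via Proposition \ref{reduce}, Proposition \ref{measverify}(c), and $\Rc_{s}\Rc_{t}\subseteq\Rc_{s+t}$ is correct), and then transport everything through the de Leeuw map using Theorem \ref{fixmm}(a). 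What your route buys is a statement of independent interest --- the level sets of the distance function form a ``co-Lipschitz'' family --- and a proof that avoids any direct manipulation of essential ranges; what it costs is reliance on the heavier machinery of Theorem \ref{derivation} and Theorem \ref{fixmm} (legitimately, since both precede the lemma in the paper). One presentational point to tighten: the pointwise quantity $\sigma_l(y)=\sup\{\rho(p',r):\pi_l^{p,q,\epsilon}(p')(y)=1\}$ is not well defined $y$-by-$y$ for an uncountable family of projections. What you actually need, and what is true, is the level-set identity $\pi_l^{p,q,\epsilon}(h_s)=\chi_{\{\pi_l^{p,q,\epsilon}(f)>s\}}$ for $0<s<c$, which follows because a unital weak* continuous $*$-homomorphism of abelian von Neumann algebras commutes with the Borel functional calculus (together with your observation that $\{f>s\}=h_s$ and $0\leq f\leq c$). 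Phrasing the last step in terms of $\alpha=\pi_l^{p,q,\epsilon}(f)(y)$ and $\beta=\pi_r^{p,q,\epsilon}(f)(y)$ and a countable dense set of $s\in(t,c)$, exactly as you indicate, then closes the argument rigorously.
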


\begin{proof}
Let $f$ be this join, let $p$ and $q$ be nonzero projections, and let
$\epsilon > 0$; we must show that $d({\rm ran}_p(f), {\rm ran}_q(f))
\leq \rho(p,q) + \epsilon$. Let $\Rc$ be the measurable relation
defined in Lemma \ref{metricrel} with $t = \rho(p,q) + \epsilon$
and let $p'$ and $q'$ be the projections provided by Lemma \ref{reduce}.
Find $p'' \leq p'$ such that ${\rm ran}_{p''}(f)$ has
diameter at most $\epsilon$, then apply Lemma \ref{reduce} to the
pair $(p'', q')$, then do the same thing with $p$'s and $q$'s reversed.
The result is a pair of nonzero projections $p_1 \leq p$ and $q_1 \leq q$
such that ${\rm ran}_{p_1}(f)$ and ${\rm ran}_{q_1}(f)$
both have diameter at most $\epsilon$ and $\rho(p_1,q_2),\rho(p_2,q_1)
< \rho(p,q) + \epsilon$ for every nonzero $p_2 \leq p_1$, $q_2 \leq q_1$.

Let $a = \rho(p_1, r)$ and $b = \rho(q_1, r)$. We may assume $b \leq a \leq c$.
Now apply Lemmas \ref{metricrel} and
\ref{reduce} to $q_1$ and $r$ to find $q_2 \leq q_1$ such
that $\rho(q_3,r) \leq \rho(q_1,r) + \epsilon$ for all $q_3 \leq q_2$.
Then by the measurable triangle inequality we have
$$a = \rho(p_1, r) \leq \sup_{q_3 \leq q_2}(\rho(p_1, q_3) + \rho(q_3,r))
\leq \rho(p,q) + b + 2\epsilon.$$
We claim that ${\rm ran}_{p_1}(f) \subseteq [a,a + \epsilon]$
and ${\rm ran}_{q_2}(f) \subseteq [b, b+\epsilon]$. This is because,
first, it is immediate from the definition of $f$ that $f \geq ap_1$,
and second, Lemmas \ref{metricrel} and \ref{reduce} guarantee, for any
$\delta > 0$, the existence of a nonzero projection $p_2 \leq p_1$
such that any nonzero projection under $p_2$ has distance at most
$a + \delta$ to $r$, so that $fp_2 \leq (a + \delta)p_2$.
This shows that $a \in {\rm ran}_{p_1}(f)$, and since ${\rm ran}_{p_1}(f)$
has diameter at most $\epsilon$
we conclude that ${\rm ran}_{p_1}(f) \subseteq [a,a+\epsilon]$.
The same argument applies to ${\rm ran}_{q_1}(f)$. We therefore have
$$d({\rm ran}_p(f), {\rm ran}_q(f))
\leq a-b \leq \rho(p,q) + 2\epsilon,$$
which is enough.
\end{proof}

The join in Lemma \ref{abdistfn} is a measurable version of the
pointwise distance function $x \mapsto \min\{d(x,S), c\}$. Note that
as long as there exists a nonzero projection $p$ with $0 < \rho(p,r)
< \infty$ the reverse inequality is easy, i.e., the Lipschitz number
of the join is exactly 1.

\section{Quantum relations}\label{qr}

We now proceed to our definition of a quantum relation on a von
Neumann algebra in terms of a bimodule over the commutant. The rough
intuition is that the bimodule consists of the operators that only
connect pairs of points that belong to the relation.

\subsection{Basic definitions}\label{basic}
Let $H$ be a complex Hilbert space, not necessarily separable. Recall
that the {\it weak*} (or {\it $\sigma$-weak}) topology on $\Bc(H)$ is the
weak topology arising from the pairing $\langle A,B\rangle \mapsto
{\rm tr}(AB)$ of $\Bc(H)$ with the trace class operators $\TC(H)$;
that is, it is the weakest topology that makes the map $A \mapsto
{\rm tr}(AB)$ continuous for all $B \in \TC(H)$. The weak* topology is
finer than the weak operator topology but the two agree on bounded sets. 
A {\it dual operator space} is a weak* closed subspace $\Vc$ of $\Bc(H)$;
it is a {\it W*-bimodule} over a von Neumann algebra $\Mx \subseteq \Bc(H)$
if $\Mx\Vc\Mx \subseteq \Vc$.

We will refer to \cite{Tak} for standard facts about von Neumann algebras.

\begin{defi}\label{quantrel}
A {\it quantum relation} on a von Neumann algebra $\Mx \subseteq \Bc(H)$
is a W*-bimodule over its commutant $\Mx'$, i.e., it is a weak* closed
subspace $\Vc \subseteq \Bc(H)$ satisfying $\Mx'\Vc\Mx' \subseteq \Vc$.
\end{defi}

The generalization to a quantum relation on a pair of von Neumann
algebras $\Mx \subseteq \Bc(H)$ and $\Nc \subseteq \Bc(K)$ would be:
a weak* closed subspace $\Vc \subseteq \Bc(K,H)$ satisfying
$\Mx'\Vc\Nc' \subseteq \Vc$. We need not develop this more general
notion separately since quantum relations on $\Mx$ and $\Nc$ can be
identified with quantum relations on the direct sum $\Mx\oplus\Nc \subseteq
\Bc(H \oplus K)$ satisfying $\Vc = I_H\Vc I_K$ (i.e., that live in the
$(H,K)$ corner of $\Bc(H \oplus K)$).

As we noted above, the intuition is that $\Vc$ consists of the
operators that only connect pairs of points that belong to the relation.
In the atomic abelian case this is exactly right. Recall the notations
$e_x = \chi_{\{x\}}$ and $M_f: g \mapsto fg$. Also let $V_{xy}$ be
the rank one operator $V_{xy}: g \mapsto \langle g,e_y\rangle e_x$
on $l^2(X)$.

\begin{prop}\label{atomiccase}
Let $X$ be a set and let $\Mx \cong l^\infty(X)$ be the von Neumann
algebra of bounded multiplication operators on $l^2(X)$. If
$R$ is a relation on $X$ then
\begin{eqnarray*}
\Vc_R &=& \{A \in \Bc(l^2(X)): (x,y) \not\in R\quad \Rightarrow\quad
\langle Ae_y, e_x\rangle = 0\}\cr
&=& \overline{\rm span}^{wk^*}\{V_{xy}: (x,y) \in R\}
\end{eqnarray*}
is a quantum relation on $\Mx$; conversely, if $\Vc$ is a
quantum relation on $\Mx$ then
$$R_\Vc = \{(x,y) \in X^2: \langle Ae_y, e_x\rangle\neq 0
\hbox{ for some }A \in \Vc\}$$
is a relation on $X$. The two constructions are inverse to each other.
\end{prop}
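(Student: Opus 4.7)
The plan is to use the matrix coefficients $\alpha_{xy}(A) = \langle Ae_y,e_x\rangle$ as the basic bookkeeping device, since everything about the atomic abelian case is controlled by them. First I will dispatch the two expressions for $\Vc_R$. The intersection-of-kernels description is weak* closed (each $\alpha_{xy}$ is a weak* continuous functional), so it contains the weak* closed span of all $V_{xy}$ with $(x,y)\in R$. For the reverse containment, given $A$ with $\alpha_{xy}(A)=0$ off $R$, let $P_F = \sum_{x\in F} e_x$ for finite $F\subseteq X$ and note $P_F A P_G = \sum_{x\in F,\,y\in G}\alpha_{xy}(A) V_{xy}$, which lies in the span of $\{V_{xy}:(x,y)\in R\}$; the net $\{P_F A P_G\}$ is bounded by $\|A\|$ and converges to $A$ in the weak operator topology (hence weak* on bounded sets).

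Next I will verify that $\Vc_R$ is a quantum relation. Weak* closedness is automatic from the intersection-of-kernels description. For the bimodule property, the commutant of $l^\infty(X)$ acting on $l^2(X)$ is again $l^\infty(X)$ (maximal abelian), so it suffices to observe $\langle M_f A M_g e_y, e_x\rangle = f(x)g(y)\langle Ae_y, e_x\rangle$, which vanishes whenever $\langle Ae_y,e_x\rangle$ does. That $R_\Vc$ is a relation on $X$ is tautological.

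For the inverse-to-each-other assertion, the easy direction is $R_{\Vc_R}=R$: if $(x,y)\in R$ then $V_{xy}\in\Vc_R$ witnesses $(x,y)\in R_{\Vc_R}$, and if $(x,y)\notin R$ then every $A\in\Vc_R$ has $\alpha_{xy}(A)=0$ by definition. The more substantive direction is $\Vc = \Vc_{R_\Vc}$. The inclusion $\Vc\subseteq\Vc_{R_\Vc}$ is immediate from the definition of $R_\Vc$. For the reverse inclusion, the main step—and the one I expect to be the only nontrivial point—is to extract individual rank-one operators from $\Vc$: given $(x,y)\in R_\Vc$, pick $B\in\Vc$ with $\alpha_{xy}(B)\neq 0$; since $e_x,e_y\in\Mx'$, we have $M_{e_x}BM_{e_y}\in\Vc$, and this operator equals $\alpha_{xy}(B)\cdot V_{xy}$, so $V_{xy}\in\Vc$.

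Finally, for any $A\in\Vc_{R_\Vc}$, the approximation argument from the first paragraph (applied with $R$ replaced by $R_\Vc$) writes $A$ as the weak* limit of finite linear combinations $P_F A P_G = \sum \alpha_{xy}(A) V_{xy}$, where the sum runs over $(x,y)\in R_\Vc$ since the other coefficients vanish. Each such combination lies in $\Vc$ by the previous step, and $\Vc$ is weak* closed, so $A\in\Vc$. This closes the loop.
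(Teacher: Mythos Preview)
Your proof is correct and follows essentially the same approach as the paper's: both use the matrix-coefficient functionals $\alpha_{xy}$ to characterize $\Vc_R$, the finite-support approximation $P_FAP_F$ (or $P_FAP_G$) to identify $\Vc_R$ with the weak* closed span of the $V_{xy}$, and the key observation that $M_{e_x}BM_{e_y}$ is a scalar multiple of $V_{xy}$ to extract rank-one operators from $\Vc$. One small notational slip: you write ``$P_F=\sum_{x\in F}e_x$'' and ``$e_x,e_y\in\Mx'$'', but $e_x$ is a vector (or function) in $l^2(X)$; you mean the multiplication operator $M_{e_x}$, which is the rank-one projection onto $\Cb e_x$ and does lie in $\Mx'=\Mx$.
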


\begin{proof}
Note first that $\Mx = \Mx'$ in this case. Let $R \subseteq X^2$
and define $\Vc_R$ to be the set of operators $A$ such that
$\langle Ae_y,e_x\rangle = 0$ for all $(x,y) \not\in R$. Then it is
clear that $\Vc_R$ is a linear subspace of $\Bc(l^2(X))$. Also, $\Vc_R$ is
weak operator closed and therefore weak* closed. Finally, if $M_f$ and $M_g$
are any two multiplication operators in $\Mx$ then
$$\langle Ae_y, e_x\rangle = 0\quad \Rightarrow\quad
\langle M_fAM_ge_y, e_x\rangle =
f(x)g(y)\langle Ae_y,e_x\rangle= 0,$$
which shows that $A \in \Vc_R \Rightarrow M_fAM_g \in \Vc_R$.
So $\Vc_R$ is a W*-bimodule over $\Mx' = \Mx$.

We verify that $\Vc_R = \overline{\rm span}^{wk^*}\{V_{xy}: (x,y) \in R\}$.
We have $(x,y) \in R$ $\Rightarrow$ $V_{xy} \in \Vc_R$ because
$\langle V_{xy}e_{y'},e_{x'}\rangle \neq 0$ only if $x = x'$ and $y = y'$;
since
$\Vc_R$ is a weak* closed subspace of $\Bc(l^2(X))$ this proves the inclusion
$\supseteq$. For the reverse inclusion let $A \in \Vc_R$ and for any finite
subset $F \subseteq X$ let $P_F \in \Bc(l^2(X))$ be the orthogonal projection
onto ${\rm span}\{e_x: x \in F\} \subseteq l^2(X)$.
Then $P_FAP_F \to A$ boundedly weak
operator and hence weak*, so it will suffice to show that each $P_FAP_F$
belongs to ${\rm span}\{V_{xy}: (x,y) \in R\}$. But $P_FAP_F$ is a linear
combination of operators of the form $M_{e_x} A M_{e_y}$ for $x,y \in X$,
which are scalar multiples of the operators $V_{xy}$. Moreover,
$$M_{e_x}AM_{e_y} \neq 0\quad\Rightarrow\quad
\langle Ae_y,e_x\rangle \neq 0\quad\Rightarrow\quad
(x,y) \in R.$$
So $P_FAP_F$ is a linear combination of operators $V_{xy}$ with
$(x,y) \in R$, as desired. This proves the inclusion $\subseteq$.

The second assertion of the proposition is trivial: $R_\Vc$ is a
subset of $X^2$ directly from its definition.

Now let $R$ be a relation, let $\Vc = \Vc_R$, and let $\tilde{R} = R_\Vc$.
It is immediate that $\tilde{R} \subseteq R$. Conversely, let
$(x,y) \in R$; then $V_{xy}$
belongs to $\Vc$ and satisfies $\langle V_{xy}e_y, e_x\rangle \neq 0$,
so $(x,y) \in \tilde{R}$. Thus $R = \tilde{R}$.

Finally, let $\Vc$ be a quantum relation, let $R = R_\Vc$, and let
$\tilde{\Vc} = \Vc_R$. It is immediate that $\Vc \subseteq \tilde{\Vc}$.
For the reverse inclusion it will suffice to show that $V_{xy} \in \Vc$
for all $(x,y) \in R$. But if $(x,y) \in R$ then we must have
$\langle Be_y, e_x\rangle \neq 0$ for some $B \in \Vc$.   
Then $M_{e_x} B M_{e_y}$ is a nonzero scalar multiple of $V_{xy}$, and
it belongs to $\Vc$ since $\Vc$ is a bimodule over $\Mx$. So
$V_{xy} \in \Vc$, as desired.
\end{proof}

\subsection{Constructions with quantum relations}
Next we consider basic constructions that can be performed with
quantum relations. The following proposition is trivial.

\begin{prop}\label{quantverify}
Let $\Mx \subseteq \Bc(H)$ be a von Neumann algebra.

\noindent (a) The commutant $\Mx'$ is a quantum relation on $\Mx$.

\noindent (b) If $\Vc$ is a quantum relation on $\Mx$ then so is
$\Vc^* = \{A^*: A \in \Vc\}$.

\noindent (c) If $\Vc$ and $\Wc$ are quantum relations on $\Mx$ then
so is the weak* closure of their algebraic product.

\noindent (d) The intersection of any family of quantum relations on
$\Mx$ is a quantum relation on $\Mx$.

\noindent (e) The weak* closed sum of any family of quantum relations
on $\Mx$ is a quantum relation on $\Mx$.
\end{prop}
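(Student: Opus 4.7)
The plan is to dispatch each of the five parts by direct verification, leaning on two structural facts about $\Bc(H)$: (i) the adjoint $A \mapsto A^*$ is a weak* homeomorphism, and (ii) for any fixed $B \in \Bc(H)$ both left and right multiplication $A \mapsto BA$ and $A \mapsto AB$ are weak* continuous. Combined with the fact that $\Mx'$ is itself a weak* closed subalgebra (being a von Neumann algebra), these are enough to handle every case.

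For (a), $\Mx'$ is weak* closed by the double commutant theorem, and $\Mx' \cdot \Mx' \cdot \Mx' \subseteq \Mx'$ since $\Mx'$ is an algebra. For (b), $\Vc^*$ is weak* closed because it is the image of the weak* closed set $\Vc$ under the weak*-continuous involution; and if $B_1, B_2 \in \Mx'$ and $A \in \Vc$, then $B_1 A^* B_2 = (B_2^* A B_1^*)^* \in \Vc^*$ because $B_2^*, B_1^* \in \Mx'$ and $\Vc$ is an $\Mx'$-bimodule. For (d), the intersection $\bigcap_\alpha \Vc_\alpha$ is weak* closed as an intersection of weak* closed sets, and $\Mx' \bigl(\bigcap_\alpha \Vc_\alpha\bigr) \Mx' \subseteq \bigcap_\alpha \Mx' \Vc_\alpha \Mx' \subseteq \bigcap_\alpha \Vc_\alpha$.

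Parts (c) and (e) are the only ones that require a (still routine) use of weak* continuity rather than a bare algebraic manipulation. For (c), write $\Vc \cdot \Wc$ for the algebraic product and let $\overline{\Vc\Wc}^{w^*}$ denote its weak* closure, which is weak* closed by construction. Given $B \in \Mx'$, weak* continuity of left multiplication by $B$ yields $B \cdot \overline{\Vc\Wc}^{w^*} \subseteq \overline{B \Vc \Wc}^{w^*}$; since $B\Vc \subseteq \Vc$ (because $\Vc$ is an $\Mx'$-bimodule), this is contained in $\overline{\Vc\Wc}^{w^*}$. Right multiplication by elements of $\Mx'$ is handled symmetrically, using that $\Wc \Mx' \subseteq \Wc$. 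Part (e) follows the same template: the weak* closed sum is tautologically weak* closed, and left (resp.\ right) multiplication by $B \in \Mx'$ carries the algebraic sum $\sum_\alpha \Vc_\alpha$ into itself (since each $\Vc_\alpha$ is an $\Mx'$-bimodule) and hence, by weak* continuity, carries its weak* closure into itself.

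There is no real obstacle here; the only thing to be careful about is that weak* closedness of a product or sum is not automatic on the algebraic level, which is precisely why (c) and (e) are phrased with a closure. Once one invokes weak* continuity of one-sided multiplication, the bimodule property passes to the closure without difficulty.
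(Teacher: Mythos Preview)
Your argument is correct and is exactly the routine verification the paper has in mind; indeed the paper simply declares the proposition ``trivial'' and gives no proof at all. The only point worth flagging is that ``algebraic product'' in part (c) should be read as the linear span of products $AB$ with $A\in\Vc$, $B\in\Wc$ (so that its weak* closure is genuinely a subspace); with that understanding your verification goes through without change.
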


This justifies the following definition.

\begin{defi}\label{quanttypes}
Let $\Mx \subseteq \Bc(H)$ be a von Neumann algebra.

\noindent (a) The {\it diagonal quantum relation} on $\Mx$ is the
quantum relation $\Vc = \Mx'$.

\noindent (b) The {\it transpose} of a quantum relation $\Vc$ on
$\Mx$ is the quantum relation $\Vc^*$.

\noindent (c) The {\it product} of two quantum relations $\Vc$ and
$\Wc$ on $\Mx$ is the weak* closure of their algebraic product.

\noindent (d) A quantum relation $\Vc$ on $\Mx$ is
\begin{quote}
(i) {\it reflexive} if $\Mx' \subseteq \Vc$

\noindent (ii) {\it symmetric} if $\Vc^* = \Vc$

\noindent (iii) {\it antisymmetric} if $\Vc \cap \Vc^* \subseteq \Mx'$

\noindent (iv) {\it transitive} if $\Vc^2 \subseteq \Vc$.
\end{quote}
\end{defi}

We immediately note that Proposition \ref{atomiccase} reduces the
preceding notions to the classical ones in the atomic abelian case.

\begin{prop}\label{atomicprops}
Let $X$ be a set and let $\Mx \cong l^\infty(X)$ be the von Neumann
algebra of bounded multiplication operators on $l^2(X)$.
Also let $R_1$, $R_2$, and $R_3$ be relations on $X$ and let $\Vc_i
= \Vc_{R_i}$ ($i = 1,2,3$) be the corresponding quantum relations
on $\Mx$ as in Proposition \ref{atomiccase}. Then

\noindent (a) $R_1 \subseteq R_2$ $\Leftrightarrow$ $\Vc_1 \subseteq \Vc_2$

\noindent (b) $R_1$ is the diagonal relation $\Leftrightarrow$
$\Vc_1$ is the diagonal quantum relation

\noindent (c) $R_1$ is the transpose of $R_2$ $\Leftrightarrow$
$\Vc_1$ is the transpose of $\Vc_2$

\noindent (d) $R_3$ is the product of $R_1$ and $R_2$ $\Leftrightarrow$
$\Vc_3$ is the product of $\Vc_1$ and $\Vc_2$.
\end{prop}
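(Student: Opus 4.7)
The plan is to leverage the explicit spanning description $\Vc_R = \overline{\mathrm{span}}^{wk^*}\{V_{xy} : (x,y) \in R\}$ and the key multiplicative identities $V_{xy}^* = V_{yx}$ and $V_{xy}V_{y'z} = \delta_{yy'}V_{xz}$. Since the correspondence $R \leftrightarrow \Vc_R$ is already bijective by Proposition \ref{atomiccase}, each of (a)--(d) will reduce to verifying one implication, because the inverse constructions are canonical.

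For part (a), the forward implication is immediate from the definition of $\Vc_R$: if $R_1 \subseteq R_2$ and $A \in \Vc_{R_1}$, then the vanishing condition $\langle Ae_y,e_x\rangle = 0$ imposed on pairs outside $R_1$ is automatically satisfied on the smaller exclusion set $X^2 \setminus R_2$. The reverse implication uses that $V_{xy} \in \Vc_R$ iff $(x,y) \in R$, so $\Vc_1 \subseteq \Vc_2$ and $(x,y) \in R_1$ force $V_{xy} \in \Vc_2$, witnessing $(x,y) \in R_{\Vc_2} = R_2$. For (b), observe that $V_{xx} = M_{e_x}$, so $\Vc_{\Delta}$ is the weak* closure of the linear span of the $M_{e_x}$, which equals $\Mx$; since $\Mx$ is maximal abelian on $l^2(X)$, $\Mx = \Mx'$, and this is precisely the diagonal quantum relation. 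For (c), since $V_{xy}^* = V_{yx}$, applying $*$ to a weak* dense spanning set for $\Vc_{R_2}$ yields a weak* dense spanning set of $\{V_{yx} : (x,y) \in R_2\} = \{V_{x'y'} : (x',y') \in R_2^T\}$ for $\Vc_{R_2^T}$; the $*$-operation is weak* continuous, so $(\Vc_{R_2})^* = \Vc_{R_2^T}$, and the stated equivalence follows from bijectivity.

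Part (d) is the only step requiring a bit more care. For the inclusion $\overline{\Vc_1\Vc_2}^{wk^*} \subseteq \Vc_{R_1 R_2}$, given $A \in \Vc_1$ and $B \in \Vc_2$ and $(x,z) \notin R_1 R_2$, I would expand
$$\langle ABe_z, e_x\rangle = \langle Be_z, A^*e_x\rangle = \sum_y \langle Be_z, e_y\rangle \langle Ae_y, e_x\rangle,$$
the sum converging absolutely since $Be_z, A^*e_x \in l^2(X)$. For each $y$ either $(x,y) \notin R_1$, killing the second factor, or $(y,z) \notin R_2$, killing the first; hence every term vanishes. This shows $AB \in \Vc_{R_1 R_2}$, and since $\Vc_{R_1 R_2}$ is weak* closed the inclusion follows. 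For the reverse inclusion, $V_{xy}V_{yz} = V_{xz}$ lies in $\Vc_1\Vc_2$ whenever $(x,y) \in R_1$ and $(y,z) \in R_2$, so the weak* closed span of $\{V_{xz} : (x,z) \in R_1 R_2\}$, which is $\Vc_{R_1 R_2}$, is contained in $\overline{\Vc_1\Vc_2}^{wk^*}$.

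The main obstacle will be keeping the manipulations in (d) clean when $X$ is infinite, so that the formal matrix-coefficient calculation is justified by absolute convergence of the relevant $\ell^2$-inner products; once this is dispatched, everything else reduces to bookkeeping with the spanning $V_{xy}$'s and the bijectivity established in Proposition \ref{atomiccase}.
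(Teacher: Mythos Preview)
Your proof is correct and complete. The paper itself omits the argument entirely, remarking only that ``the proof of this proposition is straightforward,'' so there is no authorial proof to compare against; your write-up is exactly the sort of direct verification the paper leaves to the reader, exploiting the spanning description $\Vc_R = \overline{\mathrm{span}}^{wk^*}\{V_{xy}:(x,y)\in R\}$ and the rank-one identities in the natural way.
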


The proof of this proposition is straightforward.

Using Definition \ref{quanttypes} we can define quantum versions
of equivalence relations, preorders, partial orders, and graphs.

\begin{defi}\label{quantthings}
Let $\Mx \subseteq \Bc(H)$ be a von Neumann algebra.

\noindent (a) A {\it quantum equivalence relation} on $\Mx$ is a
reflexive, symmetric, transitive quantum relation on $\Mx$. That is,
it is a von Neumann algebra that contains $\Mx'$.

\noindent (b) A {\it quantum preorder} on $\Mx$ is a reflexive,
transitive quantum relation on $\Mx$. That is, it is a weak* closed
operator algebra that contains $\Mx'$.

\noindent (c) A {\it quantum partial order} on $\Mx$ is a reflexive,
antisymmetric, transitive quantum relation on $\Mx$. That is, it is
a weak* closed operator algebra $\Ac$ such that $\Ac \cap \Ac^* =
\Mx'$.

\noindent (d) A {\it quantum graph} on $\Mx$ is a reflexive, symmetric
quantum relation on $\Mx$. That is, it is a weak* closed operator
system that is a bimodule over $\Mx'$.
\end{defi}

Note that by the double commutant theorem, von Neumann algebras
containing $\Mx'$ correspond to von Neumann algebras
contained in $\Mx$. So quantum equivalence relations on $\Mx$
correspond to von Neumann subalgebras of $\Mx$. This is the
expected definition.

If $\Vc$ is a quantum preorder on $\Mx$ then
$\Vc\cap \Vc^*$ is a quantum equivalence relation on $\Mx$, i.e.,
$\Vc \cap \Vc^*$ is the commutant of some von Neumann subalgebra
$\Mx_0 \subseteq \Mx$. Then $\Vc$ is a quantum partial order on
$\Mx_0$. Passing from $\Mx$ to $\Mx_0$ is the quantum version of
factoring out equivalent
elements to turn a preorder into a partial order.

As we noted following Definition \ref{measthings}, a graph can
classically be encoded as a reflexive, symmetric relation.
This justifies our definition of a quantum graph. 

Definition \ref{quantthings} becomes especially simple when
$\Mx = \Bc(H)$; in that case $\Mx' = \Cb I$, so that
\begin{quote}
\noindent $\bullet$ a quantum relation on $\Bc(H)$ is a dual
operator space in $\Bc(H)$;

\noindent $\bullet$ a quantum equivalence relation on $\Bc(H)$
is a von Neumann algebra in $\Bc(H)$;

\noindent $\bullet$ a quantum preorder on $\Bc(H)$ is a weak*
closed unital operator algebra in $\Bc(H)$;

\noindent $\bullet$ a quantum partial order on $\Bc(H)$ is a weak*
closed operator algebra $\Ac$ in $\Bc(H)$ satisfying $\Ac \cap \Ac^*
= \Cb I$;

\noindent $\bullet$ a quantum graph on $\Bc(H)$ is a dual
operator system in $\Bc(H)$.
\end{quote}

For finite dimensional $H$, this definition of quantum graph was proposed
in \cite{DSW}.

\subsection{Basic results}
Next we show that although the definition of a quantum relation is
framed in terms of a particular representation, the notion is
in fact representation independent. This is slightly
surprising because the W*-bimodules over $\Mx$ do vary with
the representation of $\Mx$: if we add multiplicity to a representation
(i.e., tensor with the identity on a nontrivial Hilbert space) the set
of bimodules over $\Mx$ that are contained in $\Bc(H)$ grows. But the
commutant also grows, in such a way that the set of bimodules over
$\Mx'$ does not essentially change.

\begin{theo}\label{repindep}
Let $H_1$ and $H_2$ be Hilbert spaces and let $\Mx_1 \subseteq \Bc(H_1)$
and $\Mx_2 \subseteq \Bc(H_2)$ be isomorphic von Neumann algebras. Then
there is a 1-1 correspondence between the quantum relations on
$\Mx_1$ and the quantum relations on $\Mx_2$ which respects the
conditions $\Vc \subseteq \Wc$, $\Vc = \Mx'$, $\Vc^* =
\Wc$, and $\Uc\Vc = \Wc$.
\end{theo}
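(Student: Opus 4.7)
The plan is to reduce Theorem \ref{repindep} to the single assertion that the set of quantum relations is preserved by amplification. By the standard structure theorem for faithful normal representations of a von Neumann algebra (see \cite{Tak}, Theorem IV.5.5), given an isomorphism $\Mx_1 \cong \Mx_2$ there exist Hilbert spaces $K_1$, $K_2$ and a unitary $U:H_1 \otimes K_1 \to H_2 \otimes K_2$ such that $U(\Mx_1 \,\overline{\otimes}\, I_{K_1})U^* = \Mx_2 \,\overline{\otimes}\, I_{K_2}$ and conjugation by $U$ realizes the given isomorphism on the algebras. So I would build the desired correspondence in three steps: amplify from $\Mx_1 \subseteq \Bc(H_1)$ up to $\Mx_1 \,\overline{\otimes}\, I_{K_1} \subseteq \Bc(H_1 \otimes K_1)$, conjugate by $U$, then reverse-amplify down to $\Mx_2 \subseteq \Bc(H_2)$. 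Conjugation by $U$ is plainly a bijection on W*-bimodules, and it obviously respects inclusion, the adjoint, the product, and the commutant, so the real content is the amplification step.

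The key lemma I would establish is the following: for any $\Mx \subseteq \Bc(H)$ and any Hilbert space $K$, the map $\Vc \longmapsto \Vc \,\overline{\otimes}\, \Bc(K)$ is a bijection between quantum relations on $\Mx$ and quantum relations on $\Mx \,\overline{\otimes}\, I_K \subseteq \Bc(H \otimes K)$. The forward direction is routine, since $(\Mx \,\overline{\otimes}\, I_K)' = \Mx' \,\overline{\otimes}\, \Bc(K)$ and the tensor product of weak*-closed subspaces manifestly carries a bimodule structure of the required kind. The substance is surjectivity: given a weak*-closed $(\Mx' \,\overline{\otimes}\, \Bc(K))$-bimodule $\Wc \subseteq \Bc(H \otimes K)$, $\Wc$ is in particular invariant under left and right multiplication by $I \otimes \Bc(K)$, and a slice-map argument then forces $\Wc = \Vc \,\overline{\otimes}\, \Bc(K)$ for the weak*-closed subspace
\[
\Vc = \overline{\mathrm{span}}^{wk^*}\{(\mathrm{id} \otimes \omega)(W) : W \in \Wc,\ \omega \in \TC(K)\} \subseteq \Bc(H).
\]
The bimodule property $\Mx'\Vc\Mx' \subseteq \Vc$ then falls out of the $(\Mx' \,\overline{\otimes}\, \Bc(K))$-bimodule property of $\Wc$ by applying the slice maps $\mathrm{id} \otimes \omega$, which are weak*-continuous.

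Once the amplification correspondence is in hand, verifying the four listed preservations is essentially bookkeeping. Inclusion is obvious. The diagonal relation $\Mx'$ maps to $\Mx' \,\overline{\otimes}\, \Bc(K) = (\Mx \,\overline{\otimes}\, I_K)'$, which is the diagonal relation on the amplified algebra. Transpose commutes with tensoring because $(\Vc \,\overline{\otimes}\, \Bc(K))^* = \Vc^* \,\overline{\otimes}\, \Bc(K)$. For products, the algebraic product $(\Vc \,\overline{\otimes}\, \Bc(K))(\Uc \,\overline{\otimes}\, \Bc(K))$ contains $\Vc\Uc \otimes \Bc(K)$ and sits inside $(\overline{\Vc\Uc}^{wk^*}) \,\overline{\otimes}\, \Bc(K)$ because $\Bc(K)\cdot\Bc(K) = \Bc(K)$, so passing to weak* closures matches the two products.

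The main obstacle is the slice-map step — showing that an $(I \otimes \Bc(K))$-bimodule in $\Bc(H) \,\overline{\otimes}\, \Bc(K)$ is automatically a tensor product $\Vc \,\overline{\otimes}\, \Bc(K)$. This is standard in the separable setting (essentially a Tomiyama/Kraus-type slice map result, using the factoriality of $\Bc(K)$), but the present theorem imposes no separability, so care is required to verify that the slice characterization survives unchanged for general $K$ and that the resulting $\Vc$ is genuinely weak*-closed rather than merely weak operator closed. Everything else in the proof is formal, provided one is willing to invoke the structure theorem for faithful normal representations at the outset.
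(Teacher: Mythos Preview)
Your proposal is correct and follows essentially the same route as the paper: reduce via the structure theorem for normal representations to an amplification, then show that $\Vc \mapsto \Vc \,\overline{\otimes}\, \Bc(K)$ is a bijection between quantum relations on $\Mx$ and on $\Mx \,\overline{\otimes}\, I_K$, checking that inclusion, commutant, adjoint, and product are preserved. The only difference is cosmetic: where you invoke slice maps for surjectivity, the paper argues concretely with a fixed rank-one projection $P \in \Bc(K)$ (setting $\Vc = \{A : P \otimes A \in \tilde{\Vc}\}$) and rank-one partial isometries in $\Bc(K)$ to move between matrix entries, which makes the argument entirely elementary and dispels the separability worry you flag at the end.
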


\begin{proof}
Let $K$ be any nonzero Hilbert space. Then
$$\Mx_i \cong I_K \otimes \Mx_i\subseteq \Bc(K \otimes H_i)$$
($i = 1,2$). If the cardinality of $K$ is large enough then the
representations of $I_K \otimes \Mx_1$ and $I_K \otimes \Mx_2$ in
$\Bc(K \otimes H_1)$ and $\Bc(K \otimes H_2)$, respectively, are spatially
equivalent (\cite{Tak}, Theorem IV.5.5). So it is sufficient to consider
the case where $H_2 = K \otimes H_1$ and $\Mx_2 = I_K \otimes \Mx_1$.
We then have $\Mx_2' = \Bc(K) \overline{\otimes} \Mx_1'$ (\cite{Tak},
Theorem IV.5.9).

Given a W*-bimodule $\Vc \subseteq \Bc(H_1)$ over $\Mx_1'$, let
$\Bc(K) \overline{\otimes} \Vc \subseteq \Bc(K \otimes H_1)$ denote
the normal spatial tensor product, i.e., the weak* closure of the
algebraic tensor product in $\Bc(K \otimes H_1)$. It is clear that
the map $\Vc \mapsto \Bc(K) \overline{\otimes} \Vc$ respects the
conditions listed in the statement of the theorem. We must show that
(1) $\Bc(K) \overline{\otimes} \Vc$ is a W*-bimodule over $\Mx_2' =
\Bc(K) \overline{\otimes} \Mx_1'$; (2) if $\Wc$ is a distinct
W*-bimodule over $\Mx_1'$ then $\Bc(K) \overline{\otimes} \Vc \neq
\Bc(K) \overline{\otimes} \Wc$; and (3) every W*-bimodule over
$\Bc(K) \overline{\otimes} \Mx_1'$ is of the form
$\Bc(K) \overline{\otimes} \Vc$ for some W*-bimodule $\Vc$ over $\Mx_1'$.

It is clear that $\Bc(K) \overline{\otimes} \Vc$ is a weak* closed operator
space in $\Bc(K \otimes H_1)$. Now $\Vc$ is a bimodule over $\Mx_1'$, so it
is immediate that the algebraic tensor product $\Bc(K) \otimes \Vc$ is
a bimodule over the algebraic tensor product $\Bc(K) \otimes \Mx_1'$.
Taking weak* limits then shows that $\Bc(K) \overline{\otimes} \Vc$
is a bimodule over $\Bc(K) \overline{\otimes} \Mx_1'$. This verifies (1).

To verify (2), let $P$ be a rank 1 projection in $\Bc(K)$. We claim that
$$(P \otimes I_{H_1})(\Bc(K) \overline{\otimes} \Vc)(P \otimes I_{H_1})
= P \otimes \Vc$$
where $P \otimes \Vc = \{P \otimes A: A \in \Vc\}$. Now
$(P \otimes I_{H_1})(\Bc(K) \otimes \Vc)(P \otimes I_{H_1})
\subseteq P \otimes \Vc$ is clear, where $\Bc(K) \otimes \Vc$ is the
algebraic tensor product, and taking weak* limits
therefore establishes the inclusion $\subseteq$. The reverse
inclusion is trivial. This proves the claim and shows that
$\Vc \neq \Wc$ (hence $P \otimes \Vc \neq P \otimes \Wc$)
implies $\Bc(K) \overline{\otimes} \Vc \neq \Bc(K) \overline{\otimes} \Wc$.
Moreover, $\Vc \not\subseteq \Wc$ implies $\Bc(K)\overline{\otimes} \Vc
\not\subseteq \Bc(K) \overline{\otimes} \Wc$.

Finally, let $\tilde{\Vc}\subseteq \Bc(K \otimes H_1)$ be a W*-bimodule over
$\Bc(K) \overline{\otimes} \Mx_1'$ and let
$$\Vc = \{A \in \Bc(H_1): P \otimes A \in \tilde{\Vc}\}.$$
To prove (3) we will show that $\Vc$ is a W*-bimodule over $\Mx_1'$
and $\tilde{\Vc} = \Bc(K) \overline{\otimes} \Vc$. The first part is easy:
$\Vc$ is clearly a weak* closed operator space in $\Bc(H_1)$, and it is a
bimodule over $\Mx_1'$ because $\tilde{\Vc}$ is a bimodule over
$P \otimes \Mx_1' \subseteq \Bc(K) \overline{\otimes} \Mx_1'$. For the second
part, observe first that $P \otimes \Vc \subseteq \tilde{\Vc}$;
since $\tilde{\Vc}$ is a bimodule over $\Bc(K) \otimes I_{H_1} \subseteq
\Bc(K) \overline{\otimes} \Mx_1'$, multiplying on the left and the right
by operators of the form $B \otimes I_{H_1}$
and taking linear combinations yields $A \otimes \Vc \subseteq \tilde{\Vc}$
for any finite rank operator $A \in \Bc(K)$, and taking weak* limits
then shows that $\Bc(K) \overline{\otimes} \Vc \subseteq \tilde{\Vc}$.
Conversely, given $A \in \tilde{\Vc}$ it will suffice to show that
$$(Q \otimes I_{H_1})A(Q \otimes I_{H_1}) \in \Bc(K) \overline{\otimes} \Vc$$
for any finite rank projection $Q \in \Bc(K)$, as these operators
converge weak* to $A$. Then by linearity it is enough to show that
$$(Q_1 \otimes I_{H_1})A(Q_2 \otimes I_{H_1})
\in \Bc(K) \overline{\otimes} \Vc$$
for any rank 1 projections $Q_1$ and $Q_2$. But letting $V_1$ and $V_2$
be rank 1 partial isometries in $\Bc(K)$ such that $V_1V_1^* = V_2V_2^* = P$,
$V_1^* V_1 = Q_1$, and $V_2^* V_2 = Q_2$, we have
$$(V_1 \otimes I_{H_1})A(V_2^* \otimes I_{H_1}) = P \otimes B$$
for some $B \in \Vc$, and then
$$V_1^*V_2 \otimes B =
(V_1^* \otimes I_{H_1})(P \otimes B)(V_2 \otimes I_{H_2}) =
(Q_1 \otimes I_{H_1})A(Q_2 \otimes I_{H_1}).$$
So $(Q_1 \otimes I_{H_1})A(Q_2 \otimes I_{H_1})$ does belong to
$\Bc(K) \overline{\otimes} \Vc$.
\end{proof}

The following separation lemma will also be useful in the sequel.

\begin{lemma}\label{separation}
Let $\Vc$ be a quantum relation on a von Neumann algebra $\Mx \subseteq
\Bc(H)$ and let $A \in \Bc(H) - \Vc$. Then there is a pair of projections
$P$ and $Q$ in $\Mx \overline{\otimes} \Bc(l^2) \subseteq \Bc(H \otimes l^2)$
such that
$$P(A \otimes I)Q \neq 0$$
but
$$P(B \otimes I)Q = 0$$
for all $B \in \Vc$.
\end{lemma}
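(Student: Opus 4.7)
The plan is to combine a Hahn--Banach separation with the classical vectorization of weak*-continuous functionals, and then realize the separating data as a pair of cyclic projections in $\Mx \overline{\otimes} \Bc(l^2)$.

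First, since $\Vc$ is weak* closed and $A \not\in \Vc$, Hahn--Banach gives a weak*-continuous linear functional $\phi$ on $\Bc(H)$ with $\phi(A) \neq 0$ and $\phi|_\Vc = 0$. Write $\phi(X) = {\rm tr}(XT)$ for some $T \in \TC(H)$ and take an SVD $T = \sum_n s_n |\xi_n\rangle\langle\eta_n|$ with $\sum s_n < \infty$. Setting
$$\eta = \sum_n \sqrt{s_n}\, \eta_n \otimes e_n, \qquad \xi = \sum_n \sqrt{s_n}\, \xi_n \otimes e_n$$
in $H \otimes l^2$ (where $\{e_n\}$ is the standard basis), a direct computation gives
$$\phi(X) = \langle (X \otimes I)\eta, \xi\rangle \qquad \hbox{for all } X \in \Bc(H).$$

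Next, let $\tilde{\Mx} = \Mx \overline{\otimes} \Bc(l^2) \subseteq \Bc(H \otimes l^2)$, whose commutant is $\tilde{\Mx}' = \Mx' \otimes I_{l^2}$. Define $P$ to be the orthogonal projection onto $\overline{(\Mx' \otimes I)\xi}$ and $Q$ the projection onto $\overline{(\Mx' \otimes I)\eta}$. These subspaces are $\tilde{\Mx}'$-invariant, so by the double commutant theorem $P$ and $Q$ lie in $(\tilde{\Mx}')' = \tilde{\Mx} = \Mx \overline{\otimes} \Bc(l^2)$, which is what we need.

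Finally I would verify the two separation properties. Because $I \otimes I \in \Mx' \otimes I$, we have $Q\eta = \eta$ and $P\xi = \xi$, so
$$\langle P(A \otimes I)Q\eta, \xi\rangle = \langle (A \otimes I)\eta, \xi\rangle = \phi(A) \neq 0,$$
and hence $P(A \otimes I)Q \neq 0$. For any $B \in \Vc$ and any $S_1, S_2 \in \Mx'$, the $\Mx'$-bimodule property of $\Vc$ gives $S_2^* B S_1 \in \Vc$, so
$$\langle (B \otimes I)(S_1 \otimes I)\eta,\, (S_2 \otimes I)\xi\rangle = \phi(S_2^* B S_1) = 0.$$
Since the vectors $(S_i \otimes I)\eta$ and $(S_i \otimes I)\xi$ span dense subspaces of the ranges of $Q$ and $P$ respectively, this forces $P(B \otimes I)Q = 0$. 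The only step that requires any thought is the vectorization identity in the first paragraph; everything else is formal, so the main ``obstacle'' is really just checking that this standard trace/SVD computation does produce the desired equality $\phi(X) = \langle (X\otimes I)\eta, \xi\rangle$.
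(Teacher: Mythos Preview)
Your argument is correct and follows essentially the same route as the paper's proof: the paper simply cites \cite{Tak}, p.\ 67, for the vectorization step you spell out, and then builds $P$ and $Q$ as the cyclic projections over $\Mx'\otimes I$ exactly as you do. One small slip in your explicit computation: with $T=\sum_n s_n\,|\xi_n\rangle\langle\eta_n|$ one has $\mathrm{tr}(XT)=\sum_n s_n\langle X\xi_n,\eta_n\rangle$, whereas your choice of $\eta,\xi$ gives $\langle(X\otimes I)\eta,\xi\rangle=\sum_n s_n\langle X\eta_n,\xi_n\rangle$; swapping the roles of $\xi_n$ and $\eta_n$ in the definitions of $\eta$ and $\xi$ fixes this.
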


\begin{proof}
Since $\Vc$ is weak* closed there is a weak* continuous linear functional
on $\Bc(H)$ that annihilates $\Vc$ but not $A$. Thus (\cite{Tak}, p.\ 67)
there exist a pair of vectors $v$ and $w$ in $H \otimes l^2$ such that
$$\langle (A \otimes I)w,v\rangle \neq 0$$
but
$$\langle (B \otimes I)w,v\rangle = 0$$
for all $B \in \Vc$. Moreover, since $\Vc$ is a bimodule over $\Mx'$, we
have
$$\langle (B \otimes I)w', v'\rangle = 0$$
for all $B \in \Vc$, $v' \in (\Mx' \otimes I)v$, and
$w' \in (\Mx' \otimes I)w$. Let
$P,Q \in \Bc(H \otimes l^2)$ be the orthogonal projections onto the
closures of $(\Mx' \otimes I)v$ and $(\Mx' \otimes I)w$, respectively.
Then we immediately have
$$P(A \otimes I)Q \neq 0$$
and
$$P(B \otimes I)Q = 0$$
for all $B \in \Vc$. Also, by their construction the ranges of $P$ and
$Q$ are invariant for every operator in $\Mx' \otimes I$, hence $P$ and
$Q$ commute with every operator in $\Mx' \otimes I$, hence
$P,Q \in \Mx \overline{\otimes} B(l^2)$ (\cite{Tak}, Theorem IV.5.9).
\end{proof}

\subsection{The abelian case}\label{abcs}
Next we connect quantum relations to measurable relations when $\Mx$ is
abelian. This section is closely related to Arveson's
celebrated paper \cite{Arv}, and we will show in the next section that
some of Arveson's main results can easily be derived from ours. Actually,
the converse is also true, by a simple application of the linking algebra
construction: if $\Vc$ is a quantum relation on a von
Neumann algebra $\Mx \subseteq \Bc(H)$ then
$$\Ac = \left\{\left[
\begin{matrix}
A&B\cr
0&C
\end{matrix}
\right] \in \Bc(H \oplus H): A,C \in \Mx'\hbox{ and }B \in \Vc\right\}$$
is a unital weak* closed operator algebra that contains $\Mx' \oplus \Mx'$.
In this way quantum relations can be converted into operator algebras,
and using this device we could without too much effort deduce the main
results of this section from \cite{Arv}. However, we prefer to give
direct proofs (which are also not hard, given the machinery we have
already built up).

\begin{theo}\label{abelianrel}
Let $(X,\mu)$ be a finitely decomposable measure space and let
$\Mx \cong L^\infty(X,\mu)$ be the von Neumann algebra
of bounded multiplication operators on $L^2(X,\mu)$.
If $\Rc$ is a measurable relation on $X$ then
$$\Vc_\Rc = \{A \in \Bc(L^2(X,\mu)): (p,q) \not\in \Rc
\quad \Rightarrow \quad M_pAM_q = 0\}$$
is a quantum relation on $\Mx$; conversely, if $\Vc$ is a quantum
relation on $\Mx$ then
$$\Rc_\Vc = \{(p,q): M_pAM_q\neq 0\hbox{ for some }A \in \Vc\}$$
is a measurable relation on $X$. We have $\Rc = \Rc_{\Vc_\Rc}$ for
any measurable relation $\Rc$ on $X$ and $\Vc \subseteq
\Vc_{\Rc_\Vc}$ for any quantum relation $\Vc$ on $\Mx$.

If $\Vc_1$, $\Vc_2$, and $\Vc_3$ are quantum relations on $\Mx$
and $\Rc_i = \Rc_{\Vc_i}$ ($i = 1,2,3$) then

\noindent (a) $\Vc_1 \subseteq \Vc_2$ $\Rightarrow$ $\Rc_1 \subseteq
\Rc_2$

\noindent (b) $\Vc_1$ is the diagonal quantum relation $\Leftrightarrow$
$\Rc_1$ is the diagonal measurable relation

\noindent (c) $\Vc_1$ is the transpose of $\Vc_2$ $\Rightarrow$
$\Rc_1$ is the transpose of $\Rc_2$

\noindent (d) $\Vc_3$ is the product of $\Vc_1$ and $\Vc_2$ $\Rightarrow$
$\Rc_3$ is the product of $\Rc_1$ and $\Rc_2$.
\end{theo}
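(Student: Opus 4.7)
The plan has three phases: verify that the two constructions produce objects of the claimed type, establish the composite identities $\Rc = \Rc_{\Vc_\Rc}$ and $\Vc \subseteq \Vc_{\Rc_\Vc}$, and then check the structural parts (a)--(d). Throughout I use that $\Mx$ is maximal abelian (so $\Mx' = \Mx$) and that $A \mapsto M_pAM_q$ is weak* continuous.

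First I would verify the constructions. $\Vc_\Rc$ is an intersection over $(p,q) \notin \Rc$ of the weak* closed subspaces $\{A : M_pAM_q = 0\}$, and it is an $\Mx$-bimodule by the commutativity identity $M_p(M_fAM_g)M_q = M_f(M_pAM_q)M_g$. For $\Rc_\Vc$ I would verify conditions $(\ast')$ and $(\ast'')$ of Definition \ref{measrel}. Condition $(\ast')$ is immediate: if $M_{p'}AM_{q'} \neq 0$ with $p' \leq p$, $q' \leq q$, then $A' := M_{p'}AM_{q'} \in \Vc$ satisfies $M_pA'M_q = A' \neq 0$. Condition $(\ast'')$ is the substantial check: assuming $(p_\lambda, q_\kappa) \notin \Rc_\Vc$ for all $\lambda,\kappa$, I fix $A \in \Vc$ and decompose each finite join $\bigvee_{\lambda \in \Lambda}p_\lambda$ as an orthogonal sum $\sum_i p_i^\perp$ with $p_i^\perp \leq p_{\lambda_i}$ (and similarly for the $q$'s); then $M_{p_i^\perp}AM_{q_j^\perp} = M_{p_i^\perp}(M_{p_{\lambda_i}}AM_{q_{\kappa_j}})M_{q_j^\perp} = 0$, so $M_{\bigvee_\Lambda p_\lambda}AM_{\bigvee_K q_\kappa} = 0$; taking the SOT limit over $\Lambda, K$ gives $M_{\bigvee p_\lambda}AM_{\bigvee q_\kappa} = 0$, as required.

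For the composite identities, the inclusions $\Rc_{\Vc_\Rc} \subseteq \Rc$ and $\Vc \subseteq \Vc_{\Rc_\Vc}$ are immediate from the definitions. The substantive direction is $\Rc \subseteq \Rc_{\Vc_\Rc}$, and this is where the machinery of Section \ref{br} enters. Given $(p,q) \in \Rc$ with $p = \chi_S$, $q = \chi_T$, invoke Theorem \ref{connect} to obtain a nonzero $A : L^2(T,\mu|_T) \to L^2(S,\mu|_S)$ with $M_{p'}AM_{q'} = 0$ whenever $(p',q') \notin \Rc$ and $p' \leq p$, $q' \leq q$; extend $A$ by zero to an operator on $L^2(X,\mu)$ satisfying $A = M_pAM_q$. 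For an \emph{arbitrary} pair $(p',q') \notin \Rc$, compute $M_{p'}AM_{q'} = M_{p'p}AM_{qq'}$; axiom $(\ast')$ applied contrapositively gives $(p'p, qq') \notin \Rc$ (since $p'p \leq p'$ and $qq' \leq q'$), so Theorem \ref{connect} delivers $M_{p'p}AM_{qq'} = 0$. Hence $A \in \Vc_\Rc$, and $M_pAM_q = A \neq 0$.

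Finally, for (a)--(d). Part (a) is transparent since a witness in $\Vc_1 \subseteq \Vc_2$ serves both relations. For (b) forward, $\Rc_\Mx = \{(p,q) : pq \neq 0\} = \Delta$ by the computation $M_pM_fM_q = M_{pfq}$. For (b) backward, I first observe that $\Vc_1 \subseteq \Vc_{\Rc_{\Vc_1}} = \Vc_\Delta$, and $\Vc_\Delta = \Mx$: any $A \in \Vc_\Delta$ satisfies $M_pAM_{1-p} = 0 = M_{1-p}AM_p$ for every projection $p$, so $AM_p = M_pAM_p = M_pA$, and maximal abelianness of $\Mx$ forces $A \in \Mx$. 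Then $\Vc_1$ is a weak* closed ideal of $L^\infty(X,\mu)$, hence of the form $\chi_E L^\infty$ for some measurable $E$; the condition $\Rc_{\Vc_1} = \Delta$ forces $E = X$ (otherwise $p = q = \chi_{X\setminus E}$ lies in $\Delta$ but not in $\Rc_{\Vc_1}$), so $\Vc_1 = \Mx$. Part (c) reduces to the adjoint identity $(M_qBM_p)^* = M_pB^*M_q$: $(p,q) \in \Rc_1$ iff some $B^* \in \Vc_2^*$ satisfies $M_pB^*M_q \neq 0$ iff some $B \in \Vc_2$ satisfies $M_qBM_p \neq 0$ iff $(q,p) \in \Rc_2$. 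For (d) I argue contrapositively via Proposition \ref{measverify}(c): if $(p,r) \notin \Rc_1\Rc_2$, pick $q$ with $(p,q) \notin \Rc_1$ and $(1-q,r) \notin \Rc_2$; then for any $A \in \Vc_1$, $B \in \Vc_2$ we have $M_pA = M_pAM_{1-q}$ and $BM_r = M_qBM_r$, so $M_pABM_r = M_pAM_{1-q}M_qBM_r = 0$, and weak* continuity of the sandwich extends this to all $C \in \Vc_3$. The main obstacle is the invocation of Theorem \ref{connect} in the second paragraph, where a single operator witness for a pair in $\Rc$ must be manufactured; everything else is bookkeeping with commutativity, the bimodule axiom, and weak* continuity.
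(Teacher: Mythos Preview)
Your argument tracks the paper's proof closely through the first two phases and through parts (a)--(c), and the extra detail you supply (the contrapositive use of $(\ast')$ to handle arbitrary $(p',q')\notin\Rc$ after extending $A$ by zero) is a welcome clarification. The issue is in part (d): you prove only one inclusion. Your contrapositive argument shows that $(p,r)\notin\Rc_1\Rc_2$ implies $(p,r)\notin\Rc_3$, i.e., $\Rc_3\subseteq\Rc_1\Rc_2$. But the statement asserts equality, and you have not shown $\Rc_1\Rc_2\subseteq\Rc_3$.

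This reverse inclusion is not bookkeeping: given $(p,r)\in\Rc_1\Rc_2$ you must manufacture $A\in\Vc_1$ and $B\in\Vc_2$ with $M_pABM_r\neq 0$, and it is not clear a priori that witnesses for $(p,q')\in\Rc_1$ and $(q',r)\in\Rc_2$ can be composed nontrivially. The paper handles this by taking the nonzero projection $q$ from the second characterization in Proposition~\ref{measverify}(c), observing that the set $\{w: M_pAw=0\text{ for all }A\in\Vc_1\}$ is a closed $\Mx$-invariant subspace and hence (by maximal abelianness) the range of some $M_{r_0}$ with $r_0\leq 1-q$, so that every nonzero vector in ${\rm ran}(M_q)$ is moved by some $M_pA$; then any $B\in\Vc_2$ with $M_qBM_r\neq 0$ yields a vector $M_qBM_rv\neq 0$ in ${\rm ran}(M_q)$, whence $M_pAM_qBM_r\neq 0$ for some $A\in\Vc_1$. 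You need to supply this (or an equivalent) argument.
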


\begin{proof}
Note first that $\Mx = \Mx'$ in this case. Let $\Rc$ be a measurable
relation on $X$. It is clear that $\Vc_\Rc$ is a linear subspace of
$\Bc(L^2(X,\mu))$. Also, $\Vc_\Rc$ is weak operator closed
and therefore weak* closed. Finally, if $M_f$ and $M_g$ are any two
multiplication operators in $\Mx$ then
$$M_pAM_q = 0 \quad\Rightarrow\quad
M_p(M_fAM_g)M_q = M_f(M_pAM_q)M_g = 0,$$
which shows that $A \in \Vc_\Rc \Rightarrow M_fAM_g \in \Vc_\Rc$. So
$\Vc_\Rc$ is a W*-bimodule over $\Mx' = \Mx$.

Next, let $\Vc$ be a quantum relation on $\Mx$. We verify that $\Rc_\Vc$
satisfies the pair of conditions stated in Definition \ref{measrel}.
First, if $p' \leq p$, $q' \leq q$, and $M_{p'}AM_{q'} \neq 0$ then it is
clear that $M_pAM_q \neq 0$. Second, say $p = \bigvee p_\lambda$ and
$q = \bigvee q_\kappa$ and suppose $M_{p_\lambda} A M_{q_\kappa} = 0$
for all $\lambda$ and $\kappa$. Then $\langle Aw,v\rangle = 0$ for
all $v$ in the range of any $M_{p_\lambda}$ and all $w$ in the range of
any $M_{q_\kappa}$. Taking linear combinations and norm limits then
yields $\langle Aw,v\rangle = 0$ for all $v$ in the range of
$M_p$ and all $w$ in the range of $M_q$, so that $M_pAM_q = 0$. This
verifies the second condition and shows that $\Rc_\Vc$ is a measurable
relation.

Now let $\Rc$ be a measurable relation, let $\Vc = \Vc_\Rc$, and let
$\tilde{\Rc} = \Rc_\Vc$. It is immediate that $\tilde{\Rc}
\subseteq \Rc$. For the reverse inclusion, let $(p,q) \in \Rc$
and say $p = \chi_S$ and $q = \chi_T$. By Theorem \ref{connect}
there exists a nonzero bounded operator $A: L^2(T, \mu|_T) \to
L^2(S, \mu|_S)$ such that $(p', q') \not\in \Rc$ implies $M_{p'}AM_{q'}= 0$.
Extending $A$ to be zero on $L^2(X - T, \mu|_{X - T})$, we get an
operator $\tilde{A} \in B(L^2(X,\mu))$ which satisfies $M_p\tilde{A}M_q
\neq 0$ and $M_{p'}\tilde{A}M_{q'} = 0$ for all projections $p'$
and $q'$ such that $(p',q')\not\in \Rc$. Then $\tilde{A} \in \Vc$ and
this shows that $(p,q) \in \tilde{\Rc}$, so we conclude that
$\Rc = \tilde{\Rc}$.

All of the remaining assertions except for part (d) and the reverse
implication in
(b) are straightforward. For the reverse implication in (b), observe
that $\Rc_1 = \Delta$ implies that $(1-p, p) \not\in \Rc_1$ for any $p$,
and hence that every operator in $\Vc_1$ commutes with every projection
in $\Mx$. Since $\Mx$ is maximal abelian, it follows that $\Vc_1$ is contained
in, and therefore a weak* closed ideal of, $\Mx$. Thus $\Vc_1 = P\Mx$
for some projection $P$ in $\Mx$, and if $P$ is not the identity then
it is easy to see that we could not have $\Rc_1 = \Delta$. Thus
$\Vc_1 = \Mx$ is the diagonal quantum relation.

For part (d), suppose $\Vc_3 = \Vc_1\Vc_2$ and let $(p,r) \in \Rc_3$.
Then there exist $A \in \Vc_1$ and $B \in \Vc_2$ such that $M_pABM_r \neq 0$.
For any projection $q$ in $L^\infty(X,\mu)$, we therefore have either
$M_pAM_q \neq 0$, or else
$$M_pAM_q = 0\quad\Rightarrow\quad
M_pA = M_pAM_{1-q}\quad\Rightarrow\quad
M_pABM_r = M_pAM_{1-q}BM_r.$$
Since $M_pABM_r \neq 0$, the latter implies $M_{1-q}BM_r \neq 0$. Thus we have
shown that either $(p,q) \in \Rc_1$ or $(1-q,r) \in \Rc_2$, and we conclude
that $(p,r) \in \Rc_1\Rc_2$. So $\Rc_3 \subseteq \Rc_1\Rc_2$. Conversely,
suppose $(p,r) \in \Rc_1\Rc_2$. Then there is a nonzero projection $q$
in $L^\infty(X,\mu)$ such that $(p,q') \in \Rc_1$ and $(q',r) \in \Rc_2$
for every nonzero $q' \leq q$. Now observe that the set of vectors
$w \in L^2(X,\mu)$ such that $M_pAw = 0$ for all $A \in \Vc_1$ is a closed
subspace that is invariant for $\Mx$ (since $\Vc_1$ is a right $\Mx$-module);
therefore it is the range of a projection $r$ in $\Mx' = \Mx$. Then
$M_p AM_r = 0$ for all $A \in \Vc_1$, so $(p,r) \not\in \Rc_1$ and hence
$r \leq 1-q$. This shows that for every nonzero $w$ in the range of
$M_q$ we have $M_pAw \neq 0$ for some $A \in \Vc_1$. Now $(q,r) \in \Rc_2$
implies that $M_qBM_r \neq 0$ for some $B \in \Vc_2$, i.e.,
$M_qBM_rv \neq 0$ for some vector $v$. The preceding comment then shows
that $M_pAM_qBM_r \neq 0$ for some $A \in \Vc_1$ and $B \in \Vc_2$,
so that $(p,r) \in \Rc_3$ (since $AM_qB \in
\Vc_1\Vc_2 = \Vc_3$). Thus $\Rc_3 = \Rc_1\Rc_2$.
\end{proof}

In general we do not have $\Vc = \Vc_{\Rc_\Vc}$. We will see below
(Corollary \ref{qreflex} and Proposition \ref{reflexive}) that if
$\Vc = \Ac$ is an operator algebra that contains a maximal abelian von
Neumann algebra then this happens precisely if $\Ac$ is
reflexive in the sense that $\Ac = {\rm Alg}({\rm Lat}(\Ac))$,
and Arveson (\cite{Arv}, Section 2.5; see also \cite{Fro}) has given 
an example of a weak operator closed operator algebra
that contains (and hence is a bimodule over)
a maximal abelian von Neumann algebra but is not reflexive.
In general, we will see that $\Vc = \Vc_{\Rc_\Vc}$ if and only
if $\Vc$ is reflexive in the sense of Loginov and Sul'man (\cite{Dav},
Section 15.B); see Corollary \ref{qreflex}.

In any case, by Theorem \ref{abelianrel}
$\Vc_\Rc$ is the maximal quantum relation on $\Mx$ associated
to the measurable relation $\Rc$. There is also always a minimal
quantum relation associated to $\Rc$. To describe it, recall first
(\cite{Tak}, p.\ 257) that the map $f \otimes v \mapsto f\cdot v$
implements an isometric isomorphism of the Hilbert space tensor
product $L^2(X,\mu) \otimes l^2$
with the space $L^2(X;l^2)$ of weakly
measurable functions $h: X \to l^2$, up to modification on null sets,
such that $\int \|h(x)\|^2 d\mu$ is finite. (``Weakly measurable''
means that for every $v \in H$ the scalar-valued function
$x \mapsto \langle h(x),v\rangle$ is measurable. Note that this
implies that the function
$$x \mapsto \|h(x)\|^2 = \sum_n |\langle h(x),e_n\rangle|^2$$
is measurable, where $\{e_n\}$ is the standard basis of $l^2$.)
In the following we will identify $L^2(X,\mu)\otimes l^2$ with
$L^2(X;l^2)$.

Now given a measurable relation $\Rc$ on $X$ and $h,k \in L^2(X;l^2)$, say
that $h$ is {\it $\Rc$-orthogonal} to $k$, and write $h \perp_\Rc k$, if
$$\inf\{|\langle k(y), h(x)\rangle|: x \in S, y \in T\} = 0$$
for any $S,T \subseteq X$ such that $(\chi_S, \chi_T) \in \Rc$. It should
be understood that this means the infimum must be zero
irrespective of any modification of $h$ and $k$ on null sets.
Equivalently, $h \not\perp_\Rc k$ if there exists $\epsilon > 0$ and
$S, T \subseteq X$ such that $(\chi_S,\chi_T) \in \Rc$ and
$|\langle k(y), h(x)\rangle| \geq \epsilon$ for $x \in S$ and $y \in T$.

\begin{lemma}\label{unifvector}
Let $S,T \subseteq X$, $h \in L^2(S;l^2)$, $k \in L^2(T;l^2)$, and
$\epsilon > 0$, and suppose $\mu(S),\mu(T) < \infty$ and $h \perp_\Rc k$.
Then there exist partitions $\{S_1, \ldots, S_m\}$
and $\{T_1, \ldots, T_n\}$ of $S$ and $T$ and simple functions
$$h_\epsilon = \sum_{i=1}^m \chi_{S_i}\cdot v_i
\quad{\rm and}\quad k_\epsilon
= \sum_{j=1}^n \chi_{T_j}\cdot w_j$$
($v_i, w_j \in l^2$) such that (1) $\|h - h_\epsilon\|, \|k - k_\epsilon\|
\leq \epsilon$ and (2) $|\langle w_j,v_i\rangle| \leq \epsilon$ for
any $i$ and $j$ such that $(\chi_{S_i}, \chi_{T_j}) \in \Rc$.
\end{lemma}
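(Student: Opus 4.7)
The plan is to produce finite partitions of $S$ and $T$ on which $h$ and $k$ are essentially close to constant vectors in $l^2$-norm, then use $\Rc$-orthogonality to pin down those constants up to $\epsilon$.

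First I would truncate: set $\tilde h = h \cdot \chi_{\{\|h\| \le M\}}$ and $\tilde k = k \cdot \chi_{\{\|k\| \le M\}}$ for large $M$; dominated convergence gives $\|h - \tilde h\|_{L^2}, \|k - \tilde k\|_{L^2} \to 0$. I need $\tilde h \perp_\Rc \tilde k$: given $(\chi_{S'},\chi_{T'}) \in \Rc$, if $\{\|h\| > M\} \cap S'$ has positive measure then $\tilde h$ vanishes there and $\langle \tilde k(y), \tilde h(x)\rangle$ vanishes on a positive-measure subset of $S' \times T'$, so the essential infimum is $0$; otherwise $\tilde h = h$ and $\tilde k = k$ essentially on $S', T'$ and I invoke $h \perp_\Rc k$ directly. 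Thus I may assume $\|h\|, \|k\| \le M$ essentially.

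Next, fix a small $\delta > 0$. Since $h$ is Bochner-measurable with values in separable $l^2$, its essential range is separable; cover it by countably many balls $B(u_n, \delta)$ with $\|u_n\| \le M$ and set $A_n = h^{-1}(B(u_n, \delta)) \setminus \bigcup_{m < n} A_m$. Since $\mu(S) < \infty$, choose $N$ with $M^2 \mu(\bigcup_{n > N} A_n) < \epsilon^2/4$, take $S_i = A_i$ for $i \le N$, $S_{N+1} = \bigcup_{n > N} A_n$, and set $v_i = u_i$ for $i \le N$, $v_{N+1} = 0$; construct $\{T_j\}, \{w_j\}$ analogously from $k$. Routine estimation then gives $\|h - h_\epsilon\|_{L^2}^2 \le \delta^2 \mu(S) + \epsilon^2/4$, yielding condition (1) once $\delta$ is small. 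For condition (2), fix $(\chi_{S_i}, \chi_{T_j}) \in \Rc$; if $v_i$ or $w_j$ vanishes the bound is trivial, otherwise $\|h(x) - v_i\|, \|k(y) - w_j\| \le \delta$ on full-measure subsets of $S_i$ and $T_j$. By $h \perp_\Rc k$ the essential infimum of $|\langle k(y), h(x)\rangle|$ over $S_i \times T_j$ is $0$, so I pick $(x,y)$ lying simultaneously in those full-measure sets and in the positive-measure set $\{|\langle k(y),h(x)\rangle| < \delta\}$; the triangle inequality then yields
$$|\langle w_j, v_i\rangle| \le |\langle k(y), h(x)\rangle| + \|w_j - k(y)\|\|v_i\| + \|k(y)\|\|v_i - h(x)\| \le (2M+1)\delta,$$
which is $\le \epsilon$ for small $\delta$.

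The main obstacle is that the standard $L^2$ density of simple functions is too weak to propagate $\Rc$-orthogonality to the discrete inner products $\langle w_j, v_i\rangle$—one needs closeness pointwise on most of each block, not just in the mean. The truncation–separability device resolves this by producing a discretization of $h$ that is uniformly within $\delta$ of the constant $v_i$ on each main block, while the inevitable small-measure remainder is absorbed by assigning it the value $0$.
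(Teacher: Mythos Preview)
Your argument is correct and follows essentially the same route as the paper's: truncate to bounded functions, discretize via separability of $l^2$ into a finite simple function with a zero-assigned remainder block, and bound $|\langle w_j,v_i\rangle|$ on each related pair using $\Rc$-orthogonality together with the uniform $\delta$-closeness on the blocks. One small wrinkle worth tightening: the definition of $h\perp_\Rc k$ asserts that the \emph{pointwise} infimum vanishes for \emph{every} choice of representatives, which is not literally the statement that the essential infimum vanishes, so rather than invoking a ``positive-measure set $\{|\langle k(y),h(x)\rangle|<\delta\}$'' you should first pass to representatives of $h,k$ that equal $v_i,w_j$ on the null exceptional subsets of $S_i,T_j$ (making the $\delta$-closeness hold at every point), after which the vanishing pointwise infimum over $S_i\times T_j$ directly produces a single witness $(x,y)$ for the triangle-inequality estimate.
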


\begin{proof}
First find $N$ large enough that
$$h_N = \chi_{\{x: \|h(x)\| \leq N\}}\cdot h
\quad{\rm and}\quad 
k_N = \chi_{\{y: \|k(y)\| \leq N\}}\cdot k$$
satisfy $\|h - h_N\|, \|k - k_N\| \leq \epsilon/3$, and note that
$h_N$ and $k_N$ are still $\Rc$-orthogonal. Then since $l^2$ is
separable we can uniformly approximate
$h_N$ and $k_N$ with functions of the form
$$h_\epsilon' = \sum_{i=1}^\infty \chi_{S_i}\cdot v_i
\quad{\rm and}\quad k_\epsilon'
= \sum_{j=1}^\infty \chi_{T_j}\cdot w_j$$
where the $S_i$ partition $S$ and the $T_j$ partition $T$.
If the uniform approximation is sufficiently close
then (since $h_N$ and $k_N$ are bounded) $\Rc$-orthogonality of $h_N$
and $k_N$ will imply that $|\langle w_j,v_i\rangle| \leq \epsilon$
whenever $(\chi_{S_i}, \chi_{T_j}) \in \Rc$, and
we will also have $\|h_N - h_\epsilon'\|, \|k_N -k_\epsilon'\| < \epsilon/3$.
Finally, define $h_\epsilon$ and $k_\epsilon$ by truncating the
sums that define $h_\epsilon'$ and $k_\epsilon'$; that is, for suitable
$m$ and $n$ replace $S_m$ and $T_n$ with $\bigcup_{i \geq m} S_i$ and
$\bigcup_{j \geq n}T_j$ and take $v_m = w_n = 0$. This can be done at
a cost in norm of at most $\epsilon/3$, so we will have achieved conditions
(1) and (2).
\end{proof}

\begin{theo}\label{minimal}
Let $\Rc$ be a measurable relation on a finitely decomposable
measure space $(X,\mu)$ and let $\Mx \cong L^\infty(X,\mu)$ be the
von Neumann algebra of bounded multiplication operators on $L^2(X,\mu)$.
Then
$$\tilde{\Vc}_\Rc = \{A \in \Bc(L^2(X,\mu)): h \perp_\Rc k\quad
\Rightarrow\quad \langle (A \otimes I)k, h\rangle = 0\},$$
with $h$ and $k$ ranging over $L^2(X,\mu) \otimes l^2 \cong
L^2(X;l^2)$,
is a quantum relation on $\Mx$ whose associated measurable relation
(Theorem \ref{abelianrel}) is $\Rc$. If $\Vc$ is any quantum relation
on $\Mx$ whose associated measurable relation contains $\Rc$ then
$\tilde{\Vc}_\Rc \subseteq \Vc$.
\end{theo}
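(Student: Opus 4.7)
The theorem has three parts: $\tilde\Vc_\Rc$ is a quantum relation, its associated measurable relation equals $\Rc$, and it is minimal. I would handle these in order.

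First, the functional $A\mapsto \langle (A\otimes I)k,h\rangle$ equals $A\mapsto \mathrm{tr}(AT_{h,k})$ for the trace class operator $T_{h,k}=\sum_i|g_i\rangle\langle f_i|$ (where $h=\sum f_i\otimes e_i$, $k=\sum g_i\otimes e_i$, with $\|T_{h,k}\|_1\leq\|h\|\|k\|$), so $\tilde\Vc_\Rc$ is an intersection of kernels of weak*-continuous functionals and hence weak*-closed. The $\Mx'$-bimodule property follows because left multiplication of $h$ by $M_{\bar f}\otimes I$ and right multiplication of $k$ by $M_g\otimes I$ modifies pointwise inner products only by the bounded scalar factor $g(y)f(x)$, preserving the infimum-zero condition defining $\perp_\Rc$. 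Also $\tilde\Vc_\Rc\subseteq\Vc_\Rc$: if $(\chi_{S_0},\chi_{T_0})\notin\Rc$ then the vectors $\chi_{S_0}w\otimes e_0$ and $\chi_{T_0}v\otimes e_0$ are $\Rc$-orthogonal (by $(*')$ no $\Rc$-pair can sit inside $S_0\times T_0$), so $M_{\chi_{S_0}}AM_{\chi_{T_0}}=0$ for $A\in\tilde\Vc_\Rc$, giving $\Rc_{\tilde\Vc_\Rc}\subseteq\Rc$.

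For the reverse, $\Rc\subseteq\Rc_{\tilde\Vc_\Rc}$: given $(p,q)\in\Rc$, reduce to $p=\chi_S$, $q=\chi_T$ nonatomic of finite measure and apply Theorem \ref{connect} to produce $A=\lim A_{\Sc,\Tc}$ (weak operator, hence weak*, since $\|A_{\Sc,\Tc}\|\leq\sqrt 2$) with $M_pAM_q\neq 0$. To verify $A\in\tilde\Vc_\Rc$, take $h\perp_\Rc k$ supported on $S,T$, fix $\epsilon>0$, and apply Lemma \ref{unifvector} to obtain simple approximants $h_\epsilon=\sum\chi_{S_i}v_i$, $k_\epsilon=\sum\chi_{T_j}w_j$ with $|\langle w_j,v_i\rangle|\leq\epsilon$ whenever $(\chi_{S_i},\chi_{T_j})\in\Rc$. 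For any partition $(\Sc,\Tc)$ refining these, the greedy-paired subpieces $(\chi_{S_l'},\chi_{T_l'})\in\Rc$ satisfy the bound via $(*')$, so
\[
|\langle(A_{\Sc,\Tc}\otimes I)k_\epsilon,h_\epsilon\rangle| = \Big|\sum_l\mu(S_l')\langle w_{j(l)},v_{i(l)}\rangle\Big|\leq\epsilon\mu(S).
\]
Adding an $O(\epsilon)$ $L^2$-approximation error, $\langle(A_{\Sc,\Tc}\otimes I)k,h\rangle\to 0$ along the net, and by weak* continuity $\langle(A\otimes I)k,h\rangle=0$.

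For minimality, suppose $A\in\tilde\Vc_\Rc\setminus\Vc$; by Lemma \ref{separation} there exist projections $P,Q\in\Mx\overline\otimes\Bc(l^2)$ with $P(A\otimes I)Q\neq 0$ and $P(B\otimes I)Q=0$ for all $B\in\Vc$. Pick $h=Pw$, $k=Qv$ with $\langle(A\otimes I)k,h\rangle\neq 0$; then $A\in\tilde\Vc_\Rc$ forces $h\not\perp_\Rc k$, so there exist $\epsilon>0$ and $(\chi_S,\chi_T)\in\Rc\subseteq\Rc_\Vc$ with $|\langle k(y),h(x)\rangle|\geq\epsilon$ on $S\times T$ for suitable representatives. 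Replace $P,Q$ by the subprojections $\tilde P,\tilde Q\in\Mx\overline\otimes\Bc(l^2)$ whose pointwise fibers are the rank-one projections onto $h(x)/\|h(x)\|$ and $k(y)/\|k(y)\|$; these still separate $A$ from $\Vc$. After truncating so that $\|h(\cdot)\|,\|k(\cdot)\|$ are bounded, measurability and separability of $l^2$ let me partition $S,T$ into finitely many pieces $\{S_i\},\{T_j\}$ on which the normalized vectors $\hat h,\hat k$ are within $\delta$ of constant unit vectors $h_i,k_j$ with $|\langle k_j,h_i\rangle|>0$. On each piece $\tilde P,\tilde Q$ are well-approximated by the product projections $\chi_{S_i}\otimes|h_i\rangle\langle h_i|$ and $\chi_{T_j}\otimes|k_j\rangle\langle k_j|$, and the product identity
\[
(\chi_{S_i}\otimes|h_i\rangle\langle h_i|)(B\otimes I)(\chi_{T_j}\otimes|k_j\rangle\langle k_j|) = \langle k_j,h_i\rangle(M_{\chi_{S_i}}BM_{\chi_{T_j}})\otimes|h_i\rangle\langle k_j|
\]
forces $M_{\chi_{S_i}}BM_{\chi_{T_j}}=0$ for every $B\in\Vc$, i.e., $(\chi_{S_i},\chi_{T_j})\notin\Rc_\Vc$. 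But applying $(*'')$ to $(\chi_S,\chi_T)=(\bigvee_i\chi_{S_i},\bigvee_j\chi_{T_j})\in\Rc_\Vc$ forces some such pair to lie in $\Rc_\Vc$, a contradiction. The main obstacle is exactly this last passage: rigorously controlling the error in approximating the pointwise rank-one field projections $\tilde P,\tilde Q$ by honest product projections, tightly enough to conclude exact (not approximate) vanishing of $M_{\chi_{S_i}}BM_{\chi_{T_j}}$ across the entire bimodule $\Vc$.
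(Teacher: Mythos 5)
Your first two parts (that $\tilde{\Vc}_\Rc$ is a quantum relation and that its associated measurable relation is $\Rc$) follow the paper's argument essentially step for step --- the weak* continuity of $A\mapsto\langle(A\otimes I)k,h\rangle$, the bimodule computation via $\bar f\cdot h\perp_\Rc g\cdot k$, and the use of Theorem \ref{connect} together with Lemma \ref{unifvector} to estimate $\langle(A_{\Sc,\Tc}\otimes I)k_\epsilon,h_\epsilon\rangle$ --- and they are fine. The minimality part is where you diverge from the paper, and the gap you flag at the end is genuine and sits exactly at the crux of the theorem. Exact annihilation $\tilde{P}(B\otimes I)\tilde{Q}=0$ together with a norm-$\delta$ approximation of $\tilde{P},\tilde{Q}$ by product projections only yields the approximate bound $\|M_{\chi_{S_i}}BM_{\chi_{T_j}}\|\le\bigl(4\delta/|\langle k_j,h_i\rangle|\bigr)\|B\|$, and no limit is available because shrinking $\delta$ changes the partition; so as written you cannot conclude $(\chi_{S_i},\chi_{T_j})\notin\Rc_\Vc$. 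The paper avoids this difficulty by a different device, adapted from Arveson: from $P(A\otimes I)Q=0$ it derives the operator inequality $A^*M_fA\le\|A\|^2M_g$ with $f=\|P(\chi_X\cdot v)(\cdot)\|^2$ and $g=\|(I-Q)(\chi_X\cdot v)(\cdot)\|^2$, bootstraps it via the bimodule property to $A^*M_{f^n}A\le\|A\|^2M_{g^n}$ for all $n$, and then uses $f\ge\|v\|^2-(\epsilon/2)^2$ on $S$ against $g\le\|v\|^2-\epsilon^2$ on $T$ to force $M_{\chi_S}AM_{\chi_T}=0$ as $n\to\infty$.

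That said, your route can be completed, and the repair is in the same spirit: use the bimodule structure to iterate the approximate bound. After truncating so that $\epsilon/N\le\|h(x)\|,\|k(y)\|\le N$ on $S\times T$ (possible by condition $(*'')$; the lower bound is automatic from $|\langle k(y),h(x)\rangle|\ge\epsilon$ and Cauchy--Schwarz), the normalized fibers satisfy $|\langle\hat k(y),\hat h(x)\rangle|\ge\epsilon/N^2$, so choosing $\delta$ small enough yields a single constant $c=4\delta/(\epsilon/N^2-2\delta)<1$ with $\|M_{\chi_{S_i}}BM_{\chi_{T_j}}\|\le c\|B\|$ for \emph{every} $B\in\Vc$ and every $i,j$. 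Now $B':=M_{\chi_{S_i}}BM_{\chi_{T_j}}$ again lies in $\Vc$ and satisfies $M_{\chi_{S_i}}B'M_{\chi_{T_j}}=B'$, so applying the bound to $B'$ gives $\|B'\|\le c\|B'\|$, hence $B'=0$ exactly, and your contradiction via $(*'')$ then goes through. Two smaller points to tidy: you must check that the truncated sets still form a pair in $\Rc$ witnessing $h\not\perp_\Rc k$, and the partitions $\{S_i\},\{T_j\}$ will in general be countable rather than finite, which is harmless since $(*'')$ handles arbitrary joins.
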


\begin{proof}
It is easy to see that $\tilde{\Vc}_\Rc$ is a weak* closed linear
subspace of $\Bc(L^2(X,\mu))$. To check that it is a bimodule over $\Mx$,
let $A \in \tilde{\Vc}_\Rc$ and $f,g \in L^\infty(X,\mu)$ and suppose
$h \perp_\Rc k$; then $\bar{f}\cdot h \perp_\Rc g\cdot k$ and so
$$\langle (M_fAM_g \otimes I)k,h\rangle =
\langle(A\otimes I)(g\cdot k), \bar{f}\cdot h\rangle = 0,$$
showing that $M_f AM_g \in \tilde{\Vc}_\Rc$.
So $\tilde{\Vc}_\Rc$ is a quantum relation on $\Mx$.

Next, we show that the measurable relation associated to $\tilde{\Vc}_\Rc$
is $\Rc$. Let $S,T \subseteq X$ and suppose $(\chi_S,\chi_T) \not\in \Rc$.
Then for any $f \in L^2(S,\mu|_S)$ and $g \in L^2(T,\mu|_T)$ and any unit
vector $v \in l^2$ we have
$f \cdot v \perp_\Rc g \cdot v$, so if $A \in \tilde{\Vc}_\Rc$ then
$$\langle A g, f\rangle =
\langle (A \otimes I) (g \cdot v), f \cdot v\rangle = 0.$$
It follows that $M_{\chi_S}AM_{\chi_T} = 0$ and this shows that
the measurable relation associated to $\tilde{\Vc}_\Rc$ is
contained in $\Rc$. Conversely, suppose $(\chi_S, \chi_T) \in \Rc$; we
must find $A \in \tilde{\Vc}_\Rc$ such that $M_{\chi_S}AM_{\chi_T} \neq 0$.
Find finite measure subsets $S' \subseteq S$ and $T' \subseteq T$
such that $(\chi_{S'},\chi_{T'}) \in \Rc$; it will suffice to
show that the operator $A$ with $M_{\chi_{S'}}AM_{\chi_{T'}} \neq 0$
provided by Theorem \ref{connect} belongs to $\tilde{\Vc}_\Rc$.
Fix $h,k \in L^2(X;l^2)$ such that $h \perp_\Rc k$; we want
$\langle (A \otimes I)k,h\rangle = 0$. We show this by considering the
approximating operators $A_{\Sc,\Tc}$ defined for finite partitions
of $S'$ and $T'$.

Let $h' = \chi_{S'}\cdot h$ and $k' = \chi_{T'}\cdot k$ and let
$\epsilon > 0$. Then apply Lemma \ref{unifvector} to $S'$, $T'$, $h'$, $k'$.
Now if $A_{\Sc,\Tc}$ is the operator constructed in the proof of Theorem
\ref{connect} for any finite partitions $\Sc$ and $\Tc$ of $S'$ and $T'$
which are subordinate to $\{S_i\}$ and $\{T_j\}$ then
$$|\langle (A_{\Sc,\Tc} \otimes I) k_\epsilon, h_\epsilon\rangle|
= \left|\sum \langle w_l', v_l'\rangle \mu(S_l')\right|
\leq \epsilon \mu(S').$$
(Here $\{S_1', \ldots, S_{m'}'\}$ and $\{T_1', \ldots, T_{n'}'\}$ are
the refined partitions produced in the construction of $A_{\Sc,\Tc}$
and $h_\epsilon = \sum_{i=1}^{m'} \chi_{S_i'}\cdot v_i'$ and $k_\epsilon
= \sum_{j=1}^{n'} \chi_{T_j'}\cdot w_j'$ are the corresponding expressions
for $h_\epsilon$ and $k_\epsilon$.)
Taking the limit in $\Sc$ and $\Tc$ then yields $|\langle (A\otimes I)
k_\epsilon, h_\epsilon\rangle| \leq \epsilon \mu(S')$ and taking $\epsilon
\to 0$ yields $\langle (A\otimes I)k,h\rangle = 0$, as desired.

Now let $\Vc$ be any quantum relation on $\Mx$ whose associated
measurable relation contains $\Rc$. If $\tilde{\Vc}_\Rc \subsetneq \Vc$
then as in the proof of Lemma \ref{separation} there must exist $h,k
\in L^2(X; l^2)$ such that $\langle (A \otimes I)k,h\rangle \neq 0$ for
some $A \in \tilde{\Vc}_\Rc$ --- and hence $h \not\perp_\Rc k$ --- but
$\langle (B\otimes I)k, h\rangle = 0$ for all $B \in \Vc$. Thus
suppose $h,k \in L^2(X;l^2)$ are not $\Rc$-orthogonal; we complete the
proof by showing that there exists $B \in \Vc$ such that
$\langle (B \otimes I)k,h \rangle \neq 0$. Our argument is a
straightforward adaptation of the ingenious proof of Theorem 2.1.5 in
\cite{Arv}. First, since $h \not\perp_\Rc k$ there exist $S,T \subseteq X$
and $\epsilon > 0$
such that $(\chi_S,\chi_T) \in \Rc$ and $|\langle k(y),h(x)\rangle|
\geq \epsilon$ for all $x \in S$ and $y \in T$. For some $N$ we must
have $(\chi_S, \chi_{T_N}) \in \Rc$ where $T_N = \{y \in T: \|k(y)\|
\leq N\}$, so we can assume that $k$ is bounded on $T$. By scaling $k$
(which could change the value of $\epsilon$), we may suppose
$\|k(y)\| \leq 1$ for all $y \in T$. Now find a countable
partition $\{S_i\}$ of $S$ together with a sequence
$\{v_i\} \subseteq l^2$ such that $v_i \in h(S_i)
\subseteq {\rm ball}(v_i,\epsilon/2)$ for all $i$. Then we must have
$(\chi_{S_i},\chi_T) \in \Rc$ for some $i$ and $j$. Without
loss of generality we may then replace $S$ and $h$ with
$S_i$ and $\chi_{S_i}\cdot h$. In particular,
we can assume that there is a vector $v = v_i \in l^2$ such that
$\|h(x) - v\| \leq \epsilon/2$ for all $x \in S$ and $|\langle k(y), v\rangle|
\geq \epsilon$ for all $y \in T$.

Let $P, Q \in \Mx \overline{\otimes} \Bc(l^2) \cong L^\infty(X; \Bc(l^2))$
(\cite{Tak}, Theorem IV.7.17) respectively be the orthogonal projections
onto $\overline{(\Mx\otimes I)h}$ and $\overline{(\Mx\otimes I)k}$. Suppose
for the sake of contradiction that $P(A \otimes I)Q = 0$ for all $A \in \Vc$.
Then for any $w \in L^2(X,\mu)$ we have
$$\|P(A \otimes I)(w \otimes v)\|^2
= \|P(A \otimes I)(I-Q)(w\otimes v)\|^2
\leq \|A\|^2\|(I - Q)(w\otimes v)\|^2;$$
letting $f(x) = \|P(\chi_X\cdot v)(x)\|^2$ and $g(x) =
\|(I-Q)(\chi_X\cdot v)(x)\|^2$ (both in $L^\infty(X,\mu)$),
this can be expressed as $A^*M_fA \leq \|A\|^2 M_g$, since
\begin{eqnarray*}
\langle A^*M_fAw,w\rangle &=& \int \|P(\chi_X \cdot v)(x)\|^2
\|(Aw)(x)\|^2\, d\mu\cr
&=& \int \|P(Aw \otimes v)(x)\|^2\, d\mu\cr
&=& \|P(A\otimes I)(w\otimes v)\|^2
\end{eqnarray*}
and similarly $\langle M_g w,w\rangle = \|(I - Q)(w\otimes v)\|^2$.
This inequality holds for all $A \in \Vc$.

Now given $A \in \Vc$ and $\delta > 0$, let $B =
M_{\sqrt{f}}AM_{1/\sqrt{g+\delta}} \in \Vc$. We then have
$$B^*B = M_{1/\sqrt{g+\delta}}A^*M_fAM_{1/\sqrt{g+\delta}}
\leq \|A\|^2 M_{1/\sqrt{g+\delta}}M_gM_{1/\sqrt{g+\delta}}
= \|A\|^2 M_{g/(g+\delta)}$$
so that $\|B\| \leq \|A\|$, and hence
$$(M_{1/\sqrt{g+\delta}}A^*M_{\sqrt{f}})M_f(M_{\sqrt{f}}AM_{1/\sqrt{g+\delta}})
= B^*M_fB \leq \|B\|^2M_g \leq \|A\|^2M_g.$$
Multiplying on both sides by $M_{\sqrt{g + \delta}}$ then yields
$A^*M_{f^2}A \leq \|A\|^2M_{g(g+\delta)}$, and taking $\delta \to 0$,
we get $A^*M_{f^2}A \leq \|A\|^2M_{g^2}$. Applying this argument inductively
establishes that
$A^*M_{f^n}A \leq \|A\|^2M_{g^n}$ for all $A \in \Vc$ and all $n \in \Nb$.
Now $h \in {\rm ran}(P)$ implies that $f(x) \geq \|v\|^2 - (\epsilon/2)^2$
for all $x \in S$, while $k \in {\rm ran}(Q)$ implies that $g(y) \leq
\|v\|^2 - \epsilon^2$ for all $y \in T$. Thus
\begin{eqnarray*}
(\|v\|^2 - (\epsilon/2)^2)^nA^*M_{\chi_S}A &\leq& A^*M_{f^n}A \leq
\|A\|^2M_{g^n}\cr
&\leq& \|A\|^2[(\|v\|^2 - \epsilon^2)^nM_{\chi_T} +
\|v\|^2M_{\chi_{X - T}}]
\end{eqnarray*}
and so
$$M_{\chi_T}A^*M_{\chi_S}AM_{\chi_T} \leq
\|A\|^2\left(\frac{\|v\|^2 - \epsilon^2}{\|v\|^2 - (\epsilon/2)^2}\right)^n
M_{\chi_T}.$$
Taking $n \to \infty$ then yields $M_{\chi_T}A^*M_{\chi_S}AM_{\chi_T} = 0$,
and hence $M_{\chi_S}AM_{\chi_T} = 0$. Since this is true for all $A \in \Vc$
we cannot have $(\chi_S, \chi_T) \in \Rc$, a contradiction. We conclude
that $P(A \otimes I)Q \neq 0$ for some $A \in \Vc$, and hence that
$\langle (B \otimes I)k,h \rangle \neq 0$ for some $B \in \Vc$. This
completes the proof.
\end{proof}

\subsection{Operator reflexivity}\label{apps}
Specializing the preceding work to the case of measurable partial orders,
we recover Arveson's basic results on commutative subspace lattices.
In particular, the formula $\Rc = \Rc_{\Vc_\Rc}$ in Theorem
\ref{abelianrel} emerges as an attractive generalization of Arveson's
reflexivity theorem (from which it can, alternatively, be deduced; see
the comment at the beginning of Section \ref{abcs}).

\begin{theo}\label{CSLchar}
(\cite{Arv}, Theorem 1.3.1)
Let $\Lc$ be a complete 0,1-lattice of commuting projections in some $\Bc(H)$.
Then there is a measurable preorder $\Rc$ on a finitely decomposable
measure space $(X,\mu)$ and an isomorphism $H \cong L^2(X,\mu)$ that
takes $\Lc$ to
$$\{M_{\chi_S}: S\hbox{ is a lower set for }\Rc\}.$$
\end{theo}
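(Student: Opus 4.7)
The plan is to reduce this to Theorem \ref{versuslattice} by realizing $\Lc$ as a complete 0,1-sublattice of the projection lattice of some $L^\infty(X,\mu)$.

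First I would extend $\Lc$ to a maximal abelian von Neumann subalgebra of $\Bc(H)$. Since the projections in $\Lc$ all commute, the double commutant $\Lc''$ is an abelian von Neumann algebra, and a Zorn's lemma argument produces a maximal abelian von Neumann algebra $\Mx \subseteq \Bc(H)$ containing $\Lc''$. Because joins and meets in the projection lattice of a von Neumann algebra are computed as in $\Bc(H)$, the given completeness of $\Lc$ as a lattice of projections in $\Bc(H)$ implies that $\Lc$ is a complete 0,1-sublattice of the projection lattice of $\Mx$.

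Next I would invoke the standard spatial classification of maximal abelian von Neumann algebras: $\Mx$ is spatially isomorphic to the multiplication algebra of $L^\infty(X,\mu)$ acting on $L^2(X,\mu)$ for some finitely decomposable measure space $(X,\mu)$ (this is where we need the generality of finite decomposability rather than $\sigma$-finiteness, since $H$ need not be separable; see \cite{Tak}, Chapter III). Transporting $\Lc$ across this spatial isomorphism yields a complete 0,1-sublattice $\Lc_0$ of the projection lattice of $L^\infty(X,\mu)$ such that the implementing unitary $H \cong L^2(X,\mu)$ carries $\Lc$ onto $\Lc_0$.

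Finally I would apply Theorem \ref{versuslattice} to $\Lc_0$: there is a measurable preorder $\Rc$ on $X$ with
\[
\Lc_0 = \Lc_\Rc = \{\chi_S : S \subseteq X \text{ is a lower set for } \Rc\},
\]
equivalently $\{M_{\chi_S} : S \text{ is a lower set for }\Rc\}$ after the spatial identification. This is exactly the conclusion.

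The only real obstacle is the second step, the spatial representation of a MASA on a possibly nonseparable Hilbert space as multiplication operators on some $L^2(X,\mu)$; everything else is bookkeeping on top of Theorem \ref{versuslattice}. This representation is classical but must be invoked in its finitely decomposable form, and one should be mindful that the implementing isomorphism must be spatial (a unitary equivalence) rather than just an abstract $*$-isomorphism in order to carry the concrete projections of $\Lc$ to multiplication operators $M_{\chi_S}$.
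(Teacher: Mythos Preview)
Your proposal is correct and follows essentially the same route as the paper: embed $\Lc$ in a maximal abelian von Neumann algebra, spatially represent that MASA as multiplication operators on $L^2(X,\mu)$ for a finitely decomposable measure space, and then invoke Theorem \ref{versuslattice}. The paper's proof is terser but identical in structure, and your cautionary remark about needing the spatial (unitary) form of the MASA representation is well placed.
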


\begin{proof}
$\Lc$ generates an abelian von Neumann algebra and hence is contained
in a maximal abelian von Neumann algebra $\Mx$. Then there exists a
finitely decomposable measure space $(X,\mu)$ and an isomorphism
$H \cong L^2(X,\mu)$ that takes $\Mx$ to the algebra of bounded
multiplication operators. That is, $\Mx \cong L^\infty(X,\mu)$.
This isomorphism takes $\Lc$ to a complete 0,1-sublattice of the
lattice of projections in $L^\infty(X,\mu)$ and the result now
follows from Theorem \ref{versuslattice}.
\end{proof}

(Theorem 1.3.1 of \cite{Arv} is expressed in terms of pointwise preorders;
this version, when $\mu$ is $\sigma$-finite, follows from Theorem
\ref{pointwise} and the comment preceding that result.)

For any set of projections $\Lc \subset \Bc(H)$ let ${\rm Alg}(\Lc)$ be
the algebra of operators for which the range of every projection in
$\Lc$ is invariant; that is, ${\rm Alg}(\Lc) = \{A \in \Bc(H): PAP = AP$
for all $P \in \Lc\}$. For any set of operators $\Ac \subseteq \Bc(H)$
let ${\rm Lat}(\Ac)$ be the lattice of projections whose range is
invariant for every operator in $\Ac$.

\begin{theo}
(\cite{Arv}, Theorem 1.6.1)
Let $\Lc$ be a complete 0,1-lattice of commuting projections in some
$\Bc(H)$. Then $\Lc = {\rm Lat}({\rm Alg}(\Lc))$.
\end{theo}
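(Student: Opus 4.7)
The plan is to reduce to the measurable setting via Theorem \ref{CSLchar} and identify ${\rm Alg}(\Lc)$ with the quantum relation $\Vc_\Rc$ associated to the resulting measurable preorder; the conclusion then falls out of the identity $\Rc = \Rc_{\Vc_\Rc}$ established in Theorem \ref{abelianrel}.

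First, by Theorem \ref{CSLchar} I may assume $H = L^2(X,\mu)$ for a finitely decomposable $(X,\mu)$, that $\Lc \subset \Mx$ where $\Mx \cong L^\infty(X,\mu)$ is the maximal abelian algebra of bounded multiplication operators, and that $\Lc = \{M_{\chi_S} : S\hbox{ is a lower set for }\Rc\}$ for some measurable preorder $\Rc$ on $X$. The elements of $\Lc$ commute with $\Mx$, so $\Mx \subseteq {\rm Alg}(\Lc)$.

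Next I would show that ${\rm Alg}(\Lc) = \Vc_\Rc$. The inclusion $\Vc_\Rc \subseteq {\rm Alg}(\Lc)$ is immediate: if $P = M_{\chi_S}$ with $S$ a lower set then $(\chi_{S^c},\chi_S)\not\in\Rc$, so $(I-P)AP = M_{\chi_{S^c}}AM_{\chi_S} = 0$ for every $A \in \Vc_\Rc$. For the reverse, take $A \in {\rm Alg}(\Lc)$ and $(p,q)\not\in\Rc$; by the description $\Rc = \Rc_\Lc$ from Theorem \ref{versuslattice} there is $q' \in \Lc$ with $q' \geq q$ and $pq' = 0$, whence $p \leq 1-q'$. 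Since $A$ leaves the range of $q'$ invariant, $(1-q')Aq' = 0$, and therefore $M_pAM_q = M_p(1-q')Aq'M_q = 0$, so $A \in \Vc_\Rc$.

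Finally, I would prove $\Lc = {\rm Lat}({\rm Alg}(\Lc))$. The inclusion $\subseteq$ is tautological from the definitions. For $\supseteq$, let $P \in {\rm Lat}(\Vc_\Rc)$. Since $\Mx \subseteq \Vc_\Rc$ and $\Mx$ is self-adjoint, the range of $P$ is reducing for $\Mx$, so $P$ commutes with $\Mx$; by maximal abelianness $P \in \Mx$, say $P = M_{\chi_T}$. The invariance condition then reads $M_{\chi_{T^c}}AM_{\chi_T} = 0$ for every $A \in \Vc_\Rc$, i.e., $(\chi_{T^c},\chi_T) \not\in \Rc_{\Vc_\Rc}$. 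Applying the identity $\Rc = \Rc_{\Vc_\Rc}$ from Theorem \ref{abelianrel} yields $(\chi_{T^c},\chi_T) \not\in \Rc$, so $T$ is a lower set and $P \in \Lc$. The main obstacle is the step ${\rm Alg}(\Lc) \subseteq \Vc_\Rc$, where one must invoke the concrete description of $\Rc$ in terms of $\Lc$ from Theorem \ref{versuslattice}; once that is in place, all of the analytic content has already been absorbed into the nontrivial half of Theorem \ref{abelianrel}, which was proved via the operator-building Theorem \ref{connect}.
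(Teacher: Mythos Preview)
Your proof is correct and follows essentially the same route as the paper's own argument: reduce via Theorem \ref{CSLchar}, identify ${\rm Alg}(\Lc)$ with $\Vc_\Rc$ using Theorem \ref{versuslattice} for the nontrivial inclusion, observe that any invariant projection must lie in $\Mx$ by maximal abelianness, and then read off the conclusion from $\Rc = \Rc_{\Vc_\Rc}$ in Theorem \ref{abelianrel}. The only cosmetic differences are notational (you write $(1-q')Aq'$ where the paper writes $M_{q'}AM_{q'}$, etc.) and the order in which the observations are stated.
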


\begin{proof}
By Theorem \ref{CSLchar} we may assume that $H = L^2(X,\mu)$ and
there is a measurable preorder $\Rc$ on $X$ such that $\Lc$ consists
of the operators $M_p$ for $p$ a projection in $L^\infty(X,\mu)$
satisfying $(1-p,p) \not\in \Rc$. Define $\Vc_\Rc$
as in Theorem \ref{abelianrel}; we claim that
$\Vc_\Rc = {\rm Alg}(\Lc)$. To see this first let $A \in \Vc_\Rc$ and $M_p
\in \Lc$. Then $(1-p,p) \not\in \Rc$, so $M_{1-p}AM_p = 0$, which
shows that $A \in {\rm Alg}(\Lc)$. Conversely, let $A \in {\rm Alg}(\Lc)$. If
$p,q \in L^\infty(X,\mu)$ satisfy $(p,q) \not\in \Rc$ then there
exists $M_{q'} \in \Lc$ such that $q \leq q'$ and $pq' = 0$
(Theorem \ref{versuslattice}), and
$M_{q'}AM_{q'} = AM_{q'}$ then implies that
$$M_pAM_q = M_pAM_{q'}M_q = M_pM_{q'}AM_{q'}M_q = 0,$$
which shows that $A \in \Vc_\Rc$. This proves the claim.

Observe that ${\rm Alg}(\Lc)$
contains all bounded multiplication operators, so any
projection in $\Bc(H)$ whose range is invariant for ${\rm Alg}(\Lc)$
must commute with all multiplication operators and hence must
have the form $M_p$ with $p \in L^\infty(X,\mu)$. Now
by Theorem \ref{abelianrel} we have $\Rc = \{(p,q): M_pAM_q
\neq 0$ for some $A \in \Vc_\Rc\}$. So
\begin{eqnarray*}
M_p \in \Lc &\Leftrightarrow& (1-p,p) \not\in \Rc\cr
&\Leftrightarrow& M_{1-p}AM_p = 0\hbox{ for all }A \in \Vc_\Rc\cr
&\Leftrightarrow& M_pAM_p = AM_p\hbox{ for all }A \in {\rm Alg}(\Lc).
\end{eqnarray*}
This shows that a projection of the form $M_p$ belongs to $\Lc$
if and only if the range of $M_p$ is invariant for ${\rm Alg}(\Lc)$.
We saw just above that every projection in ${\rm Lat}(\rm Alg(\Lc))$ must
take this form, so we conclude that $\Lc = {\rm Lat}({\rm Alg}(\Lc))$.
\end{proof}

Next we relate our approach to Loginov and Sul'man's generalized
notion of reflexivity (\cite{Dav}, Section 15.B).
We know from Lemma \ref{separation} that a quantum
relation $\Vc$ is determined by the pairs of projections $P$ and $Q$ in
$\Bc(H\otimes l^2)$ that annihilate it (i.e., such that $P(A\otimes I)Q
= 0$ for all $A \in \Vc$). We also noted in the comment following
Theorem \ref{abelianrel} that $\Vc$ in general is not determined by the
pairs of projections in $\Bc(H)$ that annihilate it. This suggests
the following definition:

\begin{defi}\label{refdef}
A subspace $\Vc \subseteq \Bc(H)$ is {\it operator reflexive} if
$$\Vc = \{B \in \Bc(H): P\Vc Q = 0\quad \Rightarrow\quad PBQ = 0\},$$
with $P$ and $Q$ ranging over projections in $\Bc(H)$.
\end{defi}

We use the term ``operator reflexive'' to avoid confusion with the notion
of reflexivity of a quantum relation (Definition \ref{quanttypes} (d)).

Definition \ref{refdef} makes sense for any subspace $\Vc$, but in the case of
quantum relations it can be slightly modified:

\begin{prop}\label{relatref}
Let $\Vc$ be a quantum relation over a von Neumann algebra $\Mx \subseteq
\Bc(H)$. Then $\Vc$ is operator reflexive if and only if
$$\Vc = \{B \in \Bc(H): P\Vc Q = 0\quad\Rightarrow\quad PBQ = 0\},$$
with $P$ and $Q$ ranging over projections in $\Mx$.
\end{prop}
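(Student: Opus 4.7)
The plan is to show that the condition in the proposition, with $P,Q$ ranging over projections in $\Mx$, defines the same set of operators as the condition in Definition \ref{refdef}, where $P,Q$ range over all projections in $\Bc(H)$. Write
\begin{align*}
S_1 &= \{B \in \Bc(H) : P\Vc Q = 0 \Rightarrow PBQ = 0 \text{ for all projections } P,Q \in \Bc(H)\},\\
S_2 &= \{B \in \Bc(H) : P\Vc Q = 0 \Rightarrow PBQ = 0 \text{ for all projections } P,Q \in \Mx\}.
\end{align*}
Since every projection in $\Mx$ is a projection in $\Bc(H)$, the $S_2$ condition is weaker, so $\Vc \subseteq S_1 \subseteq S_2$. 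Thus it suffices to prove $S_1 = S_2$, as then $\Vc = S_1 \Leftrightarrow \Vc = S_2$.

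The key step is a lifting lemma: given any projection $P \in \Bc(H)$, let $\tilde P$ denote the orthogonal projection onto the closed linear span of $\Mx' P H$. This subspace is invariant under $\Mx'$, so $\tilde P$ commutes with $\Mx'$ and hence $\tilde P \in (\Mx')' = \Mx$; moreover $P \leq \tilde P$. I claim that $P\Vc Q = 0$ if and only if $\tilde P \Vc \tilde Q = 0$. The $(\Leftarrow)$ direction is immediate from $P \leq \tilde P$ and $Q \leq \tilde Q$. For $(\Rightarrow)$, fix $A \in \Vc$ and $B,B' \in \Mx'$; since $\Vc$ is an $\Mx'$-bimodule, $B^*AB' \in \Vc$, so $P(B^*AB')Q = 0$. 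Then for any $v,w \in H$,
$$\langle A(B'Qw), BPv\rangle = \langle P B^*AB' Q w, v\rangle = 0.$$
Vectors of the form $BPv$ and $B'Qw$ are dense in $\tilde P H$ and $\tilde Q H$ respectively, so $\tilde P A \tilde Q = 0$, proving the claim.

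Finally, to deduce $S_2 \subseteq S_1$, let $B \in S_2$ and let $P,Q$ be any projections in $\Bc(H)$ with $P\Vc Q = 0$. By the claim, $\tilde P \Vc \tilde Q = 0$; and since $\tilde P, \tilde Q \in \Mx$, the hypothesis $B \in S_2$ gives $\tilde P B \tilde Q = 0$. Because $P = P\tilde P$ and $Q = \tilde Q Q$, we conclude $PBQ = P\tilde P B \tilde Q Q = 0$, so $B \in S_1$. This completes the proof.

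There is no real obstacle here; the only substantive point is the construction of $\tilde P$, which rests only on the double commutant theorem and the fact that $\Vc$ is a bimodule over $\Mx'$ (so that $B^*\Vc B' \subseteq \Vc$ for all $B,B' \in \Mx'$, making the density argument work).
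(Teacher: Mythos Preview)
Your proof is correct and follows essentially the same route as the paper's: both arguments hinge on replacing arbitrary projections $P,Q\in\Bc(H)$ satisfying $P\Vc Q=0$ by the larger projections $\tilde P,\tilde Q\in\Mx$ onto $\overline{\Mx'PH}$ and $\overline{\Mx'QH}$, using the bimodule property to see that $\tilde P\Vc\tilde Q=0$. Your version is slightly more explicit in setting up $S_1$ and $S_2$ and carrying out the density argument via inner products, but the substance is identical.
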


\begin{proof}
Let $P$ and $Q$ be projections in $\Bc(H)$ and suppose $P\Vc Q = 0$.
Since $\Vc$ is a bimodule over $\Mx'$, we have $P\Mx'\Vc\Mx' Q = 0$,
and hence $\tilde{P}\Vc\tilde{Q} = 0$ where $\tilde{P}$ is the
orthogonal projection onto the closure of $\Mx'({\rm ran}(P))$ and
$\tilde{Q}$ is the orthogonal projection onto the closure of
$\Mx'({\rm ran}(Q))$. Also, $\tilde{P}$ and $\tilde{Q}$ belong to $\Mx$
because their ranges are invariant for $\Mx'$. So for any projections
$P$ and $Q$ such that $P\Vc Q = 0$ there are larger projections
$\tilde{P}$ and $\tilde{Q}$ in $\Mx$ such that $\tilde{P}\Vc\tilde{Q} = 0$.
This entails that the two conditions are equivalent.
\end{proof}

Operator reflexivity is of particular interest for quantum relations over
maximal abelian von Neumann algebras because of the following result.

\begin{coro}\label{qreflex}
Let $(X,\mu)$ be a finitely decomposable measure space, let
$\Mx \cong L^\infty(X,\mu)$ be the von Neumann algebra of
bounded multiplication operators on $L^2(X,\mu)$, and let $\Vc$ be
a quantum relation on $\Mx$. Then in the notation of Theorem
\ref{abelianrel}, $\Vc = \Vc_{\Rc_\Vc}$ if and only if $\Vc$ is
operator reflexive.
\end{coro}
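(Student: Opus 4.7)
The plan is to observe that both sides of the claimed equivalence, once unpacked, describe $\Vc$ by exactly the same set of constraints, so the corollary is essentially a tautology once one applies Proposition \ref{relatref}. The forward containment $\Vc \subseteq \Vc_{\Rc_\Vc}$ is already recorded in Theorem \ref{abelianrel}, so the content is in the reverse inclusion.

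First I would use that the multiplication algebra $\Mx \cong L^\infty(X,\mu)$ acting on $L^2(X,\mu)$ is maximal abelian, so $\Mx = \Mx'$ and the projections in $\Mx$ are precisely the operators $M_p$ with $p$ a projection in $L^\infty(X,\mu)$. By Proposition \ref{relatref}, $\Vc$ is therefore operator reflexive if and only if
$$\Vc = \{B \in \Bc(L^2(X,\mu)) : M_p \Vc M_q = 0 \Rightarrow M_p B M_q = 0\},$$
with $p$ and $q$ ranging over projections in $L^\infty(X,\mu)$.

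Next I would unpack the definition of $\Vc_{\Rc_\Vc}$. By the definition of $\Rc_\Vc$ in Theorem \ref{abelianrel}, the condition $(p,q) \not\in \Rc_\Vc$ is literally the condition $M_p A M_q = 0$ for every $A \in \Vc$, i.e., $M_p \Vc M_q = 0$. Substituting this into the definition of $\Vc_{\Rc_\Vc}$ from Theorem \ref{abelianrel} yields
$$\Vc_{\Rc_\Vc} = \{B \in \Bc(L^2(X,\mu)) : M_p \Vc M_q = 0 \Rightarrow M_p B M_q = 0\},$$
which is verbatim the set characterized in the previous display. Hence $\Vc = \Vc_{\Rc_\Vc}$ if and only if $\Vc$ equals this set, if and only if $\Vc$ is operator reflexive.

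The only step that is not a direct bookkeeping rewrite is the appeal to Proposition \ref{relatref} (which allows us to restrict the test projections from all of $\Bc(H)$ to $\Mx$); since that proposition is already available, I do not expect any real obstacle. The corollary is then a one-line identification of the two characterizations of operator reflexivity in the maximal abelian setting.
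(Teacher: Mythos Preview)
Your proof is correct and matches the paper's own argument essentially verbatim: the paper states that the corollary is trivial because $\Vc_{\Rc_\Vc}$ is by definition the set of operators $B$ satisfying $P\Vc Q = 0 \Rightarrow PBQ = 0$ for projections $P,Q \in \Mx$, which (via Proposition~\ref{relatref}) is exactly the operator reflexivity condition. You have simply spelled out this identification in slightly more detail.
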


The proof of this corollary is trivial, as $\Vc_{\Rc_\Vc}$
by definition consists
of precisely those operators $B$ which satisfy $P\Vc Q = 0$ $\Rightarrow$
$PBQ = 0$, with $P$ and $Q$ ranging over projections in $\Mx$.

Loginov and Sul'man's version of operator reflexivity is stated in part (iii)
of the following result. Our definition is also equivalent to one formulated
by Erdos \cite{Erd}. Given a subspace $\Vc \subseteq \Bc(H)$, for any
projection $Q \in \Bc(H)$ let $\phi(Q)$ be the orthogonal projection
onto the closure of $\Vc({\rm ran}(Q))$. That is, $\phi(Q) =
I - \bigvee\{P: P\Vc Q = 0\}$. Erdos's definition is stated in part
(ii) of the next result. Part (v) is Larson's characterization
of operator reflexivity (\cite{Lar}, Lemma 2).

\begin{prop}\label{equivalences}
Let $\Vc$ be a subspace of $\Bc(H)$. The following are equivalent:
\begin{quote}
\noindent (i) $\Vc$ is operator reflexive

\noindent (ii) $\Vc = \{B \in \Bc(H): BQ = \phi(Q)BQ$ for all projections
$Q \in \Bc(H)\}$

\noindent (iii) $\Vc = \{B \in \Bc(H): Bv \in \overline{\Vc v}$ for
all $v \in H\}$

\noindent (iv) for any $A \in \Bc(H) - \Vc$ there exist $v,w \in H$ such that
$$\langle Aw,v\rangle \neq 0$$
but
$$\langle Bw,v\rangle = 0$$
for all $B \in \Vc$

\noindent (v) $\Vc$ is weak* closed and its preannihilator $\Vc_\perp \subseteq \TC(H)$
is generated by rank one operators.
\end{quote}
\end{prop}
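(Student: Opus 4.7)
The plan is to establish the circle of equivalences by proving (i) $\Leftrightarrow$ (ii), (ii) $\Leftrightarrow$ (iii), (i) $\Leftrightarrow$ (iv), and (iv) $\Leftrightarrow$ (v). Each of these follows from a straightforward reformulation once the right dictionary between projections, vectors, and rank-one functionals is set up, so the proof is more a bookkeeping exercise than a serious argument.

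First I would observe that (i) and (ii) are literally the same condition rewritten. By definition $I - \phi(Q) = \bigvee\{P: P\Vc Q = 0\}$, so the equation $BQ = \phi(Q)BQ$ says precisely that $PBQ = 0$ for every $P$ with $P\Vc Q = 0$ (the direction ``for every such $P$'' uses that the supremum of these $P$ is $I - \phi(Q)$, and conversely each such $P$ lies below $I - \phi(Q)$). Next, for (ii) $\Leftrightarrow$ (iii), I would specialize to rank-one projections: if $Q$ is the projection onto $\Cb v$ then $\Vc\cdot\mathrm{ran}(Q) = \Vc v$, so $\phi(Q)$ is the projection onto $\overline{\Vc v}$, and $BQ = \phi(Q)BQ$ becomes $Bv \in \overline{\Vc v}$. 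This gives (ii) $\Rightarrow$ (iii). Conversely, if $Bv \in \overline{\Vc v}$ for all $v$, then for any projection $Q$ and any $w \in \mathrm{ran}(Q)$ we have $Bw \in \overline{\Vc w} \subseteq \overline{\Vc\,\mathrm{ran}(Q)} = \mathrm{ran}(\phi(Q))$, so $\phi(Q)BQ = BQ$.

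For (i) $\Leftrightarrow$ (iv) the point is to reduce from arbitrary projections to rank-one ones. Given $A \not\in\Vc$ witnessed by projections $P,Q$ with $P\Vc Q = 0$ but $PAQ \neq 0$, there exist $w \in \mathrm{ran}(Q)$ and $v \in \mathrm{ran}(P)$ with $\langle Aw,v\rangle = \langle PAQw,v\rangle \neq 0$, while $\langle Bw,v\rangle = \langle PBQw,v\rangle = 0$ for all $B \in \Vc$. Conversely, a pair of vectors $(v,w)$ separating $A$ from $\Vc$ produces rank-one projections $P$ onto $\Cb v$ and $Q$ onto $\Cb w$ with $P\Vc Q = 0$ and $PAQ \neq 0$. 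The equivalence (iv) $\Leftrightarrow$ (v) is the Hahn--Banach/bipolar argument in the duality $\Bc(H) = \TC(H)^*$: identifying the rank-one trace-class operator $w \otimes v^*$ with the functional $B \mapsto \mathrm{tr}(B\cdot w\otimes v^*) = \langle Bw,v\rangle$, condition (iv) says exactly that the rank-one elements of $\Vc_\perp$ separate $\Vc$ from its complement in $\Bc(H)$. By the bipolar theorem in this duality, this is equivalent to $\Vc$ being the preannihilator of the closed linear span of the rank-one elements of $\Vc_\perp$, i.e., to $\Vc$ being weak* closed with $\Vc_\perp$ generated (as a norm-closed subspace of $\TC(H)$) by its rank-one operators.

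The main obstacle, such as it is, is keeping the two directions of the bipolar argument straight: one must note that (iv) alone already forces $\Vc$ to be weak* closed, since the set of $A$ annihilated by a family of weak* continuous functionals is automatically weak* closed, so there is no circularity in concluding (v) from (iv). Everything else is a routine identification, and no machinery beyond elementary Hilbert space duality and the definition of $\phi$ is needed.
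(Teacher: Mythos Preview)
Your proposal is correct and follows essentially the same route as the paper: the paper establishes exactly the same four pairings (i)$\Leftrightarrow$(ii), (ii)$\Leftrightarrow$(iii), (i)$\Leftrightarrow$(iv), (iv)$\Leftrightarrow$(v), using the identifications you describe (that $P\Vc Q=0$ iff $P\leq I-\phi(Q)$, the reduction to rank-one projections, and the correspondence between rank-one trace-class operators and the functionals $A\mapsto\langle Aw,v\rangle$). Your write-up simply fills in the details that the paper leaves implicit, including the observation that (iv) automatically forces weak* closure of $\Vc$.
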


\begin{proof}
(i) $\Leftrightarrow$ (ii): This follows from the fact that $P\Vc Q = 0$
$\Leftrightarrow$ $P \leq I - \phi(Q)$.

(ii) $\Leftrightarrow$ (iii): Trivial.

(i) $\Leftrightarrow$ (iv): $P\Vc Q = 0$ $\Rightarrow$ $PBQ = 0$ holds
for all projections $P$ and $Q$ if and only if it holds for all rank one
projections $P$ and $Q$, and $\langle Aw,v\rangle = 0$ $\Leftrightarrow$
$PAQ = 0$ where $P$ and $Q$ are respectively the orthogonal projections
onto $\Cb v$ and $\Cb w$.

(iv) $\Leftrightarrow$ (v): The linear functionals $A \mapsto {\rm tr}(AB)$
on $\Bc(H)$ with $B$ a rank one operator are precisely the linear functionals
$A \mapsto \langle Aw,v\rangle$ with $v,w \in H$.
\end{proof}

Every subspace $\Vc$ of $\Bc(H)$ has a {\it reflexive closure}
$$\overline{\Vc} =
\{B \in \Bc(H): P\Vc Q = 0\quad \Rightarrow\quad PBQ = 0\},$$
with $P$ and $Q$ ranging over projections in $\Bc(H)$. It is easy to
see that $\overline{\Vc}$ is the smallest operator reflexive subspace that
contains $\Vc$. Moreover, if $\Vc$ is a quantum relation then so is
$\overline{\Vc}$:

\begin{prop}
Let $\Vc$ be a quantum relation on a von Neumann algebra $\Mx
\subseteq \Bc(H)$. Then its reflexive closure $\overline{\Vc}$ is
also a quantum relation on $\Mx$, and we have
$$\overline{\Vc} = \{B \in \Bc(H): P\Vc Q = 0\quad\Rightarrow\quad
PBQ = 0\}$$
with $P$ and $Q$ ranging over projections in $\Mx$.
\end{prop}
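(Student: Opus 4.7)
The plan is to verify three things in order: first that $\overline{\Vc}$ is weak* closed, second that it is an $\Mx'$-bimodule, and third that the two descriptions of $\overline{\Vc}$ (with $P,Q$ ranging over all projections in $\Bc(H)$, versus only over projections in $\Mx$) coincide. The weak* closure claim is immediate: for each pair $(P,Q)$ with $P\Vc Q = 0$, the map $B \mapsto PBQ$ is weak operator continuous and hence weak* continuous, so the set $\{B : PBQ = 0\}$ is weak* closed, and $\overline{\Vc}$ is the intersection of these sets.

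The bimodule property and the identification of the two descriptions both rely on the same trick used in the proof of Proposition \ref{relatref}. Given any projections $P,Q \in \Bc(H)$, let $\tilde P$ and $\tilde Q$ be the orthogonal projections onto the closures of $\Mx'({\rm ran}(P))$ and $\Mx'({\rm ran}(Q))$, respectively. These ranges are $\Mx'$-invariant, so $\tilde P, \tilde Q \in (\Mx')' = \Mx$; moreover $\tilde P \geq P$ and $\tilde Q \geq Q$. Because $\Vc$ is a bimodule over $\Mx'$, the assumption $P\Vc Q = 0$ yields $P\Mx'\Vc\Mx' Q = 0$, and passing to norm-closures gives $\tilde P \Vc \tilde Q = 0$. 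This is the sole observation I need.

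For the bimodule property, let $B \in \overline{\Vc}$ and $M, M' \in \Mx'$, and fix any projections $P, Q \in \Bc(H)$ with $P\Vc Q = 0$. Since $\tilde P, \tilde Q \in \Mx$ commute with $M$ and $M'$, and since $P\tilde P = P$ and $\tilde Q Q = Q$,
$$P(MBM')Q \;=\; P\tilde P M B M' \tilde Q Q \;=\; PM(\tilde P B \tilde Q)M'Q \;=\; 0,$$
because $\tilde P \Vc \tilde Q = 0$ forces $\tilde P B \tilde Q = 0$ from $B \in \overline{\Vc}$. Thus $MBM' \in \overline{\Vc}$, proving $\Mx'\overline{\Vc}\Mx' \subseteq \overline{\Vc}$.

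For the second description, write $\overline{\Vc}'$ for the set obtained by restricting $P,Q$ to range over projections in $\Mx$. The inclusion $\overline{\Vc} \subseteq \overline{\Vc}'$ is trivial. Conversely, if $B \in \overline{\Vc}'$ and $P,Q \in \Bc(H)$ are projections with $P\Vc Q = 0$, form $\tilde P, \tilde Q \in \Mx$ as above; then $\tilde P \Vc \tilde Q = 0$ forces $\tilde P B \tilde Q = 0$, and hence $PBQ = P\tilde P B \tilde Q Q = 0$. So $B \in \overline{\Vc}$. I do not anticipate any serious obstacle here: the only substantive input is the identity $\Mx = \Mx''$, applied via the $\tilde P$, $\tilde Q$ construction, and everything else is formal.
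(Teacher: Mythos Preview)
Your proposal is correct and uses essentially the same approach as the paper: both arguments rest on the observation from Proposition~\ref{relatref} that any pair of projections $P,Q \in \Bc(H)$ annihilating $\Vc$ can be enlarged to projections $\tilde P, \tilde Q \in \Mx$ that still annihilate $\Vc$. The only difference is organizational---the paper first establishes the description with $P,Q \in \Mx$ and then reads off the bimodule property from the fact that such $P,Q$ commute with $\Mx'$, whereas you prove both parts directly via the $\tilde P, \tilde Q$ trick.
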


\begin{proof}
The first statement follows from the second because if $B \in \overline{\Vc}$,
$A,C \in \Mx'$, and $PBQ = 0$ for all projections $P,Q \in \Mx$ with
$P\Vc Q = 0$ then
$$P(ABC)Q = A(PBQ)C = 0$$
for all such projections $P$ and $Q$. This shows that $ABC \in \overline{\Vc}$.

The second assertion of the proposition follows from the observation
made in the proof of Proposition \ref{relatref} that if $P\Vc Q = 0$
for some projections $P,Q \in \Bc(H)$ then there exist projections
$\tilde{P},\tilde{Q} \in \Mx$ with $P \leq \tilde{P}$, $Q \leq \tilde{Q}$,
and $\tilde{P}\Vc\tilde{Q} = 0$.
\end{proof}

Next we note that if $\Vc$ is an operator algebra then our definition
of operator reflexivity is equivalent to the standard one. This follows from
Proposition \ref{equivalences} (i) $\Leftrightarrow$ (iii) above.

\begin{prop}\label{reflexive}
Let $\Ac \subseteq \Bc(H)$ be a unital operator algebra. Then $\Ac$ is
operator reflexive if and only if $\Ac = {\rm Alg}({\rm Lat}(\Ac))$.
\end{prop}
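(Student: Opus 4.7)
The plan is to invoke the equivalence (i) $\Leftrightarrow$ (iii) from Proposition \ref{equivalences} and then observe that, because $\Ac$ is a unital algebra, condition (iii) is literally the statement that $\Ac = {\rm Alg}({\rm Lat}(\Ac))$.

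First I will show that ${\rm Alg}({\rm Lat}(\Ac)) = \{B \in \Bc(H) : Bv \in \overline{\Ac v}\hbox{ for all } v \in H\}$. For any $v \in H$, the subspace $\overline{\Ac v}$ is closed and $\Ac$-invariant, and it contains $v$ because $I \in \Ac$; thus the orthogonal projection $P_v$ onto $\overline{\Ac v}$ lies in ${\rm Lat}(\Ac)$. If $B \in {\rm Alg}({\rm Lat}(\Ac))$ then $BP_v = P_vBP_v$, so in particular $Bv = BP_vv = P_vBP_vv \in \overline{\Ac v}$. Conversely, if $B$ satisfies $Bv \in \overline{\Ac v}$ for every $v$ and $P \in {\rm Lat}(\Ac)$, then for any $v \in {\rm ran}(P)$ the subspace $\overline{\Ac v}$ is contained in ${\rm ran}(P)$ (since ${\rm ran}(P)$ is closed and $\Ac$-invariant), so $Bv \in {\rm ran}(P)$; hence $B \in {\rm Alg}({\rm Lat}(\Ac))$.

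Combining this with the equivalence (i) $\Leftrightarrow$ (iii) of Proposition \ref{equivalences} yields the result: $\Ac$ is operator reflexive if and only if $\Ac$ coincides with the set of $B \in \Bc(H)$ such that $Bv \in \overline{\Ac v}$ for all $v$, and this set has just been identified with ${\rm Alg}({\rm Lat}(\Ac))$.
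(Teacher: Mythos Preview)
Your proof is correct and follows exactly the approach indicated in the paper, which simply remarks that the result follows from Proposition \ref{equivalences} (i) $\Leftrightarrow$ (iii); you have supplied the routine verification that for a unital algebra $\Ac$ the set $\{B : Bv \in \overline{\Ac v}\hbox{ for all }v\}$ coincides with ${\rm Alg}({\rm Lat}(\Ac))$.
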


The next result is given in Lemma 15.4 of \cite{Dav}, but for the sake
of completeness we include a short proof here.

\begin{prop}\label{tensorref} (\cite{Dav}, Lemma 15.4)
Let $\Vc$ be a weak* closed subspace of $\Bc(H)$. Then
$\Vc \otimes I$ is an operator reflexive subspace of $\Bc(H \otimes l^2)$.
\end{prop}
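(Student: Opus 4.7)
The plan is to invoke Proposition \ref{equivalences}(iii), which characterizes operator reflexivity of a weak*-closed subspace $\Wc$ as the condition that every operator $T$ with $Tw \in \overline{\Wc w}$ for all $w$ lies in $\Wc$. First, $\Vc \otimes I$ is indeed weak*-closed in $\Bc(H \otimes l^2)$, since the restriction of weak*-continuous functionals from $\Bc(H \otimes l^2)$ to $\Bc(H) \otimes I \cong \Bc(H)$ is surjective. So fix $T \in \Bc(H\otimes l^2)$ satisfying $Tw \in \overline{(\Vc \otimes I)w}$ for every $w$, and aim to show $T \in \Vc \otimes I$.

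The first step is to extract an underlying operator on $H$. Let $\{e_i\}$ denote the standard basis of $l^2$. Since $(\Vc \otimes I)(v\otimes e_1) = \Vc v \otimes e_1$, applying the hypothesis to $w = v \otimes e_1$ forces $T(v \otimes e_1) = \hat T v \otimes e_1$ for some $\hat T v \in \overline{\Vc v}$; this defines a linear operator $\hat T: H \to H$ with $\|\hat T\| \leq \|T\|$. The key step is to upgrade this to $T = \hat T \otimes I$. Applying the hypothesis to $w = v \otimes (e_1+e_i)$ forces $Tw \in \overline{\Vc v}\otimes(e_1+e_i)$; combined with $Tw = \hat T v \otimes e_1 + T(v\otimes e_i)$, comparing the $e_1$- and $e_i$-components yields $T(v\otimes e_i) = \hat T v \otimes e_i$. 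Extending by linearity in the second factor and by continuity, $T = \hat T \otimes I$ on all of $H \otimes l^2$.

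Finally, $\hat T \in \Vc$ because $\Vc$ is weak*-closed and every weak*-continuous functional $\phi$ on $\Bc(H)$ annihilating $\Vc$ also annihilates $\hat T$. Such a $\phi$ has the form $\phi(A) = \langle (A\otimes I)v, w\rangle$ for some $v, w \in H \otimes l^2$ (\cite{Tak}, p.\ 67). The condition $\phi|_\Vc = 0$ is exactly $w \perp (\Vc \otimes I) v$, hence $w \perp \overline{(\Vc\otimes I)v}$; since $Tv$ lies in this closed subspace by hypothesis, $\phi(\hat T) = \langle (\hat T \otimes I)v, w\rangle = \langle Tv, w\rangle = 0$. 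The main obstacle is the middle step: the correlating test vector $v \otimes (e_1+e_i)$ is needed to force the various $e_i$-slots to share a common underlying $\hat T$, and this is precisely where tensoring with $I$ (rather than a general subspace of $\Bc(l^2)$) is essential.
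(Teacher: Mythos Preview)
Your proof is correct, but it takes a different route from the paper's. The paper uses criterion (iv) of Proposition \ref{equivalences} and argues by a case split: if $A \notin \Bc(H)\otimes I$, it invokes the operator reflexivity of the von Neumann algebra $\Bc(H)\otimes I$ (via Proposition \ref{reflexive}) to find separating vectors; if $A = A_0 \otimes I$ with $A_0 \notin \Vc$, it uses weak* closure of $\Vc$ to separate. You instead use criterion (iii) and work constructively: testing against $v\otimes e_1$, $v\otimes e_i$, and $v\otimes(e_1+e_i)$ directly forces $T = \hat T \otimes I$, after which the weak* closure argument (shared with the paper's second case) finishes. Your approach is more self-contained, avoiding the appeal to reflexivity of von Neumann algebras, at the cost of a hands-on basis computation; the paper's is shorter and more conceptual. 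One small point: when you ``compare the $e_1$- and $e_i$-components'' you are implicitly using that $T(v\otimes e_i)\in H\otimes e_i$, which follows from the same reasoning you gave for $e_1$; it would be cleaner to state this explicitly.
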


\begin{proof}
Let $A \in \Bc(H\otimes l^2) - \Vc \otimes I$. By Proposition
\ref{equivalences} (iv) it will suffice to find $v,w \in H \otimes l^2$
such that $\langle Aw,v\rangle \neq 0$ but $\langle (B\otimes I)w,v\rangle
= 0$ for all $B \in \Vc$. There are two cases. First, suppose
$A \not\in \Bc(H)\otimes I$. Since $\Bc(H) \otimes I$ is a von
Neumann algebra it is operator reflexive (Proposition \ref{reflexive}),
so by Proposition \ref{equivalences} there exist $v,w \in H \otimes l^2$
such that $\langle Aw,v\rangle \neq 0$ but $\langle (B \otimes I)w,v\rangle
=0$ for all $B \in \Bc(H)$, in particular for all $B \in \Vc$, as desired.
The other case is that $A \in \Bc(H) \otimes I$, say $A = A_0 \otimes I$.
Then $A_0 \not\in \Vc$ and, as in Lemma \ref{separation}, the desired pair
of vectors $v,w \in H \otimes l^2$ exist since $\Vc$ is weak* closed.
This completes the proof.
\end{proof}

We also recover Erdos's generalization of Arveson's theorem on
the operator reflexivity of commutative subspace lattices. Given a map
$\phi$ from projections in $\Bc(H)$ to projections in $\Bc(H)$,
let its {\it co-map} be the map
$$\psi: P \mapsto \bigvee\{Q: \phi(Q) \leq P\}.$$
Let $[A]$ denote the range projection of the operator $A \in \Bc(H)$.

\begin{theo}\label{Erdos}
(\cite{Erd}, Theorem 4.4)
Let $\phi$ be a join preserving map from the set of projections in
$\Bc(H)$ to itself such that $\phi(0) = 0$, and suppose that all of
the projections in the ranges of $\phi$ and its co-map $\psi$ commute. Then
$$\phi(R) = \bigvee \{[AR]: A \in \Bc(H)\hbox{ and }
\phi(Q)AQ = AQ\hbox{ for all projections } Q \in \Bc(H)\}$$
for every projection $R \in \Bc(H)$.
\end{theo}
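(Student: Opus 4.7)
The plan is to realize the space $\Ac := \{A \in \Bc(H) : \phi(Q)AQ = AQ \text{ for all projections } Q\}$ as a quantum relation $\Vc_\Rc$ on a maximal abelian von Neumann subalgebra $\Mx$, and then to invoke the reflexivity identity $\Rc = \Rc_{\Vc_\Rc}$ from Theorem \ref{abelianrel}, using the co-map $\psi$ to transport the case of a general projection $R \in \Bc(H)$ back to the case $R \in \Mx$.

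First, fix a MASA $\Mx \subseteq \Bc(H)$ containing the ranges of both $\phi$ and $\psi$, and realize $\Mx \cong L^\infty(X,\mu)$ on $H \cong L^2(X,\mu)$. The restriction $\phi|_\Mx$ is join-preserving and sends $0$ to $0$, so Proposition \ref{erd} produces a measurable relation $\Rc$ on $X$ with $(p,q) \in \Rc \Leftrightarrow p\phi(q) \neq 0$. I would then identify $\Ac = \Vc_\Rc$. The inclusion $\Ac \subseteq \Vc_\Rc$ is immediate from the definition of $\Ac$ by specializing $Q$ to $\Mx$-projections. For the reverse inclusion, given $A \in \Vc_\Rc$ and an arbitrary projection $Q \in \Bc(H)$, set $q = \phi(Q)$ and $\tilde Q = \psi(q) \in \Mx$; then $Q \leq \tilde Q$ and $\phi(\tilde Q) = q$, and the defining property of $\Vc_\Rc$ applied to $\tilde Q$ against every $p' \leq I - q$ forces $[A\tilde Q] \leq q$, whence $[AQ] \leq [A\tilde Q] \leq \phi(Q)$.

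Next I would establish the formula for $R \in \Mx$. Put $P = \bigvee\{[AR] : A \in \Ac\}$; the defining property of $\Ac$ gives $P \leq \phi(R)$, and a bimodule-invariance argument (using $BA, AB \in \Ac$ for $B \in \Mx$) shows that $\mathrm{ran}(P)$ is $\Mx$-invariant on both sides, so $P \in \Mx' = \Mx$. If $P \neq \phi(R)$, let $p := \phi(R) - P$, a nonzero projection in $\Mx$ orthogonal to $P$. Then $p[AR] = 0$ for every $A \in \Ac$, i.e.\ $M_p\Vc_\Rc M_R = 0$; by the identity $\Rc = \Rc_{\Vc_\Rc}$ from Theorem \ref{abelianrel} this forces $(p,R) \notin \Rc$, that is, $p\phi(R) = 0$, contradicting $0 \neq p \leq \phi(R)$.

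Finally I would bootstrap to a general projection $R \in \Bc(H)$. Set $P = \bigvee\{[AR] : A \in \Ac\} \in \Mx$ again and suppose $p := \phi(R) - P$ is nonzero. Then $pAR = 0$ and hence $RA^*p = 0$ for every $A \in \Ac$, so $\Ac^* p(H) \subseteq \ker R$; the projection $E$ onto $\overline{\Ac^* p(H)}$ is $\Mx$-invariant and therefore lies in $\Mx$, and satisfies $R \leq I - E$. Monotonicity of $\phi$ gives $p \leq \phi(R) \leq \phi(I - E)$, and the preceding step applied to the $\Mx$-projection $I - E$ yields $\phi(I - E) = \bigvee\{[A(I - E)] : A \in \Ac\}$. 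On the other hand, every $v \in (I - E)(H)$ is orthogonal to $\Ac^* p(H)$, so $pAv = 0$ for every $A \in \Ac$; hence $p \perp [A(I - E)]$ for every such $A$ and so $p \perp \phi(I - E)$. Combined with $p \leq \phi(I - E)$ this forces $p = 0$, a contradiction. The main obstacle is this final reduction: one must push an obstruction located at a possibly ``wild'' projection $R \in \Bc(H)$ back into the MASA, and the co-map $\psi$ together with the $\Mx$-invariant subspace $\overline{\Ac^* p(H)}$ is precisely the device that accomplishes this.
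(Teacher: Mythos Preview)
Your proof is correct and follows the same overall strategy as the paper: embed the ranges of $\phi$ and $\psi$ in a MASA $\Mx \cong L^\infty(X,\mu)$, build the measurable relation $\Rc$ from $\phi|_\Mx$, identify $\Ac = \Vc_\Rc$, and then invoke the reflexivity identity $\Rc = \Rc_{\Vc_\Rc}$ from Theorem~\ref{abelianrel}.

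The only genuine difference is in the passage from $R \in \Mx$ to an arbitrary projection $R \in \Bc(H)$. You do this in two stages: first prove the formula for $R \in \Mx$, then for general $R$ push the obstruction $p = \phi(R) - P$ back into $\Mx$ via the auxiliary projection $E = [\Ac^*p] \in \Mx$, which satisfies $R \leq I - E$, and apply the first stage to $I - E$. The paper instead handles general $R$ in a single stroke by introducing $Q = \bigvee\{[BR] : B \in \Mx\} \in \Mx$, the smallest $\Mx$-projection dominating $R$; since $\phi(R) \leq \phi(Q)$ one gets $(\phi(R)-P,\,Q) \in \Rc = \Rc_{\Vc_\Rc}$, and the bimodule property $\Vc_\Rc\Mx \subseteq \Vc_\Rc$ then produces an $A \in \Vc_\Rc$ with $[AR] \not\leq P$. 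Your device $I - E$ approaches $R$ ``from above'' using the adjoint bimodule $\Ac^*$, while the paper's $Q$ approaches it ``from above'' using $\Mx$ directly; both exploit the $\Mx$-bimodule structure, but the paper's route avoids the intermediate special case and is somewhat shorter.
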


\begin{proof}
Let $\Mx$ be a maximal abelian von Neumann algebra containing
all of the projections in the ranges of $\phi$
and $\psi$, let $\Phi: L^\infty(X,\mu) \cong \Mx$ be an isomorphism,
and define a measurable relation $\Rc$ on $X$ by
setting $(p,q) \in \Rc$ if $\Phi(p)\phi(\Phi(q)) \neq 0$ (cf.\ Proposition
\ref{erd}). Let
\begin{eqnarray*}
\Vc &=& \{A \in \Bc(H): \phi(Q)AQ = AQ\hbox{ for all projections }
Q \in \Bc(H)\}\cr
&=& \{A \in \Bc(H): \phi(Q)A\psi(\phi(Q)) = A\psi(\phi(Q))
\hbox{ for all projections }
Q \in \Bc(H)\}\cr
&=& \{A \in \Bc(H): (p,q) \not\in \Rc\quad\Rightarrow\quad
\Phi(p)A\Phi(q) = 0\}.
\end{eqnarray*}
That is, $\Vc = \Vc_\Rc$ as in Theorem \ref{abelianrel}.
Now fix a projection $R \in \Bc(H)$, let $P = \bigvee\{[AR]: A \in \Vc\}
\leq \phi(R)$,
and suppose $P < \phi(R)$. Then $\phi(R) \in \Mx$ by construction,
and ${\rm ran}(P)$ is invariant for $\Mx$ so $P \in \Mx$ since $\Mx$
is maximal abelian, so say $P = \Phi(p)$ and $\phi(R) = \Phi(r)$. Also
let $Q = \bigvee\{[AR]: A \in \Mx\} \in \Mx$ and say $Q = \Phi(q)$. Now
$(r-p,q) \in \Rc$ since $\phi(R) \leq \phi(Q)$, so Theorem \ref{abelianrel}
implies that there exists $A \in \Vc$ such that $\Phi(r-p)A\Phi(q) \neq 0$,
contradicting the definition of $P$. We conclude that $P = \phi(R)$,
as desired.
\end{proof}

Theorem 4.4 of \cite{Erd} is apparently more general than this since
it covers maps from projections in $\Bc(H)$ to projections in $\Bc(K)$,
but this version of the result follows easily from Theorem \ref{Erdos} by
working in $\Bc(H \oplus K)$.

Finally, we have the following partially new result. It characterizes
various classes of operator reflexive quantum relations over a maximal abelian
von Neumann algebra.

\begin{theo}\label{char}
Let $\Mx$ be a maximal abelian von Neumann algebra
in $\Bc(H)$, let $\Phi: L^\infty(X,\mu) \cong \Mx$
be an isomorphism, and let $\Vc \subseteq \Bc(H)$ be an operator reflexive
operator space satisfying $\Mx\Vc\Mx \subseteq \Vc$. Then there is a measurable
relation $\Rc$ on $X$ such that
$$\Vc = \{A \in \Bc(H): (p,q) \not\in \Rc\quad\Rightarrow\quad
\Phi(p)A\Phi(q) = 0\}.$$
If $\Vc$ is a von Neumann algebra then $\Rc$ is a measurable equivalence
relation.
If $\Vc$ is an operator system then $\Rc$ is a measurable graph. If
$\Vc$ is an operator algebra then $\Rc$ is a measurable preorder. If
$\Vc$ is a triangular operator algebra then $\Rc$ is a measurable
partial order.
\end{theo}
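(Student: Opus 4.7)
The plan is to reduce everything to the general abelian theory of Section \ref{abcs}. Since $\Mx$ is maximal abelian we have $\Mx = \Mx'$, so the hypothesis $\Mx\Vc\Mx \subseteq \Vc$ is exactly the bimodule condition over $\Mx'$. Operator reflexivity forces $\Vc$ to be weak* closed by Proposition \ref{equivalences}(v), so $\Vc$ is in fact a quantum relation on $\Mx$. I would set $\Rc = \Rc_\Vc$ as supplied by Theorem \ref{abelianrel} and invoke Corollary \ref{qreflex} to conclude $\Vc = \Vc_{\Rc_\Vc}$, which is precisely the desired intrinsic description of $\Vc$.

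For the four classification statements my plan is to transfer each structural property of $\Vc$ to the analogous property of $\Rc$ via parts (a)--(d) of Theorem \ref{abelianrel}. Since $\Vc$ is unital in each of the four cases, we have $\Mx = \Mx \cdot I \cdot \Mx \subseteq \Vc$, so applying (a) with $\Vc_1 = \Mx$ together with (b) gives $\Delta \subseteq \Rc$, i.e., reflexivity. Symmetry $\Vc = \Vc^*$ translates directly via (c) into $\Rc = \Rc^T$. For transitivity, weak* closedness of $\Vc$ combined with $\Vc\Vc \subseteq \Vc$ forces the quantum relation product $\Vc \cdot \Vc$ (the weak* closure of the algebraic product) to lie inside $\Vc$, so parts (a) and (d) together deliver $\Rc \cdot \Rc \subseteq \Rc$. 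The three cases of measurable equivalence relation, measurable graph, and measurable preorder then follow by assembling the appropriate combinations of these three conclusions.

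The only case demanding extra care is antisymmetry for triangular operator algebras, where one has $\Vc \cap \Vc^* \subseteq \Mx$. The main obstacle here is that the intersection of two measurable relations is not in general a measurable relation, so one cannot directly read off the associated measurable relation of $\Vc \cap \Vc^*$ from those of $\Vc$ and $\Vc^*$. I would first observe that operator reflexivity is preserved under intersection: by Proposition \ref{equivalences}(v) the preannihilator of $\Vc_1 \cap \Vc_2$ is the norm closure of the sum of the individual preannihilators, which remains generated by rank one operators. Hence $\Vc \cap \Vc^*$ is itself an operator reflexive quantum relation, so by Corollary \ref{qreflex} it is determined by its associated measurable relation. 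A short bookkeeping step using part (a) and the identity $\Rc = \Rc_{\Vc_\Rc}$ then identifies this measurable relation as $\Rc \wedge \Rc^T$ (the inclusion $\Rc_{\Vc \cap \Vc^*} \subseteq \Rc \wedge \Rc^T$ is direct from (a); the reverse inclusion uses that any measurable $\Rc' \subseteq \Rc \cap \Rc^T$ satisfies $\Vc_{\Rc'} \subseteq \Vc \cap \Vc^*$). The containment $\Vc \cap \Vc^* \subseteq \Mx$ now transfers via (a) and (b) into $\Rc \wedge \Rc^T \subseteq \Delta$, giving antisymmetry and thus the measurable partial order conclusion.
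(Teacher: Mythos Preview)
Your argument is correct and matches the paper's approach (Corollary \ref{qreflex} plus the last part of Theorem \ref{abelianrel}); the paper's proof is a two-line sketch that simply asserts antisymmetry carries over, and your treatment of that case---showing $\Rc_{\Vc\cap\Vc^*} = \Rc \wedge \Rc^T$---fills in what the paper leaves implicit. One minor point: your observation that $\Vc \cap \Vc^*$ is itself operator reflexive, while true, is not actually used in your bookkeeping step; what you need there is only that $\Vc$ and $\Vc^*$ are operator reflexive, so that $\Vc_\Rc = \Vc$ and $\Vc_{\Rc^T} = \Vc^*$.
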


\begin{proof}
This follows from Proposition \ref{qreflex} together with the last
part of Theorem \ref{abelianrel}, which implies that reflexivity,
symmetry, antisymmetry, and transitivity of $\Vc$ all carry over
to $\Rc_\Vc$.
\end{proof}

The converse assertions, that for any measurable relation (equivalence
relation, graph, preorder, partial order) $\Rc$ on $X$ the set $\Vc =
\{A \in \Bc(H): (p,q) \not\in \Rc$ $\Rightarrow$ $\Phi(p)A\Phi(q) =0\}$
is an operator reflexive operator space (von Neumann algebra, operator system,
operator algebra, triangular operator algebra) satisfying
$\Mx\Vc\Mx \subseteq \Vc$, are trivial. So this gives us a complete
characterization of these classes of operator reflexive operator bimodules
over maximal abelian von Neumann algebras.

Theorem \ref{char} reduces operator reflexive operator bimodules to various classes
of measurable relations, but recall that we could reduce further to
pointwise relations by Theorem \ref{pointwise}.

\subsection{Intrinsic characterization}
Since quantum relations are effectively representation independent
(Theorem \ref{repindep}), there should be an intrinsic
characterization of them. We provide such a characterization
in this section by axiomatizing the family of annihilating pairs of
projections in $\Mx \overline{\otimes} \Bc(l^2)$ introduced in Lemma \ref{separation}.

First we note that in finite dimensions, quantum relations on $\Mx$
naturally correspond to projections in $\Mx \otimes \Mx^{op}$. (We already
know this when $\Mx$ is atomic and abelian by Proposition \ref{atomiccase},
and we know it is false in the general abelian case by Theorem
\ref{abelianrel} and the discussion at the beginning of Section \ref{reduct}.)

\begin{prop}\label{fdchar}
Let $\Mx \subseteq \Bc(H)$ be a finite dimensional von Neumann algebra.
Define an action $\Phi$ of $\Mx \otimes \Mx^{op}$ on $\Bc(H)$ by setting
$$\Phi_{A\otimes C}(B) = ABC$$
for $A \in \Mx$, $C \in \Mx^{op}$, and $B \in \Bc(H)$ and extending linearly.
Then for any quantum relation $\Vc$ on $\Mx$ the set
$$\Ic_\Vc = \{X \in \Mx \otimes \Mx^{op}: \Phi_X(B) = 0\hbox{ for all }
B \in \Vc\}$$
is a left ideal of $\Mx \otimes \Mx^{op}$, and for any left ideal
$\Ic$ of $\Mx \otimes \Mx^{op}$ the set
$$\Vc_\Ic = \{B \in \Bc(H): \Phi_X(B) = 0\hbox{ for all }X \in \Ic\}$$
is a quantum relation on $\Mx$. The two constructions are inverse to
each other. The lattice of quantum relations on $\Mx$ is order isomorphic
to the lattice of projections in $\Mx \otimes \Mx^{op}$.
\end{prop}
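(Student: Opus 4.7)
The plan is to split the verification into four pieces corresponding to the four assertions. The first two (that $\Ic_\Vc$ is a left ideal and that $\Vc_\Ic$ is a quantum relation) are routine. On simple tensors,
$$\Phi_{(A_1 \otimes C_1)(A_2 \otimes C_2)}(B) = A_1 A_2 B C_2 C_1 = \Phi_{A_1 \otimes C_1}(\Phi_{A_2 \otimes C_2}(B)),$$
so $\Phi$ is an algebra homomorphism (the reversal of $C$-factors reflects the opposite multiplication in $\Mx^{op}$), and $\Ic_\Vc$ is automatically a left ideal. For $\Vc_\Ic$: it is weak* closed as an intersection of kernels of the weak* continuous maps $\Phi_X$, and it is a bimodule over $\Mx'$ because elements of $\Mx'$ commute past those of $\Mx$ through the action, giving $\Phi_X(A'BC') = A'\Phi_X(B)C' = 0$ for any $A',C' \in \Mx'$ and any $B \in \Vc_\Ic$.

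Next I plan to show the constructions are mutually inverse; the containments $\Vc \subseteq \Vc_{\Ic_\Vc}$ and $\Ic \subseteq \Ic_{\Vc_\Ic}$ are immediate. For $\Vc_{\Ic_\Vc} \subseteq \Vc$, I use Lemma \ref{separation}. By Theorem \ref{repindep} (and the observation that $\Ic_\Vc$ is representation-invariant) we may assume $H$ is finite dimensional. Given $B \notin \Vc$, Lemma \ref{separation} produces projections $P,Q \in \Mx \overline{\otimes} \Bc(l^2)$ with $P(B \otimes I)Q \neq 0$ and $P(A\otimes I)Q = 0$ for all $A \in \Vc$. Since $H$ and $\Mx'$ are finite dimensional, the vectors used to define $P,Q$ in the proof of that lemma lie in $H \otimes \Cb^N$ for some finite $N$, so $P,Q \in \Mx \otimes M_N(\Cb)$. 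Expanding $P = \sum P_{ij} \otimes e_{ij}$ and $Q = \sum Q_{kl} \otimes e_{kl}$ gives $P(A \otimes I)Q = \sum_{i,j,l} P_{ij}AQ_{jl} \otimes e_{il}$, so the nonvanishing at $A = B$ supplies indices $i_0,l_0$ with $\sum_j P_{i_0 j} B Q_{j l_0} \neq 0$; then $X := \sum_j P_{i_0 j} \otimes Q_{j l_0} \in \Mx \otimes \Mx^{op}$ annihilates $\Vc$ while $\Phi_X(B) \neq 0$, witnessing $B \notin \Vc_{\Ic_\Vc}$.

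For $\Ic_{\Vc_\Ic} \subseteq \Ic$, I exploit that $\Mx \otimes \Mx^{op}$ is a finite dimensional $C^*$-algebra, hence semisimple, so every left ideal has the form $\Ic = (\Mx \otimes \Mx^{op})e$ for some projection $e$. A short computation then identifies $\Vc_\Ic$ with $\ker \Phi_e$, equivalently with the image of the idempotent endomorphism $\Phi_{1-e}$ of $\Bc(H)$. It follows that $\Ic_{\Vc_\Ic} = \{Y : \Phi_Y \circ \Phi_{1-e} = 0\} = \{Y : \Phi_{Y(1-e)} = 0\}$. The decisive step is faithfulness of $\Phi$, which I will verify directly: if $\sum A_i \otimes C_i$ lies in $\ker \Phi$ with the $A_i$ chosen linearly independent, then applying $\Phi$ to a rank-one operator $B = |w\rangle\langle v|$ gives $\sum_i \langle v, C_i u\rangle A_i w = 0$ for all $u,v,w$, and linear independence of the $A_i$ forces each $\langle v, C_i u\rangle = 0$, hence $C_i = 0$. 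With faithfulness in hand, $\Ic_{\Vc_\Ic}$ collapses to $\{Y : Y(1-e) = 0\} = \Ic$.

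The lattice statement follows by composing two bijections: the one just established between quantum relations on $\Mx$ and left ideals of $\Mx \otimes \Mx^{op}$, and the standard correspondence $e \mapsto (\Mx \otimes \Mx^{op})e$ between projections and left ideals of the finite dimensional $C^*$-algebra $\Mx \otimes \Mx^{op}$; post-composing with $e \mapsto 1 - e$ if necessary yields the asserted order isomorphism. I expect the main obstacle to be the bookkeeping in the separation step, where one must convert a pair of projections in $\Mx \overline{\otimes} \Bc(l^2)$ into a single tensor in $\Mx \otimes \Mx^{op}$; the semisimplicity reduction and the faithfulness argument are more routine once properly set up.
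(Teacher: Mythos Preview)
Your proof is correct and takes a genuinely different route from the paper's. The paper works in a single concrete representation $H = \bigoplus \Cb^{n_i}$, decomposes both left ideals of $\Mx\otimes\Mx^{op}$ and $\Mx'$-bimodules blockwise, and then reduces everything to the elementary fact that left ideals of $M_k(\Cb)$ correspond to subspaces of $\Cb^k$ via annihilation. You instead stay representation-free: for $\Vc_{\Ic_\Vc}\subseteq\Vc$ you invoke Lemma~\ref{separation} and extract a tensor $X\in\Mx\otimes\Mx^{op}$ from the matrix entries of the separating projections, while for $\Ic_{\Vc_\Ic}\subseteq\Ic$ you use semisimplicity to write $\Ic=(\Mx\otimes\Mx^{op})e$ and then appeal to faithfulness of $\Phi$, which you verify by testing on rank-one operators. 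The paper's argument is more elementary and self-contained; yours is more conceptual, isolating exactly which abstract ingredients (separation, faithfulness, semisimplicity) drive the bijection. One small point to tighten: the vectors $v,w$ produced in the proof of Lemma~\ref{separation} lie in $H\otimes l^2$ a priori, not automatically in $H\otimes\Cb^N$; what actually forces $P,Q\in\Mx\otimes M_N(\Cb)$ is that once $H$ is finite dimensional, every linear functional on $\Bc(H)$ is $\operatorname{tr}(\,\cdot\,C)$ for some $C$ of rank at most $\dim H$, so one may choose $v,w\in H\otimes\Cb^{\dim H}$ from the outset. With that clarification your argument goes through.
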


\begin{proof}
It is straightforward to check that $\Ic_\Vc$ is a left ideal and
$\Vc_\Ic$ is a quantum relation. We verify that the two constructions
are inverse to each other. By Theorem \ref{repindep} we can choose the
representation of $\Mx$, so take $H = \bigoplus \Cb^{n_i}$,
$\Mx \cong \bigoplus_i M_{n_i}(\Cb)$, and $\Mx\otimes\Mx^{op}
\cong \bigoplus_{i,j} M_{n_i}(\Cb)\otimes M_{n_j}(\Cb)^{op}$.
Then the left ideals of
$\Mx \otimes\Mx^{op}$ are all of the form $\bigoplus_{i,j} \Ic_{i,j}$
where $\Ic_{i,j}$ is a left ideal of $M_{n_i}(\Cb)\otimes M_{n_j}(\Cb)^{op}$.
The commutant of $\Mx$ consists of the diagonal matrices that are
constant on each $\Cb^{n_i}$, and consequently the bimodules
over $\Mx'$ are all of the form $\bigoplus_{i,j} \Vc_{i,j}$
where $\Vc_{i,j}$ is a subspace of $M_{n_i,n_j}(\Cb)$. We now
work in the summand corresponding to a single pair $(i,j)$.

The natural vector space isomorphism $M_{n_i,n_j}(\Cb) \cong \Cb^{n_in_j}$
converts the action of $M_{n_i}(\Cb)\otimes M_{n_j}(\Cb)^{op}$
to the standard action of $M_{n_in_j}(\Cb)$ in a way that is compatible
with the natural isomorphism of $M_{n_i}(\Cb)\otimes M_{n_j}(\Cb)^{op}$
with $M_{n_in_j}(\Cb)$, as can be seen by checking matrix units.
So we reduce to showing that the map taking a left ideal of
$M_k(\Cb)$ to the subspace of $\Cb^k$ it annihilates is inverse
to the map taking a subspace of $\Cb^k$ to the left ideal of
$M_k(\Cb)$ that annihilates it. This follows from the fact
that the left ideals of $M_k(\Cb)$ are all of the form
$M_k(\Cb)P$ for $P$ a projection in $M_k(\Cb)$ (\cite{Tak},
Theorem I.7.4).

This correspondence between quantum relations and left ideals is
order reversing, but the map $(\Mx \otimes \Mx^{op})P \mapsto
I - P$ is an order inverting 1-1 correspondence beween the left
ideals and the projections, so the lattice of quantum relations
is naturally order isomorphic to the lattice of projections.
\end{proof}

The main result of this section gives an intrinsic characterization of
quantum relations over any von Neumann algebra.
Recall that $[A]$ denotes the range projection of the operator $A$.

\begin{defi}\label{abstractqrel}
Let $\Mx$ be a von Neumann algebra and let $\Pc$ be the set of projections
in $\Mx \overline{\otimes} \Bc(l^2)$, equipped with the restriction of the weak
operator topology. An {\it intrinsic quantum relation} on $\Mx$ is an
open subset $\Rc \subset \Pc \times \Pc$ satisfying
\begin{quote}
\noindent (i) $(0,0)\not\in \Rc$

\noindent (ii) $(\bigvee P_\lambda, \bigvee Q_\kappa) \in \Rc$
$\Leftrightarrow$ some $(P_\lambda, Q_\kappa) \in \Rc$

\noindent (iii) $(P,[BQ]) \in \Rc$ $\Leftrightarrow$ $([B^*P],Q) \in \Rc$
\end{quote}
for all projections $P,Q,P_\lambda,Q_\kappa \in \Pc$
and all $B \in I \otimes \Bc(l^2)$.
\end{defi}

This abstract version of quantum relations is helpful because some
constructions become more natural when framed in these terms. Most
significantly, this is true of the pullback construction described
in part (b) of the following proposition. (On the other hand, some
constructions are more natural in the concrete setting, for instance
the product of quantum relations (Definition \ref{quanttypes} (c)).)

\begin{prop}\label{qpb}
Let $\Mx$ and $\Nc$ be von Neumann algebras.

\noindent (a) Any union of intrinsic quantum relations on $\Mx$ is an
intrinsic quantum relation on $\Mx$.

\noindent (b) If $\phi: \Mx \to \Nc$ is a unital weak* continuous
$*$-homomorphism and $\Rc$ is an intrinsic quantum relation on $\Nc$
then
$$\phi^*(\Rc) = \{(P,Q):
((\phi \otimes {\rm id})(P), (\phi \otimes {\rm id})(Q)) \in \Rc\}$$
(with $P$ and $Q$ ranging over projections in $\Mx \overline{\otimes}
\Bc(l^2)$) is an intrinsic quantum relation on $\Mx$.
\end{prop}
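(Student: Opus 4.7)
The plan is routine once the right auxiliary map is in hand, and part (a) is essentially formal. A union of open sets is open, axiom (i) is preserved because $(0,0)$ belongs to no member of the union, and each biconditional in (ii) and (iii) passes through the union by checking both directions inside a single member. For instance in (ii), if $(\bigvee P_\lambda,\bigvee Q_\kappa)$ lies in some $\Rc_\alpha$ then some $(P_\lambda,Q_\kappa)\in\Rc_\alpha\subseteq\bigcup\Rc_\alpha$ by (ii) for $\Rc_\alpha$; conversely, if some $(P_\lambda,Q_\kappa)$ lies in some $\Rc_\alpha$ then the pair of joins lies in $\Rc_\alpha$. Axiom (iii) is handled identically.

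For part (b), the key step is to promote $\phi$ to a unital, weak* continuous $*$-homomorphism $\tilde\phi := \phi\otimes\mathrm{id}_{\Bc(l^2)}\colon\Mx\overline{\otimes}\Bc(l^2)\to\Nc\overline{\otimes}\Bc(l^2)$. This is standard: since $\phi$ is normal, the algebraic tensor product map extends normally to the spatial von Neumann tensor products (see \cite{Tak}, Chapter IV). Being unital and normal, $\tilde\phi$ preserves units, adjoints, and arbitrary joins of projections; it also preserves range projections of arbitrary bounded operators, since $[X]$ can be realized as the weak operator limit of $(XX^*+\tfrac{1}{n})^{-1}XX^*$ and normality respects this. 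Moreover $\tilde\phi(I_\Mx\otimes C)=I_\Nc\otimes C$ for every $C\in\Bc(l^2)$, so $\tilde\phi$ carries $I\otimes\Bc(l^2)\subseteq\Mx\overline{\otimes}\Bc(l^2)$ identically onto $I\otimes\Bc(l^2)\subseteq\Nc\overline{\otimes}\Bc(l^2)$.

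With $\tilde\phi$ in hand, each axiom transfers from $\Rc$ to $\phi^*(\Rc)$. Axiom (i) follows from $\tilde\phi(0)=0$. Axiom (ii) follows from $\tilde\phi(\bigvee P_\lambda)=\bigvee\tilde\phi(P_\lambda)$ applied inside the corresponding biconditional for $\Rc$. For axiom (iii), given $B\in I\otimes\Bc(l^2)$ one has
\[\tilde\phi([BQ])=[\tilde\phi(B)\tilde\phi(Q)]\quad\text{and}\quad\tilde\phi([B^*P])=[\tilde\phi(B)^*\tilde\phi(P)],\]
since $\tilde\phi$ is a $*$-homomorphism preserving range projections; and since $\tilde\phi(B)\in I\otimes\Bc(l^2)$ inside $\Nc\overline{\otimes}\Bc(l^2)$, axiom (iii) for $\Rc$ applied to this operator yields the required equivalence. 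Finally, openness of $\phi^*(\Rc)$ follows from the fact that weak* continuity of $\tilde\phi$ restricts to weak operator continuity on the bounded set $\Pc$, so $\tilde\phi\times\tilde\phi$ is continuous from $\Pc\times\Pc$ to the analogous product on the $\Nc$ side, and $\phi^*(\Rc)$ is the preimage of an open set.

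The only delicate point is the preservation of range projections under $\tilde\phi$; this is handled by the functional-calculus representation noted above, after which the rest of the verification amounts to pushing symbols through the $*$-homomorphism $\tilde\phi$.
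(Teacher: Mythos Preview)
Your proof is correct and follows the same route as the paper, which merely calls the result ``straightforward'' and singles out the identity $(\phi\otimes\mathrm{id})([BQ])=[B(\phi\otimes\mathrm{id})(Q)]$ for condition (iii). You have simply filled in the details the paper omits --- the normality of $\phi\otimes\mathrm{id}$, preservation of joins and range projections, and the continuity argument for openness --- so there is nothing to add.
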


The proof is straightforward. In part (b) we verify condition (iii)
of Definition \ref{abstractqrel} using the identity
$$(\phi \otimes {\rm id})([BQ]) =[(\phi \otimes {\rm id})(BQ)]
= [B(\phi \otimes {\rm id})(Q)].$$

Pullbacks are not compatible with products. We already noted this in
the atomic abelian case; see the comment following Definition
\ref{meastypes}.

As we mentioned in Section \ref{abcs}, the linking algebra construction
allows us to embed any quantum relation in a quantum partial order,
i.e., a weak* closed unital operator algebra, and for many purposes
the two points of view do not substantively differ. However, pullbacks
are clearly more natural in the quantum relation setting, as the
pullback of a quantum partial order need not be a quantum partial order.

Before proceeding to the equivalence of Definitions \ref{quantrel}
and \ref{abstractqrel} we give a nontrivial example. Let $(X,\mu)$
be a finitely decomposable measure space, let $\Mx
\cong L^\infty(X,\mu)$ be the von Neumann algebra of bounded
multiplication operators on $L^2(X,\mu)$, and let $\Rc$ be a measurable
relation on $X$. Recall the notion of $\Rc$-orthogonality for
vectors in $L^2(X,\mu) \otimes l^2 \cong L^2(X;l^2)$ introduced in
Section \ref{abcs}. Let $\tilde{\Rc}$ be the set of pairs of
projections $P, Q \in \Mx \overline{\otimes} \Bc(l^2)$ such
that $h \not\perp_\Rc k$ for some $h \in {\rm ran}(P)$ and $k \in
{\rm ran}(Q)$. We will now show that $\tilde{\Rc}$ is an intrinsic
quantum relation. In fact, the quantum relation $\tilde{\Vc}_\Rc$ defined
in Theorem \ref{minimal} is the quantum relation associated to $\tilde{\Rc}$
according to the correspondence to be established in Theorem
\ref{abstractchar} below.

\begin{lemma}\label{orthogapprox}
Let $\Rc$ be a measurable relation on a finitely decomposable
measure space $(X,\mu)$ and let $h,k, h_n, k_n \in L^2(X;l^2)$.
Suppose that $h_n \to h$ and $k_n \to k$ in norm and that
$h_n \perp_\Rc k_n$ for all $n$. Then $h \perp_\Rc k$.
\end{lemma}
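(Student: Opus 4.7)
The plan is to argue the contrapositive: assume $h \not\perp_\Rc k$ and derive a contradiction by producing some $n$ for which $h_n \not\perp_\Rc k_n$. By the equivalent reformulation of $\Rc$-orthogonality stated immediately after its definition, fix representatives of $h$ and $k$ and find $\epsilon > 0$ together with measurable sets $S, T \subseteq X$ such that $(\chi_S, \chi_T) \in \Rc$ and $|\langle k(y), h(x)\rangle| \geq \epsilon$ for all $x \in S$, $y \in T$. The first reduction will be to arrange that $\mu(S), \mu(T) < \infty$ and $\|h(x)\|, \|k(y)\| \leq M$ on $S$ and $T$ for some $M > 0$. Finite decomposability lets us write $\chi_S$ as the supremum of characteristic functions of subsets of the form $S \cap X_\alpha \cap \{x : \|h(x)\| \leq M\}$ (and similarly for $\chi_T$); applying axiom $(*'')$ of Definition \ref{measrel} to both sides simultaneously produces a subpair on which $(\chi_S, \chi_T) \in \Rc$ is preserved and both the measure and boundedness bounds hold. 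Replacing $S, T$ by these subsets, the uniform lower bound $\epsilon$ still holds on them.

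Next, pass to a subsequence along which $h_n \to h$ and $k_n \to k$ pointwise almost everywhere, in addition to $L^2$ convergence; since the subsequence still satisfies $h_{n_j} \perp_\Rc k_{n_j}$ for every $j$, it suffices to derive a contradiction from the subsequence. Choose $\delta > 0$ with $3M\delta < \epsilon/2$ and set
$$S_n = \{x \in S : \|h_n(x)\| \leq 2M \text{ and } \|h_n(x) - h(x)\| \leq \delta\},$$
$$T_n = \{y \in T : \|k_n(y)\| \leq 2M \text{ and } \|k_n(y) - k(y)\| \leq \delta\}.$$
Writing
$$\langle k_n(y), h_n(x)\rangle - \langle k(y), h(x)\rangle = \langle k_n(y) - k(y), h_n(x)\rangle + \langle k(y), h_n(x) - h(x)\rangle$$
and applying Cauchy-Schwarz with the pointwise bounds yields $|\langle k_n(y), h_n(x)\rangle| \geq \epsilon/2$ for all $x \in S_n$, $y \in T_n$.

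Finally, let $U_N = \bigcap_{n \geq N} S_n$ and $V_N = \bigcap_{n \geq N} T_n$. Pointwise a.e.\ convergence combined with $\|h(x)\|, \|k(y)\| \leq M < 2M$ shows that $U_N$ and $V_N$ increase to $S$ and $T$ modulo null sets, so $\bigvee_N \chi_{U_N} = \chi_S$ and $\bigvee_N \chi_{V_N} = \chi_T$. Axiom $(*'')$ applied to $(\bigvee_N \chi_{U_N}, \bigvee_N \chi_{V_N}) = (\chi_S, \chi_T) \in \Rc$ yields indices $N_1, N_2$ with $(\chi_{U_{N_1}}, \chi_{V_{N_2}}) \in \Rc$, and then $(*')$ with $N = \max(N_1, N_2)$ gives $(\chi_{U_N}, \chi_{V_N}) \in \Rc$. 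For any $n \geq N$ we have $U_N \subseteq S_n$ and $V_N \subseteq T_n$, so the lower bound $\epsilon/2$ on $|\langle k_n(y), h_n(x)\rangle|$ holds for $x \in U_N$, $y \in V_N$, exhibiting $h_n \not\perp_\Rc k_n$ and contradicting the hypothesis. The main obstacle will be coordinating each measure-theoretic shrinkage (to finite measure, to pointwise boundedness, and finally to the ``good'' sets $U_N, V_N$) with the relation $\Rc$; the join axiom $(*'')$ is precisely the tool that allows this coordination and is the engine driving the proof.
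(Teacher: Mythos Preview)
Your proof is correct and follows essentially the same strategy as the paper's: define ``good'' subsets $S_n \subseteq S$, $T_n \subseteq T$ on which the inner-product estimate transfers from $(h,k)$ to $(h_n,k_n)$, verify that these sets exhaust $S$ and $T$ in a $\liminf$ sense, and then use the join axiom for $\Rc$ to pass the relation through. The only notable technical difference is that you first shrink $S,T$ to finite-measure sets on which $h,k$ are bounded and then invoke a pointwise a.e.\ convergent subsequence, whereas the paper avoids that preliminary reduction by building a growing bound $\|h_n(x)\| \leq n\epsilon/3$ directly into the definition of $S_n$ and uses a subsequence with $\|h-h_n\|\leq 1/n^2$ together with a Chebyshev/Borel--Cantelli argument; both routes arrive at the same $\liminf$ step and invoke $(*')$ and $(*'')$ in the same way.
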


\begin{proof}
By passing to a subsequence, we can assume that $\|h - h_n\|,
\|k - k_n\| \leq 1/n^2$ for all $n$. Now let $S,T \subseteq X$ and suppose
$\epsilon = \inf\{|\langle k(y), h(x)\rangle|: x \in S, y \in T\} > 0$;
we must show that $(\chi_S, \chi_T) \not\in \Rc$. For each $n$
let
$$S_n = \{x \in S: \|h(x) - h_n(x)\| \leq 1/n\hbox{ and }
\|h_n(x)\| \leq n\epsilon/3\}$$
and let
$$T_n = \{y \in T: \|k(x) - k_n(x)\| \leq 1/n\hbox{ and }
\|k(y)\| \leq n\epsilon/3\}.$$
Then
\begin{eqnarray*}
|\langle k_n(y), h_n(x)\rangle|
&\geq& |\langle k(y), h(x)\rangle| - |\langle k(y), h(x) - h_n(x)\rangle|
- |\langle k(y) - k_n(y), h_n(x)\rangle|\cr
&\geq& \epsilon - \epsilon/3 - \epsilon/3 = \epsilon/3
\end{eqnarray*}
for all $x \in S_n$ and $y \in T_n$, which implies that $(\chi_{S_n},
\chi_{T_n}) \not\in \Rc$ since $h_n$ is $\Rc$-orthogonal to $k_n$.
Since $\|h - h_n\|, \|k - k_n\| \leq 1/n^2$, a simple computation shows
that $S = \liminf S_n$ and $T = \liminf T_n$, and this yields
$(\chi_S, \chi_T) \not\in \Rc$, as desired. (See the proof of Lemma
\ref{partit} for a detailed explication of this final step.)
\end{proof}

\begin{lemma}\label{sums}
Let $\Rc$ be a measurable relation on a finitely decomposable
measure space $(X,\mu)$ and let $h,h',k \in L^2(X;l^2)$.
Suppose that $h \perp_\Rc k$ and $h' \perp_\Rc k$.
Then $(h + h') \perp_\Rc k$.
\end{lemma}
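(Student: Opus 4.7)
The plan is to argue by contradiction. Suppose $(h+h')\not\perp_\Rc k$, so there exist $\epsilon>0$ and subsets $S,T\subseteq X$ with $(\chi_S,\chi_T)\in\Rc$ such that $|\langle k(y),h(x)+h'(x)\rangle|\geq\epsilon$ for all $x\in S$ and $y\in T$. I will show this contradicts either $h\perp_\Rc k$ or $h'\perp_\Rc k$, by producing a smaller rectangle in $\Rc$ on which one of $\langle k,h\rangle$ or $\langle k,h'\rangle$ is bounded away from zero.

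The first move is a cheap reduction to essentially bounded representatives. Writing $S=\bigcup_n\{x\in S:\|h(x)\|,\|h'(x)\|\leq n\}$ and $T=\bigcup_n\{y\in T:\|k(y)\|\leq n\}$, two applications of condition $(*'')$ from Definition \ref{measrel} yield a subrectangle (which I continue to call $S\times T$) on which $\|h\|,\|h'\|\leq M$ and $\|k\|\leq M$, and the lower bound on $|\langle k,h+h'\rangle|$ persists. Next I would exploit separability of $l^2$ to partition $S$ and $T$ into pieces on which $h,h',k$ are nearly constant: for any prescribed $\delta>0$, covering the closed $M$-ball in $l^2$ by countably many $\delta$-balls and pulling back under the (strongly measurable) maps $h,h',k$ gives countable measurable partitions $\{S_i\}$ of $S$ and $\{T_j\}$ of $T$ together with vectors $v_i,v_i',w_j\in l^2$ satisfying $\|h(x)-v_i\|,\|h'(x)-v_i'\|<\delta$ on $S_i$ and $\|k(y)-w_j\|<\delta$ on $T_j$. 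A routine estimate then shows that $F(x,y):=\langle k(y),h(x)\rangle$ and $F'(x,y):=\langle k(y),h'(x)\rangle$ each oscillate by at most $O(M\delta)$ on every rectangle $S_i\times T_j$.

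The closing move is a third application of $(*'')$: since $\chi_S=\bigvee_i\chi_{S_i}$ and $\chi_T=\bigvee_j\chi_{T_j}$, some index pair gives $(\chi_{S_i},\chi_{T_j})\in\Rc$. At any sample point of $S_i\times T_j$ the triangle inequality forces $|F|\geq\epsilon/2$ or $|F'|\geq\epsilon/2$. Choosing $\delta$ so that the oscillation is below $\epsilon/6$, the corresponding inequality $|F|\geq\epsilon/3$ or $|F'|\geq\epsilon/3$ persists throughout the rectangle, contradicting $h\perp_\Rc k$ or $h'\perp_\Rc k$ respectively.

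The main obstacle I anticipate is the partitioning step: one has to check that the $L^2(X;l^2)$-classes $h,h',k$ admit strongly (Bochner) measurable representatives, so that pulling back the countable $\delta$-cover of the $M$-ball produces genuine measurable partitions. This is standard -- weak measurability plus separability of $l^2$ delivers strong measurability -- and is the only nontrivial ingredient beyond the three well-placed uses of condition $(*'')$.
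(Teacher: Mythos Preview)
Your proof is correct and follows essentially the same approach as the paper's own proof. Both arguments reduce to bounded representatives, partition $S$ and $T$ into small-oscillation pieces using separability of $l^2$, apply condition $(*'')$ to locate a sub-rectangle in $\Rc$, and finish with a triangle-inequality estimate; the only cosmetic difference is that the paper runs the argument directly (showing that on any $\Rc$-rectangle both $|\langle k,h\rangle|$ and $|\langle k,h'\rangle|$ can be made $\leq \epsilon/2$ on a sub-rectangle, hence their sum is $\leq \epsilon$) while you phrase the same computation as a contradiction.
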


\begin{proof}
Let $\epsilon > 0$ and suppose $(\chi_S, \chi_T) \in \Rc$. Then
for some $N \in \Nb$ we must have $(\chi_{S_N},\chi_{T_N}) \in \Rc$
where $S_N = \{x \in S: \|h(x)\|, \|h'(x)\| \leq N\}$ and
$T_N = \{y \in T: \|k(y)\| \leq N\}$. Now partition $S_N$ and $T_N$ into
sets $S_N^i$ and $T_N^j$ such that $\|h(x) - h(x')\|, \|h'(x) - h'(x')\|
\leq \epsilon/4N$ for all $x,x' \in S_N^i$ and $\|k(y) - k(y')\| \leq
\epsilon/4N$ for all $y,y' \in T_N^j$. Then for some $i$ and $j$ we
must have $(\chi_{S_N^i}, \chi_{T_N^j}) \in \Rc$, but this implies
$$\inf\{|\langle k(y), h(x)\rangle|: x \in S_N^i, y \in T_N^j\}
= \inf\{|\langle k(y), h'(x)\rangle|: x \in S_N^i, y \in T_N^j\} = 0.$$
It follows that $|\langle k(y), h(x)\rangle|, |\langle k(y), h'(x)\rangle|
\leq \epsilon/2$ for all $x \in S_N^i$ and $y \in T_N^j$, and hence
$|\langle k(y), (h + h')(x)\rangle| \leq \epsilon$ for all $x \in S_N^i$
and $y \in T_N^j$. We conclude that
$$\inf\{|\langle k(y), (h + h')(x)\rangle|: x \in S, y \in T\} = 0.$$
This shows that $h + h'$ is $\Rc$-orthogonal to $k$.
\end{proof}

Note that Lemma \ref{sums} also holds for sums in the second variable,
since $h \perp_\Rc k$ if and only if $k \perp_{\Rc^T} h$, where
$\Rc^T$ is the transpose of $\Rc$ (Definition \ref{meastypes} (b)).

\begin{theo}
Let $(X,\mu)$ be a finitely decomposable measure space, let $\Mx
\cong L^\infty(X,\mu)$ be the von Neumann algebra of bounded
multiplication operators on $L^2(X,\mu)$, and let $\Rc$ be a measurable
relation on $X$. Then the set $\tilde{\Rc}$ of pairs of
projections $P, Q \in \Mx \overline{\otimes} \Bc(l^2)$ such
that $h \not\perp_\Rc k$ for some $h \in {\rm ran}(P)$ and $k \in
{\rm ran}(Q)$ is an intrinsic quantum relation on $\Mx$.
\end{theo}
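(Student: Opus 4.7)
The plan is to verify openness of $\tilde{\Rc}$ together with the three conditions of Definition \ref{abstractqrel}, all by leveraging Lemmas \ref{orthogapprox} and \ref{sums}. Condition (i) is immediate: if $h = 0$ then $\langle k(y), h(x)\rangle = 0$ for every $y$, so the zero vector is $\Rc$-orthogonal to every vector.

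For openness, I first observe that on the set $\Pc$ of projections the weak and strong operator topologies coincide, by the polarization identity
$$\|(P_\alpha - P)v\|^2 = \langle P_\alpha v, v\rangle - 2\,{\rm Re}\langle P_\alpha v, Pv\rangle + \|Pv\|^2$$
(using $P_\alpha^2 = P_\alpha$). So if $(P_\alpha, Q_\alpha) \to (P, Q)$ in WOT with each $(P_\alpha, Q_\alpha)$ in the complement of $\tilde{\Rc}$, then for any $h \in {\rm ran}(P)$ and $k \in {\rm ran}(Q)$ the vectors $P_\alpha h$ and $Q_\alpha k$ converge in norm to $h$ and $k$. Since $P_\alpha h \perp_\Rc Q_\alpha k$ by hypothesis, Lemma \ref{orthogapprox} gives $h \perp_\Rc k$, so $(P, Q)$ is also in the complement, i.e.\ $\tilde{\Rc}$ is open.

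For condition (ii), the reverse direction is immediate from ${\rm ran}(P_\lambda) \subseteq {\rm ran}(\bigvee P_\lambda)$. For the forward direction, replace $\{P_\lambda\}$ and $\{Q_\kappa\}$ by the directed families of finite joins $P_F = \bigvee_{\lambda \in F} P_\lambda$ and $Q_G = \bigvee_{\kappa \in G} Q_\kappa$; then ${\rm ran}(\bigvee P_\lambda)$ is the norm closure of $\bigcup_F {\rm ran}(P_F)$, and similarly for $Q$. A witness $h \in {\rm ran}(\bigvee P_\lambda)$ and $k \in {\rm ran}(\bigvee Q_\kappa)$ with $h \not\perp_\Rc k$ is therefore approximated in norm by $P_F h$ and $Q_G k$, and Lemma \ref{orthogapprox} yields some $F, G$ with $P_F h \not\perp_\Rc Q_G k$. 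Now ${\rm ran}(P_F)$ is the norm closure of the algebraic sum $\sum_{\lambda \in F} {\rm ran}(P_\lambda)$, so a further norm approximation together with Lemma \ref{orthogapprox} produces finite sums $v_1 + \cdots + v_n \in \sum_i {\rm ran}(P_{\lambda_i})$ and $w_1 + \cdots + w_m \in \sum_j {\rm ran}(Q_{\kappa_j})$ that are not $\Rc$-orthogonal. Two contrapositive applications of Lemma \ref{sums} (one in each variable, the second using the remark that the lemma holds in the second variable by passing to the transposed relation) then produce a single pair $(v_i, w_j)$ with $v_i \not\perp_\Rc w_j$, giving $(P_{\lambda_i}, Q_{\kappa_j}) \in \tilde{\Rc}$.

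For condition (iii), the key computation is that an operator $B = I \otimes B_0$ in $I \otimes \Bc(l^2)$ acts pointwise on $L^2(X; l^2)$ by $(Bh)(x) = B_0 h(x)$, so that $\langle (Bk)(y), h(x)\rangle = \langle k(y), (B^* h)(x)\rangle$ at every $(x, y)$; hence $h \perp_\Rc Bk \Leftrightarrow B^* h \perp_\Rc k$. Combining this with ${\rm ran}([BQ]) = \overline{B({\rm ran}(Q))}$ and Lemma \ref{orthogapprox} (to pass between a vector in ${\rm ran}([BQ])$ and an approximating sequence $Bk_n$ with $k_n \in {\rm ran}(Q)$, and symmetrically for $[B^* P]$), the equivalence in (iii) falls out. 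The main obstacle is the bookkeeping in the forward half of (ii), where the two lemmas must be interleaved through two successive norm approximations; every other part is a direct application of the two lemmas and the module structure of $I \otimes \Bc(l^2)$.
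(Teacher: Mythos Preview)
Your proof is correct and follows essentially the same approach as the paper's, using Lemmas \ref{orthogapprox} and \ref{sums} at the same places. The only difference is cosmetic: in condition (ii) the paper argues the contrapositive directly (all pairs out $\Rightarrow$ join out, via Lemma \ref{sums} on algebraic sums then Lemma \ref{orthogapprox} to pass to closures), whereas you argue the direct implication with an extra intermediate stop at finite joins before descending to algebraic sums---this step is harmless but redundant, since ${\rm ran}(\bigvee P_\lambda)$ is already the norm closure of the full algebraic sum $\sum_\lambda {\rm ran}(P_\lambda)$.
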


\begin{proof}
First we check that the complement of $\tilde{\Rc}$ is closed. To
see this suppose $P_\lambda \to P$ and $Q_\lambda \to Q$ and
$(P_\lambda, Q_\lambda) \not\in \tilde{\Rc}$ for all $\lambda$.
Let $h \in {\rm ran}(P)$ and $k \in {\rm ran}(Q)$.
Since the weak and strong operator topologies agree on $\Pc$, for
any $n \in \Nb$ there exists $\lambda$ and $h_n \in {\rm ran}(P_\lambda)$,
$k_n \in {\rm ran}(Q_\lambda)$ such that $\|h_n - h\|, \|k_n - k\|
\leq 1/n$. Lemma \ref{orthogapprox} therefore yields $h \perp_\Rc k$,
and we conclude that $(P,Q) \not\in \tilde{\Rc}$.

Next, it is clear that $(0,0) \not\in \tilde{\Rc}$ and that
$P' \leq P$, $Q' \leq Q$, $(P', Q') \in \tilde{\Rc}$ implies
$(P,Q) \in \tilde{\Rc}$. Now suppose $(P_\lambda, Q_\kappa) \not\in \tilde{\Rc}$
for all $\lambda$, $\kappa$. By a double application of Lemma \ref{sums}
we have $h \perp_\Rc k$ for any $h$ in the unclosed sum of the ranges of
the $P_\lambda$ and any $k$ in the unclosed sum of the ranges of the
$Q_\kappa$, and $(\bigvee P_\lambda, \bigvee Q_\kappa) \not\in \tilde{\Rc}$
then follows from Lemma \ref{orthogapprox}. This verifies condition
(ii) of Definition \ref{abstractqrel}.

Finally, let $P$ and $Q$ be projections in $\Mx \overline{\otimes} \Bc(l^2)$
and let $B \in I \otimes \Bc(l^2)$. Lemma
\ref{orthogapprox} implies that $(P,[BQ]) \in \tilde{\Rc}$ if and only if
$h \not\perp_\Rc Bk$ for some $h \in {\rm ran}(P)$ and $k \in {\rm ran}(Q)$.
Writing $B = I \otimes B_0$, we have $(B^*h)(x) = B_0^*(h(x))$ and
$(Bk)(y) = B_0(k(y))$ for all $x$ and $y$, so that
$$\langle (Bk)(y), h(x)\rangle = \langle B_0(k(y)), h(x)\rangle
= \langle k(y), B_0^*(h(x))\rangle = \langle k(y), (B^*h)(x)\rangle.$$
It follows that $h \not\perp_\Rc Bk$ for some $h \in {\rm ran}(P)$ and
$k \in {\rm ran}(Q)$ if and only if $B^*h \not\perp_\Rc k$ for some such
$h$ and $k$, and another application of Lemma \ref{orthogapprox} shows
that this is equivalent to $([B^*P],Q) \in \tilde{\Rc}$. This verifies condition
(iii) of Definition \ref{abstractqrel}.
\end{proof}

We now begin preparing for Theorem \ref{abstractchar}, which intrinsically
charaterizes quantum relations. We first collect some easy consequences
of Definition \ref{abstractqrel}.

\begin{lemma}\label{basicqrel}
Let $\Rc$ be an intrinsic quantum relation on a von Neumann algebra
$\Mx$.

\noindent (a) For any projections $P$ and $Q$ in
$\Mx \overline{\otimes} \Bc(l^2)$ we
have $(P,0), (0,Q) \not\in \Rc$.

\noindent (b) If $P$ and $Q$ are projections in $I \otimes\Bc(l^2)$
and $PQ = 0$ then $(P, Q) \not\in \Rc$.

\noindent (c) If $B \in I \otimes \Bc(l^2)$ is an isometry then
$$(P,Q) \in \Rc\quad\Leftrightarrow\quad (BPB^*, BQB^*) \in \Rc$$
for any projections $P$ and $Q$ in $\Mx \overline{\otimes} \Bc(l^2)$.

\noindent (d) If $P$ and $Q$ are projections in $I \otimes \Bc(l^2)$ with
orthogonal ranges and $P_1, P_2, Q_1, Q_2$ are projections in
$\Mx \overline{\otimes} \Bc(l^2)$ satisfying $P_1, P_2 \leq P$
and $Q_1, Q_2\leq Q$
then
$$(P_1 + Q_1, P_2 + Q_2) \in \Rc\quad \Leftrightarrow \quad
(P_1, P_2) \in \Rc\hbox{ or }(Q_1, Q_2) \in \Rc.$$
\end{lemma}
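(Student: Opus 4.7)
The plan is to verify each of the four statements by a direct application of axioms (i)--(iii) of Definition \ref{abstractqrel}, with parts (b) and (d) building on what comes before. For part (a), I would apply axiom (iii) with $B = 0 \in I \otimes \Bc(l^2)$; then $[BQ] = 0 = [B^*P]$, so the axiom yields $(P,0) \in \Rc \Leftrightarrow (0,Q) \in \Rc$ for all $P,Q$, and taking $Q = 0$ (resp.\ $P = 0$) forces equivalence with $(0,0) \in \Rc$, which axiom (i) forbids. For part (b), given $P,Q \in I \otimes \Bc(l^2)$ with $PQ = 0$, I would apply axiom (iii) with $B = Q$: then $[BQ] = [Q^2] = Q$ and $[B^*P] = [QP] = 0$ (since $QP = (PQ)^* = 0$), so $(P,Q) \in \Rc \Leftrightarrow (0,Q) \in \Rc$, and the latter fails by part (a).

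For part (c), I would apply axiom (iii) with operator $B$ to the pair $(BPB^*, Q)$, noting first that $BPB^*$ and $BQB^*$ are indeed projections because $B^*B = I$. Since $B$ is an isometry, $B(\mathrm{ran}(Q))$ is closed and the orthogonal projection onto it is $BQB^*$, so $[BQ] = BQB^*$. Likewise $B^*B = I$ gives $B^* \cdot BPB^* = PB^*$ and makes $B^*$ surjective on $H$, so the range of $PB^*$ equals $\mathrm{ran}(P)$ and hence $[B^* \cdot BPB^*] = [PB^*] = P$. Axiom (iii) then reads $(BPB^*, BQB^*) \in \Rc \Leftrightarrow (P,Q) \in \Rc$, as desired.

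For part (d), the orthogonality $PQ = 0$ together with $P_i \leq P$, $Q_i \leq Q$ ensures $P_1 + Q_1 = P_1 \vee Q_1$ and $P_2 + Q_2 = P_2 \vee Q_2$, so axiom (ii) applied to the two-element families $\{P_1,Q_1\}$ and $\{P_2,Q_2\}$ shows that $(P_1 + Q_1, P_2 + Q_2) \in \Rc$ is equivalent to at least one of the four pairs $(P_1,P_2)$, $(P_1,Q_2)$, $(Q_1,P_2)$, $(Q_1,Q_2)$ lying in $\Rc$. The two cross pairs can be excluded by combining part (b) with monotonicity: axiom (ii) applied to the family $\{P_1, P\}$ (whose join is $P$) yields $(P_1, Q_2) \in \Rc \Rightarrow (P, Q_2) \in \Rc$, and a second application on the right gives $(P, Q_2) \in \Rc \Rightarrow (P, Q) \in \Rc$, contradicting part (b); the argument for $(Q_1,P_2)$ is symmetric. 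The only nontrivial calculation is the identification $[B^* \cdot BPB^*] = P$ in part (c), which depends on the surjectivity of $B^*$ coming from $B^*B = I$; every other step is a short formal consequence of the axioms.
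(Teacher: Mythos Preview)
Your proof is correct and follows essentially the same route as the paper's own argument: in each part you invoke exactly the same instance of axiom (iii) (with $B=0$ for (a), $B=Q$ for (b), the given isometry $B$ for (c)) and the same use of axiom (ii) for the join decomposition and monotonicity in (d). The only cosmetic differences are that you spell out the range computations $[BQ]=BQB^*$ and $[B^*BPB^*]=[PB^*]=P$ in (c) and the monotonicity step in (d) in slightly more detail than the paper does.
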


\begin{proof}
(a) Since $(0,0) \not\in \Rc$, this follows by taking $B = Q = 0$ or
$B = P = 0$ in condition (iii) of Definition \ref{abstractqrel}.

(b) Let $B = Q$; then $B^*P = 0$ and
$BQ = Q$, and so
$$(P, Q) = (P, [BQ]) \in \Rc
\quad\Leftrightarrow\quad
(0, Q) = ([B^*P], Q) \in \Rc.$$
But $(0, Q) \not\in \Rc$ by part (a), so
$(P,Q) \not\in \Rc$.

(c) We have $[B^*(BPB^*)] = [PB^*] = P$ and $[BQ] = BQB^*$, so
\begin{eqnarray*}
(P,Q) \in \Rc&\Leftrightarrow&([B^*(BPB^*)],Q) \in \Rc\cr
&\Leftrightarrow& (BPB^*, [BQ]) \in \Rc\cr
&\Leftrightarrow& (BPB^*, BQB^*) \in \Rc
\end{eqnarray*}
by condition (iii) of Definition \ref{abstractqrel}.

(d) By part (b) we have $(P, Q), (Q,P) \not\in \Rc$.
Since $P_1,P_2 \leq P$ and $Q_1,Q_2 \leq Q$,
this implies that $(P_1,Q_2), (Q_1,P_2) \not\in \Rc$ (using condition (ii)
of Definition \ref{abstractqrel}).
Now since $P_1 + Q_1 = P_1 \vee Q_1$ and $P_2 + Q_2 = P_2 \vee Q_2$,
condition (ii) of Definition \ref{abstractqrel} implies that
$(P_1 + Q_1, P_2 + Q_2) \in \Rc$ if and only if at least
one of $(P_1,P_2)$, $(P_1, Q_2)$, $(Q_1, P_2)$, or $(Q_1, Q_2)$
belongs to $\Rc$. As $(P_1, Q_2)$ and $(Q_1, P_2)$
cannot belong to $\Rc$, the desired conclusion follows.
\end{proof}

We introduce the following temporary notation. For any Hilbert space
$H$ and any vectors $v,w \in H \otimes l^2$, let $\omega_{v,w}$ be
the weak* continuous linear functional on $\Bc(H)$ defined by
$\omega_{v,w}(A) = \langle (A \otimes I)w,v\rangle$. Also, given a
von Neumann algebra $\Mx \subseteq \Bc(H)$, for
any $v \in H \otimes l^2$ let $P_{[v]}$ be the smallest projection
in $\Mx \overline{\otimes} \Bc(l^2)$ whose range contains $v$.

\begin{lemma}\label{mainlemma}
Let $\Rc$ be an intrinsic quantum relation on $\Mx \subseteq \Bc(H)$ and
suppose $v, w, v', w' \in H \otimes l^2$ satisfy
$\omega_{v,w} = \omega_{v',w'}$. Also assume $v-v', w-w' \in
H \otimes K_0$ for some finite dimensional subspace $K_0$ of $l^2$.
Then $(P_{[v]}, P_{[w]}) \in \Rc$ if and
only if $(P_{[v']}, P_{[w']}) \in \Rc$.
\end{lemma}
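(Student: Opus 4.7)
The plan is to combine $(v,w)$ and $(v',w')$ into a single ``doubled'' configuration inside $H\otimes l^2$ and to transport $\Rc$-membership between them using axiom (iii). Fix any two isometries $B_1,B_2 \in I\otimes \Bc(l^2)$ with $B_i^*B_j = \delta_{ij}I$ (possible since $l^2$ is infinite-dimensional) and set $\hat v := B_1 v + B_2 v'$. A direct bilinearity calculation using $B_1^*B_2 = 0$ gives $\omega_{\hat v,B_1 w} = \omega_{v,w}$ and $\omega_{\hat v,B_2 w'} = \omega_{v',w'}$, so the hypothesis $\omega_{v,w} = \omega_{v',w'}$ becomes the single relation $\omega_{\hat v,\,B_1 w - B_2 w'} = 0$.

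The first step is to verify the ``transport equivalences''
\[
(P_{[v]},P_{[w]}) \in \Rc \Leftrightarrow (P_{[\hat v]},P_{[B_1 w]}) \in \Rc
\quad\text{and}\quad
(P_{[v']},P_{[w']}) \in \Rc \Leftrightarrow (P_{[\hat v]},P_{[B_2 w']}) \in \Rc.
\]
Lemma \ref{basicqrel}(c) gives $(P_{[v]},P_{[w]}) \in \Rc \Leftrightarrow (P_{[B_1 v]},P_{[B_1 w]}) \in \Rc$, and then one application of axiom (iii) with $B = E_1 := B_1 B_1^* \in I \otimes \Bc(l^2)$ exchanges $P_{[B_1 v]}$ and $P_{[\hat v]}$: since $E_1$ commutes with $\Mx'\otimes I$ and satisfies $E_1 \hat v = B_1 v$ and $E_1 B_1 w = B_1 w$, one checks $[E_1 P_{[\hat v]}] = P_{[B_1 v]}$ and $[E_1 P_{[B_1 w]}] = P_{[B_1 w]}$. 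The second transport equivalence is symmetric, via $E_2 := B_2 B_2^*$.

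The crux is to show $(P_{[\hat v]},P_{[B_1 w]}) \in \Rc \Leftrightarrow (P_{[\hat v]},P_{[B_2 w']}) \in \Rc$. Let $u := B_1 w - B_2 w'$; the equation $\omega_{\hat v,u} = 0$ asserts that $u$ is orthogonal to the cyclic subspace $\overline{(\Bc(H)\otimes I)\hat v}$. By the commutant identity $(\Bc(H)\otimes I)' = I\otimes \Bc(l^2)$ inside $\Bc(H\otimes l^2)$, the projection $Q$ onto this cyclic subspace already lies in $I\otimes \Bc(l^2)$; manifestly $Q\hat v = \hat v$ while $Qu = 0$. Applying axiom (iii) with $B = Q$ to $(P_{[\hat v]},P_{[u]})$, the right-hand side becomes $(P_{[\hat v]},P_{[Qu]}) = (P_{[\hat v]},0)$, which is excluded by Lemma \ref{basicqrel}(a); therefore $(P_{[\hat v]},P_{[u]}) \notin \Rc$. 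Now $B_1 w = u + B_2 w'$ implies $P_{[B_1 w]} \leq P_{[u]} \vee P_{[B_2 w']}$, so if $(P_{[\hat v]},P_{[B_1 w]}) \in \Rc$ then monotonicity (an immediate consequence of axiom (ii)) upgrades this to $(P_{[\hat v]},P_{[u]}\vee P_{[B_2 w']}) \in \Rc$, whereupon axiom (ii) itself forces either $(P_{[\hat v]},P_{[u]}) \in \Rc$ (now excluded) or $(P_{[\hat v]},P_{[B_2 w']}) \in \Rc$; hence the latter holds, and the reverse implication is symmetric.

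The main obstacle is locating the correct operator to feed into axiom (iii) in the crux step. The right choice is the cyclic projection $Q \in I\otimes \Bc(l^2)$ for $\hat v$ with respect to $\Bc(H)\otimes I$, whose three key properties ($Q \in I\otimes \Bc(l^2)$ so that axiom (iii) even applies; $Q\hat v = \hat v$; and $Qu = 0$) are precisely what is needed to convert the vanishing of the scalar matrix coefficient $\omega_{\hat v,u}$ into the projection-level exclusion $(P_{[\hat v]},P_{[u]})\notin \Rc$. Once this is in hand, the elementary linear identity $B_1 w = u + B_2 w'$ together with axiom (ii) completes the deduction.
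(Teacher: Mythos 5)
Your proof is correct, and it takes a genuinely different and more economical route than the paper's. The paper proves the lemma by first pushing everything into a finite dimensional corner $H_0\otimes(K_0\oplus K_1)$ (this is where the hypothesis $v-v',w-w'\in H\otimes K_0$ enters) and then performing a long explicit normalization: the vectors are incrementally converted to sums of scaled basis vectors by unitaries in $I\otimes\Bc(l^2)$, duplications are cancelled, components are permuted to match, and finally an invertible diagonal $B$ is fed into axiom (iii). You replace all of this with two soft observations: the doubling $\hat v=B_1v+B_2v'$ via isometries with orthogonal ranges (a device the paper itself uses later, in showing that the set $E$ in the proof of Theorem \ref{abstractchar} is a linear subspace), and, crucially, the cyclic projection $Q$ of $\hat v$ under $\Bc(H)\otimes I$, which lies in $(\Bc(H)\otimes I)'=I\otimes\Bc(l^2)$ and hence is a legitimate input to axiom (iii); since $Q\hat v=\hat v$ and $Qu=0$ for $u=B_1w-B_2w'$, axiom (iii) converts the scalar identity $\omega_{\hat v,u}=0$ directly into $(P_{[\hat v]},P_{[u]})\notin\Rc$, and axiom (ii) finishes. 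All the supporting identities you use ($BP_{[v]}B^*=P_{[Bv]}$ for isometries, $[E_1P_{[\hat v]}]=P_{[B_1v]}$, $[QP_{[u]}]=0$) follow from $P_{[v]}$ being the projection onto $\overline{(\Mx'\otimes I)v}$ together with the fact that the relevant $B$'s commute with $\Mx'\otimes I$, and monotonicity is an instance of axiom (ii), so the argument is complete. One point worth flagging explicitly: your proof never uses the hypothesis that $v-v'$ and $w-w'$ lie in $H\otimes K_0$ with $K_0$ finite dimensional, so you have in fact proved the stronger statement that whether $(P_{[v]},P_{[w]})\in\Rc$ depends only on the functional $\omega_{v,w}$; that hypothesis is an artifact of the paper's finite dimensional normalization, not of the truth of the lemma. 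What the paper's approach buys is a completely hands-on computation using only unitary conjugations and rescalings; what yours buys is brevity, a cleaner and more general statement, and a conceptual explanation of why the lemma holds (the cyclic projection for $\Bc(H)\otimes I$ is exactly the operator that sees the vanishing of $\omega_{\hat v,u}$).
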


\begin{proof}
First, let $B_0$ be an isometry from $l^2$ onto an infinite-codimensional
subspace of $l^2$, let $B = I \otimes B_0$, and replace $v, w, v', w'$ with
$Bv, Bw, Bv', Bw'$. We can do this because
$$\omega_{Bv,Bw} = \omega_{v,w} = \omega_{v',w'} = \omega_{Bv',Bw'},$$
and
$$(P_{[v]},P_{[w]}) \in \Rc\quad\Leftrightarrow\quad
(P_{[Bv]}, P_{[Bw]}) = (BP_{[v]}B^*, BP_{[w]}B^*) \in \Rc$$
by part (c) of Lemma \ref{basicqrel} (and similarly for $v', w'$). Also
replace $K_0$ with $B_0(K_0)$.

Now let $P_0 \in \Bc(l^2)$ be the orthogonal projection onto $K_0$.
Then $(I \otimes P_0)v$
belongs to $H \otimes K_0$ and hence (since $K_0$ is finite dimensional)
is a finite linear combination of elementary tensors.
The same is true of $w$, $v'$, and $w'$. So there is a finite dimensional
subspace $H_0$ of $H$ such that the projections
of all four vectors onto $H \otimes K_0$ lie in $H_0 \otimes K_0$ and
$v - v', w - w' \in H_0 \otimes K_0$.

If $K_1$ is a finite dimensional subspace of $l^2$ that is orthogonal
to the range of $B_0$ then all four vectors are orthogonal to
$H \otimes K_1$. In the remainder of the proof we will work in the
finite dimensional space $H_0 \otimes (K_0 \oplus K_1)$, where $K_1$ is
chosen large enough to accomodate all computations below. (Specifically,
${\rm dim}(K_1) = ({\rm dim}(H_0) - 1)^2\cdot{\rm dim}(K_0) + 1$ would
suffice, but this number is not important.)

Identify $H_0$ with $\Cb^k$, $K_0 \oplus K_1$ with $\Cb^n$,
and $H_0 \otimes (K_0 \oplus K_1)$ with $\Cb^k \oplus \cdots \oplus \Cb^k$
($n$ summands). Let $(e_i)$ be the standard basis for $\Cb^k$ and let
$\bar{v} = (I \otimes P_0)v$, $\bar{w} = (I \otimes P_0)w$,
$\bar{v}' = (I \otimes P_0)v'$, and $\bar{w}' = (I \otimes P_0)w'$. 
The main step is to incrementally
convert $\bar{v}$ and $\bar{w}$, which initially lie in
$H_0 \otimes K_0$, into vectors of the form $a_1e_{i_1} \oplus
\cdots \oplus a_ne_{i_n}$ and $b_1e_{j_1} \oplus \cdots \oplus b_n e_{j_n}$,
now lying in $H_0 \otimes (K_0 \oplus K_1)$, without changing $\omega_{v,w}$ or
affecting whether $(P_{[v]},P_{[w]})$ lies in $\Rc$, and similarly to put
$\bar{v}'$ and $\bar{w}'$ in the form
$a_1'e_{i_1'} \oplus \cdots \oplus a_n'e_{i_n'}$ and
$b_1'e_{j_1'} \oplus \cdots \oplus b_n'e_{j_n'}$.

The main step is achieved in the following way. Say
$\bar{v} = v_1 \oplus \cdots \oplus v_n$ with each
$v_i \in H_0 \cong \Cb^k$ and suppose some $v_r$ is not of the form
$a_re_{i_r}$.  Let the corresponding decomposition of $\bar{w}$ be
$\bar{w} = w_1 \oplus \cdots \oplus w_n$. Because the dimension of
$K_1$ was sufficiently large, some of the $v_i$'s and $w_i$'s are zero
regardless of where we are in the construction. For notational simplicity
say $v_1 = w_1 = 0$ and $v_2 = a_1'e_1 + a_1''u$ with
$a_1'e_1 \neq 0$, $a_1''u \neq 0$, and $u \perp e_1$.
Now consider the vectors
$$v^0 = (a_1'e_1 - a_1''u) \oplus (a_1'e_1 + a_1''u) \oplus v_3 \oplus \cdots
\oplus v_n \oplus\tilde{v}$$
and
$$v^1 = (-a_1'e_1 + a_1''u) \oplus (a_1'e_1 + a_1''u) \oplus v_3 \oplus \cdots
\oplus v_n \oplus \tilde{v},$$
where $\tilde{v} = v - \bar{v} \in (H \otimes (K_0 \oplus K_1))^\perp$.
We have
$$(P_{[v^0]},P_{[w]}) \in \Rc\quad\Leftrightarrow\quad
(P_{[v^1]},P_{[w]}) \in \Rc$$
since $v^0 = Uv^1$ and $w = Uw$, and hence
$P_{[v^1]} = U^*P_{[v^0]}U$ and $P_{[w]} = U^*P_{[w]}U$, where
$U = -I_H \oplus I_H \oplus \cdots \oplus I_H \oplus I_H \in I \otimes \Bc(l^2)$.
Hence both pairs belong to $\Rc$ if and only if
$(P_{[v^0]}\vee P_{[v^1]}, P_{[w]}) \in \Rc$.
But $P_{[v^0]} \vee P_{[v^1]} = P_{[v]} \vee P_{[\hat{v}]}$ where
$\hat{v} = (a_1'e_1 - a_1''u) \oplus 0 \oplus \cdots \oplus 0\oplus 0$, and
we have $(P_{[\hat{v}]},P_{[w]}) \not\in \Rc$ by Lemma \ref{basicqrel} (d).
Thus
$$(P_{[v^0]},P_{[w]}) \in \Rc\quad\Leftrightarrow\quad
(P_{[v]},P_{[w]}) \in \Rc.$$
Also $\omega_{v,w} = \omega_{v^0,w}$.
So replace $v$ with $v^0$ and then apply the unitary
$$V =\frac{1}{\sqrt{2}}
\left[
\begin{matrix}
I&  I\cr
-I& I
\end{matrix}
\right]
\oplus I \oplus \cdots \oplus I \oplus I \in I \otimes B(l^2)$$
to $v$ and $w$ so that $\bar{v}$ becomes
$\sqrt{2}a_1'e_1 \oplus \sqrt{2}a_1''u \oplus v_3 \oplus \cdots \oplus v_r$
and $\bar{w}$ becomes $\frac{1}{\sqrt{2}}w_2 \oplus \frac{1}{\sqrt{2}}w_2
\oplus w_3 \oplus \cdots \oplus w_n$. The
end result is that $\bar{v}$ has moved one step closer to being in the
desired form, $\omega_{v,w}$ has not changed, and whether
$(P_{[v]},P_{[w]}) \in \Rc$ has not
changed. The vector $w$ has also been replaced by $Vw$, but if
$\bar{w}$ was already in the desired form this will still be the case.
So we achieve the main step by first putting $\bar{w}$ in the desired form and
then putting $\bar{v}$ in the desired form.

Now we proceed in four additional steps. As above let $\bar{v} =
v_1 \oplus \cdots \oplus v_n$ and $\bar{w} = w_1 \oplus \cdots \oplus w_n$.
We can also write $v_r = a_re_{i_r}$ and $w_r = b_re_{j_r}$ for $1 \leq r \leq
n$. First, for any $r$, if $v_r = 0$ then we also set $w_r = 0$ and if
$w_r = 0$ then we also set $v_r = 0$. This clearly does not change
$\omega_{v,w}$; to see that it also does not affect whether
$(P_{[v]}, P_{[w]}) \in \Rc$, suppose for simplicity that $w_1 = 0$
and consider the vector $v^1 = Uv$ where $U = -I \oplus I \oplus
\cdots \oplus I \oplus I$ as above. Letting $v^0 = v$ and arguing
exactly as in the main step yields the desired conclusion. (We do not need
to apply $V$ for this argument.) Make the same argument for $v'$ and $w'$.

We now have $v_rw_r \neq 0$ or $v_r = w_r = 0$ for all $r$. The next
step eliminates duplications where $i_r = i_s$ and $j_r = j_s$ but
$r \neq s$ (and $a_r, b_r, a_s, b_s$ are all nonzero). To do this,
for notational simplicity suppose $r = 1$ and $s = 2$ and apply a
unitary of the form
$$\left[
\begin{matrix}
\alpha I& \beta I\cr
-\bar{\beta} I& \bar{\alpha} I
\end{matrix}
\right]
\oplus I \oplus \cdots \oplus I \oplus I \in I \otimes B(l^2)$$
to $v$ and $w$ with $\alpha$ and $\beta$ chosen so that $\alpha a_1
+ \beta a_2 = 0$. This leads to $v_1 = 0$ and $v_2 = (-\bar{\beta}a_1
+ \bar{\alpha} a_2)e_{i_2}$. We may have $w_1 \neq 0$ but the argument of
the previous step can now be repeated to remedy this. Applying the
preceding construction repeatedly, we reach a point where $r \neq s$
and $v_r, w_r, v_s,w_s$ all nonzero implies either $i_r \neq i_s$ or
$j_r \neq j_s$. Make the same argument for $v'$ and $w'$.

In the next step we leave $v'$ and $w'$ intact and
apply a unitary in $I \otimes \Bc(l^2)$ to $v$ and
$w$ to ensure $i_r = i'_r$ and $j_r =j'_r$ for all $r$ such that
$v_r, w_r \neq 0$. We just use a permutation unitary to achieve this;
the pairs $(i_r, j_r)$ appearing in nonzero components
of $v$ and $w$ are the same up to rearrangement as the pairs $(i_r', j_r')$
appearing in nonzero components of $v'$ and $w'$ since $\omega_{v,w} =
\omega_{v',w'}$. This is because applying $\omega_{v,w}$ to the operator
$V_{e_{i_r}e_{j_r}}: u \mapsto \langle u,e_{j_r}\rangle e_{i_r}$ in
$\Bc(H)$ yields the result $\bar{a}_r b_r$, so we can diagnose whether
$(i_r,j_r)$ appears in a nonzero component of $(v,w)$ in this way.

We have reached the final step. By applying both sides of $\omega_{v,w} =
\omega_{v',w'}$ to $V_{e_{i_r}e_{j_r}}$ we see that
$a_r\bar{b}_r = a_r'\bar{b}_r'$ for all
values of $r$ such that $v_r,w_r,\bar{v}_r,\bar{w}_r$ are nonzero, so let
$$B = \frac{b_1}{b_1'}I \oplus \cdots \oplus \frac{b_n}{b_n'}I \oplus I
\in I \otimes \Bc(l^2)$$
(with the convention that $\frac{0}{0} = 1$, so that $B$ is invertible) and
observe that
$$(P_{[v]},P_{[w]}) = (P_{[v]}, [BP_{[w']}]) \in \Rc\quad\Leftrightarrow\quad
(P_{[v']}, P_{[w']}) = ([B^*P_{[v]}], P_{[w']}) \in \Rc.$$
Since the truth values of the conditions
$(P_{[v]},P_{[w]}) \in \Rc$ and
$(P_{[v']}, P_{[w']}) \in \Rc$ have not changed throughout the entire
process we conclude that $(P_{[v]},P_{[w]}) \in \Rc$ $\Leftrightarrow$
$(P_{[v']}, P_{[w']}) \in \Rc$ for the original values of $v,w,v',w'$.
\end{proof}

Let $P_S(A)$ denote the spectral projection of a self-adjoint operator
$A$ for the Borel set $S \subseteq \Rb$.

\begin{lemma}\label{nets}
Let $\{A_\lambda\}$ be a bounded net of self-adjoint
elements of a von Neumann algebra $\Mx$ and suppose
$A_\lambda \to A$ weak operator. Then for any $\epsilon > 0$ there is a
net of projections $\{P_\kappa\}$ in $\Mx$ which converges weak operator
to $P_{(-\infty, 0]}(A)$ and such that every
$P_\kappa$ is less than or equal to
$P_{(-\infty,\epsilon]}(A_\lambda)$ for some $\lambda$.
\end{lemma}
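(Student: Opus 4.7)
The plan is to upgrade the weak-operator convergence $A_\lambda \to A$ to strong-operator convergence via convex combinations, and then produce the desired approximating projections using the continuous functional calculus applied to cutoff functions.

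First, since $\{A_\lambda\}$ is bounded and $A_\lambda \to A$ weak operator, the weak-operator and strong-operator closures of the convex hull $\mathrm{conv}\{A_\lambda\}$ in $\Bc(H)$ coincide (by a standard Hahn--Banach argument applied to this convex set, whose weak and strong closures agree since a continuous linear functional on $\Bc(H)$ is weak-operator continuous iff it is strong-operator continuous). Hence there is a net of finite convex combinations $\tilde A_\mu = \sum_i c_i^\mu A_{\lambda_i^\mu}$ converging to $A$ in the strong operator topology; the $\tilde A_\mu$ remain self-adjoint and uniformly bounded.

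Next, choose continuous cutoffs $g_n : \Rb \to [0,1]$ with $g_n \equiv 1$ on $(-\infty, 0]$, $g_n \equiv 0$ on $[1/n, \infty)$, and linear between. Then $g_n \to \chi_{(-\infty, 0]}$ pointwise, so $g_n(A) \to P := P_{(-\infty, 0]}(A)$ in the strong operator topology by bounded convergence applied to the spectral measure of $A$. For each fixed $n$, strong-operator convergence of a uniformly bounded net of self-adjoints is preserved by the continuous functional calculus, so $g_n(\tilde A_\mu) \to g_n(A)$ strongly. A diagonal argument in $(n, \mu)$, choosing $n_\kappa$ large enough that $1/n_\kappa < \epsilon$, then yields a net of positive contractions $T_\kappa := g_{n_\kappa}(\tilde A_{\mu_\kappa}) \in \Mx$ with $T_\kappa \to P$ strongly, and whose range projections satisfy $[T_\kappa] \leq P_{(-\infty, 1/n_\kappa)}(\tilde A_{\mu_\kappa}) \leq P_{(-\infty, \epsilon]}(\tilde A_{\mu_\kappa})$.

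The main obstacle is the final step: the lemma demands that $P_\kappa$ be dominated by $P_{(-\infty, \epsilon]}(A_\lambda)$ for an \emph{original} index $\lambda$, not for the convex combination $\tilde A_{\mu_\kappa}$. To bridge this gap, the plan is to decompose $[T_\kappa]$ into an orthogonal join of subprojections, each adapted to a single constituent $A_{\lambda_i^{\mu_\kappa}}$ of $\tilde A_{\mu_\kappa}$. The starting observation is that any $v \in \mathrm{ran}([T_\kappa])$ satisfies $\sum_i c_i^{\mu_\kappa} \langle A_{\lambda_i^{\mu_\kappa}} v, v\rangle \leq (1/n_\kappa)\|v\|^2$, forcing $\langle A_{\lambda_i^{\mu_\kappa}} v, v\rangle$ to be small for at least one $i$; promoting this pointwise quadratic-form statement to the strictly stronger projection-level inclusion $P_\kappa \leq P_{(-\infty, \epsilon]}(A_{\lambda_i^{\mu_\kappa}})$, while ensuring that the reassembled net of projections still converges weak operator to $P$ rather than to some strictly larger projection, is the technical heart of the argument. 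I expect to accomplish this via a maximal orthogonal decomposition (using Zorn's lemma on the set of orthogonal families of such adapted subprojections) combined with a careful refinement of the cutoff $g_n$ so that the cumulative ``loss'' in passing from $\tilde A_{\mu_\kappa}$ to an individual $A_{\lambda_i^{\mu_\kappa}}$ stays within $\epsilon$.
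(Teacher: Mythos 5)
There is a genuine gap, and you have located it yourself: the passage from domination by $P_{(-\infty,\epsilon]}(\tilde A_{\mu_\kappa})$ (a spectral projection of a convex combination) to domination by $P_{(-\infty,\epsilon]}(A_\lambda)$ for a single original index $\lambda$ is left as a plan, and the plan as sketched does not work. The observation that $\sum_i c_i^{\mu_\kappa}\langle A_{\lambda_i^{\mu_\kappa}}v,v\rangle \leq (1/n_\kappa)\|v\|^2$ forces $\langle A_{\lambda_i^{\mu_\kappa}}v,v\rangle$ to be small for some $i$ is a statement about one vector $v$ at a time, with the index $i$ depending on $v$; the set of $v$ for which a given $i$ works is not a subspace, so it does not produce a decomposition of $[T_\kappa]$ into adapted subprojections, orthogonal or otherwise. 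Worse, even for fixed $v$ and $i$, smallness of the quadratic form $\langle A_{\lambda_i}v,v\rangle$ does not place $v$ in ${\rm ran}(P_{(-\infty,\epsilon]}(A_{\lambda_i}))$ (take $v$ an equal superposition of eigenvectors with large positive and large negative eigenvalues); at best, and only under a positivity hypothesis on $A_{\lambda_i}$, a Chebyshev-type estimate bounds the component of $v$ orthogonal to that range. So the ``promotion'' to a projection-level inclusion is exactly the missing mechanism, not a technicality. A secondary problem: $T_\kappa \to P$ strongly does not imply $[T_\kappa] \to P$ weak operator --- range projections dominate the operators they come from and their limit points can be strictly larger than $P$ --- so even the convergence of your proposed net is not established.

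For comparison, the paper never forms convex combinations. It fixes $\delta > 0$, finitely many unit vectors $v_i \in {\rm ran}(P_{(-\infty,0]}(A))$ and $w_j \in {\rm ran}(P_{(0,\infty)}(A))$, sets $R = P_{(-\infty,0]}(A)$ and $Q = P_{(-\infty,\alpha]}(A_\lambda)$ for a \emph{single} $\lambda$ (chosen via weak-operator convergence so that $\langle A_\lambda v_i, v_i\rangle \leq \alpha^3$), and takes $P = P_{[1/2,1]}((QRQ)^n)$. This $P$ is automatically a subprojection of $Q$, hence of $P_{(-\infty,\epsilon]}(A_\lambda)$; the canonical form of two projections yields $\|(RQ)^n w\| \leq (2n-1)^{-1/2}\|w\|$ whenever $Rw=0$, which forces $\|Pw_j\|^2 \leq \delta$, while a Markov-type estimate gives $\|Pv_i\|^2 \geq 1-\delta$. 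The device you are missing is precisely this $(QRQ)^n$ construction, which manufactures an honest subprojection of a spectral projection of one $A_\lambda$ that still nearly contains the $v_i$ and nearly annihilates the $w_j$.
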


\begin{proof}
Fix $\epsilon > 0$. Then let $\delta > 0$, let $v_1, \ldots, v_m$ be unit
vectors in ${\rm ran}(P_{(-\infty, 0]}(A))$, and let $w_1, \ldots, w_n$ be
unit vectors in ${\rm ran}(P_{(0, \infty)}(A))$. It will suffice to find a
projection $P \leq P_{(-\infty,\epsilon]}(A_\lambda)$ in $\Mx$ for some
$\lambda$ such that (1) $\|Pv_i\|^2 \geq 1 - \delta$ for all $i$ and
(2) $\|Pw_j\|^2 \leq \delta$ for all $j$. We will achieve this with a
projection of the form $P = P_{[1/2,1]}((QRQ)^n)$ where $Q =
P_{(-\infty,\alpha]}(A_\lambda)$, for some $\lambda$ and some
$\alpha \leq \epsilon$, and $R = P_{(-\infty, 0]}(A)$.
It is easy to see that any such $P$ belongs to $\Mx$ and
satisfies $P \leq P_{(-\infty,\epsilon]}(A_\lambda)$.

We first check that property (2) can be assured independently of the
choice of $\alpha$ and $\lambda$ simply by chosing $n$
large enough. Since $P \leq 2(QRQ)^n$, this follows from the following
claim: if $Q$ and $R$ are any projections and $Rw = 0$ then
$$\|(RQ)^nw\| \leq \frac{1}{\sqrt{2n-1}}\|w\|$$
(for $n \geq 1$).
This can be seen by using the general form of two projections given
in \cite{Tak}, p.\ 308. Namely, we can decompose the Hilbert space so that
$$R = \left[
\begin{matrix}
R_0&0&0\cr
0&I&0\cr
0&0&0
\end{matrix}
\right]
\quad{\rm and}\quad
Q = \left[
\begin{matrix}
Q_0&0&0\cr
0&C^2&CS\cr
0&CS&S^2
\end{matrix}
\right]$$
with $R_0$ and $Q_0$ commuting projections, $0 \leq C,S \leq I$, and
$C^2 + S^2 = I$. For then $Rw = 0$ means that $w$ has the form $w = \left[
\begin{matrix}
w'\cr
0\cr
w''
\end{matrix}
\right]$ with $R_0w' = 0$ and hence that $(RQ)^nw = \left[
\begin{matrix}
0\cr
C^{2n-1}Sw''\cr
0
\end{matrix}
\right]$. So we just need to estimate the norm of $C^{2n-1}Sw'' =
C(I - S^2)^{n-1}Sw''$. But if $S$ is realized as multiplication by
$x$ then $(I-S^2)^{n-1}S$ becomes multiplication by $x(1-x^2)^{n-1}$,
which is extremized on $[0,1]$ at $x = \pm\frac{1}{\sqrt{2n-1}}$ and
hence has operator norm at most $\frac{1}{\sqrt{2n-1}}$ (ignoring the
$(1 - x^2)^{n-1}$ factor, which is at most one). Thus
$\|C^{2n-1}Sw''\| \leq \frac{1}{\sqrt{2n-1}}\|w\|$ and
this completes the proof of the claim.

So fix a value of $n$ that ensures property (2). For property (1),
let $\alpha = \min\{\delta/2(n+1),\epsilon\}$ and choose $\lambda$ so that
$\langle A_\lambda v_i, v_i\rangle \leq \alpha^3$ for $1 \leq i \leq m$.
Then set $Q = P_{(-\infty,\alpha]}(A_\lambda)$, $R = P_{(-\infty, 0]}(A)$, and
$P = P_{[1/2, 1]}((QRQ)^n)$. We must verify that $\|Pv_i\|^2
\geq 1 - \delta$ for all $i$. First, we have
$$\alpha\langle(I - Q)v_i,v_i\rangle \leq 
\langle A_\lambda v_i, v_i\rangle \leq \alpha^3$$
so that
$$\|(I - Q)v_i\|^2 = \langle (I-Q)v_i, v_i\rangle \leq \alpha^2,$$
i.e., $\|(I - Q)v_i\| \leq \alpha$. Also $(I - R)v_i = 0$, so
\begin{eqnarray*}
\|v_i - (QRQ)^nv_i\| &\leq& \|v_i - Qv_i\| + \|Qv_i - QRv_i\| +
\cdots + \|(QR)^nv_i - (QR)^nQv_i\|\cr
&\leq& \|(I-Q)v_i\| + \|Q\|\|(I-R)v_i\| + \cdots + \|(QR)^n\|\|(I-Q)v_i\|\cr
&\leq& (n+1)\alpha\cr
&\leq& \delta/2
\end{eqnarray*}
and hence
\begin{eqnarray*}
\|(I - P)v_i\|^2 &=& \|P_{[0,1/2)}((QRQ)^n) v_i\|^2\cr
&=& \|P_{(1/2,1]}(I - (QRQ)^n)v_i\|^2\cr
&=& \langle P_{(1/2,1]}(I - (QRQ)^n)v_i, v_i\rangle\cr
&\leq& 2\langle (I-(QRQ)^n)v_i, v_i\rangle\cr
&\leq&\delta.
\end{eqnarray*}
This shows that $\|Pv_i\|^2 \geq 1 - \delta$, as desired.
\end{proof}

\begin{theo}\label{abstractchar}
Let $\Mx \subseteq \Bc(H)$ be a von Neumann algebra and let $\Pc$ be
the set of projections in $\Mx \overline{\otimes} \Bc(l^2)$. If $\Vc$ is a
quantum relation on $\Mx$ (Definition \ref{quantrel}) then
$$\Rc_\Vc =
\{(P,Q) \in \Pc^2: P(A \otimes I)Q \neq 0\hbox{ for some }A \in \Vc\}$$
is an intrinsic quantum relation on $\Mx$ (Definition \ref{abstractqrel});
conversely, if $\Rc$ is an intrinsic quantum relation on $\Mx$ then
$$\Vc_\Rc = \{A \in \Bc(H): (P,Q) \not\in \Rc\quad\Rightarrow\quad
P(A \otimes I)Q = 0\}$$
is a quantum relation on $\Mx$. The two constructions are inverse to
each other.
\end{theo}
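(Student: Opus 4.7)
The proof has four parts: verifying the axioms of Definition \ref{abstractqrel} for $\Rc_\Vc$, verifying that $\Vc_\Rc$ is a weak*-closed $\Mx'$-bimodule, and establishing the two inverse identities $\Vc = \Vc_{\Rc_\Vc}$ and $\Rc = \Rc_{\Vc_\Rc}$. The first three parts are largely mechanical; the genuine content lies in the reverse inclusion of the last identity.

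For the routine parts, I would first check that the complement of $\Rc_\Vc$ is closed in $\Pc \times \Pc$, which follows because weak operator convergence of projections agrees with strong operator convergence on bounded sets, so $P(A \otimes I)Q = 0$ passes to WOT limits. Axiom (i) is automatic; axiom (ii) comes from approximating vectors in the range of $\bigvee P_\lambda$ by finite linear combinations of vectors from the individual ranges; axiom (iii) follows from the commutation $(A \otimes I)B = B(A \otimes I)$ for $B \in I \otimes \Bc(l^2)$ together with the standard identifications of range projections. That $\Vc_\Rc$ is weak*-closed is clear as an intersection of kernels, and the $\Mx'$-bimodule property uses $(\Mx \overline{\otimes} \Bc(l^2))' = \Mx' \otimes I$, so $M \otimes I$ commutes with every $P \in \Pc$ whenever $M \in \Mx'$. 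The inclusion $\Vc \subseteq \Vc_{\Rc_\Vc}$ is immediate, while the converse $\Vc_{\Rc_\Vc} \subseteq \Vc$ is precisely Lemma \ref{separation}. The inclusion $\Rc_{\Vc_\Rc} \subseteq \Rc$ is also immediate from the definitions.

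The substantive part is the reverse $\Rc \subseteq \Rc_{\Vc_\Rc}$. Given $(P,Q) \in \Rc$, axiom (ii) lets me replace $(P,Q)$ by $(P_{[v]}, P_{[w]})$ for cyclic vectors $v \in \text{ran}(P)$, $w \in \text{ran}(Q)$, so the task becomes finding $A \in \Vc_\Rc$ with $\omega_{v,w}(A) := \langle (A \otimes I)w, v\rangle \neq 0$. By the bipolar theorem applied to the weak*/preannihilator duality between $\Bc(H)$ and $\TC(H)$, $\Vc_\Rc$ is the annihilator of $W := \{\omega_{v',w'} : (P_{[v']}, P_{[w']}) \notin \Rc\}$, so a contradiction must be derived from the hypothesis that $\omega_{v,w}$ lies in the norm-closed linear span of $W$. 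Finite linear combinations are handled directly: given $\omega_{v,w} = \sum c_i \omega_{v_i, w_i}$ with each $(P_{[v_i]}, P_{[w_i]}) \notin \Rc$, fix isometries $U_i \in \Bc(l^2)$ with pairwise orthogonal ranges and set $v^* = \sum (I \otimes U_i) v_i$ and $w^* = \sum c_i (I \otimes U_i) w_i$. Orthogonality of the ranges makes all cross terms vanish so that $\omega_{v^*, w^*} = \omega_{v,w}$; Lemma \ref{basicqrel}(c) keeps the diagonal subpairs outside $\Rc$, Lemma \ref{basicqrel}(b) handles the off-diagonal subpairs (which sit under orthogonal projections in $I \otimes \Bc(l^2)$), and repeated application of Lemma \ref{basicqrel}(d) yields $(P_{[v^*]}, P_{[w^*]}) \notin \Rc$. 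Lemma \ref{mainlemma} then transfers this to $(P_{[v]}, P_{[w]}) \notin \Rc$, once the $U_i$ are chosen so that $v - v^*$ and $w - w^*$ lie in a common finite-dimensional slice $H \otimes K_0$.

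The main obstacle is the passage from finite linear combinations to their norm-closed span. A sequence $\omega_n \to \omega_{v,w}$ in $\TC(H)$ gives representations $\omega_n = \omega_{v_n^*, w_n^*}$ with $(P_{[v_n^*]}, P_{[w_n^*]}) \notin \Rc$, but the representing vectors need not converge to $v, w$ in any useful sense. The plan is to invoke Lemma \ref{nets}: applied to suitable self-adjoint combinations in $\Mx \overline{\otimes} \Bc(l^2)$ that encode the data of the convergence $\omega_n \to \omega_{v,w}$, it produces a net of projections sitting below the bad projections $P_{[v_n^*]}, P_{[w_n^*]}$ and converging weak operator to $P_{[v]}$ and $P_{[w]}$. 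Since each projection in this net lies below a pair outside $\Rc$, downward monotonicity (the contrapositive of axiom (ii)) places the auxiliary pairs outside $\Rc$, and WOT-closedness of the complement of $\Rc$ forces $(P_{[v]}, P_{[w]}) \notin \Rc$, yielding the desired contradiction.
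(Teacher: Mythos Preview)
Your outline follows the paper's strategy and handles the routine parts correctly, but there is a genuine gap in the hard direction $\Rc \subseteq \Rc_{\Vc_\Rc}$, specifically in how Lemma \ref{mainlemma} gets invoked.

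In the finite-combination step you write that Lemma \ref{mainlemma} ``transfers'' $(P_{[v^*]},P_{[w^*]}) \notin \Rc$ to $(P_{[v]},P_{[w]}) \notin \Rc$ once the $U_i$ are chosen so that $v-v^*$ and $w-w^*$ lie in $H\otimes K_0$ for finite-dimensional $K_0$. But no such choice exists in general: $v$ and the $v_i$ are arbitrary vectors in $H\otimes l^2$, and composing with isometries $I\otimes U_i$ cannot force $v - \sum (I\otimes U_i)v_i$ into a finite-dimensional $l^2$-slice. The finite-dimensional hypothesis of Lemma \ref{mainlemma} is essential to its proof (it is what allows the reduction to $H_0\otimes(K_0\oplus K_1)$ and the incremental basis-vector manipulations), so this step simply fails. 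The paper does not attempt to apply Lemma \ref{mainlemma} here; it only uses the isometry trick to show that $E = \overline{\{\omega_{v',w'}:(P_{[v']},P_{[w']})\notin\Rc\}}$ is a linear subspace, and then treats the approximate case uniformly.

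The closure step is where the missing idea lives, and your description (``suitable self-adjoint combinations \ldots that encode the data of the convergence'') is too vague to be a plan. The actual mechanism is a perturbation: given $\|\omega_{v,w}-\omega_{v',w'}\|\le\epsilon$ with $(P_{[v']},P_{[w']})\notin\Rc$, one builds $v_3,w_3$ with $\|v-v_3\|,\|w-w_3\|$ small, $\omega_{v_3,w_3}=\omega_{v',w'}$ \emph{exactly}, and $v_3-v',w_3-w'\in H\otimes K_0$ for some finite-dimensional $K_0$. This is done by first truncating with a finite-rank $R\in I\otimes\Bc(l^2)$ to get $v_1=Rv+(I-R)v'$, $w_1=Rw+(I-R)w'$, then representing the small finite-rank error $\omega_{v_1,w_1}-\omega_{v',w'}$ as $\omega_{v_2,w_2}$ with $v_2,w_2$ small and supported in an orthogonal finite-dimensional slice, and finally setting $v_3=v_1-v_2$, $w_3=w_1+w_2$. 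Only now does Lemma \ref{mainlemma} apply, yielding $(P_{[v_3]},P_{[w_3]})\notin\Rc$. This produces projections $P_n,Q_n$ with $(P_n,Q_n)\notin\Rc$ and $\|P_nv\|,\|Q_nw\|\to 1$; Lemma \ref{nets} is then applied to weak-operator subnet limits of $I-P_n$ and $I-Q_n$ to manufacture projections below the $P_n,Q_n$ converging to projections dominating $P_{[v]},P_{[w]}$, and openness of $\Rc$ finishes. Without the perturbation construction there is nothing for Lemma \ref{nets} to act on.
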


\begin{proof}
Observe first that
\begin{eqnarray*}
P(A \otimes I)Q = 0&\Leftrightarrow&\langle (A\otimes I)w,v\rangle = 0
\hbox{ for all }v \in {\rm ran}(P), w \in {\rm ran}(Q)\cr
&\Leftrightarrow& \omega_{v,w}(A) = 0\hbox{ for all }
v \in {\rm ran}(P), w \in {\rm ran}(Q).
\end{eqnarray*}

Now let $\Vc$ be a quantum relation on $\Mx$. Then conditions (i)
and (ii) of Definition \ref{abstractqrel} are easily seen to hold for
$\Rc_\Vc$, and condition (iii) holds because $B \in I \otimes \Bc(l^2)$
implies
\begin{eqnarray*}
P(A\otimes I)[BQ] \neq 0&\Leftrightarrow& P(A\otimes I)BQ \neq 0\cr
&\Leftrightarrow& PB(A\otimes I)Q \neq 0\cr
&\Leftrightarrow&[B^*P](A\otimes I)Q \neq 0.
\end{eqnarray*}
Also, using the fact that the weak operator topology agrees with the
strong operator topology on $\Pc$ it is easy to see that the complement
of $\Rc_\Vc$ is closed in $\Pc^2$. So $\Rc_\Vc$
is an intrinsic quantum relation.

Next let $\Rc$ be an intrinsic quantum relation on $\Mx$.
It is clear that $\Vc_\Rc$ is a linear subspace of $\Bc(H)$, 
it is weak* closed by the observation made at the start of the proof,
and it is a bimodule over $\Mx'$ because if $A,C \in \Mx'$, $B \in \Vc_\Rc$,
and $P,Q \in \Pc$ then $P(B\otimes I)Q = 0$ implies
$$P(ABC\otimes I)Q = P(A\otimes I)(B\otimes I)(C\otimes I)Q
= (A\otimes I)P(B\otimes I)Q(C\otimes I) = 0.$$
So $\Vc_\Rc$ is a quantum relation.

Now let $\Vc$ be a quantum relation, let $\Rc = \Rc_\Vc$, and let
$\tilde{\Vc} = \Vc_\Rc$. Then it is immediate that
$\Vc \subseteq \tilde{\Vc}$, and the reverse inclusion is just
the content of Lemma \ref{separation}.

Finally, let $\Rc$ be an intrinsic quantum relation, let $\Vc = \Vc_\Rc$,
and let $\tilde{\Rc} = \Rc_\Vc$. It is immediate that
$\tilde{\Rc} \subseteq \Rc$. For the reverse inclusion, fix $P$ and
$Q$ and suppose $P(A\otimes I)Q = 0$ for all $A \in \Vc$; we must show
that $(P,Q) \not\in \Rc$. By condition (ii) of Definition
\ref{abstractqrel} it will suffice to show that $(P_{[v]},P_{[w]})
\not\in \Rc$ for any $v \in {\rm ran}(P)$ and $w \in {\rm ran}(Q)$.

Let $E \subseteq \Bc(H)_*$ be the norm closure of
$\{\omega_{v,w}: (P_{[v]},P_{[w]}) \not\in \Rc\}$. We claim that
$E$ is a linear subspace. To see this, suppose $(P_{[v_1]},P_{[w_1]}),
(P_{[v_2]},P_{[w_2]}) \not\in \Rc$ and let $V_1, V_2 \in I \otimes\Bc(l^2)$
be isometries with orthogonal ranges; then $v = V_1v_1 + V_2v_2$
and $w = V_1w_1 + V_2w_2$ satisfy $P_{[v]} \leq V_1P_{[v_1]}V_1^* +
V_2P_{[v_2]}V_2^*$ and $P_{[w]} \leq V_1P_{[w_1]}V_1^* + V_2P_{[w_2]}V_2^*$,
and hence $(P_{[v]},P_{[w]}) \not\in \Rc$ by Lemma \ref{basicqrel} (c) and (d).
But $\omega_{v,w} = \omega_{v_1,w_1} + \omega_{v_2,w_2}$, so we have
shown that $E$ is stable under addition. Stability under scalar
multiplication is easy. This proves the claim.

Now let $v \in {\rm ran}(P)$ and $w \in {\rm ran}(Q)$; we must show
that $(P_{[v]}, P_{[w]}) \not\in \Rc$.
We may suppose $\|v\| = \|w\| = 1$.
If $\omega_{v,w} \not\in E$ then there would
exist $A \in \Bc(H)$ such that $\omega_{v,w}(A) \neq 0$ but $\omega(A) = 0$
for all $\omega \in E$; then $A \in \Vc$ but $P_{[v]}(A\otimes I)P_{[w]} \neq 0$,
contradicting the fact that $P_{[v]} \subseteq P$ and $P_{[w]} \subseteq Q$.
Thus $\omega_{v,w} \in E$. We conclude the proof by showing that any
unit vectors $v,w \in H \otimes l^2$ with $\omega_{v,w} \in E$ satisfy
$(P_{[v]},P_{[w]}) \not\in\Rc$.

Let $B_0 \in \Bc(l^2)$ be an isometry with infinite codimensional
range and let $B = I \otimes B_0$. We may replace $v$ and $w$ with
$Bv$ and $Bw$ since $\omega_{v,w} = \omega_{Bv,Bw}$ and
$(P_{[v]}, P_{[w]}) \in \Rc$ $\Leftrightarrow$
$(P_{[Bv]},P_{[Bw]}) = (BP_{[v]}B^*, BP_{[w]}B^*) \in \Rc$.
Now since $\omega_{v,w} \in E$, for any $0 < \epsilon \leq 1$ there exist
$v',w' \in H\otimes l^2$ such that $\|\omega_{v,w} - \omega_{v',w'}\|
\leq \epsilon$ and $(P_{[v']},P_{[w']}) \not\in \Rc$. We may also replace
$v'$ and $w'$ with $Bv'$ and $Bw'$.
Find a finite rank projection $R_0 \in \Bc(l^2)$ such that
$\|(I - R)v\|, \|(I - R)w\|, \|(I - R)v'\|, \|(I - R)w'\|
\leq \epsilon$ where $R = I \otimes R_0$, and let $v_1 = Rv + (I - R)v'$ and
$w_1 = Rw + (I - R)w'$. Then $\|v - v_1\|, \|w - w_1\| \leq 2\epsilon$,
\begin{eqnarray*}
\|\omega_{v_1,w_1} - \omega_{v',w'}\|
&\leq& \|\omega_{v_1,w_1} - \omega_{v_1,w}\|
+ \|\omega_{v_1,w} - \omega_{v,w}\|
+ \|\omega_{v,w} - \omega_{v',w'}\|\cr
&\leq& \|v_1\|\|w_1 - w\| + \|v_1 - v\|\|w\| + \epsilon\cr
&\leq& (1+\epsilon)\cdot 2\epsilon + 2\epsilon + \epsilon\cr
&\leq& 7\epsilon,
\end{eqnarray*}
and $v_1- v', w_1 - w' \in H \otimes K_0$ where $K_0 = {\rm ran}(R_0)$.
This implies that $\omega_{v_1,w_1} - \omega_{v',w'}$ is ${\rm tr}(\cdot A)$
for some finite rank operator $A$ with ${\rm tr}(|A|) \leq 7\epsilon$.
Thus $\omega_{v_1,w_1} - \omega_{v',w'} = \omega_{v_2,w_2}$ for some
vectors $v_2,w_2 \in H\otimes K_1$ such that $\|v_2\|, \|w_2\| \leq
\sqrt{7\epsilon}$ and where $K_1$ is a finite dimensional subspace of
$l^2$ that we can take to be
orthogonal to ${\rm ran}(B_0)$. Finally let $v_3 = v_1 - v_2$
and $w_3 = w_1 + w_2$. We obtain $\|v - v_3\|, \|w - w_3\|
\leq 2\epsilon + \sqrt{7\epsilon}$, $\omega_{v_3,w_3} = \omega_{v',w'}$,
and $v_3 - v', w_3 - w' \in H\otimes(K_0 \oplus K_1)$.
Since $(P_{[v']}, P_{[w']}) \not\in\Rc$, Lemma \ref{mainlemma} implies
that $(P_{[v_3]}, P_{[w_3]}) \not\in\Rc$.

Letting $\epsilon \to 0$, we thus get a sequence of pairs of projections
$P_n,Q_n \in \Pc$ such that $(P_n,Q_n) \not\in \Rc$ and
$\|P_n v\|, \|Q_nw\| \to 1$. Passing to a weak operator convergent
subnet $\{P_\lambda \oplus Q_\lambda\}$ of the sequence $\{P_n \oplus Q_n\}$,
we have $(I -P_\lambda) \oplus (I -Q_\lambda) \to A_1 \oplus A_2$ weak
operator for some positive $A_1,A_2 \in \Mx\overline{\otimes} \Bc(l^2)$ such
that $v \in {\rm ker}(A_1)$ and $w \in {\rm ker}(A_2)$. By Lemma \ref{nets}
with $\epsilon = 1/2$ we can find a net of projections $P_\lambda' \oplus
Q_\lambda' \leq P_\lambda \oplus Q_\lambda$ which converge weak operator
to $\tilde{P} \oplus \tilde{Q}$ where $\tilde{P}$ and $\tilde{Q}$ are the
orthogonal projections onto ${\rm ker}(A_1)$ and ${\rm ker}(A_2)$,
respectively. So $(\tilde{P},\tilde{Q}) \not\in \Rc$ since $\Rc$ is open,
and then $P_{[v]} \leq \tilde{P}$, $P_{[w]} \leq \tilde{Q}$ implies
$(P_{[v]},P_{[w]}) \not\in \Rc$. This is what we needed to prove.
\end{proof}

\subsection{Quantum tori}\label{torisect}
In this section we analyze quantum relations on quantum tori which
satisfy a condition that is naturally understood as ``translation
invariance''. We find that this class of quantum relations is quite
tractable.

Quantum tori are the simplest examples of noncommutative manifolds.
They are related to the quantum plane, which plays the role of the
phase space of a spinless one-dimensional particle.
The classical version of such a system has phase space $\Rb^2$, with
the point $(q,p) \in \Rb^2$ representing a state with position $q$ and
momentum $p$, so that the position and momentum observables are just
the coordinate functions on phase space. When such a system is quantized
the position and momentum observables are modelled by unbounded
self-adjoint operators $Q$ and $P$ satisfying $QP - PQ = i\hbar I$.
Polynomials in $Q$ and $P$ can then be seen as a quantum
analog of polynomial functions on $\Rb^2$. The quantum analog of the
continuous functions on the torus --- equivalently, the
$(2\pi, 2\pi)$-periodic continuous functions on the plane --- is the
C*-algebra generated by the unitary operators $e^{iQ}$ and $e^{iP}$,
which satisfy the commutation relation $e^{iQ}e^{iP} =
e^{-i\hbar}e^{iP}e^{iQ}$. For more background see \cite{Rie} or
Sections 4.1, 4.2, 5.5, and 6.6 of \cite{W5}.

Let $\Tb = \Rb/2\pi\Zb$ and fix $\hbar \in \Rb$. Let $\{e_{m,n}\}$ be
the standard basis of $l^2(\Zb^2)$. We model the quantum
tori on $l^2(\Zb^2)$ as follows.

\begin{defi}
Let $U_\hbar$ and $V_\hbar$ be the unitaries in $\Bc(l^2(\Zb^2))$ defined by
\begin{eqnarray*}
U_\hbar e_{m,n} &=& e^{-i\hbar n/2} e_{m+1,n}\cr
V_\hbar e_{m,n} &=& e^{i\hbar m/2} e_{m,n+1}.
\end{eqnarray*}
The {\it quantum torus von Neumann algebra} for the given value of
$\hbar$ is the von Neumann algebra
$W^*(U_\hbar,V_\hbar)$ generated by $U_\hbar$ and $V_\hbar$.
\end{defi}

If $\hbar$ is an irrational multiple of $\pi$ then $W^*(U_\hbar,V_\hbar)$ is a
hyperfinite $II_1$ factor. We will not need this fact.

Conjugating $U_\hbar$ and $V_\hbar$ by the Fourier transform
$\Fc: L^2(\Tb^2) \to l^2(\Zb^2)$ yields the operators
\begin{eqnarray*}
\hat{U}_\hbar f(x,y) &=& e^{ix} f\left(x,y-\frac{\hbar}{2}\right)\cr
\hat{V}_\hbar f(x,y) &=& e^{iy} f\left(x + \frac{\hbar}{2},y\right)
\end{eqnarray*}
on $L^2(\Tb^2)$, with $W^*(\hat{U}_\hbar,\hat{V}_\hbar)$
reducing to the algebra of bounded
multiplication operators when $\hbar = 0$. However, for our purposes
the $l^2(\Zb^2)$ picture is more convenient.

The following commutation relations will be useful. For $f \in
l^\infty(\Zb^2)$ and $k,l \in \Zb$ let $\tau_{k,l}f$ be the translated
function $\tau_{k,l}f(m,n) = f(m-k,n-l)$. Then
\begin{eqnarray*}
U_\hbar V_\hbar &=& e^{-i\hbar}V_\hbar U_\hbar\cr
U_\hbar^kV_\hbar^lM_f &=& M_{\tau_{k,l}f}U_\hbar^kV_\hbar^l.
\end{eqnarray*}
In particular, $U_\hbar^kV_\hbar^lM_{e^{-i(mx + ny)}} =
e^{i(kx+ly)}M_{e^{-i(mx + ny)}}U_\hbar^kV_\hbar^l$.

Our main technical tool will be a kind of Fourier analysis. We introduce
the relevant definitions.

\begin{defi}\label{fdef}
Let $A \in \Bc(l^2(\Zb^2))$.

\noindent (a) For $x,y \in \Tb$ define
$$\theta_{x,y}(A) = M_{e^{i(mx + ny)}} A M_{e^{-i(mx + ny)}}.$$

\noindent (b) For $k,l \in \Zb$ define
$$A_{k,l} = \frac{1}{4\pi^2} \int_0^{2\pi} \int_0^{2\pi}
e^{-i(kx + ly)} \theta_{x,y}(A)\, dxdy.$$
We call $A_{k,l}$ the {\it $(k,l)$ Fourier term} of $A$.

\noindent (c) For $k,l \in \Nb$ define
$$S_{k,l}(A) = \sum_{|k'| \leq k, |l'| \leq l} A_{k',l'}$$
and for $N \in \Nb$ define
$$\sigma_N(A) = \frac{1}{N^2} \sum_{0 \leq k,l \leq N-1} S_{k,l}(A).$$
\end{defi}

In the $L^2(\Tb^2)$ picture the operator $M_{e^{i(mx + ny)}}$ on
$l^2(\Zb^2)$ becomes translation by $(-x,-y)$, so that $\theta_{x,y}$ is
conjugation by a translation.

The integral used to define $A_{k,l}$ can be understood in a weak
sense: for any vectors $v,w \in l^2(\Zb^2)$ we take
$\langle A_{k,l}w,v\rangle$ to be $\frac{1}{4\pi^2} \int_0^{2\pi}\int_0^{2\pi}
e^{-i(kx + ly)} \langle \theta_{x,y}(A)w,v\rangle\, dxdy$.
In particular, if $w = e_{m,n}$ and $v = e_{m',n'}$ then we have
$$\langle A_{k,l} e_{m,n}, e_{m',n'}\rangle =
\begin{cases}
\langle Ae_{m,n}, e_{m',n'}\rangle&\hbox{if $m' = m + k$ and
$n' = n + l$}\cr
0&\hbox{otherwise}.
\end{cases}\eqno{(*)}$$
The $A_{k,l}$ are something like Fourier coefficients, the $S_{k,l}(A)$
like partial sums of a Fourier series, and the $\sigma_N(A)$ like
Ces\`{a}ro means. The next few results are minor reworkings of
material in \cite{W5}.

\begin{prop}\label{cesaro}
Let $A \in \Bc(l^2(\Zb^2))$. Then $\|\sigma_N(A)\| \leq \|A\|$ for all $N$
and $\sigma_N(A) \to A$ weak operator.
\end{prop}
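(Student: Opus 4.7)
The plan is to recognize $\sigma_N(A)$ as a Fej\'er-type average and then apply standard approximate-identity reasoning.

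First I would rewrite the operations in Definition \ref{fdef} as weak integrals against classical kernels. Using the identity $\sum_{|k'|\leq k} e^{-ik'x} = D_k(x)$ (the Dirichlet kernel), linearity in the weak integral gives
$$S_{k,l}(A) = \frac{1}{4\pi^2} \int_0^{2\pi}\int_0^{2\pi} D_k(x) D_l(y)\, \theta_{x,y}(A)\, dx\, dy,$$
and averaging over $0 \leq k,l \leq N-1$ and invoking the identity $\frac{1}{N}\sum_{k=0}^{N-1} D_k(x) = F_N(x)$, where $F_N(x) = \frac{1}{N}\bigl(\frac{\sin(Nx/2)}{\sin(x/2)}\bigr)^2$ is the Fej\'er kernel, yields
$$\sigma_N(A) = \frac{1}{4\pi^2} \int_0^{2\pi}\int_0^{2\pi} F_N(x) F_N(y)\, \theta_{x,y}(A)\, dx\, dy.$$

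For the norm bound, the key observation is that $\theta_{x,y}$ is conjugation by a unitary multiplication operator and is therefore an isometric $*$-automorphism of $\Bc(l^2(\Zb^2))$; in particular $\|\theta_{x,y}(A)\| = \|A\|$ for every $(x,y)$. Since $F_N \geq 0$ and $\frac{1}{4\pi^2}\int\int F_N(x)F_N(y)\,dx\,dy = 1$, pulling the norm inside the positive weak integral gives $\|\sigma_N(A)\| \leq \|A\|$ immediately.

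For weak operator convergence, fix $v,w \in l^2(\Zb^2)$ and set $f(x,y) = \langle \theta_{x,y}(A) w, v\rangle$. I would first check that $f$ is continuous on $\Tb^2$: for fixed $w$, the estimate $\|(M_{e^{i(mx+ny)}} - I)w\|^2 = \sum_{m,n}|e^{i(mx+ny)} - 1|^2 |w_{m,n}|^2 \to 0$ as $(x,y)\to(x_0,y_0)$ follows by dominated convergence, so $(x,y)\mapsto M_{e^{i(mx+ny)}}w$ is norm continuous, and composing with the bounded operator $A$ and taking inner products with $v$ preserves continuity. Obviously $f(0,0) = \langle Aw,v\rangle$ and $|f| \leq \|A\|\|v\|\|w\|$.

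Then
$$\langle \sigma_N(A)w, v\rangle = \frac{1}{4\pi^2}\int_0^{2\pi}\int_0^{2\pi} F_N(x)F_N(y)\, f(x,y)\, dx\, dy,$$
and the classical two-dimensional Fej\'er summation theorem — namely that $\{F_N \otimes F_N\}$ is a positive approximate identity on $C(\Tb^2)$ — gives that this integral converges to $f(0,0) = \langle Aw,v\rangle$. Since $v,w$ were arbitrary, $\sigma_N(A) \to A$ weak operator.

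The only step that requires care is the continuity of $(x,y)\mapsto \theta_{x,y}(A)$ in the strong operator topology, but as indicated this is a straightforward dominated convergence argument and not a real obstacle; everything else is a direct transcription of classical Fej\'er theory into this operator-valued weak-integral setting.
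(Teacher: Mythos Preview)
Your argument is correct and begins exactly as the paper does: both rewrite $\sigma_N(A)$ as the weak integral of $\theta_{x,y}(A)$ against the product Fej\'er kernel and read off the norm bound from positivity and $\|F_N\|_1 = 2\pi$. The only difference is in the convergence step. You establish strong continuity of $(x,y)\mapsto \theta_{x,y}(A)w$ (correctly, via dominated convergence and a two-term estimate) and then invoke the approximate-identity property of $F_N\otimes F_N$ on $C(\Tb^2)$. The paper instead uses boundedness to reduce to checking $\langle \sigma_N(A)e_{m,n}, e_{m',n'}\rangle$, where the formula $(*)$ for the Fourier terms gives the explicit value $(1-|m'-m|/N)(1-|n'-n|/N)\langle Ae_{m,n}, e_{m',n'}\rangle$, which visibly converges. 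Your route is a bit more conceptual and would transfer unchanged to any strongly continuous unitary action of a compact group with a Fej\'er-type approximate identity; the paper's route avoids proving strong continuity by exploiting the concrete basis, which is marginally quicker here and also makes the matrix-entry formula available for later use.
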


\begin{proof}
We have
$$\sigma_N(A) = \frac{1}{4\pi^2}\int_0^{2\pi}\int_0^{2\pi}
K_N(x)K_N(y)\theta_{x,y}(A)\, dxdy$$
where $K_N$ is the Fej\'{e}r kernel,
$$K_N(x) = \sum_{n=-N+1}^{N-1} \left(1 - \frac{|n|}{N}\right)e^{inx}
= \frac{1}{N}\left(\frac{\sin(Nx/2)}{\sin(x/2)}\right)^2.$$
Since $\|K_N\|_1 = 2\pi$,
this shows that $\|\sigma_N(A)\| \leq \|A\|$, so the sequence
$\{\sigma_N(A)\}$ is bounded. So it will suffice to check weak operator
convergence against the vectors $e_{m,n}$. But if $|m'-m|,
|n'-n| \leq N$ then
$$\langle \sigma_N(A) e_{m,n}, e_{m',n'}\rangle =
\left(1 - \frac{|m'-m|}{N}\right)\left(1 - \frac{|n'-n|}{N}\right)
\langle Ae_{m,n},e_{m',n'}\rangle,$$
and this converges to $\langle Ae_{m,n},e_{m',n'}\rangle$ as
$N \to \infty$, as desired.
\end{proof}

\begin{lemma}\label{fourierlemma}
(a) For any $k,l \in \Zb$ the map $A \mapsto A_{k,l}$ is weak* continuous
from $\Bc(l^2(\Zb^2))$ to $\Bc(l^2(\Zb^2))$.

\noindent (b) Let $\Mx \cong l^\infty(\Zb^2)$ be the von Neumann
algebra of bounded multiplication operators in $\Bc(l^2(\Zb^2))$.
Then for any $A \in \Bc(l^2(\Zb^2))$ and any $k,l \in \Zb$
we have $A_{k,l} \in \Mx\cdot U_\hbar^kV_\hbar^l$.
\end{lemma}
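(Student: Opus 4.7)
The plan is to use the matrix-coefficient formula $(*)$ directly to write $A_{k,l}$ as a fixed shift composed with a diagonal multiplication operator whose symbol depends linearly on the matrix entries of $A$. Both parts then follow with little further work.

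Concretely, from $(*)$ one reads off that $A_{k,l}$ annihilates all basis vectors except along the shift by $(k,l)$, sending $e_{m,n}$ to $\langle Ae_{m,n},e_{m+k,n+l}\rangle\cdot e_{m+k,n+l}$. Writing $T_{k,l}$ for the pure shift $e_{m,n}\mapsto e_{m+k,n+l}$ and $f_{k,l}^A\in l^\infty(\Zb^2)$ for the bounded function $(m,n)\mapsto \langle Ae_{m,n},e_{m+k,n+l}\rangle$ (bounded by $\|A\|$), this says $A_{k,l}=T_{k,l}M_{f_{k,l}^A}$.

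For part (b), I would compute $U_\hbar^k V_\hbar^l e_{m,n}$ directly from the definitions, obtaining a phase times $e_{m+k,n+l}$; comparing with $T_{k,l}$ yields $T_{k,l}=U_\hbar^k V_\hbar^l M_g$ for an explicit unimodular $g\in l^\infty(\Zb^2)$. Using the commutation relation $U_\hbar^k V_\hbar^l M_h=M_{\tau_{k,l}h}U_\hbar^k V_\hbar^l$ already recorded in the excerpt, one rewrites
$$A_{k,l}=U_\hbar^k V_\hbar^l M_{g f_{k,l}^A}=M_{\tau_{k,l}(g f_{k,l}^A)}U_\hbar^k V_\hbar^l\in\Mx\cdot U_\hbar^k V_\hbar^l,$$
as required.

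For part (a), since left multiplication by the fixed bounded operator $T_{k,l}$ is weak* continuous and the representation $l^\infty(\Zb^2)\cong\Mx\subseteq\Bc(l^2(\Zb^2))$ by $f\mapsto M_f$ is a weak*-homeomorphism, it suffices to prove that $A\mapsto f_{k,l}^A$ is weak*-to-weak* continuous from $\Bc(l^2(\Zb^2))$ to $l^\infty(\Zb^2)$. Testing against an arbitrary $g\in l^1(\Zb^2)$ and regarding $l^\infty(\Zb^2)$ as the dual of $l^1(\Zb^2)$, I must show that $A\mapsto \sum_{m,n}g(m,n)\langle Ae_{m,n},e_{m+k,n+l}\rangle$ is weak* continuous in $A$. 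This functional has the form $A\mapsto\mathrm{tr}(AT)$ for the trace-class operator $T$ defined by $Te_{m+k,n+l}=\overline{g(m,n)}e_{m,n}$ (and zero on the orthogonal complement of the basis vectors of the form $e_{m+k,n+l}$), which has $\|T\|_1=\|g\|_1<\infty$; hence the functional is indeed weak* continuous.

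There is no serious obstacle: the matrix-coefficient formula $(*)$ reduces everything to bookkeeping. The only mildly delicate points are the commutation calculation that identifies $T_{k,l}$ with $U_\hbar^k V_\hbar^l$ up to a diagonal unitary in $\Mx$, and the explicit identification of the trace-class operator $T$ witnessing weak* continuity; both are short computations.
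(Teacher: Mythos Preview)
Your proof is correct. Both parts follow cleanly from the explicit factorization $A_{k,l}=T_{k,l}M_{f_{k,l}^A}$ that you extract from $(*)$, and your bookkeeping checks out: the trace-class operator $T$ you write down does have $\|T\|_1=\|g\|_1$, and $U_\hbar^kV_\hbar^l$ does differ from the pure shift $T_{k,l}$ by a unimodular diagonal.

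The paper argues differently on both parts. For (a), rather than exhibiting a trace-class witness for each predual functional, it invokes the Krein--Smulian theorem to reduce to bounded nets and then checks convergence of $\langle A_{k,l}e_{m,n},e_{m',n'}\rangle$ directly from $(*)$. For (b), instead of your explicit factorization, the paper performs a change of variable in the integral defining $A_{0,0}$ to show that $A_{0,0}$ commutes with every $M_{e^{i(mx+ny)}}$; since these generate the maximal abelian algebra $\Mx$, this forces $A_{0,0}\in\Mx$. The general case is then reduced to $(0,0)$ by observing that the $(0,0)$ Fourier term of $AV_\hbar^{-l}U_\hbar^{-k}$ is $A_{k,l}V_\hbar^{-l}U_\hbar^{-k}$. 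Your approach is more hands-on and avoids both Krein--Smulian and the commutant argument; the paper's route for (b) is slicker in that it never needs the explicit phase in $U_\hbar^kV_\hbar^l e_{m,n}$, only the invariance of the integral.
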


\begin{proof}
(a) By the Krein-Smuliyan theorem we need only check that if
$A_\lambda \to A$ boundedly weak* then their $(k,l)$ Fourier
terms converge, for every $k$ and $l$. Then since the
net is bounded it is enough to check convergence against basis
vectors, and this follows immediately from the formula ($*$) above.

(b) A simple change of variable in the formula for $A_{0,0}$
shows that it commutes with the operator $M_{e^{i(mx + ny)}}$
for all $x,y \in \Tb$. But these operators generate the maximal
abelian von Neumann algebra $\Mx$, so we must have $A_{0,0} \in \Mx$.
The result for arbitrary $k$ and $l$ now follows from the observation
that the $(0,0)$ Fourier term of $AV_\hbar^{-l}U_\hbar^{-k}$ is
$A_{k,l}V_\hbar^{-l}U_\hbar^{-k}$.
\end{proof}

\begin{prop}\label{belongs}
Let $A \in \Bc(l^2(\Zb^2))$. Then $A \in W^*(U_\hbar,V_\hbar)$ if and only if
$A_{k,l}$ is a scalar multiple of $U_\hbar^kV_\hbar^l$ for all $k,l \in \Zb$.
\end{prop}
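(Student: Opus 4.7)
The plan is to prove both implications via Fourier analysis on the generators.

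For the forward direction, the first step is to compute the action of $\theta_{x,y}$ on the generators. Using the commutation relation
$$U_\hbar^k V_\hbar^l M_{e^{-i(mx+ny)}} = e^{i(kx+ly)} M_{e^{-i(mx+ny)}} U_\hbar^k V_\hbar^l$$
recorded just before Definition \ref{fdef}, one immediately obtains
$$\theta_{x,y}(U_\hbar^k V_\hbar^l) = e^{i(kx+ly)} U_\hbar^k V_\hbar^l.$$
Substituting this into the defining integral for Fourier terms and using orthogonality of the characters $e^{i(kx+ly)}$ on $\Tb^2$, one concludes that $(U_\hbar^k V_\hbar^l)_{k',l'}$ equals $U_\hbar^k V_\hbar^l$ when $(k',l') = (k,l)$ and is zero otherwise. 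By linearity, for any polynomial $A$ in $U_\hbar$ and $V_\hbar$ the term $A_{k,l}$ is a scalar multiple of $U_\hbar^k V_\hbar^l$. To pass from polynomials to arbitrary $A \in W^*(U_\hbar, V_\hbar)$, I would invoke Kaplansky density (so that any $A$ in the von Neumann algebra is a bounded weak operator limit of polynomials, equivalently a bounded weak* limit) together with Lemma \ref{fourierlemma}(a), which gives weak* continuity of $A \mapsto A_{k,l}$. Since the one-dimensional subspace $\Cb\cdot U_\hbar^k V_\hbar^l$ is weak* closed, the conclusion persists in the limit.

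For the reverse direction, assume each $A_{k,l}$ is a scalar multiple of $U_\hbar^k V_\hbar^l$. Then every partial sum $S_{k,l}(A)$, and hence every Ces\`aro mean $\sigma_N(A)$, is a finite linear combination of the operators $U_\hbar^{k'} V_\hbar^{l'}$ and therefore lies in $W^*(U_\hbar, V_\hbar)$. By Proposition \ref{cesaro}, $\sigma_N(A) \to A$ in the weak operator topology, and since $W^*(U_\hbar, V_\hbar)$ is weak operator closed, $A \in W^*(U_\hbar, V_\hbar)$.

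There is no significant obstacle here: both directions reduce to combining the computation $\theta_{x,y}(U_\hbar^k V_\hbar^l) = e^{i(kx+ly)} U_\hbar^k V_\hbar^l$ with the Fourier-theoretic tools (weak* continuity of $A \mapsto A_{k,l}$ and Fej\'er-type convergence $\sigma_N(A) \to A$) that have already been set up. The only mild care needed is in the forward direction, where one must justify the passage from polynomials to the full von Neumann algebra; this is handled cleanly by Kaplansky density together with the weak* closedness of the line $\Cb\cdot U_\hbar^k V_\hbar^l$.
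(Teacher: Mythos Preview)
Your proposal is correct and follows essentially the same approach as the paper: verify the claim on monomials/polynomials, extend to all of $W^*(U_\hbar,V_\hbar)$ by weak* continuity of $A\mapsto A_{k,l}$ (Lemma \ref{fourierlemma}(a)), and for the converse use the Ces\`aro means from Proposition \ref{cesaro}. The only cosmetic differences are that the paper reads off the Fourier terms of polynomials directly from the matrix-entry formula $(*)$ rather than via the identity $\theta_{x,y}(U_\hbar^kV_\hbar^l)=e^{i(kx+ly)}U_\hbar^kV_\hbar^l$, and it simply cites weak* density of polynomials rather than Kaplansky density (the latter is harmless but unnecessary, since Lemma \ref{fourierlemma}(a) gives full weak* continuity).
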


\begin{proof}
Suppose $A = \sum \alpha_{k,l}U_\hbar^kV_\hbar^l$ is a polynomial in $U_\hbar$
and $V_\hbar$. Then the formula ($*$) given above shows that $A_{k,l}$
equals $\alpha_{k,l}U_\hbar^kV_\hbar^l$ for all $k$ and $l$, so that the
$(k,l)$ Fourier term of $A$ is a scalar multiple of $U_\hbar^kV_\hbar^l$.
The forward implication now follows for all $A \in W^*(U_\hbar,V_\hbar)$
since the map $A \mapsto A_{k,l}$ is weak* continuous and the polynomials
in $U_\hbar$ and $V_\hbar$ are weak* dense in $W^*(U_\hbar, V_\hbar)$.
Conversely, if every Fourier term belongs to $W^*(U_\hbar,V_\hbar)$ then
$A$ must also belong to $W^*(U_\hbar,V_\hbar)$ by Proposition \ref{cesaro}.
\end{proof}

\begin{coro}
The commutant of $W^*(U_\hbar,V_\hbar)$ is $W^*(U_{-\hbar}, V_{-\hbar})$.
\end{coro}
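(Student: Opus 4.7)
The plan is to establish the two inclusions $W^*(U_{-\hbar},V_{-\hbar}) \subseteq W^*(U_\hbar,V_\hbar)'$ and $W^*(U_\hbar,V_\hbar)' \subseteq W^*(U_{-\hbar},V_{-\hbar})$ separately. For the first (easy) inclusion, since both sides are von Neumann algebras it suffices to check that $U_{-\hbar}$ and $V_{-\hbar}$ commute with $U_\hbar$ and $V_\hbar$. I would verify this by direct calculation on the standard basis: for instance,
$$U_\hbar V_{-\hbar} e_{m,n} = e^{-i\hbar m/2}e^{-i\hbar(n+1)/2} e_{m+1,n+1}
\quad\hbox{and}\quad
V_{-\hbar} U_\hbar e_{m,n} = e^{-i\hbar n/2}e^{-i\hbar(m+1)/2} e_{m+1,n+1},$$
and the two phase factors agree. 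The other three pairings are analogous.

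For the reverse inclusion, fix $A \in W^*(U_\hbar,V_\hbar)'$; the goal is to show, via Proposition \ref{belongs} applied with $\hbar$ replaced by $-\hbar$, that each Fourier term $A_{k,l}$ is a scalar multiple of $U_{-\hbar}^k V_{-\hbar}^l$. First I would observe that $\theta_{x,y}$ scales the generators by scalars: the commutation relation $U_\hbar^k V_\hbar^l M_{e^{-i(mx+ny)}} = e^{i(kx+ly)} M_{e^{-i(mx+ny)}} U_\hbar^k V_\hbar^l$ yields $\theta_{x,y}(U_\hbar) = e^{ix}U_\hbar$ and $\theta_{x,y}(V_\hbar) = e^{iy}V_\hbar$, so $\theta_{x,y}$ preserves $W^*(U_\hbar,V_\hbar)'$. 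Averaging weakly and using weak* continuity of the Fourier projection (Lemma \ref{fourierlemma} (a)) shows that $A_{k,l} \in W^*(U_\hbar,V_\hbar)'$ as well. Next, by Lemma \ref{fourierlemma} (b) we have $A_{k,l} = M_f U_\hbar^k V_\hbar^l$ for some $f \in l^\infty(\Zb^2)$. The key observation is that $U_\hbar^k V_\hbar^l$ and $U_{-\hbar}^k V_{-\hbar}^l$ both send $e_{m,n}$ to a unit scalar multiple of $e_{m+k,n+l}$, so they differ only by a unitary multiplication operator; hence $\Mx\cdot U_\hbar^k V_\hbar^l = \Mx\cdot U_{-\hbar}^k V_{-\hbar}^l$, and we may rewrite $A_{k,l} = M_g U_{-\hbar}^k V_{-\hbar}^l$ for some $g \in l^\infty(\Zb^2)$.

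Finally, since $U_{-\hbar}^k V_{-\hbar}^l$ commutes with $U_\hbar$ and $V_\hbar$ by the easy inclusion, and $A_{k,l}$ does too, the multiplication operator $M_g$ must commute with both $U_\hbar$ and $V_\hbar$. Using $U_\hbar M_g = M_{\tau_{1,0} g} U_\hbar$ and $V_\hbar M_g = M_{\tau_{0,1} g} V_\hbar$ this forces $\tau_{1,0} g = \tau_{0,1} g = g$, so $g$ is constant on $\Zb^2$, and therefore $A_{k,l}$ is a scalar multiple of $U_{-\hbar}^k V_{-\hbar}^l$. Applying (the $-\hbar$ version of) Proposition \ref{belongs} then gives $A \in W^*(U_{-\hbar},V_{-\hbar})$. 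I expect the main obstacle to be the bookkeeping in the Fourier step, namely convincing oneself that $\theta_{x,y}$ genuinely preserves the commutant and that the weak integral defining $A_{k,l}$ inherits commutation with $U_\hbar,V_\hbar$; everything else is either a direct index computation or a routine appeal to the previously established lemmas.
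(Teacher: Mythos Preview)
Your proof is correct and follows the same overall architecture as the paper: verify the easy inclusion by checking commutation of generators, and for the hard inclusion show that every Fourier term $A_{k,l}$ is a scalar multiple of the appropriate monomial, then invoke Proposition \ref{belongs}.

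The execution of the key step differs, however. The paper takes $A$ in the commutant of $W^*(U_{-\hbar},V_{-\hbar})$ and computes the matrix entries $\langle A e_{m,n}, e_{m+k,n+l}\rangle$ directly from the identity $V_{-\hbar}^{-n}U_{-\hbar}^{-m}A = A V_{-\hbar}^{-n}U_{-\hbar}^{-m}$, reading off that $A_{k,l}$ is a scalar multiple of $U_\hbar^kV_\hbar^l$; it thus actually proves $W^*(U_{-\hbar},V_{-\hbar})' = W^*(U_\hbar,V_\hbar)$ and obtains the stated corollary via the double commutant theorem. You instead argue structurally and in the stated direction: since $\theta_{x,y}$ is a spatial automorphism that merely rescales the generators, it preserves $W^*(U_\hbar,V_\hbar)$ and hence its commutant, so $A_{k,l}$ lies in the commutant; writing $A_{k,l} = M_g\, U_{-\hbar}^kV_{-\hbar}^l$ then forces $M_g$ itself to commute with $U_\hbar$ and $V_\hbar$, whence $g$ is constant. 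Your route avoids the explicit index bookkeeping at the cost of one extra observation (that $\theta_{x,y}$ preserves the commutant and that the weak integral inherits commutation, which you correctly flagged); the paper's route is more hands-on but requires no such invariance argument. Both are short and natural.
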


\begin{proof}
A straightforward calculation shows that $U_\hbar$ and $V_\hbar$ each commute
with both of $U_{-\hbar}$ and $V_{-\hbar}$. From this it easily
follows that $W^*(U_\hbar,V_\hbar)$ is contained in the commutant of
$W^*(U_{-\hbar}, V_{-\hbar})$. Conversely, suppose $A \in \Bc(l^2(\Zb^2))$
commutes with $U_{-\hbar}$ and $V_{-\hbar}$. For any $k,l,m,n \in \Zb$
we have 
\begin{eqnarray*}
\langle V_{-\hbar}^{-n}U_{-\hbar}^{-m} A e_{m,n}, e_{k,l}\rangle
&=& \langle A e_{m,n}, U_{-\hbar}^mV_{-\hbar}^ne_{k,l}\rangle\cr
&=& e^{i\hbar(nk- ml - mn)/2}\langle A e_{m,n}, e_{m+k,n+l}\rangle
\end{eqnarray*}
and
$$\langle AV_{-\hbar}^{-n}U_{-\hbar}^{-m} e_{m,n}, e_{k,l}\rangle
= e^{-i\hbar mn/2}\langle A e_{0,0}, e_{k,l}\rangle.$$
Thus, letting $\alpha = e^{i\hbar kl/2}\langle Ae_{0,0}, e_{k,l}\rangle$,
we have
\begin{eqnarray*}
\langle A_{k,l} e_{m,n}, e_{m+k,n+l}\rangle
&=& \langle A e_{m,n}, e_{m+k,n+l}\rangle\cr
&=& e^{i\hbar(ml - nk)/2}\langle Ae_{0,0}, e_{k,l}\rangle\cr
&=& \langle \alpha U_\hbar^kV_\hbar^l e_{m,n}, e_{m+k,n+l}\rangle.
\end{eqnarray*}
And since
$$\langle A_{k,l} e_{m,n}, e_{m',n'}\rangle = 0
= \langle \alpha U_\hbar^kV_\hbar^l e_{m,n}, e_{m',n'}\rangle$$
if either $m' \neq m + k$ or $n' \neq n + l$, we conclude that
$A_{k,l} = \alpha U_\hbar^kV_\hbar^l$. Thus $A \in W^*(U_\hbar,V_\hbar)$ by Proposition
\ref{belongs}. This completes the proof.
\end{proof}

With this background material in place, we now proceed to analyze
translation invariant quantum relations on quantum tori.

\begin{defi}\label{titori}
(a) A quantum relation $\Vc \subseteq \Bc(l^2(\Zb^2))$ on the quantum
torus von Neumann algebra $W^*(U_\hbar,V_\hbar)$ is {\it translation
invariant} if $\theta_{x,y}(\Vc) = \Vc$ for all $x,y \in \Tb$.

\noindent (b) A subspace $\Ec$ of the von Neumann algebra $\Mx \cong
l^\infty(\Zb^2)$ of bounded multiplication operators on $l^2(\Zb^2)$
is {\it translation invariant} if
$$M_f \in \Ec \quad\Rightarrow\quad M_{\tau_{k,l}f} \in \Ec$$
for all $k,l \in \Zb$, where $\tau_{k,l}f$ is the translation operator
defined just before Definition \ref{fdef}.
\end{defi}

(The two notions of translation invariance are not directly related.
One refers to invariance under an action of $\Tb^2$, the other to
invariance under an action of $\Zb^2$.)

First we indicate an equivalent formulation of translation invariance
framed in terms of projections.

\begin{prop}\label{tirel}
Let $\Vc$ be a quantum relation on $W^*(U_\hbar,V_\hbar)$ and let $\Rc$ be
the corresponding intrinsic quantum relation (Theorem \ref{abstractchar}). Then
$\Vc$ is translation invariant if and only if
$$(P,Q) \in \Rc\quad\Leftrightarrow\quad ((\theta_{x,y} \otimes I)(P),
(\theta_{x,y}\otimes I)(Q)) \in \Rc$$
for all $x,y \in \Tb$ and all projections
$P,Q \in W^*(U_\hbar,V_\hbar) \overline{\otimes} \Bc(l^2)$.
\end{prop}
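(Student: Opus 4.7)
The plan is to exploit the fact that $\theta_{x,y}$ is a \emph{spatial} automorphism: writing $W_{x,y} = M_{e^{i(mx+ny)}}$, which is a unitary on $l^2(\Zb^2)$, we have $\theta_{x,y}(A) = W_{x,y} A W_{x,y}^*$ for every $A \in \Bc(l^2(\Zb^2))$. A direct check on the generators (using the relation $U_\hbar^k V_\hbar^l M_f = M_{\tau_{k,l} f} U_\hbar^k V_\hbar^l$ with $f = e^{-i(mx+ny)}$) shows $\theta_{x,y}(U_\hbar^k V_\hbar^l) = e^{i(kx+ly)} U_\hbar^k V_\hbar^l$, so $\theta_{x,y}$ preserves $W^*(U_\hbar,V_\hbar)$. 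Consequently $\theta_{x,y} \otimes I$, defined as conjugation by $W_{x,y} \otimes I$ on $\Bc(l^2(\Zb^2) \otimes l^2)$, restricts to a weak* continuous $*$-automorphism of $W^*(U_\hbar,V_\hbar) \overline{\otimes} \Bc(l^2)$; in particular it sends projections to projections, so the statement on the right-hand side makes sense.

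For the forward implication I would observe that, by the spatiality above,
\[
(\theta_{x,y} \otimes I)(P) \,(\theta_{x,y}(A) \otimes I)\, (\theta_{x,y} \otimes I)(Q)
= (W_{x,y} \otimes I)\, P (A \otimes I) Q\, (W_{x,y}^* \otimes I),
\]
so the left side is nonzero if and only if $P(A \otimes I)Q$ is nonzero. If $\Vc$ is translation invariant and $(P,Q) \in \Rc_\Vc$ is witnessed by $A \in \Vc$, then $\theta_{x,y}(A) \in \Vc$ witnesses that $((\theta_{x,y}\otimes I)(P), (\theta_{x,y}\otimes I)(Q)) \in \Rc_\Vc$. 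The reverse implication (applied with $(-x,-y)$) gives the other direction of the biconditional.

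For the converse, assume the projection-level condition on $\Rc = \Rc_\Vc$. By Theorem \ref{abstractchar} we have $\Vc = \Vc_{\Rc_\Vc}$, so to show $\theta_{x,y}(A) \in \Vc$ for every $A \in \Vc$ it suffices to show that $P (\theta_{x,y}(A) \otimes I) Q = 0$ whenever $(P,Q) \notin \Rc$. Given such $(P,Q)$, the hypothesis applied with $(-x,-y)$ (noting $W_{-x,-y} = W_{x,y}^*$) gives $((\theta_{-x,-y} \otimes I)(P), (\theta_{-x,-y} \otimes I)(Q)) \notin \Rc$, hence $(\theta_{-x,-y} \otimes I)(P)\,(A \otimes I)\,(\theta_{-x,-y} \otimes I)(Q) = 0$. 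Conjugating by $W_{x,y} \otimes I$ and using the identity from the previous paragraph converts this into $P(\theta_{x,y}(A) \otimes I)Q = 0$, as required; symmetrically $\theta_{-x,-y}(\Vc) \subseteq \Vc$, so $\theta_{x,y}(\Vc) = \Vc$.

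There is no real obstacle here; the only point that deserves verification is the preliminary claim that $\theta_{x,y}$ preserves $W^*(U_\hbar,V_\hbar)$ (so that the right-hand condition is sensibly stated), and this is a one-line calculation on the generators using the commutation relations recorded before Definition \ref{fdef}. Everything else reduces to the spatiality $\theta_{x,y} = \mathrm{Ad}\,W_{x,y}$ combined with $\Vc = \Vc_{\Rc_\Vc}$.
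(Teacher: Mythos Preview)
Your proof is correct and essentially the same as the paper's. The forward implication is identical; for the converse, the paper argues by contrapositive using Lemma~\ref{separation} to separate $\theta_{x,y}(A)\notin\Vc$ from $\Vc$, while you argue directly using the identity $\Vc=\Vc_{\Rc_\Vc}$ from Theorem~\ref{abstractchar}---but since that identity is itself proved via Lemma~\ref{separation}, the underlying mechanism is the same.
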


\begin{proof}
The forward implication holds because
$$(\theta_{x,y}\otimes I)(P(A \otimes I)Q) =
(\theta_{x,y}\otimes I)(P)(\theta_{x,y}(A) \otimes I)(\theta_{x,y}\otimes I)(Q),$$
so that translation invariance of $\Vc$ implies that there exists
$A \in \Vc$ such that $P(A \otimes I)Q \neq 0$ if and only if there
exists $A \in \Vc$ such that
$(\theta_{x,y}\otimes I)(P)(A \otimes I)(\theta_{x,y}\otimes I)(Q)
\neq 0$. Conversely, suppose $\Vc$ is not translation invariant and
find $A \in \Vc$ and $x,y \in \Tb$ such that $\theta_{x,y}(A) \not\in \Vc$.
By Lemma \ref{separation} we can find projections $P, Q \in
W^*(U_\hbar,V_\hbar) \overline{\otimes} \Bc(l^2)$ such that
$P(\theta_{x,y}(A)\otimes I)Q \neq 0$ but $P(B \otimes I)Q = 0$ for
all $B \in \Vc$. Then $(P,Q) \not\in \Rc$ but
$((\theta_{-x,-y}\otimes I)(P),(\theta_{-x,-y}\otimes I)(Q)) \in \Rc$
because
$$(\theta_{-x,-y}\otimes I)(P)(A \otimes I)(\theta_{-x,-y}\otimes I)(Q)
= (\theta_{-x,-y}\otimes I)(P(\theta_{x,y}(A)\otimes I)Q) \neq 0.$$
This proves the reverse implication.
\end{proof}

\begin{theo}\label{transinvqr}
Let $\Mx \cong l^\infty(\Zb^2)$ be the von Neumann
algebra of bounded multiplication operators in $\Bc(l^2(\Zb^2))$
and let $\Ec$ be a weak* closed, translation invariant subspace of
$\Mx$. Then
$$\Vc_\Ec = \{A \in \Bc(l^2(\Zb^2)):
A_{k,l} \in \Ec\cdot U_{-\hbar}^kV_{-\hbar}^l\hbox{ for all }k,l \in \Zb\}$$
is a translation invariant quantum relation on $W^*(U_\hbar,V_\hbar)$. Every
translation invariant quantum relation on $W^*(U_\hbar,V_\hbar)$ is of this
form.
\end{theo}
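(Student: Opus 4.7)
I will establish the two directions separately, with the crux being that $\Ec = \Vc \cap \Mx$ recovers $\Vc$ via the Ces\`aro means of its Fourier decomposition.

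\emph{Forward direction.} To see that $\Vc_\Ec$ is a translation invariant quantum relation, I will verify three properties. Weak* closedness of $\Vc_\Ec$ follows because each Fourier map $A \mapsto A_{k,l}$ is weak* continuous (Lemma \ref{fourierlemma}(a)) and $\Ec \cdot U_{-\hbar}^k V_{-\hbar}^l$ is weak* closed (right multiplication by a unitary is a weak* homeomorphism). Translation invariance follows from the change-of-variable identity $\theta_{x,y}(A)_{k,l} = e^{i(kx+ly)} A_{k,l}$: each Fourier term is merely rescaled and hence stays in $\Ec \cdot U_{-\hbar}^k V_{-\hbar}^l$. For the $W^*(U_{-\hbar},V_{-\hbar})$-bimodule property, by weak* continuity and boundedness of left and right multiplication together with weak* density of polynomials in $U_{-\hbar},V_{-\hbar}$, it suffices to check invariance under multiplication by the generating unitaries $U_{-\hbar}^{k'} V_{-\hbar}^{l'}$. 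Using $\theta_{x,y}(U_{-\hbar}^{k'} V_{-\hbar}^{l'}) = e^{i(k'x+l'y)} U_{-\hbar}^{k'} V_{-\hbar}^{l'}$, a direct calculation gives
\[
(U_{-\hbar}^{k'} V_{-\hbar}^{l'} A\, U_{-\hbar}^{k''} V_{-\hbar}^{l''})_{k,l} = U_{-\hbar}^{k'} V_{-\hbar}^{l'} \cdot A_{k-k'-k'',\,l-l'-l''} \cdot U_{-\hbar}^{k''} V_{-\hbar}^{l''}.
\]
Writing $A_{k-k'-k'',\,l-l'-l''} = M_f \cdot U_{-\hbar}^{k-k'-k''} V_{-\hbar}^{l-l'-l''}$ with $M_f \in \Ec$ and pushing $M_f$ past the leftmost unitaries via the commutation relation $U_\hbar^a V_\hbar^b M_f = M_{\tau_{a,b}f} U_\hbar^a V_\hbar^b$ (valid also for $-\hbar$), the right-hand side becomes a scalar multiple of $M_{\tau_{k',l'} f} \cdot U_{-\hbar}^k V_{-\hbar}^l$; translation invariance of $\Ec$ then places this in $\Ec \cdot U_{-\hbar}^k V_{-\hbar}^l$.

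\emph{Reverse direction.} Given a translation invariant quantum relation $\Vc$, take $\Ec = \Vc \cap \Mx$; this is a weak* closed subspace of $\Mx$, and its translation invariance follows from $U_{-\hbar}^k V_{-\hbar}^l M_f V_{-\hbar}^{-l} U_{-\hbar}^{-k} = M_{\tau_{k,l} f}$ together with the bimodule property of $\Vc$. For $\Vc \subseteq \Vc_\Ec$: translation invariance, weak* closedness, and convexity of $\Vc$ together imply that every Fourier term $A_{k,l}$ of $A \in \Vc$ again lies in $\Vc$, since it is a weak*-convergent scalar-weighted integral of elements $\theta_{x,y}(A) \in \Vc$. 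By Lemma \ref{fourierlemma}(b), we can write $A_{k,l} = M_f \cdot U_{-\hbar}^k V_{-\hbar}^l$ (using that $U_\hbar^k V_\hbar^l$ and $U_{-\hbar}^k V_{-\hbar}^l$ differ by a factor in $\Mx$); right-multiplying by $V_{-\hbar}^{-l} U_{-\hbar}^{-k}$ yields $M_f \in \Vc \cap \Mx = \Ec$. For $\Vc_\Ec \subseteq \Vc$: if $A \in \Vc_\Ec$, every $A_{k,l}$ lies in $\Ec \cdot U_{-\hbar}^k V_{-\hbar}^l \subseteq \Vc$ (by the bimodule property), so every Ces\`aro mean $\sigma_N(A)$ lies in $\Vc$; Proposition \ref{cesaro} gives $\sigma_N(A) \to A$ boundedly weak operator, hence weak*, and weak* closedness of $\Vc$ finishes the proof.

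\emph{Main obstacle.} The substantive step is the bookkeeping in the forward bimodule verification: one must confirm that the combinatorial shift by $(k',l')$ emerging from commuting $M_f$ past $U_{-\hbar}^{k'} V_{-\hbar}^{l'}$ is exactly the translation action $\tau_{k',l'}$ under which $\Ec$ was hypothesized to be invariant. Everything else reduces to weak* continuity of the Fourier maps and Ces\`aro convergence.
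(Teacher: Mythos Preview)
Your argument is correct and follows essentially the same route as the paper: weak* closedness via continuity of the Fourier maps, translation invariance via $\theta_{x,y}(A)_{k,l}=e^{i(kx+ly)}A_{k,l}$, the bimodule property via the shift formula for Fourier terms under one-sided (or in your version, two-sided) multiplication by monomials in $U_{-\hbar},V_{-\hbar}$ combined with translation invariance of $\Ec$, and the converse via $\Ec=\Vc\cap\Mx$ together with Ces\`aro recovery. Your remark that $\Mx\cdot U_\hbar^kV_\hbar^l=\Mx\cdot U_{-\hbar}^kV_{-\hbar}^l$ (since the two unitaries differ by a multiplication operator) makes explicit a point the paper uses silently when invoking Lemma~\ref{fourierlemma}(b).
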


\begin{proof}
Since $\Ec$ is weak* closed, so is $\Ec\cdot U_{-\hbar}^kV_{-\hbar}^l$.
Together with weak* continuity of the map $A \mapsto A_{k,l}$
(Lemma \ref{fourierlemma} (a)), this
implies that $\Vc_\Ec$ is weak* closed.
$\Vc_\Ec$ is clearly a linear subspace of $\Bc(l^2(\Zb^2))$.
To see that it is a bimodule over $W^*(U_{-\hbar},V_{-\hbar})$ it
suffices by weak* continuity to demonstrate stability under left
and right multiplication by monomials in $U_{-\hbar}$ and $V_{-\hbar}$;
this holds because the $(k,l)$ Fourier term of
$AU_{-\hbar}^mV_{-\hbar}^n$ is $A_{k-m,l-n}U_{-\hbar}^mV_{-\hbar}^n$,
and the $(k,l)$ Fourier term of $U_{-\hbar}^mV_{-\hbar}^nA$ is
$U_{-\hbar}^mV_{-\hbar}^nA_{k-m,l-n}$. So if $A \in \Vc_\Ec$ then
$A_{k-m,l-n} \in \Ec\cdot U_{-\hbar}^{k-m}V_{-\hbar}^{l-n}$, say $A_{k-m,l-n} =
M_fU_{-\hbar}^{k-m}V_{-\hbar}^{l-n}$, and the $(k,l)$ Fourier term
of $AU_{-\hbar}^mV_{-\hbar}^n$ is
$$M_fU_{-\hbar}^{k-m}V_{-\hbar}^{l-n}U_{-\hbar}^mV_{-\hbar}^n
= e^{i\hbar(nm -lm)}M_fU_{-\hbar}^kV_{-\hbar}^l \in \Ec\cdot U_{-\hbar}^kV_{-\hbar}^l,$$
while the $(k,l)$ Fourier term of $U_{-\hbar}^mV_{-\hbar}^nA$ is
$$U_{-\hbar}^mV_{-\hbar}^nM_fU_{-\hbar}^{k-m}V_{-\hbar}^{l-n} =
e^{i\hbar(nm-nk)}M_{\tau_{m,n}f}U_{-\hbar}^kV_{-\hbar}^l \in \Ec\cdot U_{-\hbar}^kV_{-\hbar}^l$$
since $\Ec$ is translation invariant. Finally, $\Vc_\Ec$ is
translation invariant because the $(k,l)$ Fourier term of
$\theta_{x,y}(A)$ is $\theta_{x,y}(A_{k,l})$, so if $A \in \Vc_\Ec$ then
$A_{k,l} \in \Ec\cdot U_{-\hbar}^kV_{-\hbar}^l$, say $A_{k,l} =
M_fU_{-\hbar}^kV_{-\hbar}^l$, and the $(k,l)$ Fourier term of
$\theta_{x,y}(A)$ is
$$\theta_{x,y}(M_fU_{-\hbar}^kV_{-\hbar}^l) =
e^{i(kx + ly)}M_fU_{-\hbar}^kV_{-\hbar}^l \in
\Ec\cdot U_{-\hbar}^kV_{-\hbar}^l.$$
So $\Vc_\Ec$ is a translation invariant quantum relation on
$W^*(U_\hbar,V_\hbar)$.

Now let $\Vc$ be any translation invariant quantum relation on
$W^*(U_\hbar,V_\hbar)$
and let $\Ec = \Vc \cap \Mx$. Then $\Ec$ is clearly a weak* closed subspace
of $\Mx$, and it is translation invariant because $M_f \in \Ec$ implies
$$M_{\tau_{k,l}f} = U_{-\hbar}^kV_{-\hbar}^lM_fV_{-\hbar}^{-l}U_{-\hbar}^{-k}
\in \Ec.$$
We claim that $\Vc = \Vc_\Ec$. To see this, first let $A \in \Vc$; then
for any $k,l \in \Zb$ we have $A_{k,l} \in \Vc$ by translation invariance
and weak* closure of $\Vc$, and
$A_{k,l} \in \Mx \cdot U_{-\hbar}^kV_{-\hbar}^l$ by Lemma \ref{fourierlemma},
so $A_{k,l}V_{-\hbar}^{-l}U_{-\hbar}^{-k} \in \Ec$. This shows that $A
\in \Vc_\Ec$ and we conclude that $\Vc \subseteq \Vc_\Ec$. Conversely,
if $A \in \Vc_\Ec$ then $A_{k,l} \in \Ec\cdot U_{-\hbar}^kV_{-\hbar}^l
\subseteq \Vc$ for all $k,l \in \Zb$, and this implies that $A \in \Vc$
by Proposition \ref{cesaro}. So $\Vc_\Ec \subseteq \Vc$.
\end{proof}

Retaining the notation of Theorem \ref{transinvqr},
for any closed subset $S \subseteq \Tb^2$ the smallest weak* closed
translation invariant subspace of $\Mx$ that contains
$M_{e^{i(mx + ny)}}$ if and only if $(x,y) \in S$ is
\begin{eqnarray*}
\Ec_0(S) &=& \{M_f: f \in l^\infty(\Zb^2)\hbox{ and }\sum f\bar{g} = 0
\hbox{ for all } g \in l^1(\Zb^2)\hbox{ such that }\hat{g}|_S = 0\}\cr
&=& \overline{\rm span}^{wk^*}\{M_{e^{i(mx + ny)}}: (x,y) \in S\}
\end{eqnarray*}
and the largest is
\begin{eqnarray*}
\Ec_1(S) &=& \{M_f: f \in l^\infty(\Zb^2)\hbox{ and }\sum f\bar{g} = 0
\hbox{ for all } g \in l^1(\Zb^2)\hbox{ such that }\hat{g}|_T = 0\cr
&&\phantom{\limsup\limsup}\hbox{for some neighborhood $T$ of $S$}\}\cr
&=& \bigcap_{\epsilon > 0} \Ec_0(N_\epsilon(S))
\end{eqnarray*}
where $N_\epsilon(S)$ is the open $\epsilon$-neighborhood of $S$
(see, e.g., Section 3.6.16 of \cite{Pal}).

Now for any weak* closed translation invariant subspace $\Ec$ of $\Mx$ let
$$S(\Ec) = \{(x,y) \in \Tb^2: M_{e^{i(mx + ny)}} \in \Ec\}.$$
Then $S(\Ec)$ is a closed subset of $\Tb^2$ and $\Ec_0(S) \subseteq \Ec
\subseteq \Ec_1(S)$, and we immediately infer the following corollary.

\begin{coro}\label{transinvcor}
Let $\Mx \cong l^\infty(\Zb^2)$ be the von Neumann algebra of bounded
multiplication operators in $\Bc(l^2(\Zb^2))$, let $\Vc$ be a translation
invariant quantum relation on $W^*(U_\hbar,V_\hbar)$,
and let $\Ec = \Vc \cap \Mx$. Then
$$\Vc_{\Ec_0(S)} \subseteq \Vc \subseteq \Vc_{\Ec_1(S)}$$
where $S = S(\Ec)$ and $\Vc_\Ec$ is as in Theorem \ref{transinvqr}.
\end{coro}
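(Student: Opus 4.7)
The plan is to observe that the corollary is an immediate consequence of Theorem \ref{transinvqr} together with the monotonicity of the assignment $\Ec \mapsto \Vc_\Ec$ and the sandwiching of $\Ec$ between $\Ec_0(S)$ and $\Ec_1(S)$ which was established in the discussion preceding the corollary.

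First I would invoke Theorem \ref{transinvqr} to conclude that $\Vc = \Vc_\Ec$, using that $\Ec = \Vc \cap \Mx$ is indeed a weak* closed translation invariant subspace of $\Mx$ (which is part of what was verified in the proof of that theorem). Next I would note that the defining condition
$$A_{k,l} \in \Ec\cdot U_{-\hbar}^kV_{-\hbar}^l\qquad\text{for all }k,l \in \Zb$$
is manifestly monotone in $\Ec$: if $\Ec \subseteq \Ec'$ then $\Vc_\Ec \subseteq \Vc_{\Ec'}$. Finally, the paragraph immediately before the corollary records that $\Ec_0(S(\Ec)) \subseteq \Ec \subseteq \Ec_1(S(\Ec))$ for any weak* closed translation invariant subspace $\Ec \subseteq \Mx$, because $\Ec_0(S)$ and $\Ec_1(S)$ are by construction respectively the smallest and the largest such subspaces whose spectrum (in the sense of $S(\cdot)$) equals $S$. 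Applying monotonicity to this chain yields
$$\Vc_{\Ec_0(S)} \subseteq \Vc_\Ec \subseteq \Vc_{\Ec_1(S)},$$
and substituting $\Vc_\Ec = \Vc$ gives the desired conclusion.

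There is no real obstacle: all the substantive work has already been done in Theorem \ref{transinvqr} and in the spectral analysis preceding the corollary. The only things worth double-checking are that $\Ec = \Vc \cap \Mx$ is closed under translation (which follows from $M_{\tau_{k,l}f} = U_{-\hbar}^kV_{-\hbar}^l M_f V_{-\hbar}^{-l}U_{-\hbar}^{-k}$ together with the bimodule property of $\Vc$ over $W^*(U_{-\hbar},V_{-\hbar})$) and that $S(\Ec_0(S)) = S(\Ec_1(S)) = S$, both of which are built into the definitions of $\Ec_0(S)$ and $\Ec_1(S)$.
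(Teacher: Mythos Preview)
Your proposal is correct and matches the paper's own treatment: the paper simply writes ``and we immediately infer the following corollary,'' leaving the reader to supply exactly the three ingredients you identified --- that $\Vc = \Vc_\Ec$ from Theorem \ref{transinvqr}, that $\Ec \mapsto \Vc_\Ec$ is monotone, and that $\Ec_0(S) \subseteq \Ec \subseteq \Ec_1(S)$ from the preceding paragraph. Your closing remarks about translation invariance of $\Ec$ and the spectra of $\Ec_0(S)$, $\Ec_1(S)$ are accurate sanity checks but not strictly required for the argument.
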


In particular, if $S(\Ec)$ is a set of spectral synthesis then
$\Ec_0(S) = \Ec_1(S)$ and hence $\Vc = \Vc_{\Ec_0(S)} = \Vc_{\Ec_1(S)}$.

%##
\bigskip
\bigskip

\end{document}